\documentclass[10pt, a4paper, reqno]{amsart}

\usepackage{amsmath,amssymb,amscd,amsthm,mathtools}

\usepackage[T1]{fontenc}

\usepackage{nicematrix}
\usepackage{tikz}

\usepackage[margin=2.5cm]{geometry}

\title{A De Rham Perspective on the Symplectic Geometry of Teichm\"uller Space}
\author{Antoine Ablondi}
\date{}

\usepackage{hyperref}
\hypersetup{colorlinks, linkcolor={newred}, citecolor={newbrown}, 
 pdftitle={A De Rham Perspective on the Symplectic Geometry of Teichm\"uller Space},
	pdfauthor={Antoine Ablondi},
 bookmarksdepth=2}
\usepackage{pdfpages}

\address{Antoine Ablondi: IMAG, Universit\'e de Montpellier, CNRS, Montpellier, France.} 
\email{antoine.ablondi@umontpellier.fr}
\date{}

\usepackage{setspace}

\usepackage{enumitem}

\usepackage[english]{babel}

\usepackage[justification=centering]{caption}

\usepackage{tikz-cd}
\usepackage{pgfplots}
\pgfplotsset{compat=1.16}
\usetikzlibrary{arrows}

\usetikzlibrary{tqft}

\usepackage{xcolor}
\definecolor{newred}{RGB}{200, 30, 45}
\definecolor{newyellow}{RGB}{255, 222, 23}
\definecolor{newblue}{RGB}{25, 48, 126}
\definecolor{newgrey}{RGB}{115, 110, 95}
\definecolor{newbrown}{RGB}{160, 62, 45}
\definecolor{sienna}{RGB}{160, 62, 45}

\newtheorem{maintheorem}{Theorem}

\newtheorem{maincorollary}[maintheorem]{Corollary}

\newtheorem{theorem}{Theorem}[section]
\newtheorem*{theorem*}{Theorem}
\newtheorem{corollary}[theorem]{Corollary}
\newtheorem{lemma}[theorem]{Lemma}
\newtheorem{proposition}[theorem]{Proposition}

\theoremstyle{definition}
\newtheorem{definition}[theorem]{Definition}

\theoremstyle{remark}
\newtheorem{remark}[theorem]{Remark}

\newtheorem*{remark*}{Remark}

\usepackage[labelsep=period]{caption}
\numberwithin{equation}{section}
\numberwithin{figure}{section}

\newcommand{\R}{\mathbf{R}} 

\newcommand{\Z}{\mathbf{Z}}
\renewcommand{\H}{\mathbf{H}}
\newcommand{\Rmink}{\R^{2,1}}
\newcommand{\bilmink}[2]{\left\langle {#1},{#2} \right\rangle_{\mbox{\tiny $2,1$}}}

\newcommand{\Id}{\mathrm{Id}} 

\newcommand{\newoplus}{\, \oplus \,}

\newcommand{\dif}{\mathrm{d}} 
\newcommand{\T}{\mathrm{T}} 

\newcommand{\SO}{\mathrm{SO}} 
\renewcommand{\O}{\mathrm{O}} 
\newcommand{\so}{\mathfrak{so}}

\newcommand{\dev}{\mathrm{dev}} 

\newcommand{\st}{\; \middle\vert \;} 

\newcommand{\namelessfunction}[4]{ \left\lbrace \begin{matrix}
{#1} &\longrightarrow& {#2}\\
{#3} & \longmapsto & {#4}
\end{matrix} \right. }
\newcommand{\function}[5]{{#1}: \namelessfunction{#2}{#3}{#4}{#5}}

\newcommand{\vp}[1]{{\left( {#1} \right)}} 
\newcommand{\bp}[1]{{\bigl( {#1} \bigr)}} 

\renewcommand{\S}{\mathcal{S}}
\newcommand{\pant}{\mathcal{P}}

\newcommand{\upant}{\tilde{\pant}}
\newcommand{\teich}{\mathcal{T}}
\newcommand{\diffo}{\mathrm{Diff}_0}

\newcommand{\hypcan}{\mathbf{h}}

\newcommand{\bundle}{E_{\rho}}
\newcommand{\bundlerest}[1]{E_{\rho,#1}}

\DeclareMathOperator{\Hol}{\mathrm{Hol}}
\newcommand{\Isom}{\mathrm{Isom}}
\newcommand{\isom}{\mathfrak{isom}}
\DeclareMathOperator{\Ad}{Ad}

\newcommand{\chifd}{\chi_{\mbox{\tiny \textit{fd}}}^+}

\newcommand{\Hisom}{H^1_{\mbox{\tiny $\Ad \rho$}}(\pi_1\S,\isom(\H^2))}
\newcommand{\Zisom}{Z^1_{\mbox{\tiny $\Ad \rho$}}(\pi_1\S,\isom(\H^2))}
\newcommand{\Bisom}{B^1_{\mbox{\tiny $\Ad \rho$}}(\pi_1\S,\isom(\H^2))}

\newcommand{\Hisomeq}{H^1_{\mbox{\tiny $\Ad \rho$}}\bigl(\pi_1\S,\isom(\H^2)\bigr)}
\newcommand{\Zisomeq}{Z^1_{\mbox{\tiny $\Ad \rho$}}\bigl(\pi_1\S,\isom(\H^2)\bigr)}
\newcommand{\Bisomeq}{B^1_{\mbox{\tiny $\Ad \rho$}}\bigl(\pi_1\S,\isom(\H^2)\bigr)}

\newcommand{\HdR}{H^1_{\mbox{\tiny \textup{dR}}}(\S,\bundle)}
\newcommand{\ZdR}{Z^1_{\mbox{\tiny \textup{dR}}}(\S,\bundle)}
\newcommand{\BdR}{B^1_{\mbox{\tiny \textup{dR}}}(\S,\bundle)}

\newcommand{\HdRof}[1]{H^1_{\mbox{\tiny \textup{dR}}}({#1},\bundlerest{#1})}
\newcommand{\ZdRof}[1]{Z^1_{\mbox{\tiny \textup{dR}}}({#1},\bundlerest{#1})}

\newcommand{\ddto}{\left. \frac{\dif}{\dif t}\right\vert _{t=0}}
\newcommand{\ddso}{\left. \frac{\dif}{\dif s}\right\vert _{s=0}}

\newcommand{\wwp}{\omega_{\mbox{\tiny \textup{WP}}}}

\newcommand{\wg}{\omega_{\mbox{\tiny \textup{G}}}}
\newcommand{\wdR}{\omega_{\mbox{\tiny \textup{dR}}}}

\newcommand{\dR}[1]{[ {#1} ]_{\mbox{\tiny \textup{dR}}}}

\DeclareMathOperator{\tr}{Tr}
\newcommand{\kil}{\kappa}

\newcommand{\alphat}{\tilde{\alpha}}

\DeclareMathOperator{\Ima}{Im}

\newcommand{\tH}{\mathfrak{t}}
\newcommand{\twinf}[1]{\mathfrak{t}_{#1}}

\makeatletter
\newcommand{\myitem}[1]{%
\item[#1]\protected@edef\@currentlabel{#1}%
}
\makeatother

\DeclareRobustCommand\epsind{\varepsilon}

\begin{document} 

\begin{abstract}

We present a new proof of Wolpert's Magic Formula, stating that any Fenchel--Nielsen coordinates on the Teichm\"uller space of a closed surface are Darboux coordinates for the Weil--Petersson symplectic form. 
Our approach relies on a de Rham cohomology model for the tangent space to the character variety model of Teichm\"uller space, in which, by the seminal work of Goldman, the Weil--Petersson form is expressed by a natural symplectic form, called the Goldman form. 

We extend the work of Fillastre and Seppi, who have managed, using that approach and Stokes's Theorem, to provide a new proof of Wolpert's sum of cosines formula. Using the unique isometric symmetry on any hyperbolic pair of pants, we also introduce a way, given a pair of pants decomposition of a closed surface, to construct an associated linear involution on every tangent space of its Teich\"uller space.

That allows us to derive a new proof of Wolpert's Magic Formula and deduce a self-contained proof of the closedness of the Goldman form on Teichm\"uller space.
\end{abstract}

\maketitle

{\hypersetup{linkcolor=black} \setcounter{tocdepth}{1} \tableofcontents}

\begin{spacing}{1.23}

\section*{Introduction}

There are various models for the \emph{Teichm\"uller space} $\teich(\S)$ of an oriented closed connected surface $\S$ of genus $g\geq 2$. It is classically defined as the \emph{moduli space of marked complex structures on $\S$}, that is the space of complex structures on $\S$, up to biholomorphism isotopic to the identity. By the Uniformisation Theorem, it is also naturally realised as the space of hyperbolic metrics on $\S$ up to isometries isotopic to the identity, which is sometimes called \emph{Fricke space}. Considering holonomy maps of hyperbolic surfaces, it can also be identified with a connected component of the character variety
\begin{equation*}
\chi(\pi_1\S)=\text{Hom}\bp{\pi_1\S,\Isom^+(\H^2)} \, \big/ \, \Isom^+(\H^2) \, ,
\end{equation*}
consisting of conjugacy classes of \emph{Fuchsian}, i.e. discrete and faithful, representations of the fundamental group of $\S$ into the Lie group $\Isom^+(\H^2)$ of orientation-preserving isometries of the hyperbolic plane $\H^2$. In fact, there are two such components, corresponding to the choice of orientation on $\S$. Those are isomorphic to each other, and smooth manifolds of dimension $6g-6$. We we shall denote by $\chifd(\pi_1\S)$ the component obtained as the image of the holonomy maps of hyperbolic structures on $\S$, with respect to the fixed orientation of $\S$.

The \emph{Weil--Petersson metric} on $\teich(\S)$ is a Kähler metric introduced by Weil, using the Petersson inner product on forms on a Riemann surface \cite{Weil:1958}. It comes from a symplectic form $\wwp$ on $\teich(\S)$, which is classically defined using the complex quadratic differential model for the cotangent bundle of the Teichm\"uller space \cite[Section~7.7]{Hubbard:2006}.

\subsection*{Goldman form on the character variety}

In his seminal work \cite{Goldman:1984}, Goldman has used the \emph{Killing form} $\kil$ of the Lie algebra $\isom(\H^2)$ in order to introduce a natural symplectic form on the character variety $\chi(\pi_1\S)$. 

Indeed, if $\rho \in \text{Hom}(\pi_1\S,\Isom^+(\H^2))$, the fundamental group $\pi_1\S$ acts on $\isom(\H^2)$ via $\rho$ and the adjoint action of $\Isom(\H^2)$, by
\begin{equation*}
 \namelessfunction{\pi_1\S \times \isom(\H^2)}{\isom(\H^2)}{(\gamma,x)}{\Ad \rho(\gamma) \cdot x} \, .
\end{equation*}
Then, a classical model for the Zariski tangent space of the character variety $\chi(\pi_1\S)$ at $[\rho]$ is
\begin{equation*}
 \T_{[\rho]} \chi(\pi_1\S) = \Hisomeq \, ,
\end{equation*}
the first cohomology group of the \emph{group cohomology of $\pi_1\S$ with values in the Lie algebra $\isom(\H^2)$} (‘‘twisted'' by the group action described above). The \emph{Goldman form} is the skew-symmetric $2$-form on $\chi(\pi_1\S)$ defined by: for all $[\tau],[\tau'] \in \T_{[\rho]} \chi(\pi_1\S)=\Hisom $,
\begin{equation*}
\wg ( [\tau],[\tau'])= \kil ([\tau] \smile [\tau'])(R) \, ,
\end{equation*}
where $R$ is the fundamental class of the group homology group $H_2(\pi_1S , \Z)$, and $\smile$ is the cup product in group cohomology, here paired with the Killing form $\kil$ on $\isom(\H^2)$ (see Section~\ref{Sec Group cohomology model}).

Goldman proved that the $2$-form on $\chifd(\pi_1\S)$ defined in that way is symplectic (in particular closed) and actually coincides with the Weil--Petersson form through the natural identification between $\chifd(\pi_1\S)$ and the Teichm\"uller space $\teich(\S)$ given by holonomy maps of hyperbolic surfaces:

\begin{theorem*}[{Goldman \cite{Goldman:1984}}]
\label{theo intro Goldman Theorem}
The holonomy map $\Hol: (\teich (\S) ,\wwp) \to (\chifd(\pi_1\S), \wg)$ is a symplectomorphism.
\end{theorem*}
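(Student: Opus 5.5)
Our plan is to reduce Goldman's Theorem to a comparison of two explicit $2$-forms on a single tangent space, using Wolpert's results as a bridge. Throughout, we normalise the Killing form $\kil$ by the constant for which the Goldman form of two Fenchel--Nielsen twist vectors along intersecting curves matches the Weil--Petersson form. The key point is that $\wwp$ is already known in Fenchel--Nielsen coordinates $(\ell_i,\theta_i)_{1\le i\le 3g-3}$ by Wolpert's Magic Formula: $\wwp=\sum_i \dif \ell_i\wedge \dif\theta_i$.

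It therefore suffices to show that the pull-back $\Hol^*\wg$ satisfies the same identity. We evaluate it on the basis of tangent vectors $\partial_{\ell_i},\partial_{\theta_i}$ at an arbitrary point. The first step is to pass from the group cohomology model $\Hisom$ to a de Rham model $\HdR$ with values in the flat bundle $\bundle$. In that model $\wg([\alpha],[\beta])=\int_\S \kil(\alpha\wedge\beta)$, where $\kil$ is applied fibrewise. This is the standard de Rham comparison isomorphism, compatible with cup products and with evaluation on the fundamental class.

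The second step is to represent the twist vectors $\partial_{\theta_i}$ by explicit closed forms. Such a form is supported in a thin collar around the curve $\gamma_i$ and equals $\dif f\otimes \tH_i$, where $f$ is a bump function jumping across the collar and $\tH_i$ is the infinitesimal translation along the axis of $\gamma_i$. Two cases follow.
\begin{enumerate}
\item If $\gamma_i$ and $\gamma_j$ are disjoint, the supports are disjoint, so $\wg(\partial_{\theta_i},\partial_{\theta_j})=0$.
\item If $i=j$, the integrand contains $\dif f\wedge\dif f=0$, so the pairing vanishes as well.
\end{enumerate}
For a pairing of a twist $\partial_{\theta_i}$ with a length deformation $\partial_{\ell_j}$, Stokes's Theorem reduces the integral to $\kil\bp{\tH_i,\sigma}$ integrated along $\gamma_i$, where $\sigma$ is a local primitive of the length deformation. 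This equals the derivative of $\ell_i$ in the direction $\partial_{\ell_j}$, which is $\delta_{ij}$ (Wolpert's duality $\wwp(\partial_{\theta_i},\cdot)=-\dif\ell_i$ in the Goldman setting). It remains to show that $\wg(\partial_{\ell_i},\partial_{\ell_j})=0$.

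This last vanishing is the main obstacle. The length deformations are not compactly supported, so no support argument applies. Our first attempt would be the pants-symmetry idea. Each hyperbolic pair of pants carries an orientation-reversing isometric involution fixing its seams. Applying these involutions pants by pants yields a linear involution $\iota$ on the tangent space that fixes the $\partial_{\ell_i}$ and sends the $\partial_{\theta_i}$ to $-\partial_{\theta_i}$ modulo length directions. Because $\iota$ is induced by orientation reversal, it satisfies $\iota^*\wg=-\wg$. Hence $\wg(\partial_{\ell_i},\partial_{\ell_j})=-\wg(\partial_{\ell_i},\partial_{\ell_j})=0$.

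The delicate points in this last step are twofold. First, the local involutions must glue consistently across cuffs; they do so only up to twist, which is why $\iota$ is defined on cohomology rather than on $\S$. Second, one must check that the twist correction terms do not spoil the computation. Once all four families of pairings are known, $\Hol^*\wg=\sum\dif\ell_i\wedge\dif\theta_i=\wwp$, so $\Hol$ is a symplectomorphism, since it is already a diffeomorphism.
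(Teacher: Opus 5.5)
\textbf{There is a genuine gap, in your last step.}

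The paper does not prove this theorem; it quotes Goldman. Your reduction is nonetheless legitimate and not circular. Wolpert's formula for $\wwp$ was obtained in the Beltrami-differential model, so it suffices to prove the matching Fenchel--Nielsen expression for $\Hol^*\wg$. Your de Rham model, collar-supported twist forms and Stokes computation of the twist--length pairing are exactly the paper's machinery.

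The problem is that the pants-by-pants involution does \emph{not} fix $\partial_{\ell_i}$. What actually holds is $\iota(\partial_{\theta_i})=-\partial_{\theta_i}$ exactly, and $\iota(\partial_{\ell_i})=\partial_{\ell_i}+2\frac{\theta_i}{\ell_i}\partial_{\theta_i}$, with $\theta_i,\ell_i$ the coordinates of the base point. You have put the correction on the wrong vectors. Your version cannot hold in general. The involution $\iota$ depends only on the pants decomposition, whereas $\partial_{\ell_i}$ depends on the choice of twist origin: changing it replaces $\partial_{\ell_i}$ by $\partial_{\ell_i}-\frac{k}{2}\partial_{\theta_i}$, and $\partial_{\theta_i}$ is not $\iota$-fixed. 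Your claim is true only where all twists vanish for the chosen origin, i.e.\ where the pants reflections glue to a global isometry.

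Even the weaker true statement $\iota(\partial_{\ell_i})-\partial_{\ell_i}\in\mathrm{span}(\partial_{\theta_k})_k$ would not suffice. Write $\iota(\partial_{\ell_i})=\partial_{\ell_i}+\sum_k c_{ik}\partial_{\theta_k}$. Anti-invariance together with your twist pairings then only shows that $\wg(\partial_{\ell_i},\partial_{\ell_j})$ is a fixed multiple of $c_{ij}-c_{ji}$. So you must know that this correction matrix is symmetric, and that is precisely the ``twist correction'' you left unchecked.

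\textbf{The missing computation.} This is the heart of the paper's argument.
\begin{enumerate}
\item Realise $\sum_i\lambda_i\partial_{\ell_i}$ by a path of representations with constant twist along $\gamma$. Normalise it so that the axis $C_\gamma$ and the reflection axes $C,C'$ of the two adjacent pants stay fixed. These axes are at signed distance equal to the twist along $C_\gamma$.
\item A comparison lemma shows the following. With base point on a pant's reflection axis, the period cocycle of $\alpha^*-\alpha$ on that pant is the coboundary of $-2x$. Here $x\in\mathrm{span}\,\tH(C_\gamma)$ is the coboundary term relating $\alpha$ to the path.
\item Move the base point from $C\cap C_\gamma$ to $C'\cap C_\gamma$. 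This shifts $x$ by $\lambda_\gamma\frac{\theta_\gamma}{\ell_\gamma}\tH(C_\gamma)$, because $\alpha$ is normalised near the cuff to $\frac{\lambda_\gamma}{\ell_\gamma}\,\tH\,\dif s$, with $s$ the arclength coordinate of the collar.
\item The resulting mismatch is $2\lambda_\gamma\frac{\theta_\gamma}{\ell_\gamma}$ times the twist cocycle along $\gamma$, and nothing else. Hence $\partial_{\ell_i}+\frac{\theta_i}{\ell_i}\partial_{\theta_i}$ is $\iota$-fixed, and your expansion closes.
\end{enumerate}
The same cuff normal form explains the gluing you left vague (``glue only up to twist''). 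It is preserved by both adjacent reflections, since each reverses both $\dif s$ and $\tH$. That is what makes the reflected forms glue and makes $\iota$ well defined on cohomology.

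\textbf{An alternative repair.} Invoke the closedness of $\wg$, which Goldman proves independently of $\wwp$. This shows the length--length coefficients do not depend on the twists. You can then evaluate them at zero-twist points, where your claim is true.

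\textbf{Conventions.} With $\wwp=\sum_i\dif\ell_i\wedge\dif\theta_i$ you need $\wg(\partial_{\theta_i},\partial_{\ell_j})=-\delta_{ij}$, not the $+\delta_{ij}$ you compute. As written, you would end up with $\Hol^*\wg=-\wwp$.
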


\subsection*{Results presented in this article} 

Several important results concerning the Weil--Petersson symplectic form are due to Wolpert \cite{Wolpert:1981,Wolpert:1983,Wolpert:1985} who managed, among other things, to show that the \emph{Fenchel--Nielsen coordinates} on the Teichm\"uller space, given by a \emph{pair of pants decomposition} of $\S$, are Darboux coordinates for the Weil--Petersson form. Surprisingly, that fact holds for any choice of a pair of pants decomposition, and is thus  often called \emph{Wolpert's Magic Formula}.

In this article, we present new proofs of some of Wolpert's classical results (Theorems~\ref{theo intro First Wolpert formula}, \ref{theo intro Second Wolpert formula}, \ref{theo intro w(dl,dl)=0} and Corollary~\ref{theo intro WP in FN}), as well as a new contribution (Theorem~\ref{theo intro existence involution on tangent space}).

We shall only use the character variety model and prove all those results for the Goldman form on $\chifd(\pi_1\S)$ (which coincides with the Weil--Petersson form by Goldman's Theorem). That differs from Wolpert's approach, which considers the classical model of the Teichmüller space $\teich(\S)$ as the moduli space of marked complex structures on $\S$, and thus sees the tangent space $\T_{[X]}\teich(\S)$ as the space of Beltrami differentials on the Riemann surface $(\S,X)$. The core of our approach is the use of the de Rham cohomology model for the tangent space of $\chifd(\pi_1\S)$. Then, our proofs only rely on hyperbolic geometry facts and on Stokes's Theorem.

Let us recall that one can define \emph{infinitesimal twists} on Teichm\"uller space in the following way. Given an oriented simple
closed geodesic curve $c$ on a closed hyperbolic surface $(\S,h)$, one can obtain a new marked hyperbolic surface by cutting $(\S,h)$ along $c$ and re-gluing after a twist of length $t \in \R$. Then, the infinitesimal twist $\twinf{c}$ is the tangent vector on Teichm\"uller space given by the derivative of that process at $t=0$.

We start by presenting Fillastre and Seppi's proof \cite{Fillastre-Seppi:2020} of the following formula of Wolpert \cite{Wolpert:1981}.

\begin{maintheorem}
\label{theo intro First Wolpert formula}
Let $\rho$ be a Fuchsian representation of $\pi_1\S$, and let $c$ and $c'$ be two oriented simple closed geodesic curves on the hyperbolic surface $\H^2/\rho(\pi_1\S)$. Then, the Goldman product of their associated infinitesimal twists $\twinf{c}$ and $\twinf{c'}$, seen as elements in the tangent space $\T_{[\rho]}\chifd(\pi_1\S)$, satisfies 
\begin{equation*}
\wg(\twinf{c},\twinf{c'})= 2 \sum_{q\in c \cap c'}\cos \theta_q \, ,
\end{equation*}
where for all $q\in c \cap c'$, $\theta_q$ is the oriented angle between the oriented closed geodesic curves $c$ and $c'$ at $q$.
\end{maintheorem}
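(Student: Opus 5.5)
We will work in the de Rham model of the tangent space. Let $\bundle$ be the flat $\isom(\H^2)$-bundle over $\S$ associated to $\rho$, whose flat sections over an open set correspond to $\Ad\rho$-equivariant maps from its preimage in $\H^2$ to $\isom(\H^2)$. Under the isomorphism $\Hisom \cong \HdR$, the Goldman form is
\begin{equation*}
\wg([\alpha],[\beta]) = \int_\S \kil(\alpha\wedge\beta)
\end{equation*}
for closed $\bundle$-valued $1$-forms $\alpha,\beta$.

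\textbf{Step 1: de Rham representative of $\twinf{c}$.} Let $X_c \in \isom(\H^2)$ be the infinitesimal translation along a lift $\tilde c$ of $c$, normalised to unit speed. Then $X_c$ is fixed by $\Ad\rho(c)$, so it defines a flat section $X_c$ of $\bundle$ over a tubular neighbourhood $U$ of $c$. Choose a bump function $f$ supported in $U$ that equals $0$ on one side of $c$ and $1$ on the other, with $df$ supported in a thin collar. Then $\alpha_c = df\otimes X_c$ is a closed $\bundle$-valued $1$-form. Its class corresponds to the group cocycle that jumps by $X_c$ each time a path crosses $c$, which is exactly the cocycle of the infinitesimal twist, since twisting composes the holonomy with the translation along $c$ on one side. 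Checking this correspondence, including the sign convention for the orientation of the twist, is a routine but necessary step.

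\textbf{Step 2: reduction to local contributions.} Put $c$ and $c'$ in transverse position and write
\begin{equation*}
\kil(\alpha_c\wedge\alpha_{c'}) = df\wedge df'\,\kil(X_c,X_{c'}).
\end{equation*}
This form is supported in $U\cap U'$, which for thin collars is a disjoint union of small quadrilaterals around the points $q\in c\cap c'$. On such a quadrilateral, $X_c$ and $X_{c'}$ are flat sections given by the infinitesimal translations along the two lifts through a lift $\tilde q$. Since they are flat, $\kil(X_c,X_{c'})$ is constant there. Moreover $\int df\wedge df' = \pm 1$, because $f$ and $f'$ act as coordinates on the square; equivalently, by Stokes, the integral is the intersection sign. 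Hence
\begin{equation*}
\wg(\twinf{c},\twinf{c'}) = \sum_{q\in c\cap c'} \epsind_q\,\kil(X_c,X_{c'}),
\end{equation*}
where $\epsind_q$ is the local intersection sign.

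\textbf{Step 3: the Killing form computation.} Take two unit infinitesimal translations whose axes meet at $\tilde q$ with angle $\theta$. Realise $\isom(\H^2)\cong\so(2,1)\cong\Rmink$ and use the normalisation of $\kil$ fixed in the paper. Then $\kil(X_c,X_{c'})$ is a constant multiple of the Minkowski product of the spacelike unit normals to the two axes, and that product equals $\cos\theta$. With the paper's normalisation the constant should be $2$ (up to the factor hidden in the normalisation of $\kil$).

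\textbf{Main obstacle.} The delicate point is sign bookkeeping. One must verify that the intersection sign $\epsind_q$, combined with the orientation conventions for $\theta_q$ and for the twist direction, produces exactly $\cos\theta_q$ rather than $\pm\cos\theta_q$ or $\sin\theta_q$. The key observation is that the angle between the normal vectors equals the angle between the axes. A second check is that the orientation of $\S$ used in the fundamental class makes the local integral $+1$ for positively oriented crossings. I would settle both by a single model computation in the upper half-plane, with $\tilde c$ the imaginary axis and $\tilde c'$ a geodesic through $i$ at angle $\theta$.
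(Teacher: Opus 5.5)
Your proposal follows the paper's proof step for step. Both use the collar-supported representatives $\tH(C_{\gamma,p})\,\dif(f\circ\delta)$, localise $\int_\S\kil(\alpha_\gamma\wedge\alpha_\eta)$ to the quadrilaterals $Q_q$, use the computation $\kil\bp{\tH(C),\tH(C')}=2\cos\theta$ of Lemma~\ref{lem cos and killing}, and apply Stokes on $Q_q$.

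In Step~2 you keep the local intersection sign, and that is the accurate bookkeeping. The paper asserts $\int_{Q_q}\dif(f\circ\delta)\wedge\dif(f\circ\delta')=1$ for every $q$, but this fails at some crossings. As part of $\partial Q_q$, the edge $\mathcal{E}_+$ is oriented in the direction of $c$. Along it $\delta'$ increases exactly when $\sin\theta_q>0$. So that edge contributes $\tfrac12\,\mathrm{sign}(\sin\theta_q)$, and likewise $\mathcal{E}_-$. Equivalently, $\dif\delta\wedge\dif\delta'=\sin\theta_q\,\mathrm{vol}$ at $q$, so $\int_{Q_q}=\epsilon_q\coloneqq\mathrm{sign}(\sin\theta_q)$.

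The step that cannot be carried out is your planned final sign reconciliation. No choice of conventions turns $\epsilon_q\cos\theta_q$ into $\cos\theta_q$ when $\theta_q\in\R/2\pi\Z$ is the oriented angle from $c$ to $c'$. Such choices (twist direction, orientation of $\S$, normalisation of $\kil$) change the total by one global sign. By contrast, $\epsilon_q$ varies with $q$ whenever $c$ and $c'$ cross with both signs, and a model computation at a single crossing will not detect this.

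The formula with oriented angles is in fact false in general, for two reasons:
\begin{itemize}
\item $\wg$ is skew-symmetric, while $\sum_q\cos\theta_q$ is symmetric in $(c,c')$, since swapping the curves replaces $\theta_q$ by $-\theta_q$.
\item $\twinf{c}$ does not depend on the orientation of $c$: reversing it flips both $X_c$ and the side where $f$ jumps, so $\alpha_c$ is unchanged. Yet every $\cos\theta_q$ changes sign.
\end{itemize}

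What your argument actually proves, and what is correct, is
\begin{equation*}
\wg(\twinf{c},\twinf{c'})=2\sum_{q\in c\cap c'}\epsilon_q\cos\theta_q=2\sum_{q\in c\cap c'}\cos\bar\theta_q\,,
\end{equation*}
where $\bar\theta_q\in(0,\pi)$ is the angle measured counterclockwise from $c$ to $c'$ as unoriented geodesics; this is Wolpert's convention. State the result in this form and your proof is complete.
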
 

Actually, the work of Fillastre and Seppi \cite{Fillastre-Seppi:2020} concerns a generalised version of Wolpert's formula for balanced geodesic graphs on closed hyperbolic surfaces. In this article, we present a simplified version of their proof which only concerns closed geodesic curves. 

Using the same approach, we then proceed to give an original proof of a second formula of Wolpert \cite{Wolpert:1983} concerning the differential map of the  \emph{geodesic length function} $l_c$, which can be defined in the following way. If $c$ is a simple closed curve on $\S$ and $[\rho] \in \chifd(\pi_1\S)$, then there exists a unique simple closed geodesic curve on the hyperbolic surface $(\S,h) = \H^2/\rho(\pi_1\S)$ freely homotopic to $c$ (see Propositions~\ref{prop Unique geodesic loop} and \ref{prop geod is simple}). Then, $l_c[\rho]$ is the length of that closed geodesic curve.

\begin{maintheorem}
\label{theo intro Second Wolpert formula}
Let $c$ be a simple closed curve on $\S$. The vector field $\twinf{c}$ on $\chifd(\pi_1\S)$ given by the infinitesimal twist along the simple closed geodesic curve freely homotopic to $c$ and the geodesic length function $l_c$ on $\chifd(\pi_1\S)$ satisfy
\begin{equation*}
\wg(\twinf{c}, \cdot)= 2 \, \dif l_{c} \, .
\end{equation*}
\end{maintheorem}

We also give a completely new proof of the following formula concerning infinitesimal length variation in Fenchel--Nielsen coordinates of the Teichm\"uller space.

\begin{maintheorem}
\label{theo intro w(dl,dl)=0}
Let $(\gamma_i)_{1\leq i\leq 3g-3}$ be simple closed curves on $\S$ giving a pair of pants decomposition of $\S$ and Fenchel--Nielsen coordinates $(l_{\gamma_i},t_{\gamma_i})_{1\leq i\leq 3g-3}$ on $\chifd(\pi_1\S)$. Then, for all $1\leq i,j\leq 3g-3$,
\begin{equation*}
\wg(\partial_{l_{\gamma_i}}, \partial_{l_{\gamma_j}})= 0 \, .
\end{equation*}
\end{maintheorem}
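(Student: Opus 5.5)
We will prove Theorem~\ref{theo intro w(dl,dl)=0} using an involution adapted to the pants decomposition, which the abstract announces (Theorem~\ref{theo intro existence involution on tangent space}). Every hyperbolic pair of pants $P$ has a unique orientation-reversing isometric involution $\sigma_P$. It fixes the three seams, i.e.\ the common perpendiculars between boundary components, and preserves each boundary geodesic while reversing its orientation.

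Fix $[\rho]$, with hyperbolic surface $(\S,h)$ cut along the geodesics $\gamma_1,\dots,\gamma_{3g-3}$ into pants $P_1,\dots,P_{2g-2}$. We build a linear involution $\iota$ on $\T_{[\rho]}\chifd(\pi_1\S)$ in the de Rham model $\HdR$. Pull back flat-bundle-valued closed $1$-forms on each piece $P_k$ by $\sigma_{P_k}$; the flat bundle pulls back along the isometry via the corresponding conjugation of holonomy. Then match the pieces along each $\gamma_i$ after correcting by an exact form near the cuff. The resulting properties to establish are:
\begin{enumerate}
\item[(a)] $\iota$ is well defined on cohomology.
\item[(b)] $\wg(\iota\,\cdot,\iota\,\cdot)=-\wg$. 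This holds because each $\sigma_{P_k}$ reverses orientation, the Killing form is $\Ad$-invariant, and $\wg([\alpha],[\beta])=\int_\S\kil(\alpha\wedge\beta)$ decomposes as $\sum_k\int_{P_k}$, with boundary correction terms cancelling in pairs by Stokes's Theorem.
\item[(c)] $\iota\,\twinf{\gamma_i}=-\twinf{\gamma_i}$. The twist cocycle is supported near $\gamma_i$ and is a bump form times the infinitesimal translation along $\gamma_i$. Reversing the orientation of $\gamma_i$ together with the orientation of the transverse direction gives the sign.
\item[(d)] $\iota\,\partial_{l_{\gamma_i}}\equiv\partial_{l_{\gamma_i}}$ modulo $\mathrm{span}(\twinf{\gamma_j})$. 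The reason is that $\sigma$ transports the hyperbolic structures on the pants, which are determined by the cuff lengths. So the deformation that changes lengths while keeping the pants symmetric is $\sigma$-invariant up to changes in the gluing, i.e.\ twists.
\end{enumerate}

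We then conclude by linear algebra. Since $\twinf{\gamma_j}$ is the coordinate vector $\partial_{t_{\gamma_j}}$ up to normalisation, we may choose representatives $v_i=\partial_{l_{\gamma_i}}+\sum_j a_{ij}\twinf{\gamma_j}$ with $\iota v_i=v_i$. Then
\[
\wg(v_i,v_j)=\wg(\iota v_i,\iota v_j)=-\wg(v_i,v_j),
\]
so $\wg(v_i,v_j)=0$. To return to $\partial_{l_{\gamma_i}}$, expand by bilinearity and use $\wg(\twinf{\gamma_j},\twinf{\gamma_k})=0$, which is Theorem~\ref{theo intro First Wolpert formula} for disjoint curves. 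The remaining cross terms are $\wg(\twinf{\gamma_k},\partial_{l_{\gamma_j}})=2\,\dif l_{\gamma_k}(\partial_{l_{\gamma_j}})=2\delta_{kj}$ by Theorem~\ref{theo intro Second Wolpert formula}. This gives $\wg(\partial_{l_{\gamma_i}},\partial_{l_{\gamma_j}})=2(a_{ij}-a_{ji})$ up to sign.

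So we also need $a_{ij}=a_{ji}$. We would secure this by choosing the representatives so that the fixed seams are matched across each cuff, which yields a canonical $\iota$-invariant lift with $a_{ij}=0$. Concretely, glue the pants so that the seam endpoints on $\gamma_i$ from the two sides are at a prescribed offset. Varying the lengths while keeping these offsets fixed in the symmetric normalisation is then exactly $\partial_{l}$ at fixed Fenchel--Nielsen twist parameters. With that normalisation, (d) holds with no twist correction.

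The main obstacle is (a)–(d): making $\iota$ a genuine well-defined involution of $\HdR$ that commutes with the gluing along the cuffs. Near each $\gamma_i$ the two involutions $\sigma_{P}$ and $\sigma_{P'}$ of the adjacent pants generally differ by a rotation along $\gamma_i$, namely the offset between seam endpoints. So the pulled-back forms must be patched using a cutoff function, with the mismatch absorbed by a twist-type exact term. Checking that this patching preserves closedness and that the Stokes boundary terms in (b) cancel is where the real work lies. There we would use explicitly that the holonomy of $\gamma_i$ is hyperbolic, and that its centraliser is generated by the infinitesimal translation, which commutes with the correction.
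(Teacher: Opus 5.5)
Your architecture matches the paper's: the involution is assembled from the pant symmetries, it is anti-symplectic by orientation reversal, it satisfies $\iota\,\partial_{t_\gamma}=-\partial_{t_\gamma}$, and your (d) is exactly Proposition~\ref{prop tau* - tau}. However, the identity $\wg(\partial_{l_{\gamma_i}},\partial_{l_{\gamma_j}})=\pm2(a_{ij}-a_{ji})$ holds for \emph{any} choice of $\iota$-invariant lifts. So (a)--(d), together with Theorems~\ref{theo intro First Wolpert formula} and~\ref{theo intro Second Wolpert formula}, only reformulate the statement. Its entire content is the symmetry of $(a_{ij})$, and your argument for that symmetry is false.

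For the involution coming from the pant symmetries, $\partial_{l_\gamma}$ at fixed Fenchel--Nielsen twists is \emph{not} invariant. The paper proves (Corollary~\ref{cor (dl + blabla)*}) that $\iota\,\partial_{l_\gamma}=\partial_{l_\gamma}+2\frac{t_\gamma}{l_\gamma}\partial_{t_\gamma}$. Hence the invariant lift is $\partial_{l_\gamma}+\frac{t_\gamma}{l_\gamma}\partial_{t_\gamma}$, and $a_{ii}=t_{\gamma_i}/l_{\gamma_i}\neq0$ in general. The cause is the offset you yourself mention at the end. The fixed points of $\sigma_{\pant}$ and $\sigma_{\pant'}$ on the common cuff are the seam endpoints, at distance $t_\gamma$. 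Conjugating a lengthening $\delta l$ of the cuff by reflections about two points at distance $t_\gamma$ creates a relative shift $2t_\gamma\,\delta l/l_\gamma$. So the symmetric deformation is the one in which the offset scales with the length, not the one in which it stays fixed.

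A short check confirms your claim cannot hold. The involution $\iota$ does not depend on the multicurve $\Gamma'$ (Remark~\ref{rem * depend on pop dec}). Changing $\Gamma'$, however, replaces $\partial_{l_\gamma}$ by $\partial_{l_\gamma}-\frac{k}{2}\partial_{t_\gamma}$ (Remark~\ref{remark change of coordinates multicurve}). Since $\iota\,\partial_{t_\gamma}=-\partial_{t_\gamma}$, both vectors cannot be fixed when $k\neq0$.

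Choosing the patching differently does not rescue $a=0$ either. Your ``correction by an exact form near the cuff'' is ambiguous up to the span of the twist classes. The paper removes this ambiguity with the normal form $\mu_\gamma[p,\tH(C_{\gamma,p})]\,\dif\theta_\gamma$ near each cuff. That form is fixed by both adjacent reflections, so the pieces glue with no correction, and (b) becomes a clean orientation-reversal argument.

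Any other choice replaces $\iota$ by $\iota+N$, with $N$ valued in $\mathrm{span}(\partial_{t_{\gamma_k}})$ and, to keep (c), vanishing on it. By Theorems~\ref{theo intro First Wolpert formula}--\ref{theo intro Second Wolpert formula}, $\iota+N$ is anti-symplectic iff the matrix $(n_{kj})$, defined by $N\partial_{l_{\gamma_j}}=\sum_k n_{kj}\partial_{t_{\gamma_k}}$, is symmetric. Forcing $\partial_{l}$-invariance makes $N$ equal to minus the twist part of $\iota$, and the symmetry of that twist part is equivalent to the theorem. Your ``boundary terms cancel by Stokes'' in (b) would then be the claim itself.

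What is actually needed, and suffices since diagonal entries are irrelevant, is locality: the $\partial_{t_{\gamma_j}}$-component of $\iota\,\partial_{l_{\gamma_i}}-\partial_{l_{\gamma_i}}$ vanishes for $j\neq i$. The paper proves the sharper Theorem~\ref{theo ccl dl*} by a computation on $\Sigma=\pant\cup c^h_\gamma\cup\pant'$, as follows.
\begin{itemize}
\item Represent the fixed-twist path by representations fixing $C_\gamma$ and the seam lines $C$, $C'$.
\item Apply Lemma~\ref{lem central result} on each pant at $p=C\cap C_\gamma$ and $p'=C'\cap C_\gamma$. This makes $\xi\mapsto\int_p^{\xi\cdot p}(\alphat^*-\alphat)$ the coboundary of $y=-2x$ on $\pi_1\pant$, and similarly at $p'$ with $y'=-2x'$ on $\pi_1\pant'$. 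Here $x$ and $x'$ are the coboundary corrections between the periods of $\alphat$ and the cocycle of the path.
\item Use the normal form to get $x'-x=\int_p^{p'}\alphat=\lambda_\gamma\frac{t_\gamma}{l_\gamma}\tH(C_\gamma)$, with $\lambda_\gamma$ the $\partial_{l_\gamma}$-coefficient. The mismatch $y'-y$ is exactly the twist.
\item When $c^h_\gamma$ is non-separating, an extra HNN step is needed.
\end{itemize}
This computation is the missing step in your proposal.

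A minor point on (c): the sign comes only from $\Ad S\cdot\tH(C_\gamma)=-\tH(C_\gamma)$. The symmetry $\sigma$ preserves the signed distance to the cuff, and if the transverse direction were also reversed the two signs would cancel.
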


The classical proof of Theorem~\ref{theo intro w(dl,dl)=0} \cite{Wolpert:1985,Hubbard:2006}, uses the closedness of the Goldman form (or of the Weil--Petersson form, depending on the model of the Teichm\"uller space one is considering) in order to reduce to case of points $[\rho_0] \in \chifd(\pi_1\S)$ having every twist coordinate $t_{\gamma_i}$ equal to $0$. In that case, the whole hyperbolic surface $(\S,h_0)=\H^2/\rho_0(\pi_1\S)$ is endowed with an orientation-reversing isometric symmetry $\sigma$. Then, that global isometric symmetry is used to naturally define a linear involution on $\T_{[\rho_0]} \chifd(\pi_1\S)$ in order to conclude (for a more detailed description of the classic proof, see the paragraph following Theorem~\ref{theo w(dl,dl)=0}).

Our proof of Theorem~\ref{theo intro w(dl,dl)=0} does not rely on the closedness of the Goldman form. Hence, it requires a new tool: a way to naturally associate each pair of pants decomposition with a linear involution ${}^*$ on the tangent space $\T_{[\rho]} \chifd(\pi_1\S)$ at any point $[\rho]$ (i.e. not necessarily admitting a global orientation-reversing isometric symmetry). Using restrictions of de Rham forms to hyperbolic pairs of pants and the orientation-reversing isometric symmetry on those, we prove the following.

\begin{maintheorem}
\label{theo intro existence involution on tangent space}
Let $\Gamma=(\gamma_i)_{1\leq i\leq 3g-3}$ be a family of simple closed curves on $\S$ giving a pair of pants decomposition of $\S$. Then, there is a smooth $(1,1)$-tensor $\mathrm{Inv^{\Gamma}}$ on $\chifd(\pi_1\S)$ such that
\begin{equation*}
\bp{\mathrm{Inv}^{\Gamma}}^2 = \Id \, 
\end{equation*}
and such that at every point $[\rho] \in \chifd(\pi_1\S)$, the linear involution $\mathrm{Inv}^{\Gamma}_{[\rho]}$, is ‘‘naturally induced'' by the unique orientation-reversing isometric symmetry on each hyperbolic pair of pants $\pant \subset \H^2/\rho(\pi_1\S)$ given by the pair of pants decomposition $\Gamma$ (see Theorem~\ref{theo existence involution on tangent space} for a more rigorous statement).

Moreover, for all $[\rho] \in \chifd(\pi_1\S)$ and $[\tau],[\tau'] \in \T_{[\rho]}\chifd(\pi_1\S)$, one has
\begin{equation*}
\wg(\mathrm{Inv}^{\Gamma}[\tau],\mathrm{Inv}^{\Gamma}[\tau']) = - \wg([\tau],[\tau']) \, .
\end{equation*}

\end{maintheorem}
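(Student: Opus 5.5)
We will work throughout in the de Rham model $\T_{[\rho]}\chifd(\pi_1\S)\cong\HdR$, where $\bundle$ is the flat $\isom(\H^2)$-bundle over $\S$ associated with $\rho$. In this model the Goldman form reads $\wg(\dR{\alpha},\dR{\beta})=\int_\S \kil(\alpha\wedge\beta)$. We fix the hyperbolic metric $h=\H^2/\rho(\pi_1\S)$, the geodesic representatives of the $\gamma_i$, and the resulting pants $\pant_1,\dots,\pant_{2g-2}$. On each $\pant_k$ there is a unique orientation-reversing isometric involution $\sigma_k$. It fixes the three seams, preserves each boundary geodesic, and acts on it as a reflection. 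Since $\sigma_k$ is an isometry, it lifts to a bundle map of $\bundlerest{\pant_k}$ covering $\sigma_k$. Concretely, on the developing side it is conjugation by the reflection $\hat\sigma_k\in\Isom(\H^2)\setminus\Isom^+(\H^2)$, acting on $\isom(\H^2)$ by $\Ad\hat\sigma_k$. This action preserves $\kil$ and is compatible with the flat connection.

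\emph{Step 1 (the involution on a pair of pants).} For a closed form $\alpha$ on $\pant_k$ we set $\alpha^{*_k}=\Ad\hat\sigma_k\cdot\sigma_k^*\alpha$. This is again $\dif$-closed, and $(\alpha^{*_k})^{*_k}=\alpha$. Because $\sigma_k$ reverses orientation and $\Ad\hat\sigma_k$ preserves $\kil$, we get
\begin{equation*}
\int_{\pant_k}\kil(\alpha^{*_k}\wedge\beta^{*_k})=-\int_{\pant_k}\kil(\alpha\wedge\beta).
\end{equation*}

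\emph{Step 2 (gluing).} Given a class $[\tau]$, we choose a representative $\alpha$ adapted to the decomposition. Near each $\gamma_i$, in a collar, we want $\alpha$ to be a combination of a form supported in the collar that represents the length and twist deformations of $\gamma_i$, plus something standard. More precisely, we normalise $\alpha$ so that its restriction to a collar of $\gamma_i$ equals $\dif$ of a section that is invariant, or anti-invariant, under the reflections of the two adjacent pants. The key observation is that the two adjacent pants $\pant_k,\pant_l$ induce the same reflection on a neighbourhood of $\gamma_i$ only up to the twist. Both involutions fix $\gamma_i$ setwise and reverse it, but their fixed points on $\gamma_i$ (the seam endpoints) differ by the twist parameter.

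To handle this, we let $\mathrm{Inv}^\Gamma\dR{\alpha}$ be the class of the form equal to $\alpha^{*_k}$ on each $\pant_k$, corrected in the collars by exact terms $\dif s_i$. Here $s_i$ is a section defined near $\gamma_i$ whose differential reconciles $\alpha^{*_k}$ and $\alpha^{*_l}$ on the overlap. This is possible because both restrictions to the annulus represent the same class in $H^1$ of the annulus: the annulus cohomology is spanned by length and twist variations, and each is acted on by a sign that is the same from both sides. Since the $\alpha^{*_k}$ agree cohomologically on annuli, a Mayer--Vietoris argument produces a global class, and ambiguities from $H^0$ of the annuli must be checked to die. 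Smoothness in $[\rho]$ follows because all choices (seams, reflections, collar cutoffs) vary smoothly with $\rho$, and $(\mathrm{Inv}^\Gamma)^2=\Id$ follows from Step~1.

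\emph{Step 3 (antisymmetry).} We compute $\wg(\mathrm{Inv}\,\tau,\mathrm{Inv}\,\tau')$ as the sum over $k$ of the integrals over $\pant_k$, plus correction terms. By Stokes, the correction terms become boundary integrals $\int_{\gamma_i}\kil(s_i\,\alpha^{*})$. Step~1 gives $-\wg$ on the pants part. We then have to show that the boundary terms cancel. For this we use the symmetry of $\gamma_i$ under the reflections, so that the integrand is odd under the reflection of $\gamma_i$, or we arrange $s_i$ to vanish on $\gamma_i$ itself by placing the correction in the interior of the collar and exploiting the $\H^0$ ambiguity.

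The main obstacle is Step~2: constructing the correction so that it is well defined at the level of classes (independent of representatives and of the $H^0$ ambiguity on the annuli), and so that the boundary terms in Step~3 vanish. We would first try normalising representatives to be $\Ad$-parallel and supported away from the curves in a fixed basis: the Fenchel--Nielsen basis $\partial_{l_{\gamma_i}},\twinf{\gamma_i}$. We would then define $\mathrm{Inv}$ by $\partial_l\mapsto\partial_l$, $\twinf{}\mapsto-\twinf{}$ up to the pants-local contributions, and verify via Step~1 that this is the induced map.
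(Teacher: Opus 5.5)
Your Step~1 is exactly the paper's pants-level involution $\alpha_\pant^*=\Ad S\cdot(\tilde\sigma^*\alphat_\pant)$, and its sign computation is right. The gap is in Step~2, which leaves the map undetermined.

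\textbf{The gluing ambiguity does not die.} Your collar correction $s_i$ is defined only up to a parallel section of $\bundle$ over the annulus, i.e.\ up to $\lambda\nu_{\gamma_i}$ with $\nu_{\gamma_i}=[p,\tH(C_{\gamma_i,p})]$, since $\ker(\Ad\rho(\gamma_i)-\Id)=\mathrm{span}\,\tH(C_{\gamma_i})$. Changing $s_i$ by $\lambda\nu_{\gamma_i}$ changes the glued form by $\lambda\nu_{\gamma_i}\,\dif\chi$, where $\chi$ is whatever cut-off you interpolate with. That form represents $\pm\lambda\,\partial_{t_{\gamma_i}}\neq0$. So the indeterminacy is precisely $\mathrm{span}(\partial_{t_{\gamma_i}})$.

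\textbf{Pants-level classes cannot remove it.} Twists restrict to zero on every $\pant$. By the remark after the paper's technical lemma, $\alpha_\pant^*$ is cohomologous to $\alpha_\pant$. Hence every map $\Id+L$ with $L$ valued in $\mathrm{span}(\partial_{t_{\gamma_i}})$ is compatible with the pants symmetries in cohomology.

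\textbf{Two auxiliary claims are false.} First, $H^1$ of the annulus with coefficients in $\bundle$ is one-dimensional, namely $\isom(\H^2)/\Ima(\Ad\rho(\gamma_i)-\Id)$. It detects only $\dif l_{\gamma_i}$; the twist is exact there. Second, a representative whose restriction to the collar is $\dif$ of a section exists only when $\dif l_{\gamma_i}[\tau]=0$. For the same reason, $\partial_{l_{\gamma_i}}$ can never be represented by a form supported away from $c_{\gamma_i}$.

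\textbf{Your fallback is not the induced map.} The paper proves $\partial_{l_\gamma}^*=\partial_{l_\gamma}+2\frac{t_\gamma}{l_\gamma}\partial_{t_\gamma}$, not $\partial_{l_\gamma}^*=\partial_{l_\gamma}$. Your map $\partial_{l}\mapsto\partial_{l}$, $\partial_{t}\mapsto-\partial_{t}$ depends on the multicurve $\Gamma'$, as $\partial_{\bar l_\gamma}=\partial_{l_\gamma}-\frac{k}{2}\partial_{t_\gamma}$ shows. It agrees with the induced involution only where all $t_{\gamma_i}[\rho]=0$. Checking its antisymmetry amounts to $\wg(\partial_{l_{\gamma_i}},\partial_{l_{\gamma_j}})=0$, which is exactly what the involution is later used to prove.

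\textbf{The missing idea is the normal form $(\mathrm{P}_\varepsilon)$.} Choose representatives equal to $\mu_{\gamma_i}\nu_{\gamma_i}\,\dif\theta_{\gamma_i}$ on each collar, with $\dif\theta_{\gamma_i}$ the angular form. They exist by the splitting $\isom(\H^2)=\ker(\Ad\rho(\gamma_i)-\Id)\oplus\Ima(\Ad\rho(\gamma_i)-\Id)$, plus an exact correction supported near the collars. On $N^{\gamma_i}_\varepsilon\cap\pant$ the symmetry acts by $\theta\mapsto\vartheta-\theta$, so $\sigma^*\dif\theta=-\dif\theta$, and $\Ad S\cdot\tH=-\tH$. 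Such a form is therefore fixed by the involution of either adjacent pants, whatever $\vartheta$ is.

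\textbf{What this buys.}
\begin{itemize}
\item The twist offset between the two reflections plays no role in the gluing. The $\alpha_\pant^*$ glue with no correction, and Step~3 has no boundary terms.
\item Well-definedness follows. Two such representatives of one class coincide on the collars, because $\mu_{\gamma_i}=\dif l_{\gamma_i}[\tau]/l_{\gamma_i}$ is forced. So they differ by $\dif f$ with $f=\lambda\nu$ on the collars, and the $f_\pant^*$ glue to a global section equal to $-\lambda\nu$ there.
\end{itemize}

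\textbf{Where the offset actually matters.} It reappears in the value of $\mathrm{Inv}^\Gamma$, through the period $\mu_\gamma t_\gamma\tH(C_\gamma)$ of $\alphat$ between the feet of the two seams on $C_\gamma$. Computing this uses paths of representations with constant twist that fix $C_\gamma$ and both seam lines, together with the technical lemma. It yields the Fenchel--Nielsen matrix of $\mathrm{Inv}^\Gamma$.

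\textbf{Smoothness.} The paper proves smoothness from that explicit matrix. Your remark that ``all choices vary smoothly'' does not replace this, since the spaces $\HdR$ themselves change with $\rho$.
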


Together, Theorems~\ref{theo intro First Wolpert formula}, \ref{theo intro Second Wolpert formula} and \ref{theo intro w(dl,dl)=0} imply \emph{Wolpert's Magic Formula} \cite{Wolpert:1985}.

\begin{maincorollary}[{Wolpert's Magic Formula}]
\label{theo intro WP in FN}
In any Fenchel--Nielsen coordinates $(l_{\gamma_i},t_{\gamma_i})_{1\leq i\leq 3g-3}$ on $\chifd(\pi_1\S)$, the Goldman form is expressed as 
\begin{equation*}
\wg = 2\sum_{i=1}^{3g-3} \dif t_{\gamma_i}\wedge \dif l_{\gamma_i} \, .
\end{equation*}
\end{maincorollary}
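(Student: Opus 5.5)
The plan is to derive the corollary purely by linear algebra from Theorems~\ref{theo intro First Wolpert formula}, \ref{theo intro Second Wolpert formula} and \ref{theo intro w(dl,dl)=0}, working at a fixed point $[\rho]\in\chifd(\pi_1\S)$. In the coordinate frame $(\partial_{l_{\gamma_i}},\partial_{t_{\gamma_i}})_{1\leq i\leq 3g-3}$ of $\T_{[\rho]}\chifd(\pi_1\S)$, a $2$-form is determined by its values on the three kinds of pairs: twist--twist, length--twist and length--length. The key identification is that the coordinate vector field $\partial_{t_{\gamma_i}}$ is exactly the infinitesimal twist $\twinf{\gamma_i}$ along the geodesic representative of $\gamma_i$. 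Indeed, twisting along $\gamma_i$ by $t$ changes only the twist coordinate $t_{\gamma_i}$ by $t$ and leaves all lengths and other twists unchanged. This relies on the Fenchel--Nielsen twist being normalised by arc length, which I take as the convention defining the coordinates.

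The computation then proceeds in three steps.
\begin{enumerate}
\item \emph{Twist--twist.} The curves $\gamma_i$ of a pants decomposition are pairwise disjoint, and each $\gamma_i$ has no transverse self-intersection with itself. So Theorem~\ref{theo intro First Wolpert formula} gives $\wg(\twinf{\gamma_i},\twinf{\gamma_j})=0$ for all $i,j$, since the sum over $c\cap c'$ is empty. The case $i=j$ is also immediate from skew-symmetry.
\item \emph{Twist--length.} Theorem~\ref{theo intro Second Wolpert formula} gives $\wg(\partial_{t_{\gamma_i}},\cdot)=2\,\dif l_{\gamma_i}$. Evaluating on $\partial_{l_{\gamma_j}}$ yields $\wg(\partial_{t_{\gamma_i}},\partial_{l_{\gamma_j}})=2\delta_{ij}$. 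Evaluating on $\partial_{t_{\gamma_j}}$ yields $0$, which recovers step (1) as a consistency check.
\item \emph{Length--length.} Theorem~\ref{theo intro w(dl,dl)=0} gives $\wg(\partial_{l_{\gamma_i}},\partial_{l_{\gamma_j}})=0$.
\end{enumerate}

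Finally, compare with $2\sum_i \dif t_{\gamma_i}\wedge\dif l_{\gamma_i}$. On $(\partial_{t_{\gamma_i}},\partial_{l_{\gamma_j}})$ it equals $2\delta_{ij}$. On pairs of two lengths or two twists it vanishes. The two skew forms therefore agree on a basis, hence everywhere.

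The only delicate step is the identification $\partial_{t_{\gamma_i}}=\twinf{\gamma_i}$, in particular that the twist flow does not move the length coordinates or the other twist coordinates. This should follow from the definition of Fenchel--Nielsen coordinates: cutting and regluing along $\gamma_i$ leaves every pair of pants isometric and only shifts the gluing parameter at $\gamma_i$. I also need to check that the sign and orientation convention for $t_{\gamma_i}$ matches the one used for $\twinf{c}$ in Theorem~\ref{theo intro Second Wolpert formula}, so that the constant is $+2$ and not $-2$.
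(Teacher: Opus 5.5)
Your proposal is correct and matches the paper's proof: it checks the same three kinds of pairs, using Corollary~\ref{w(dt,dt)=0} (from the first Wolpert formula) for twist--twist, Theorem~\ref{Second Wolpert formula} for twist--length, and Theorem~\ref{theo w(dl,dl)=0} for length--length, then compares with $2\sum_i \dif t_{\gamma_i}\wedge \dif l_{\gamma_i}$ on the coordinate basis. The identification $\partial_{t_{\gamma_i}}=\twinf{\gamma_i}$ that you flag as the delicate point is taken for granted in the paper too, through the ``in particular'' clause of Theorem~\ref{Second Wolpert formula}.
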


As explained, our proofs of Theorems~\ref{theo intro First Wolpert formula}, \ref{theo intro Second Wolpert formula}, \ref{theo intro w(dl,dl)=0} and Corollary~\ref{theo intro WP in FN}, contrarily to the classical one \cite{Wolpert:1985,Hubbard:2006}, do not rely on the closedness of the Goldman form. Thus, closedness comes as a direct corollary.

\begin{maincorollary}
\label{theo intro WP closed}
The Goldman form is a symplectic form on $\chifd(\pi_1\S)$.
\end{maincorollary}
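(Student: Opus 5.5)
The plan is to deduce Corollary~\ref{theo intro WP closed} from the explicit formula of Corollary~\ref{theo intro WP in FN}. Since that corollary is obtained from Theorems~\ref{theo intro First Wolpert formula}, \ref{theo intro Second Wolpert formula} and \ref{theo intro w(dl,dl)=0}, none of whose proofs use closedness of $\wg$, there is no circularity. Fix a pair of pants decomposition $(\gamma_i)_{1\leq i\leq 3g-3}$ of $\S$. The Fenchel--Nielsen map $[\rho]\mapsto (l_{\gamma_i}[\rho],t_{\gamma_i}[\rho])_i$ is a global diffeomorphism from $\chifd(\pi_1\S)$ onto $(\R_{>0})^{3g-3}\times\R^{3g-3}$. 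This is a classical fact, which I assume is established in the paper's preliminaries. Consequently the differentials $(\dif l_{\gamma_i},\dif t_{\gamma_i})_i$ form a global coframe of smooth $1$-forms.

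Next I would check that the formula $\wg = 2\sum_i \dif t_{\gamma_i}\wedge\dif l_{\gamma_i}$ is an identity of smooth $2$-forms on all of $\chifd(\pi_1\S)$, and not merely a pointwise algebraic identity. Recall how it is derived. Pairing $\wg$ with the coordinate vector fields reduces, via $\partial_{t_{\gamma_i}}=\twinf{\gamma_i}$, to three computations. Theorem~\ref{theo intro First Wolpert formula} gives $\wg(\twinf{\gamma_i},\twinf{\gamma_j})=0$, since the curves are disjoint. Theorem~\ref{theo intro Second Wolpert formula} gives $\wg(\twinf{\gamma_i},\partial_{l_{\gamma_j}})=2\,\delta_{ij}$. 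Theorem~\ref{theo intro w(dl,dl)=0} gives $\wg(\partial_{l_{\gamma_i}},\partial_{l_{\gamma_j}})=0$. These hold at every point $[\rho]$, so the coefficient functions of $\wg$ in this coframe are constants.

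Closedness then follows at once:
\begin{equation*}
\dif\wg = 2\sum_{i=1}^{3g-3}\dif\bp{\dif t_{\gamma_i}\wedge \dif l_{\gamma_i}} = 0 .
\end{equation*}
For nondegeneracy, in the basis $(\partial_{l_{\gamma_i}},\partial_{t_{\gamma_i}})_i$ the matrix of $\wg$ is block diagonal with blocks $\begin{pmatrix}0&-2\\2&0\end{pmatrix}$. Its determinant is $4^{3g-3}\neq 0$. Equivalently, $\wg^{3g-3}$ is a nonzero constant multiple of $\bigwedge_i(\dif t_{\gamma_i}\wedge\dif l_{\gamma_i})$, which vanishes nowhere. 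Hence $\wg$ is a closed nondegenerate $2$-form, that is, symplectic.

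The only real subtlety is the first step: making sure the Fenchel--Nielsen coordinates are genuinely smooth global coordinates on $\chifd(\pi_1\S)$, and that $\partial_{t_{\gamma_i}}$ coincides with the infinitesimal twist vector field $\twinf{\gamma_i}$ as defined via the character variety. Beyond that, the argument is purely formal.
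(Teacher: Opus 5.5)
Your proposal is correct and follows essentially the paper's own argument. The paper likewise deduces the corollary from Wolpert's Magic Formula (Theorem~\ref{WG in FN}), built from Corollary~\ref{w(dt,dt)=0}, Theorem~\ref{Second Wolpert formula} and Theorem~\ref{theo w(dl,dl)=0}, none of which uses closedness; so $\wg$ has constant coefficients in the global Fenchel--Nielsen coordinates and is therefore closed and non-degenerate.
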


\subsection*{Organisation of the article}

We start by giving some background concerning hyperbolic geometry, Teichm\"uller space and Fenchel--Nielsen coordinates in Section~\ref{Sec Background}.

In Section~\ref{Sec Group cohomology model}, we present the  group cohomology model for the tangent space to Teichm\"uller space, coming from its realisation as a component of a character variety, and we introduce the Goldman form. Section~\ref{Sec De Rham model} describes an equivalent de Rham cohomology model, providing the use of Stokes's Theorem.

In Section~\ref{sec twists}, we consider infinitesimal twists on the Teichm\"uller space, and describe their realisations in both descriptions of its tangent space. 

Sections~\ref{Sec Wolpert part 1} and~\ref{Sec Wolpert part 2} are respectively devoted to the proofs of Theorems~\ref{theo intro First Wolpert formula} and \ref{theo intro Second Wolpert formula}, using Stokes's Theorem in the de Rham cohomology model of tangent spaces. The first proof is a particular case of the work of Fillastre and Seppi \cite{Fillastre-Seppi:2020}, while the second one, although based on a similar approach, is new. 

In Sections~\ref{sec Induced de Rham forms and symmetries of pairs of pants} and \ref{sec FN involution} we use symmetries on hyperbolic pair of pants in order to associate each pair of pants decomposition with a family of involutions on each tangent space of $\chifd(\pi_1\S)$. That construction provides a proof for most of Theorem~\ref{theo intro existence involution on tangent space}.

In Section~\ref{Sec Formula for infinitesimal lengths},  we use these involutions in order to compute the Goldman product of two infinitesimal Fenchel--Nielsen length variations and thus prove Theorem~\ref{theo intro w(dl,dl)=0} without using the closedness of the Goldman form. Along the way, we complete the proof of Theorem~\ref{theo intro existence involution on tangent space} by showing that the involutions introduced in Section~\ref{sec FN involution} define a smooth tensor.

Finally, Section~\ref{Sec WG closedness} acts as a conclusion. We use all the previous results to get a new proof of Wolpert's Magic Formula for the Goldman form (Corollary~\ref{theo intro WP in FN}). Then, closedness of the Goldman form on Teichm\"uller space becomes a corollary (Corollary~\ref{theo intro WP closed}). 

\subsection*{Acknowledgment} I am deeply grateful to Andrea Seppi for introducing me to this subject, as well as for his regular help, guidance and advice. I also thank Fran\c{c}ois Fillastre for multiple remarks and advice concerning the writing of this article.

\section{Backgrounds}
\label{Sec Background}

Throughout this whole article we let $\S$ be an oriented closed connected surface of genus $g \geq 2$.

\subsection{On the hyperboloid model of the hyperbolic space}
\label{subsec hyperboloid model}

The \emph{Minkowski space} $\Rmink$ is the vector space $\R^3$ endowed with the following bilinear form of signature $(2,1)$:
\begin{equation*}
\bilmink{x}{y} = x_1y_1+x_2y_2-x_3y_3 \, .
\end{equation*}
A model for the \emph{hyperbolic plane} is the oriented Riemannian manifold $\H^2 = (\mathcal{H}^2, \hypcan)$, where
\begin{equation*}
\mathcal{H}^2 = \left\lbrace x \in \Rmink \st \bilmink{x}{x} = -1, x_3>0 \right\rbrace \, ,
\end{equation*}
is the \emph{upper hyperboloid} in $\Rmink$, endowed with the hyperbolic metric $\hypcan$ induced by the ambient bilinear form $\bilmink{\cdot}{\cdot}$. In that model, the set $\Isom(\H^2)$ of \emph{isometries} of $\H^2$ can then be seen as the Lie group $\O_+(2,1)$, consisting of matrices preserving both the bilinear form $\bilmink{\cdot}{\cdot}$ and $\mathcal{H}^2$. The Lie group $\Isom^+(\H^2)$ of \emph{orientation-preserving isometries} of $\H^2$ can then be seen as the identity component $\SO_0(2,1)$, and its Lie algebra $\isom(\H^2)$ is identified with $\so(2,1)$.

The \emph{Killing form} $\kil$ on the Lie algebra $\isom(\H^2)$ is a non-degenerate bilinear form. It is $\Ad$-invariant: for all $\phi \in \Isom(\H^2)$ and $x,y \in \isom(\H^2)$,
\begin{equation}
\label{eq Ad-inv of Kil}
\kil(\Ad \phi  \cdot x, \Ad \phi \cdot y) = \kil(x,y) \, .
\end{equation}
In the $\O_+(2,1)$ model of $\Isom(\H^2)$, it is expressed in the following way. For all $A,B \in \so(2,1)$,
\begin{equation}
\label{eq kil in SO}
\kil (A,B) \coloneqq \tr (AB) \, ,
\end{equation}
where $\tr$ denotes the trace of a matrix.

\subsection{On Teichm\"uller space and holonomy}
\label{Models for the Teichmuller space of a surface}

A model for the \emph{Teichm\"uller space} of $\S$ is the quotient 
\begin{equation*}
\teich (\S) \coloneqq \{ \text{hyperbolic metrics on } \S \} \, \big/ \, \diffo(\S) \, ,
\end{equation*}
where $\diffo(\S)$, the group of diffeomorphisms of $\S$ isotopic to the identity, acts by pull-back. It can be endowed with the structure of a smooth $(6g -6)$-dimensional manifold \cite{Tromba:1992}.

Let $\pi: \tilde{\S} \to \S$ be a universal covering of $\S$ and consider a hyperbolic metric $ h $ on the surface $\S$. As the surface $\S$ is closed, the Riemannian metric $ h $ on $\S$ is complete, and so too is the pull-back metric $\tilde{h} \coloneqq \pi^* h $ on $\tilde{\S}$. The oriented Riemannian surface $(\tilde{\S}, \tilde{h})$ is simply connected, complete, and has constant sectional curvature equal to $-1$. Thus it is isometric to the hyperbolic plane $\H^2$ \cite[Theorem 8.6.2]{Ratcliffe_19}. An orientation-preserving  isometry $(\tilde{\S},\tilde{h}) \to \H^2$ is called a \emph{developing map} and is unique up to post composition by an element of $\Isom^+(\H^2)$. Indeed, if $\dev, \dev': (\tilde{\S},\tilde{h}) \to \H^2$ are two developing maps, then $\dev' = \phi  \circ \dev $ where $\phi =\dev' \circ \dev^{-1}: \H^2 \to \H^2$ is an orientation-preserving isometry of $\H^2$.

Let $\dev: \tilde{\S} \to \H^2$ be a developing map of the hyperbolic surface $(\S, h )$. By definition of the pull-back metric $\tilde{h} = \pi^* h $ on $\tilde{S}$, the fundamental group $\pi_1\S$ acts by orientation-preserving isometries on $(\tilde{\S},\tilde{h})$. Hence, with any element $\gamma \in \pi_1\S$, one can associate the isometry $\rho(\gamma) \in \Isom^+(\H^2)$ defined by
\begin{equation*}
\rho(\gamma) \coloneqq \dev \circ \gamma \circ \dev^{-1} \, .
\end{equation*}
That defines a group representation $\rho: \pi_1\S \to \Isom^+(\H^2)$ called the \emph{holonomy map}. By definition, it is the unique representation for which the developing map $\dev$ is \emph{equivariant}, meaning that for all $\gamma \in \pi_1\S$, 
\begin{equation*}
\dev \circ \gamma = \rho(\gamma) \circ \dev \, .
\end{equation*}
As the developing map associated with the hyperbolic surface $(\S, h )$ is unique up to post-composition by an element of $\Isom^+(\H^2)$, so too is its holonomy map up to conjugacy by an element of $\Isom^+(\H^2)$: if $\dev$ and $\dev' = \phi  \circ \dev$ are two developing maps with associated holonomy $\rho$ and $\rho'$, then for all $\gamma \in \pi_1\S$,
\begin{equation*}
\rho'(\gamma) = \dev' \circ \gamma \circ (\dev')^{-1} = \phi \circ \dev \circ \gamma \circ \dev^{-1} \circ \phi ^{-1} = \phi \circ \rho(\gamma)\circ \phi ^{-1} \, .
\end{equation*}
Holonomy thus provides a map 
\begin{equation*}
\Hol: \teich (\S)\longrightarrow\chi \bp{ \pi_1\S , \Isom^+(\H^2)} \, ,
\end{equation*}
where the \emph{character variety} $\chi ( \pi_1\S)$ is defined as the quotient
\begin{equation*}
\chi(\pi_1\S) \coloneqq \text{Hom}\bp{\pi_1\S,\Isom^+(\H^2)} \, \big/ \, \Isom^+(\H^2) \, , 
\end{equation*}
with $\Isom^+(\H^2)$ acting by conjugation. That map actually induces a diffeomorphism
\begin{equation*}
\label{Hol}
\Hol: \teich (\S)\longrightarrow \chifd (\pi_1\S) \, , 
\end{equation*}
where $ \chifd(\pi_1\S)$ is one of the two connected components of the character variety $\chi(\pi_1\S)$ consisting of conjugacy classes of faithful and discrete representations \cite{Goldman_thesis}, called \emph{Fuchsian representations} .

\subsection{On Fenchel--Nielsen coordinates on Teichm\"uller space}
\label{subsec Fenchel_Nielsen coordinates}

\subsubsection{Geodesic representatives of free homotopy classes}

Let us recall a classical algebraic topology fact \cite[Section 1.1 Exercise 6]{Hatcher_02}: as the surface $\S$ is path-connected, there is a one-to-one correspondence between free homotopy classes of closed curves on $\S$ and conjugacy classes in its fundamental group $\pi_1\S$. From now on, let us denote by $[\gamma]$ the conjugacy class of an element $\gamma \in \pi_1\S$.

Let $ h $ be a hyperbolic metric on the closed surface $\S$ and consider a pair $(\dev,\rho)$ of developing and holonomy maps associated with $(\S, h)$. As $\S$ is a closed surface, the holonomy $\rho(\gamma)$ of a non-trivial $\gamma \in \pi_1\S$ is always a hyperbolic translation of the hyperbolic plane $\H^2$ \cite[Theorem 9.6.3]{Ratcliffe_19} and hence preserves a unique geodesic line. Let $C_{\gamma}$ be the unique oriented geodesic line of $\H^2$ along which $\rho(\gamma)$ is a positive hyperbolic translation. Pulling back $C_{\gamma}$ by $\dev$, we get an oriented complete geodesic curve in $(\tilde{\S},\tilde{h})$ preserved by the action of $\gamma$ and projecting onto an oriented closed geodesic curve $c^h_{\gamma}$ of $(\S, h )$. Note that if $\gamma,\gamma' \in \pi_1\S$ are conjugated, i.e. if there exist $\eta \in \pi_1\S$ such that $\gamma = \eta\gamma'\eta^{-1}$, then $C_{\gamma'} = \rho(\eta)C_\gamma$ is another lift of $c^h_\gamma$, so that $c^h_\gamma =c^h_{\gamma'}$. Thus, we have the following well-known proposition.

\begin{proposition}
\label{prop Unique geodesic loop}
Let $ h $ be a hyperbolic metric on the closed surface $\S$. Then, for each non-trivial $\gamma \in \pi_1\S$, there exists a unique closed $h$-geodesic curve in $(\S,h)$ having $[\gamma]$, the conjugacy class of $\gamma$ in $\pi_1\S$, as its free homotopy class. We shall denote it by $c^h_{\gamma}$.
\end{proposition}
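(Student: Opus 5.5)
Existence is already supplied by the construction preceding the statement: $c^h_\gamma$ is the projection of $\dev^{-1}(C_\gamma)$. It remains to check that this curve is a closed $h$-geodesic in the class $[\gamma]$, and then to prove uniqueness.

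\emph{Existence.} Parametrise $C_\gamma$ by arclength as $s\mapsto \tilde c(s)$. Since $\rho(\gamma)$ translates positively along $C_\gamma$ by some length $\ell>0$, we have $\rho(\gamma)\tilde c(s)=\tilde c(s+\ell)$. By equivariance of $\dev$, the curve $\dev^{-1}\circ\tilde c$ satisfies $\gamma\cdot \dev^{-1}\tilde c(s)=\dev^{-1}\tilde c(s+\ell)$. Its projection by $\pi$ is therefore an $\ell$-periodic geodesic of $(\S,h)$, because $\pi$ is a local isometry. The lift of this loop on $[0,\ell]$ runs from a point $\tilde x$ to $\gamma\tilde x$, so the loop represents $\gamma$ (for a suitable basepoint path), and its free homotopy class is $[\gamma]$.

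\emph{Uniqueness.} Let $c$ be a closed $h$-geodesic freely homotopic to $c^h_\gamma$.
\begin{enumerate}
\item \emph{Lift $c$.} Its free homotopy class corresponds to $[\gamma]$, so some lift $\tilde c$ of $c$ to $\tilde\S$ is invariant under a conjugate $\gamma'=\eta\gamma\eta^{-1}$, acting as a translation along $\tilde c$. Developing, $\dev\circ\tilde c$ is a complete geodesic of $\H^2$ preserved by $\rho(\gamma')$, along which $\rho(\gamma')$ acts as a nontrivial translation.
\item \emph{Identify the developed line.} A hyperbolic translation preserves a unique geodesic line, its axis. Hence $\dev\circ\tilde c=C_{\gamma'}=\rho(\eta)C_\gamma$ as sets. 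Orientation is not needed, because $c^h_\gamma$ is defined up to orientation by this set.
\item \emph{Conclude.} By the remark preceding the statement, $\rho(\eta)C_\gamma$ is another lift of $c^h_\gamma$. Projecting gives that $c$ and $c^h_\gamma$ have the same image, and hence coincide as closed geodesics up to parametrisation.
\end{enumerate}

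The main obstacle is the first step of the uniqueness argument: showing that a closed geodesic in the class $[\gamma]$ has a lift invariant under a conjugate of $\gamma$. I would argue this via deck transformations. Lift the closed curve $c:[0,\ell]\to\S$ to a path $\tilde c$ from $\tilde x$ to $g\tilde x$, where $g$ is the deck transformation determined by $c$. The curve $c$ represents the class $[g]$, so $g$ is conjugate to $\gamma$. Uniqueness of path lifting then gives $g\tilde c(s)=\tilde c(s+\ell)$ on the whole complete lifted geodesic. Since $g\neq 1$ and $\pi_1\S$ acts freely, this translation is nontrivial. Closedness of $\S$ is what guarantees that $\rho(g)$ is hyperbolic, which is needed for the axis uniqueness used in the second step.
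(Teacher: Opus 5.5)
Your proof is correct and follows the paper's route: existence comes from projecting $\dev^{-1}(C_\gamma)$, and conjugation moves the axis to another lift $\rho(\eta)C_\gamma$; the uniqueness that the paper calls well-known is supplied by your standard lifting argument (a closed geodesic in $[\gamma]$ lifts to a complete geodesic translated by a conjugate $g$ of $\gamma$, so it lies on the axis of $\rho(g)$). One small fix: equality of images alone does not distinguish $c^h_\beta$ from $c^h_{\beta^2}$, so in your last step state that the period $\ell$ of $c$ equals the translation length of $\rho(g)$ (hence the length of $c^h_\gamma$) and that the direction is the one in which $\rho(g)$ translates positively, instead of claiming orientation is not needed.
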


A closed curve on $\S$ is called \emph{simple} if it is non-trivial and does not intersect itself, i.e. if, as a set, it can be seen as the image of an injective continuous map $c:\R/\Z \to \S$. One can prove the two following facts, concerning Proposition~\ref{prop Unique geodesic loop} and simple closed curves.

\begin{proposition}[{\cite[Proposition 3.3.9 point 1]{Hubbard:2006}}]
\label{prop geod is simple}
Let $\gamma \in \pi_1\S$ and let $ h $ be a hyperbolic metric on the closed surface $\S$. If there exist a simple closed curve on $\S$ having free homotopy class $[\gamma]$, then $c^h_\gamma$ is also simple.
\end{proposition}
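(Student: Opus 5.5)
The plan is to work in the universal cover and use the fact that a closed geodesic on a hyperbolic surface self-intersects exactly when two of its lifts in $\H^2$ cross. Fix a developing map $\dev$ and holonomy $\rho$ for $(\S,h)$. By the construction preceding Proposition~\ref{prop Unique geodesic loop}, the lifts of $c^h_\gamma$ are the geodesic lines $C_{\eta\gamma\eta^{-1}} = \rho(\eta)C_\gamma$ for $\eta \in \pi_1\S$. The first step is to show that $c^h_\gamma$ is simple if and only if two conditions hold: first, for every $\eta$, either $\rho(\eta)C_\gamma = C_\gamma$ or $\rho(\eta)C_\gamma \cap C_\gamma = \emptyset$; second, $\gamma$ is not a proper power. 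A transverse self-intersection of $c^h_\gamma$ lifts to a transverse crossing of two distinct lifts. A geodesic that retraces itself must be a multiple cover, which forces $\gamma = \delta^k$ with $k\ge 2$.

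The second step is to transfer this to the topological side via the boundary at infinity. Each line $\rho(\eta)C_\gamma$ is determined by its pair of endpoints in $\partial\H^2$, and these are the fixed points of $\rho(\eta\gamma\eta^{-1})$. Two distinct such lines cross if and only if their endpoint pairs are linked on the circle. Now take the given simple closed curve $c$ in the class $[\gamma]$. Its lifts to $\tilde{\S}$, pushed forward by $\dev$, are quasi-geodesics, because $\S$ is compact and $\dev$ is an isometry. Each lift is invariant under the corresponding conjugate $\rho(\eta\gamma\eta^{-1})$, so it has the same endpoints at infinity as $\rho(\eta)C_\gamma$. Since $c$ is simple, distinct lifts of $c$ are disjoint embedded lines. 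Disjoint lines in the disc with endpoints on the circle cannot have linked endpoint pairs, by the Jordan curve theorem in the closed disc. Hence the geodesic lifts do not cross transversally. Two distinct geodesic lifts cannot share an endpoint either: otherwise the corresponding hyperbolic elements would share a fixed point, which is impossible in a discrete group unless they have a common axis, and common axes mean the lifts coincide.

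The third step handles primitivity. A simple closed non-trivial curve on a closed orientable surface represents a primitive element of $\pi_1\S$. I would either cite this or argue it directly: if $\gamma = \delta^k$, the lift of $c$ invariant under $\delta$ and its image under $\delta$ share endpoints, so disjointness must fail. Combining the three steps shows that $c^h_\gamma$ is simple.

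The main obstacle I expect is the second step. It needs the quasi-geodesic and endpoint argument, and it needs care in identifying the lifts of $c$ with the axes. The cleanest route would be to replace $c$ by a homotopy that is equivariant in $\tilde{\S}$, and to show that lifts of $c$ stay within bounded distance of the corresponding axes, so that their endpoints agree. If this becomes heavy, I would simply follow the cited argument in Hubbard's book.
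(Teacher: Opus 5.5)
The paper gives no proof of this statement; it only cites \cite[Proposition 3.3.9]{Hubbard:2006}. Your Steps 1 and 2 are the standard argument for this result, and they are correct. Two distinct lifts of $c^h_\gamma$ cross if and only if their endpoint pairs are linked. The lifts of $c$ are pairwise disjoint, extend by $\gamma$-periodicity to arcs in the closed disc, and have the same endpoints as the corresponding axes. The Jordan curve theorem then forbids linking. Your sentence excluding shared endpoints is unnecessary: two distinct geodesics with a common ideal endpoint are already disjoint in $\H^2$, and disjointness is all your criterion in Step 1 requires.

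The step that fails as written is the direct proof of primitivity in Step 3. Suppose $\gamma=\delta^k$ with $k\geq 2$, and let $\tilde{c}_0$ be the lift of $c$ invariant under $\gamma$. Its stabiliser is exactly $\langle\gamma\rangle$, because $\tilde{c}_0\to c$ is the universal covering of the embedded circle $c$. So $\delta\cdot\tilde{c}_0$ is a \emph{different} lift, hence disjoint from $\tilde{c}_0$, and it has the same endpoints $a,b$. That is not a contradiction: two disjoint arcs in the closed disc can share both endpoints, so the claim that ``disjointness must fail'' is false.

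The missing ingredient is an ordering argument. The isometry $\rho(\delta)$ fixes $a$ and $b$ and maps each component of $\partial\H^2\setminus\{a,b\}$ to itself. Hence it preserves the side-to-side order of the pairwise disjoint lifts $\tilde{c}_0,\delta\cdot\tilde{c}_0,\dots,\delta^{k-1}\cdot\tilde{c}_0$, all joining $a$ to $b$. If $\delta$ pushes $\tilde{c}_0$ strictly to one side, then so does $\delta^k=\gamma$, contradicting $\gamma\cdot\tilde{c}_0=\tilde{c}_0$.

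Alternatively, lift $c$ to the annulus $\H^2/\langle\rho(\delta)\rangle$. There it becomes a simple closed curve freely homotopic to $k$ times the core, which is impossible for $k\geq 2$.

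With either repair, or by citing the primitivity of non-trivial simple closed curves as you suggest, the proof is complete. Primitivity does matter here, because the paper later uses that $c^h_\gamma$ is traversed exactly once, for instance in the diffeomorphism $\varphi_\gamma\colon c^h_\gamma\to\R/l_\gamma[\rho]\Z$.
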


\begin{proposition}[{\cite[Proposition 3.3.9 point 2]{Hubbard:2006}}]
\label{prop geod are disjoint}
Let $\gamma, \eta\in \pi_1\S$ and let $ h $ be a hyperbolic metric on the closed surface $\S$. If $[\gamma]$ and $[\eta]$ are two distinct free homotopy classes given by two disjoint simple closed curves on $\S$, then $c^h_\gamma$ and $c^h_\eta$ do not intersect.
\end{proposition}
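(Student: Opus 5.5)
We have to prove Proposition~\ref{prop geod are disjoint}: if $[\gamma]\neq[\eta]$ are represented by disjoint simple closed curves, then the geodesics $c^h_\gamma$ and $c^h_\eta$ are disjoint. The plan is to lift to the universal cover, identified with $\H^2$ via $\dev$. There we show that no lift of $c^h_\gamma$ crosses any lift of $c^h_\eta$. The tool is the boundary at infinity $\partial\H^2$ and the linking of endpoint pairs, since that linking is a homotopy invariant.

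\textbf{Step 1: endpoints of lifts.} Let $a,b$ be disjoint simple closed curves in the classes $[\gamma],[\eta]$. Each lift of $a$ to $\tilde\S\cong\H^2$ is a $\gamma'$-invariant quasi-geodesic for some conjugate $\gamma'$ of $\gamma$. It stays at bounded distance from the axis $C_{\gamma'}$, because the free homotopy between $a$ and $c^h_\gamma$ lifts to an equivariant homotopy with compact fundamental domain. Hence that lift of $a$ has the same two endpoints on $\partial\H^2$ as $C_{\gamma'}$, namely the fixed points of $\rho(\gamma')$. The same holds for lifts of $b$ and the axes $C_{\eta'}$.

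\textbf{Step 2: no crossing.} Suppose $c^h_\gamma$ and $c^h_\eta$ intersect at some point. They are distinct geodesics, since their free homotopy classes differ (Proposition~\ref{prop Unique geodesic loop}). So they meet transversally, and some pair of lifts $C_{\gamma'}$, $C_{\eta'}$ crosses in $\H^2$. Two distinct geodesics in $\H^2$ cross exactly when their endpoint pairs are linked on the circle $\partial\H^2$. Note that these endpoint pairs are disjoint: hyperbolic elements of the discrete group $\rho(\pi_1\S)$ sharing a fixed point share both, hence a common axis, which would force $c^h_\gamma$ and $c^h_\eta$ to coincide.

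By Step 1, the corresponding lifts $\tilde a$ and $\tilde b$ are properly embedded lines in the closed disc $\H^2\cup\partial\H^2$ with these linked endpoint pairs. By a Jordan-curve argument, $\tilde a$ separates the disc into two components. The two ends of $\tilde b$ lie in different components, so $\tilde b$ meets $\tilde a$. Projecting to $\S$, $a$ and $b$ intersect, which is a contradiction.

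\textbf{Main obstacle.} The hard step is Step 1: proving that the lifts of $a$ converge to the same endpoints as the axes, with an honest bounded-distance estimate. I would do this using compactness of $\S$ and the equivariance $\dev\circ\gamma'=\rho(\gamma')\circ\dev$. Together these bound $d(\tilde a(t),C_{\gamma'})$ uniformly. One then needs that a $\rho(\gamma')$-invariant curve at bounded distance from the axis accumulates exactly at the attracting and repelling fixed points. The separation argument in Step 2 also needs care: $\tilde a$ is embedded because $a$ is simple and the covering is a homeomorphism on lifts of embedded lines.
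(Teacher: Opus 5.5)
Your linking-at-infinity strategy is the standard one and its structure is sound; the paper gives no argument of its own here and only cites Hubbard. Step~1 is easier than you fear. A lift satisfies $\tilde a(n+s)=\rho(\gamma')^n\tilde a(s)$ for $s\in[0,1]$, and powers of a hyperbolic isometry push the compact set $\tilde a([0,1])$ to its two fixed points. No homotopy or distance estimate is needed.

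\textbf{The gap.} The step that fails is ``they are distinct geodesics, since their free homotopy classes differ, so they meet transversally''. Distinct oriented classes only give distinct \emph{oriented} curves, but a transverse crossing requires distinct \emph{images}.

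Take $[\eta]=[\gamma^{-1}]$, realised by $a$ and a parallel push-off of $a$ with reversed orientation. These are disjoint simple closed curves, and $[\gamma]\neq[\gamma^{-1}]$: an element conjugating $\rho(\gamma)$ to its inverse would preserve $C_\gamma$ while reversing it, hence be an elliptic involution, which is impossible in $\rho(\pi_1\S)$. Yet $c^h_\eta$ is $c^h_\gamma$ traversed backwards. So the two geodesics coincide as sets, and the statement as literally written is false in this case.

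\textbf{The fix.} Read the hypothesis as $[\eta]\notin\{[\gamma],[\gamma^{-1}]\}$, which is how it is used in the paper, for curves of a pants decomposition. You then still have to prove that the images differ, and Proposition~\ref{prop Unique geodesic loop} alone does not give this: compare $\gamma$ and $\gamma^2$, whose geodesics have the same image.

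The missing ingredient is Proposition~\ref{prop geod is simple}. Both $c^h_\gamma$ and $c^h_\eta$ are simple, and two simple closed geodesics with the same image agree up to orientation. That would force $[\eta]=[\gamma^{\pm1}]$, which is now excluded.

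Once the images differ, uniqueness of geodesics rules out tangential contact. So the lifts through a common preimage point are two distinct lines crossing in $\H^2$. Such lines automatically have four distinct, alternating endpoints, so you can drop your discreteness argument for disjointness of the endpoint pairs (it silently assumed the same thing). Your Jordan-curve separation argument then finishes the proof.
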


Proposition~\ref{prop geod is simple} hence invites us to introduce the following notation. 

\begin{definition}[Simple element of the fundamental group]
\label{def simple}
An element $\gamma \in \pi_1\S$ is said to be \emph{simple} if there exists a simple closed curve on $\S$ having free homotopy class $[\gamma]$.
\end{definition}

Note that the closed surface $\S$ of genus $g \geq 2$ can be decomposed into $2g-2$ topological pairs of pants, by cutting it through $3g-3$ disjoint simple closed curves (see Figure~\ref{Pair of pants decomposition}). 

\begin{figure}[htbp]
\centering
\includegraphics{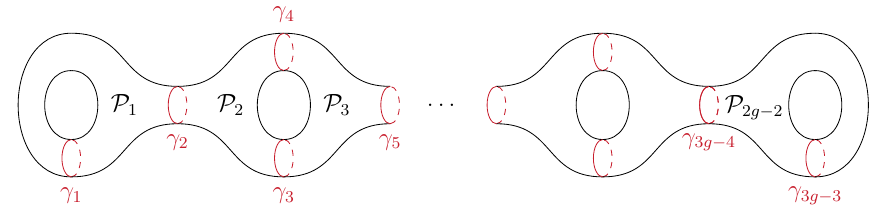}
\caption{A pair of pants decomposition of the genus $g$ surface.}
\label{Pair of pants decomposition}
\end{figure}

\begin{definition}[Pair of pants decomposition]
We shall say that $([\gamma_i])_{1\leq i \leq 3g-3}$ is a \emph{pair of pants decomposition of the surface $\S$} if it is a tuple of $3g-3$ pairwise distinct conjugacy classes of elements of $\pi_1\S$ given by the free homotopy classes of disjoint simple closed curves decomposing $\S$ into $2g-2$ topological pairs of pants. We shall also abusively say that “$\gamma \in \pi_1\S$ is in the pair of pants decomposition $([\gamma_i])_{1\leq i \leq 3g-3}$'' if $[\gamma] \in \{[\gamma_i]\}_{1\leq i \leq 3g-3}$.
\end{definition}

Let $([\gamma_i])_{1\leq i \leq 3g-3}$ be a pair of pants decomposition of $\S$. There are some natural coordinates, on the Teichm\"uller space of $\S$ associated with that pair of pants decomposition, called \emph{Fenchel--Nielsen coordinates}. They consist of $3g-3$ \emph{length parameters}, denoted by 
\begin{equation*}
(l_{\gamma_1},\dots ,l_{\gamma_{3g-3}}) : \teich(\S) \longrightarrow (\R^*_+)^{3g-3} \, 
\end{equation*}
and $3g-3$ \emph{twist parameters}, denoted by 
\begin{equation*}
(t_{\gamma_1},\dots ,t_{\gamma_{3g-3}}) : \teich(\S) \longrightarrow \R^{3g-3} \, ,
\end{equation*}
defined in the following way.

\subsubsection{Length parameters}

The length parameters are naturally defined in the following way. If $h$ is a hyperbolic metric on $\S$, for all $1\leq i \leq 3g-3$, the parameter $l_{\gamma_i}[h]$ is the length, in $\R_+^*$, of the closed geodesic curve $c^h_{\gamma_i}$ on $(\S,h)$ given by Proposition~\ref{prop Unique geodesic loop} (which only depends on the conjugacy class $[\gamma_i]$).

\subsubsection{Twist parameters}

The twist parameters must be defined more precociously using the additional datum of a multicurve $\Gamma'$ on $\S$. For a complete description of those parameters we refer to \cite[Section 7.6]{Hubbard:2006}. Still, let us quickly describe how them.

Let $h$ be a hyperbolic metric on $\S$. By Propositions~\ref{prop Unique geodesic loop}, \ref{prop geod is simple} and \ref{prop geod are disjoint}, one can decompose the hyperbolic surface $(\S, h )$ by cutting it along the disjoint simple closed $ h $-geodesic curves $(c^h_{\gamma_i})_{1 \leq i \leq 3g-3}$. Each connected component of that decomposition is a pair of pants endowed with a hyperbolic metric and having geodesic boundary. We shall refer to those as \emph{hyperbolic pairs of pants}.

When all the twists parameters $(t_{\gamma_i})_{1\leq i \leq 3g-3}$ of $[h] \in \teich(\S)$ are $0$, the geodesic arcs orthogonal to two boundary components of a pair of pants all align on the closed geodesic curves bounding pair of pants (see Figure~\ref{Twist figure}), producing a multicurve on $\S$ freely homotopic to $\Gamma'$. Otherwise, the parameter $t_{\gamma_i}$ in $(-l_{\gamma_i},l_{\gamma_i})$ corresponds to how much two hyperbolic pairs of pants inside $(\S,h)$ are twisted before being glued along $c^h_{\gamma_i}$, and are then naturally extended to $\R$ by considering that doing one full turn is equivalent to applying a Dehn twist along the curve $c^h_{\gamma_i}$.

The Fenchel--Nielsen parameters of a hyperbolic metric $h$ on $\S$ completely determine the point $[h]$ in $\teich(\S)$ because of the following classical hyperbolic geometry facts. 

Cutting a hyperbolic pair of pants along the three geodesic arcs orthogonal to two of the three boundary components (which are also the minimal length path between these components) divides it into two isometric \emph{right-angled hyperbolic hexagons}. Because of the well-known uniqueness, up to isometry, of a right-angled hyperbolic hexagon with fixed lengths of three alternated edges \cite[Theorem 3.5.13]{Ratcliffe_19}, every hyperbolic pair of pants in $(\S, h )$ given by the decomposition $([\gamma_i])_{1\leq i \leq 3g-3}$ is thus determined by the lengths parameters of its three geodesic boundaries. Then, the twist parameters indicate how one has to glue those hyperbolic pairs of pants in order to recover the marked hyperbolic surface $(\S, [h] )$.

\begin{figure}[htbp]
\centering
\includegraphics{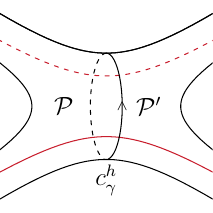} \qquad \qquad \qquad\includegraphics{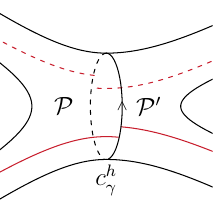} 
\caption{The gluing of two hyperbolic pairs of pants $\pant$ and $\mathcal{P'}$ along the curve $c^h_{\gamma}$ with respective twists $0$ and $ t_{\gamma} \in (0, l_{\gamma}) $; the red lines are the geodesic arcs orthogonal to boundary components.}
\label{Twist figure}
\end{figure}

The Fenchel--Nielsen parameters given by the pair of pants decomposition $([\gamma_i])_{1\leq i \leq 3g-3}$ (and the multicurve $\Gamma'$) are actually coordinates on the Teichm\"uller space $\teich(\S)$ parametrising it as $(\R_+^*)^{3g-3} \times \R^{3g-3}$ \cite[Theorem 7.6.3]{Hubbard:2006}.

From now on, using the holonomy diffeomorphism to identify $\teich(\S)$ and $\chifd(\pi_1\S)$, we shall also denote the Fenchel--Nielsen coordinates on $\chifd(\pi_1\S)$ by 
\begin{equation*}
(l_{\gamma_1},\dots ,l_{\gamma_{3g-3}},t_{\gamma_1},\dots ,t_{\gamma_{3g-3}}) :\chifd(\pi_1\S) \longrightarrow (\R_+^*)^{3g-3} \times \R^{3g-3} \, .
\end{equation*}

\begin{remark}
\label{remark change of coordinates multicurve}
The only non-canonical aspect of the twist coordinates is the arbitrary choice of the elements of $\teich(\S)$ having zero twist coordinates. That is why the multicurve $\Gamma'$ is needed. The influence of that choice is the following. Let $\bar\Gamma'$ be another multicurve on $\S$ defining, with the pair of pants decomposition $([\gamma_i])_{1\leq i \leq 3g-3}$, some Fenchel--Nielsen coordinates
\begin{equation*}
(\bar l_{\gamma_1},\dots ,\bar l_{\gamma_{3g-3}},\bar t_{\gamma_1},\dots ,\bar t_{\gamma_{3g-3}}) \, .   
\end{equation*}
By definition, for all $1\leq i \leq3g-3$, we have $l'_{\gamma_i}=l_{\gamma_i}$. Moreover, an hyperbolic surface $(S,h)$ having all twists parameters $t'_{\gamma_i}$ equal to $0$ has all the geodesic arcs orthogonal to two boundary components of a hyperbolic pair of pants aligning. Thus, by continuity of the length and twist coordinates \cite[Theorem 7.6.3]{Hubbard:2006}, for all $1\leq i \leq3g-3$, there exist $k_i \in \Z$ such that 
\begin{equation*}
    \begin{cases}
        \bar l_{\gamma_i}=l_{\gamma_i} \\
        \bar t_{\gamma_i}=t_{\gamma_i} + \frac{k_i}{2} l_{\gamma_i} \, .
    \end{cases}
\end{equation*}
In particular, we have that for all $1\leq i \leq3g-3$,
\begin{equation}
\label{eq change of FN coordinates}
    \begin{cases}
        \dif \bar l_{\gamma_i}= \dif l_{\gamma_i} \\
        \dif \bar t_{\gamma_i}=\dif t_{\gamma_i} + \frac{k_i}{2} \dif l_{\gamma_i} ,
    \end{cases}
    \quad \text{and} \quad 
    \begin{cases}
        \partial_{\bar l_{\gamma_i}}= \partial_{l_{\gamma_i}} - \frac{k_i}{2}\partial _{t_{\gamma_i}} \\
        \partial_{\bar t_{\gamma_i}}=\partial_{t_{\gamma_i}} \, .
    \end{cases}
\end{equation}
\end{remark}

\section{Tangent space of Teichm\"uller space via group cohomology}
\label{Sec Group cohomology model}

Using the diffeomorphism $ \Hol : \teich(\S) \to \chifd(\pi_1\S)$ introduced in Section~\ref{Models for the Teichmuller space of a surface}, one gets the following description for tangent spaces of the  Teichm\"uller space of $\S$. At a point $[\rho] \in \chi (\pi_1\S)$, the Zariski tangent space of the character variety can be seen as 
\begin{equation*}
\T_{[\rho]} \chi (\pi_1\S) \simeq \Hisomeq \, ,
\end{equation*}
the first cohomology group of the \emph{twisted group cohomology of $\pi_1\S$ with values in the Lie algebra $\isom(\H^2)$} \cite{Goldman:1984}. It is defined as the quotient 
\begin{equation*}
\Hisomeq \coloneqq \Zisomeq \, \big/ \, \Bisomeq \, ,
\end{equation*}
where:
\begin{itemize}
 \item $\Zisom$ is the space of \emph{cocycles} with respect to the adjoint action of $\rho$, that is the real vector space of maps $\tau: \pi_1\S \to \isom(\H^2)$ satisfying that for all $\gamma,\eta \in \pi_1\S$,
\begin{equation}
\label{Zeq}
\tau(\gamma\eta)=\Ad \rho (\gamma) \cdot \tau(\eta)+ \tau(\gamma) \, ,
\end{equation}
\item $\Bisom$ is the real vector space of \emph{coboundaries}, i.e. of cocycles of the form 
\begin{equation}
\label{Beq}
\tau(\gamma) = \Ad \rho (\gamma) \cdot x - x \, ,
\end{equation}
where $x \in \isom(\H^2)$.
\end{itemize}

We shall denote by $[\tau]$ the class in $\Hisom$ of an element $\tau \in \Zisom$. If $(\rho_s)_{s \in I}$ is a smooth family of Fuchsian representations of $\pi_1\S$ such that $\rho_0 = \rho$, we shall write 
\begin{equation*}
 [\tau] = \ddso [\rho_s] \in \T_{[\rho]}\chifd(\S) = \Hisomeq
\end{equation*}
when we consider $[\tau]$, the tangent vector at $[\rho_0]$ of the smooth path $[\rho_s]_{s\in I}$ in $\chifd(\S)$, seen as an element of $\Hisom$. In that case, a representative of $[\tau]$ is given by the cocycle
\begin{equation*}
 \tau = \ddso \rho_s \rho_0^{-1} \in \Zisomeq \, .
\end{equation*}

We can check that $\tau$ is indeed a cocycle, i.e. satisfies condition \eqref{Zeq}. As $(\rho_s)_s$ is a path of representations, for all $\gamma,\eta\in\pi_1\S$,
\begin{equation*}
 \ddso \rho_s(\gamma\eta) = \ddso\bp{\rho_s(\gamma)\rho_s(\eta)} = \rho_0(\gamma)\ddso\rho_s(\eta)+\ddso\rho_s(\gamma)\rho_0(\eta) \, ,
\end{equation*}
and thus 
\begin{align*}
\tau(\gamma\eta) &= \ddso \rho_s(\gamma\eta) \rho_0(\gamma\eta)^{-1} =\ddso \rho_s(\gamma\eta) \rho_0(\eta)^{-1}\rho_0(\gamma)^{-1} \\
& = \rho_0(\gamma)\ddso\rho_s(\eta)\rho_0(\eta)^{-1}\rho_0(\gamma)^{-1} + \ddso\rho_s(\gamma)\rho_0(\eta) \rho_0(\eta)^{-1}\rho_0(\gamma)^{-1}\\
&= \Ad \rho (\gamma) \cdot \tau(\eta)+ \tau(\gamma) \, .
\end{align*}

Moreover, if $(\rho_s)_{s\in I}$ and $(\mu_s)_{s\in I}$ are two smooth families of Fuchsian representations of $\pi_1\S$ conjugated to each other at every $s \in I$ and such that $\rho=\rho_0=\mu_0$, we can check that their associated cocycles differ by a coboundary, i.e. a cocycle satisfying \eqref{Beq}. Indeed, in that case, for all $s \in I$ and $\gamma\in \pi_1 \S$,
\begin{equation*}
\rho_s(\gamma) = \phi_s^{-1} \circ \mu_s(\gamma) \circ \phi_s \, ,
\end{equation*}
where $(\phi_s)_{s\in I}$ is a smooth path in $\Isom^+(\H^2)$ such that $\phi_0=\Id$. Setting
\begin{equation*}
 x \coloneqq \ddso \phi_s \in \isom(\H^2) 
\end{equation*}
and differentiating, we get that for all $\gamma\in \pi_1 \S$,
\begin{equation*}
\ddso \rho_s(\gamma) =\ddso \mu_s(\gamma) + \mu_0(\gamma)\cdot x -x \cdot \mu_0(\gamma) \, ,
\end{equation*}
Thus, as $\rho=\rho_0=\mu_0$, for all $\gamma\in \pi_1 \S$,
\begin{equation*}
\tau(\gamma) = \ddso \rho_s(\gamma) \rho_0(\gamma)^{-1} = \ddso \mu_s(\gamma) \mu_0(\gamma)^{-1} + \Ad \rho (\gamma) \cdot x - x \, .
\end{equation*}

Finally, let us note that a direct computation using the cocycle relation \eqref{Zeq} satisfied by every element of $\Zisom$, implies the following fact.

\begin{proposition}
\label{prop cocy conj}
Let $\tau \in \Zisom$. Then, for all $\eta, \zeta \in \pi_1\S$,
\begin{equation*}
 \tau(\zeta\eta\zeta^{-1}) = \tau(\zeta)- \Ad \rho(\zeta\eta\zeta^{-1}) \cdot \tau(\zeta) + \Ad \rho(\zeta) \cdot \tau(\eta) \, .
\end{equation*}
\end{proposition}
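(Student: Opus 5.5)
We expand the left-hand side with the cocycle relation \eqref{Zeq} twice, splitting $\zeta\eta\zeta^{-1}$ as the product $(\zeta\eta)\cdot\zeta^{-1}$.

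Applying \eqref{Zeq} with $\gamma=\zeta\eta$ and second factor $\zeta^{-1}$ gives
\begin{equation*}
\tau(\zeta\eta\zeta^{-1}) = \Ad\rho(\zeta\eta)\cdot\tau(\zeta^{-1}) + \tau(\zeta\eta) \, .
\end{equation*}
Applying \eqref{Zeq} again gives $\tau(\zeta\eta)=\Ad\rho(\zeta)\cdot\tau(\eta)+\tau(\zeta)$.

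It remains to express $\tau(\zeta^{-1})$. First evaluate \eqref{Zeq} at $\gamma=\eta=1$: this gives $\tau(1)=\tau(1)+\tau(1)$, so $\tau(1)=0$. Then apply \eqref{Zeq} to the product $\zeta\cdot\zeta^{-1}=1$:
\begin{equation*}
0=\Ad\rho(\zeta)\cdot\tau(\zeta^{-1})+\tau(\zeta) \, , \quad\text{so}\quad \tau(\zeta^{-1})=-\Ad\rho(\zeta^{-1})\cdot\tau(\zeta) \, .
\end{equation*}

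Substituting into the first term, and using that $\Ad\circ\rho$ is a homomorphism, we get
\begin{equation*}
\Ad\rho(\zeta\eta)\cdot\tau(\zeta^{-1}) = -\Ad\rho(\zeta\eta)\Ad\rho(\zeta^{-1})\cdot\tau(\zeta) = -\Ad\rho(\zeta\eta\zeta^{-1})\cdot\tau(\zeta) \, .
\end{equation*}
Summing the three terms gives exactly the claimed identity:
\begin{equation*}
\tau(\zeta\eta\zeta^{-1}) = \tau(\zeta)- \Ad \rho(\zeta\eta\zeta^{-1}) \cdot \tau(\zeta) + \Ad \rho(\zeta) \cdot \tau(\eta) \, .
\end{equation*}

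The only point needing care is the inverse formula for $\tau(\zeta^{-1})$, in particular checking $\tau(1)=0$ first. The rest is direct bookkeeping with \eqref{Zeq}.
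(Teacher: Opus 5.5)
Your computation is correct, including the preliminary facts $\tau(1)=0$ and $\tau(\zeta^{-1})=-\Ad\rho(\zeta^{-1})\cdot\tau(\zeta)$. It is essentially the argument the paper intends: the paper just asserts that the identity follows from ``a direct computation using the cocycle relation \eqref{Zeq}'' and leaves that computation to the reader.
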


\subsection{The Goldman symplectic form}

In \cite{Goldman:1984}, Goldman has introduced the following symplectic form on the character variety $\chi (\pi_1\S)$.

\begin{definition}[Goldman form] 
The \emph{Goldman form} is the skew-symmetric $2$-form on $\chi (\pi_1\S)$ defined by: for all $[\tau],[\tau'] \in \T_{[\rho]} \chi (\pi_1\S) = \Hisom$,
\begin{equation*}
\wg ( [\tau],[\tau'])= \kil ([\tau] \smile [\tau'])(R) \, ,
\end{equation*}
where $R$ is the fundamental class of the group homology group $H_2(\pi_1\S , \Z) \simeq \Z$ which can be represented by an element of $\Z[\pi_1\S] \otimes \Z[\pi_1\S]$, and where
\begin{equation*}
 \smile \, : \Hisomeq \times \Hisomeq \longrightarrow H^2_{\Ad \rho}\bp{\pi_1\S,\isom(\H^2)}
\end{equation*} 
is the cup product in group cohomology, here paired with the Killing form $\kil$ on $\isom(\H^2)$, i.e. for all $\gamma,\eta \in \pi_1\S$, 
\begin{equation*}
\kil([\tau] \smile [\tau'])(\gamma \otimes \eta) \coloneqq \kil\bp{\tau(\gamma), \tau'(\eta)}-\kil\bp{\tau(\eta), \tau'(\gamma)} \, .
\end{equation*}
\end{definition}

The closedness of the Goldman symplectic form is not obvious. In \cite{Goldman:1984}, Goldman proves it, following the work of Atiyah and Bott \cite{Atiyah_Bot_83}, by using various tools and arguments from algebraic topology and gauge theory. We shall prove that it is closed on the component $\chifd(\pi_1\S)$ in Section~\ref{Sec WG closedness}. Hence, throughout the rest of the article, let us simply write that $\wg$ is the Goldman form on $\chi(\pi_1\S)$, rather than the Goldman symplectic form.

\section{Tangent space of Teichm\"uller space via de Rham cohomology}
\label{Sec De Rham model}

Let us now introduce another equivalent model for tangent spaces of $\chifd(\pi_1\S)$, using \emph{de Rham cohomology}. It will be very useful to us for two reasons. First, it gives access to \emph{Stokes's Theorem}. Moreover, we shall see that it allows to get representatives of tangent vectors in $\T_{[\rho]}\chifd(\pi_1\S)$ which are more intuitively and geometrically related to the variation of the associated hyperbolic surface $\H^2/\rho(\pi_1\S)$.

\subsection{A flat bundle and its de Rham cohomology}
\label{subsec A flat bundle}

Let $\rho$ be Fuchsian representation of $\pi_1\S$. Consider the hyperbolic surface $(\S,h) = \H^2/\rho(\pi_1\S)$ with universal covering $\pi:\tilde{\S}  \to \S$. 

The trivial vector bundle 
\begin{equation*}
   E \coloneqq \tilde{\S} \times \isom(\H^2)
\end{equation*}
is a flat vector bundle over $\tilde{\S}$. The composition of $\rho$ with the adjoint representation of $\Isom(\H^2)$ gives the following linear representation
\begin{equation*}
   \Ad \rho :\pi_1\S \longrightarrow \mathrm{Aut}\bp{\isom(\H^2)} \, .
\end{equation*}
Thus, we can consider the flat vector bundle $\bundle$ over $\S$ defined by
\begin{equation*}
\bundle = \bp{\tilde{\S} \times \isom(\H^2) } \, \big/ \, \pi_1\S \, ,
\end{equation*}
where $\pi_1\S$ acts diagonally on the product $\tilde{\S} \times \isom(\H^2)$ by the following action
\begin{equation*}
\gamma \cdot (p,x) \coloneqq \bp{\gamma \cdot p, \Ad \rho(\gamma)\cdot x} \, .
\end{equation*}

\begin{remark}
\label{remark identification uS and H2}
As $\pi : \H^2 \to \H^2/\rho(\pi_1\S)$ is a universal covering of $(\S,h)$, the universal cover $\tilde{\S}$ and $\H^2$ are identified (that is the same as using the developing map associated with the holonomy map $\rho$, see subsection~\ref{Models for the Teichmuller space of a surface}). Thus, the action of any element $\gamma \in \pi_1\S$ on $\tilde{S}$ is given by the action of $\rho(\gamma)$ on $\H^2$. Still, we choose to continue to denote the action of $\pi_1\S$ on the universal cover $\tilde{S}$ by $\gamma \cdot p$, rather than using $\rho$ and $\H^2$, in order to keep track of the identification $\tilde{\S} = \H^2$, and to quickly understand which action of $\pi_1\S$ is concerned in a formula.
\end{remark}

As $\bundle$ is a flat vector bundle over $\S$, the exterior covariant derivative $\dif$ on $\bundle$-valued forms induced by the flat connection on $\bundle$ satisfies
\begin{equation*}
    \dif \circ \dif =0 \, , 
\end{equation*}
and one can then consider the \emph{de Rham cohomology of $\S$ with values in $\bundle$} \cite[2.21]{Deligne_70}. The first de Rham cohomology group $\HdR$ is the quotient
\begin{equation*}
\HdR \coloneqq \ZdR \, \big/ \, \BdR \, ,
\end{equation*}
where $\ZdR$ is the vector space of closed $\bundle$-valued $1$-forms, and $\BdR$ is the subspace of exact $\bundle$-valued $1$-forms. We shall denote by $\dR{\alpha}$ the class in $\HdR$ of an element $\alpha \in \ZdR$.

By definition of $\bundle$, each $\alpha \in \ZdR$ is induced by a closed $\Ad \rho $-equivariant $\isom(\H^2)$-valued $1$-form $\alphat$ on $\tilde{\S}$, i.e. satisfying: for all $\gamma \in \pi_1\S$,
\begin{equation}
\label{eq ad equiv 1-form}
\alphat \circ \dif \gamma = \Ad \rho(\gamma)\cdot \alphat \, .
\end{equation}
An exact form $\alpha = \dif f \in \BdR$ is then induced by an exact $\isom(\H^2)$-valued $1$-form $\alphat =\dif \tilde{f}$ on $\tilde{\S}$, where $\tilde{f}$ is an $\Ad \rho $-equivariant $\isom(\H^2)$-valued map, i.e. satisfying: for all $\gamma \in \pi_1\S$.
\begin{equation}
\label{eq ad equiv map}
\tilde{f} \circ \gamma = \Ad\rho(\gamma) \cdot \tilde{f} \, ,
\end{equation}
In that case, $f$ is the section of $\bundle$ induced by $\tilde{f}$.

\subsection{Equivalence with group cohomology}
\label{subsec de Rham map}

The first cohomology groups $\Hisom$ and $\HdR$ are isomorphic because of classical cohomology results. Indeed, by de Rham's Theorem for local systems, $\HdR$ is isomorphic to $H^1(\S,\bundle)$, the cohomology of $\S$ with local coefficients in $\bundle$ \cite[Theorem 2.23]{Deligne_70}. Moreover, by the work of Eilenberg \cite[Theorem 28.1]{Eilenberg:1947}, as $\S$ is an Eilenberg--MacLane space of type $K(\pi_1\S,1)$, the first cohomology group $H^1(\S,\bundle)$ is isomorphic to the first group cohomology group $\Hisom$. 

Let us introduce an explicit natural isomorphism between $\HdR$ and $\Hisom$: the \emph{de Rham map}. Let $p_0 \in \tilde{\S}$ be an arbitrary base point. Given any $\alpha \in \ZdR$, let $\tau_{\alpha} : \pi_1\S \to \isom(\H^2)$ be the map defined by 
\begin{equation*}
\tau_\alpha (\gamma) \coloneqq \int_{p_0}^{\gamma \cdot p_0}\alphat \, ,
\end{equation*}
where $\alphat$ is the closed $\Ad \rho$-equivariant $\isom(\H^2)$-valued 1-form on $\tilde{\S}$ inducing $\alpha$, and the integration is along any smooth path from $p_0$ to $p_1$ in $\tilde{\S}$ (the value of that integral does not depend on the choice of such a path because the form $\alphat$ is closed and $\tilde{\S}$ is simply connected).

\begin{lemma}
For all $\alpha \in \ZdR$, the map $\tau_\alpha : \pi_1\S \to \isom(\H^2)$ satisfies the group cocycle condition~\eqref{Zeq}.
\end{lemma}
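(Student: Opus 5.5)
The plan is to compute $\tau_\alpha(\gamma\eta)$ directly by splitting the integration path at an intermediate point. Since $\alphat$ is closed on the simply connected $\tilde{\S}$, the integral between two points does not depend on the path. Hence we may integrate from $p_0$ to $\gamma \cdot p_0$ and then from $\gamma\cdot p_0$ to $(\gamma\eta)\cdot p_0 = \gamma\cdot(\eta\cdot p_0)$. This gives
\begin{equation*}
\tau_\alpha(\gamma\eta) = \int_{p_0}^{\gamma\cdot p_0}\alphat + \int_{\gamma\cdot p_0}^{\gamma\cdot(\eta\cdot p_0)}\alphat = \tau_\alpha(\gamma) + \int_{\gamma\cdot p_0}^{\gamma\cdot(\eta\cdot p_0)}\alphat \, .
\end{equation*}

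For the second term, choose a smooth path $c$ from $p_0$ to $\eta\cdot p_0$. Then $\gamma\circ c$ is a smooth path from $\gamma\cdot p_0$ to $\gamma\cdot(\eta \cdot p_0)$, and
\begin{equation*}
\int_{\gamma\circ c}\alphat = \int_c \gamma^*\alphat = \int_c \alphat\circ\dif\gamma \, .
\end{equation*}
By the equivariance condition \eqref{eq ad equiv 1-form}, the integrand equals $\Ad\rho(\gamma)\cdot\alphat$. Because $\Ad\rho(\gamma)$ is a constant linear map on $\isom(\H^2)$, it can be taken out of the integral, which leaves $\Ad\rho(\gamma)\cdot\tau_\alpha(\eta)$. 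Combining the two terms yields exactly the cocycle relation \eqref{Zeq}.

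The only point needing care is the change of variables $\int_{\gamma\circ c}\alphat=\int_c\gamma^*\alphat$ for vector-valued forms, together with the identification $\gamma^*\alphat = \alphat\circ \dif\gamma$. Both follow componentwise from the scalar case after fixing a basis of $\isom(\H^2)$, so the argument is routine and presents no real obstacle. Path-independence is already noted in the definition of $\tau_\alpha$.
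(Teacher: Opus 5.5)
Your proposal is correct and follows essentially the same argument as the paper. Both split the path at $\gamma\cdot p_0$ and rewrite the second integral as $\int_{p_0}^{\eta\cdot p_0}\alphat\circ\dif\gamma$. Both then use the equivariance \eqref{eq ad equiv 1-form} to pull out the constant linear map $\Ad\rho(\gamma)$.
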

\begin{proof}
For all $\gamma,\eta\in \pi_1\S$, using the $\Ad \rho $-equivariance~\eqref{eq ad equiv 1-form} of $\alphat$, we get
\begin{equation*}
\tau_\alpha(\gamma \eta) = \int_{p_0}^{\gamma\eta \cdot p_0}\alphat =\int_{p_0}^{\gamma \cdot p_0}\alphat + \int_{\gamma \cdot p_0}^{\gamma\eta \cdot p_0}\alphat = \tau_\alpha(\gamma) + \int_{p_0}^{\eta \cdot p_0} (\alphat \circ \dif \gamma) = \tau_\alpha(\gamma) + \Ad \rho(\gamma)\cdot \tau_\alpha(\eta)\, . \qedhere
\end{equation*}
\end{proof}

\begin{lemma}
\label{lem change of base point}
For all $\alpha \in \ZdR$, the choice of the base point $p_0$ in the definition of $\tau_\alpha$ only changes $\tau_\alpha$ by a coboundary. 
\end{lemma}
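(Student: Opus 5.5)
We take a second base point $p_0' \in \tilde{\S}$ and write $\tau'_\alpha(\gamma) = \int_{p_0'}^{\gamma \cdot p_0'} \alphat$ for the associated cocycle. The idea is to compare the two integrals by splitting the path from $p_0'$ to $\gamma \cdot p_0'$ through $p_0$ and $\gamma\cdot p_0$. This is legitimate because $\alphat$ is closed on the simply connected $\tilde{\S}$, so all integrals depend only on their endpoints.

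Concretely, for every $\gamma \in \pi_1\S$ we write
\begin{equation*}
\tau'_\alpha(\gamma) = \int_{p_0'}^{p_0}\alphat + \int_{p_0}^{\gamma\cdot p_0}\alphat + \int_{\gamma\cdot p_0}^{\gamma \cdot p_0'}\alphat \, .
\end{equation*}
The middle term is $\tau_\alpha(\gamma)$. For the last term, we change variables by the diffeomorphism $\gamma$ of $\tilde{\S}$: it equals $\int_{p_0}^{p_0'} \alphat\circ \dif\gamma$. By the $\Ad\rho$-equivariance \eqref{eq ad equiv 1-form}, and since $\Ad\rho(\gamma)$ is a constant linear map that commutes with integration, this becomes $\Ad\rho(\gamma)\cdot \int_{p_0}^{p_0'}\alphat$.

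Set $x \coloneqq \int_{p_0}^{p_0'}\alphat \in \isom(\H^2)$. The first term is then $-x$, and we obtain
\begin{equation*}
\tau'_\alpha(\gamma) = \tau_\alpha(\gamma) + \Ad\rho(\gamma)\cdot x - x \, .
\end{equation*}
The correction term $\gamma \mapsto \Ad\rho(\gamma)\cdot x - x$ is exactly a coboundary of the form \eqref{Beq}, so $\tau'_\alpha - \tau_\alpha \in \Bisom$.

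There is no real obstacle here. The only point requiring care is the change of variables: one must check that pulling back along $\gamma$ turns $\alphat$ into $\alphat\circ\dif\gamma$, and that the constant operator $\Ad\rho(\gamma)$ can be taken outside the vector-valued integral. This is the same manipulation already used in the proof of the cocycle lemma just above.
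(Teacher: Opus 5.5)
Your proposal is correct and follows essentially the same route as the paper: you split the path from $p_0'$ to $\gamma\cdot p_0'$ through $p_0$ and $\gamma\cdot p_0$, then use the $\Ad\rho$-equivariance of $\alphat$ on the last leg. This yields the correction $\Ad\rho(\gamma)\cdot x - x$ with exactly the same $x = \int_{p_0}^{p_0'}\alphat$.
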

\begin{proof}
Let $p_0' \in \tilde{\S}$. Using the $\Ad \rho $-equivariance~\eqref{eq ad equiv 1-form} of $\alphat$, we get
\begin{align*}
\tau'_\alpha (\gamma)  \coloneqq \int_{p_0'}^{\gamma \cdot p_0'}\alphat & = \int_{p'_0}^{p_0} \alphat +\int_{p_0}^{\gamma \cdot p_0} \alphat + \int_{\gamma \cdot p_0}^{\gamma \cdot p_0'} \alphat = - \int _{p_0}^{p_0'} \alphat + \int_{p_0}^{ p_0'} (\alphat \circ \dif \gamma)  + \int_{p_0}^{\gamma \cdot p_0} \alphat\\
& = \tau_\alpha (\gamma) + \Ad \rho (\gamma) \cdot x -x \, ,
\end{align*}
where 
\begin{equation*}
    x= \int _{p_0}^{p_0'} \alphat \in \isom(\H^2)\, . \qedhere
\end{equation*}
\end{proof}

\begin{lemma}
If $\alpha \in \BdR$, then  $\tau_\alpha \in \Bisom$.
\end{lemma}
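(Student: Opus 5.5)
The plan is to compute $\tau_\alpha$ directly from a primitive of the lifted form. Suppose $\alpha = \dif f \in \BdR$, where $f$ is a section of $\bundle$.

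As recalled in Subsection~\ref{subsec A flat bundle}, the lift of $\alpha$ to $\tilde{\S}$ is then $\alphat = \dif \tilde{f}$. Here $\tilde{f}:\tilde{\S}\to\isom(\H^2)$ is the $\Ad\rho$-equivariant map inducing $f$, so it satisfies \eqref{eq ad equiv map}.

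For $\gamma \in \pi_1\S$, integrate along any smooth path from $p_0$ to $\gamma\cdot p_0$. Since the integrand is exact, the fundamental theorem of calculus applies (componentwise in the fixed vector space $\isom(\H^2)$) and gives
\begin{equation*}
\tau_\alpha(\gamma) = \int_{p_0}^{\gamma\cdot p_0} \dif\tilde{f} = \tilde{f}(\gamma\cdot p_0) - \tilde{f}(p_0) \, .
\end{equation*}

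Applying the equivariance \eqref{eq ad equiv map} at the point $p_0$ gives $\tilde{f}(\gamma\cdot p_0) = \Ad\rho(\gamma)\cdot \tilde{f}(p_0)$. Setting $x \coloneqq \tilde{f}(p_0) \in \isom(\H^2)$, which does not depend on $\gamma$, we get
\begin{equation*}
\tau_\alpha(\gamma) = \Ad\rho(\gamma)\cdot x - x
\end{equation*}
for all $\gamma$. This is exactly the coboundary form \eqref{Beq}, so $\tau_\alpha \in \Bisom$.

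There is no real obstacle here. The only point that needs care is that the flat connection on $\bundle$ lifts to the trivial connection on $\tilde{\S}\times\isom(\H^2)$. That is what makes $\dif\tilde f$ the ordinary differential of a vector-valued function, and hence lets us apply the fundamental theorem of calculus. This holds by the construction of $\bundle$ as a quotient of the trivial flat bundle.
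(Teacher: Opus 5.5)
Your proof is correct and matches the paper's argument: you lift $\alpha = \dif f$ to $\dif\tilde f$, apply the fundamental theorem of calculus along a path from $p_0$ to $\gamma\cdot p_0$, and use the equivariance of $\tilde f$ to obtain the coboundary with $x = \tilde f(p_0)$. Your added remark that the flat connection lifts to the trivial one is a reasonable clarification that the paper leaves implicit.
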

\begin{proof}
Let $\alpha \in \BdR$. That is, $\alpha$ is exact and there is a section $f$ of $\bundle$ such that $\alpha = \dif f$. Then, $\alphat = \dif \tilde{f}$, where $\tilde{f} : \tilde{\S} \to \isom(\H^2)$ satisfies that for all $\gamma \in \pi_1\S$,
\begin{equation*}
\tilde{f} \circ \gamma= \Ad \rho(\gamma)\cdot\tilde{f} \, .
\end{equation*}
Hence, by the Fundamental Theorem of Calculus, for all $\gamma\in \pi_1\S$,
\begin{equation*}
\tau_\alpha (\gamma) = \int_{p_0}^{\gamma \cdot p_0}\dif \tilde{f} = \tilde{f}(\gamma \cdot p_0) - \tilde{f}(p_0) = \Ad \rho (\gamma) \cdot \tilde{f}(p_0) -\tilde{f}(p_0) \, ,
\end{equation*}
and $\tau_\alpha$ is a coboundary.
\end{proof}

Because of the three lemmas we have just stated, the following linear map is well-defined and independent of the choice of a point $p_0 \in \tilde{\S}$:
\begin{equation}
\label{eq Psi}
\function{\Psi_\rho}{\HdR}{\Hisomeq}{\dR{\alpha}}{[\tau_\alpha]} \, .
\end{equation}

The map $\Psi_\rho$ is actually the natural vector space isomorphism, called the  \emph{de Rham map}, given by the classical cohomology results discussed above (de Rham's Theorem and Eilenberg's Theorem), so that we have the following.

\begin{theorem}
The map $\Psi_\rho : \HdR \to \T_{[\rho]}\chifd(\pi_1\S)$ defined by \eqref{eq Psi} is a vector space isomorphism.
\end{theorem}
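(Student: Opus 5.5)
We will prove that $\Psi_\rho$ is injective and surjective directly, using only the universal cover $\tilde{\S}=\H^2$, its contractibility, and the free, properly discontinuous action of $\pi_1\S$. Linearity is immediate from the definition of $\tau_\alpha$.

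\emph{Injectivity.} Let $\alpha\in\ZdR$ with $\tau_\alpha\in\Bisom$, say $\tau_\alpha(\gamma)=\Ad\rho(\gamma)\cdot x-x$ for all $\gamma$.
\begin{itemize}
\item Define $\tilde f(p)\coloneqq\int_{p_0}^{p}\alphat - x$. This is well defined because $\alphat$ is closed and $\tilde\S$ is simply connected, and it satisfies $\dif\tilde f=\alphat$.
\item Compute, using equivariance \eqref{eq ad equiv 1-form} as in Lemma~\ref{lem change of base point}:
\begin{equation*}
\tilde f(\gamma\cdot p)=\tau_\alpha(\gamma)+\Ad\rho(\gamma)\cdot\int_{p_0}^{p}\alphat - x .
\end{equation*}
\item Substituting the coboundary expression for $\tau_\alpha(\gamma)$ gives $\tilde f(\gamma\cdot p)=\Ad\rho(\gamma)\cdot\tilde f(p)$.
\end{itemize}
So $\tilde f$ satisfies \eqref{eq ad equiv map} and descends to a section $f$ of $\bundle$ with $\alpha=\dif f$. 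Hence $\dR{\alpha}=0$.

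\emph{Surjectivity.} Given $\tau\in\Zisom$, we must build a closed equivariant $1$-form $\alphat$ with $\tau_\alpha=\tau$. The idea is to find a smooth map $F:\tilde\S\to\isom(\H^2)$ satisfying the twisted equivariance
\begin{equation*}
F(\gamma\cdot p)=\Ad\rho(\gamma)\cdot F(p)+\tau(\gamma),
\end{equation*}
and then set $\alphat\coloneqq\dif F$.
\begin{itemize}
\item Differentiating the twisted equivariance shows that $\dif F$ satisfies \eqref{eq ad equiv 1-form}.
\item $\dif F$ is exact on $\tilde\S$, hence closed.
\item Its periods are $\int_{p_0}^{\gamma\cdot p_0}\dif F=F(\gamma p_0)-F(p_0)=\Ad\rho(\gamma)\cdot F(p_0)+\tau(\gamma)-F(p_0)$.
\end{itemize}
This equals $\tau(\gamma)$ up to the coboundary of $F(p_0)$, which is enough.

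\emph{Construction of $F$ (the main step).} Note that $(\gamma,x)\mapsto\Ad\rho(\gamma)\cdot x+\tau(\gamma)$ is an affine action of $\pi_1\S$ on $\isom(\H^2)$; the cocycle condition \eqref{Zeq} is exactly what makes it an action. A twisted-equivariant $F$ is then a section of the associated flat affine bundle over $\S$, whose fibre $\isom(\H^2)$ is contractible.
\begin{itemize}
\item Take a locally finite open cover $(U_i)$ of $\S$ by evenly covered discs.
\item For each $i$, choose a lift $\tilde U_i$ of $U_i$ and put $F_i\equiv0$ there. Extend $F_i$ to all of $\pi^{-1}(U_i)=\bigsqcup_\gamma\gamma\tilde U_i$ by $F_i(\gamma p)=\Ad\rho(\gamma)\cdot 0+\tau(\gamma)=\tau(\gamma)$. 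This extension is consistent because the action is free and it is an action.
\item Take a partition of unity $(\psi_i)$ subordinate to $(U_i)$ and set $F\coloneqq\sum_i(\psi_i\circ\pi)\,F_i$.
\end{itemize}
Since $\sum_i\psi_i\circ\pi=1$, affine combinations are preserved by the affine action, so $F$ is twisted equivariant.

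The obstacle I expect is exactly this construction: checking that the affine action is genuinely an action via \eqref{Zeq}, and that the local constant choices glue equivariantly. An alternative route would be to cite de Rham's theorem and Eilenberg's theorem for dimension equality and keep only the injectivity argument. However, identifying $\Psi_\rho$ with that composite is less transparent, so I prefer the direct construction.
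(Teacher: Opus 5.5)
Your proof is correct up to a sign slip in the injectivity step. With $\tilde f(p)=\int_{p_0}^{p}\alphat - x$ one gets $\tilde f(\gamma\cdot p)-\Ad\rho(\gamma)\cdot\tilde f(p)=2\bigl(\Ad\rho(\gamma)\cdot x-x\bigr)=2\tau_\alpha(\gamma)$, which is not zero in general. The correct normalisation is $\tilde f(p)=x+\int_{p_0}^{p}\alphat$. You can see this because if $\alphat=\dif\tilde f$ with $\tilde f$ equivariant, then $\tau_\alpha$ is the coboundary of $\tilde f(p_0)$. Once this is fixed, your injectivity argument is the paper's Proposition~\ref{prop Psi is injective}. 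The only difference is that you fold the constant $x$ into the primitive, whereas the paper first changes the representative via Lemma~\ref{lem good value dR form} so that the periods vanish.

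The genuine difference is surjectivity. The paper does not prove it directly. It asserts that $\Psi_\rho$ is the natural isomorphism given by de Rham's theorem for local systems combined with Eilenberg's theorem, and writes out only injectivity. Bijectivity then rests on those citations, equivalently on injectivity plus equality of the finite dimensions. This is exactly the alternative you mention and set aside.

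You instead construct, for each cocycle $\tau$, a smooth map $F:\tilde\S\to\isom(\H^2)$ satisfying $F(\gamma\cdot p)=\Ad\rho(\gamma)\cdot F(p)+\tau(\gamma)$, using locally constant local solutions and a partition of unity. In other words, $F$ is a section of the flat affine bundle with linear part $\Ad\rho$ and translation part $\tau$, and you take $\alphat=\dif F$. All the checks go through:
\begin{itemize}
\item the cocycle identity \eqref{Zeq} is exactly what makes the affine formula an action;
\item freeness of the deck action on $\pi^{-1}(U_i)$ makes each $F_i$ well defined;
\item $\sum_i\psi_i\circ\pi=1$ makes the convex combination twisted-equivariant.
\end{itemize}
Note that you use only simple connectedness of $\tilde\S$ and convexity of the fibre, not contractibility of $\tilde\S$.

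Your route is elementary and self-contained, which fits the paper's stated aim. It proves directly that the explicit map $\Psi_\rho$, rather than some abstract isomorphism, is onto, and it does not need finite-dimensionality. It also yields explicit de Rham representatives of arbitrary cocycles, in the same spirit as the twist forms $\alpha_\gamma$. The paper's citation buys brevity. Moreover, the same reference to Deligne is needed anyway for Proposition~\ref{prop wg=wdR}, the compatibility of wedge and cup products, which your argument does not provide but which is not part of this statement.
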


Let us only prove the injectivity of $\Psi_\rho$, as we shall later re-use the arguments from that proof. First, here is a small useful fact.

\begin{lemma}
\label{lem good value dR form}
Let $\alpha \in \ZdR$ and $p_0 \in \tilde \S$. If $\tau \in \Zisom$ is such that $\Psi_\rho\dR{\alpha}=[\tau]$, then there exists a representative $\alpha'$ of $\dR{\alpha}$ such that for all $\gamma \in \pi_1\S$,
\begin{equation*}
 \int_{p_0}^{\gamma \cdot p_0} \alphat' = \tau(\gamma) \, .
\end{equation*}
\end{lemma}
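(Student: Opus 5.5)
Fix $\alpha$, $p_0$ and $\tau$ as in the statement. By definition of $\Psi_\rho$, the cocycle $\tau_\alpha$ computed with base point $p_0$ satisfies $[\tau_\alpha]=[\tau]$. So there is $x\in\isom(\H^2)$ with, for all $\gamma\in\pi_1\S$,
\begin{equation*}
\tau(\gamma)-\tau_\alpha(\gamma)=\Ad\rho(\gamma)\cdot x - x \, .
\end{equation*}
The plan is to absorb this coboundary into $\alpha$ by adding an exact form $\dif f$. We need $\tilde f$ to be $\Ad\rho$-equivariant and to satisfy $\tilde f(p_0)=x$. Then, by the Fundamental Theorem of Calculus computation used for exact forms above,
\begin{equation*}
\int_{p_0}^{\gamma\cdot p_0}\dif\tilde f=\Ad\rho(\gamma)\cdot x-x \, .
\end{equation*}
Setting $\alpha'\coloneqq\alpha+\dif f$, we get $\dR{\alpha'}=\dR{\alpha}$, and
\begin{equation*}
\int_{p_0}^{\gamma\cdot p_0}\alphat'=\tau_\alpha(\gamma)+\Ad\rho(\gamma)\cdot x-x=\tau(\gamma) \quad \text{for all } \gamma\in\pi_1\S \, .
\end{equation*}

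The only real step is to construct an equivariant map $\tilde f:\tilde\S\to\isom(\H^2)$ with prescribed value $x$ at $p_0$. I would build it with a bump function, using that $\pi_1\S$ acts freely and properly discontinuously on $\tilde\S$.
\begin{itemize}
\item Choose a small open ball $U\ni p_0$ whose translates $\gamma\cdot U$, $\gamma\in\pi_1\S$, are pairwise disjoint. This is possible since $\pi$ restricted to a small ball is injective.
\item Choose a smooth function $\psi$ supported in $U$ with $\psi(p_0)=1$.
\item Define
\begin{equation*}
\tilde f(p)\coloneqq\sum_{\gamma\in\pi_1\S}\psi(\gamma^{-1}\cdot p)\,\Ad\rho(\gamma)\cdot x \, .
\end{equation*}
\end{itemize}
At each point at most one term is nonzero, so $\tilde f$ is smooth, and $\tilde f(p_0)=x$. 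Reindexing the sum gives $\tilde f(\eta\cdot p)=\Ad\rho(\eta)\cdot\tilde f(p)$, which is exactly \eqref{eq ad equiv map}. Hence $\tilde f$ descends to a section $f$ of $\bundle$, and $\dif f\in\BdR$ is induced by $\dif\tilde f$.

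I expect no serious obstacle. The points to check are the smoothness and locally finite nature of the sum, and the sign conventions, so that the coboundary produced matches the convention in \eqref{Beq}.
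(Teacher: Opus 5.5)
Your proposal is correct and follows the paper's proof: the coboundary discrepancy is absorbed by an exact form $\dif f$ whose equivariant lift satisfies $\tilde f(p_0)=\pm x$. The paper takes $x$ with the opposite sign and sets $\alpha'=\alpha-\dif f$, which is the same argument. Your bump-function construction of the equivariant section is a valid way to justify the existence of $f$, which the paper simply asserts.
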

\begin{proof}
As $\Psi_\rho\dR{\alpha}=[\tau]$, there exists $x \in \isom(\H^2)$ such that for all $\gamma\in \pi_1\S$,
\begin{equation*}
\int_{p_0}^{\gamma \cdot p_0} \alphat = \tau(\gamma)+ \Ad \rho (\gamma) \cdot x - x \, .
\end{equation*}
Let $f$ be any global section of $\bundle$ such that its lift $\tilde{f}: \tilde{\S} \to \isom(\H^2)$ takes the value $\tilde{f}(p_0)=x$. Then, setting $\alpha'= \alpha -\dif f$, we get that for all $\gamma \in \pi_1\S$,
\begin{equation*}
 \int_{p_0}^{\gamma \cdot p_0} \alphat' =\int_{p_0}^{\gamma \cdot p_0} (\alphat - \dif \tilde{f})= \int_{p_0}^{\gamma \cdot p_0}  \alphat - \tilde f(\gamma\cdot p_0) + \tilde f(p_0) = \int_{p_0}^{\gamma \cdot p_0} \alphat - \Ad \rho(\gamma) \cdot x +x = \tau(\gamma)\, . \qedhere
\end{equation*}
\end{proof}

\begin{proposition}
\label{prop Psi is injective}
The map $\Psi_\rho : \HdR \to \T_{[\rho]}\chifd(\pi_1\S)$ defined by \eqref{eq Psi} is injective.
\end{proposition}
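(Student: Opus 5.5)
To prove that $\Psi_\rho$ is injective, I will take $\alpha \in \ZdR$ with $\Psi_\rho\dR{\alpha}=0$ and show that $\alpha$ is exact. Concretely, I will build an $\Ad\rho$-equivariant map $\tilde f:\tilde\S\to\isom(\H^2)$ with $\dif\tilde f=\alphat$.

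The first step normalises the representative. Fix $p_0\in\tilde\S$. Since $\Psi_\rho\dR{\alpha}=[0]$, I apply Lemma~\ref{lem good value dR form} with $\tau=0$. This gives a representative $\alpha'$ of $\dR{\alpha}$, differing from $\alpha$ by an exact form, such that
\begin{equation*}
\int_{p_0}^{\gamma\cdot p_0}\alphat'=0 \quad \text{for all } \gamma\in\pi_1\S .
\end{equation*}
It therefore suffices to show that $\alpha'$ is exact.

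The second step constructs the primitive. I define
\begin{equation*}
\tilde f(p)\coloneqq\int_{p_0}^{p}\alphat' .
\end{equation*}
This is well defined because $\alphat'$ is closed and $\tilde\S$ is simply connected. By the fundamental theorem of calculus, $\dif\tilde f=\alphat'$.

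The third step, which is the main point, checks equivariance. For $\gamma\in\pi_1\S$ and $p\in\tilde\S$, I split the path as in the proof of Lemma~\ref{lem change of base point}:
\begin{equation*}
\tilde f(\gamma\cdot p)=\int_{p_0}^{\gamma\cdot p_0}\alphat'+\int_{\gamma\cdot p_0}^{\gamma\cdot p}\alphat' = 0+\int_{p_0}^{p}(\alphat'\circ\dif\gamma) .
\end{equation*}
The equivariance \eqref{eq ad equiv 1-form} of $\alphat'$ then gives
\begin{equation*}
\int_{p_0}^{p}(\alphat'\circ\dif\gamma)=\Ad\rho(\gamma)\cdot\tilde f(p).
\end{equation*}
Hence $\tilde f$ satisfies \eqref{eq ad equiv map}. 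It therefore descends to a section $f$ of $\bundle$ with $\dif f=\alpha'$, so $\alpha'\in\BdR$ and $\dR{\alpha}=0$.

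The only delicate point is the change of variables $\int_{\gamma\cdot p_0}^{\gamma\cdot p}\alphat'=\int_{p_0}^{p}\gamma^*\alphat'$. It holds by integrating along the image under $\gamma$ of a path from $p_0$ to $p$, exactly as in the earlier lemmas.
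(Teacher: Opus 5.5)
Your proof is correct and follows the paper's own argument essentially step for step. You normalise the representative via Lemma~\ref{lem good value dR form} with $\tau=0$, take the primitive $\tilde f(p)=\int_{p_0}^{p}\alphat'$, and obtain its $\Ad\rho$-equivariance by splitting the path at $\gamma\cdot p_0$ and using the equivariance \eqref{eq ad equiv 1-form} of $\alphat'$.
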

\begin{proof}
Let $\dR{\alpha} \in \HdR$ such that $\Psi_\rho\dR{\alpha}=0$. Let us show that $\dR{\alpha} =0$.

Let $p_0 \in \tilde{\S}$. By Lemma~\ref{lem good value dR form}, up to a good choice of representative $\alpha \in \ZdR$, we have that for all $\gamma \in \pi_1\S$, 
\begin{equation*}
\int_{p_0}^{\gamma\cdot p_0} \alphat = 0 \, .
\end{equation*}
Then, the smooth map $F: \tilde{\S} \to \isom(\H^2)$ defined by
\begin{equation*}
F(p) = \int_{p_0}^{p} \alphat \, 
\end{equation*}
clearly satisfies $\alphat = \dif F$. Moreover, it is $\Ad \rho$-equivariant: for all $p\in \S$ and $\gamma \in \pi_1\S$,
\begin{equation*}
F( \gamma \cdot p) = \int_{p_0}^{\gamma \cdot p} \alphat = \int_{p_0}^{\gamma \cdot p_0} \alphat + \int_{\gamma \cdot p_0}^{\gamma \cdot p} \alphat = \int_{\gamma \cdot p_0}^{\gamma \cdot p} \alphat = \Ad \rho (\gamma) \int_{p_0}^{p} \alphat=\Ad \rho(\gamma)\cdot F(p) \, .
\end{equation*}
Hence, $F$ induces a smooth section $f$ of $\bundle$ such that $\alpha = \dif f$. That is $\dR{\alpha} = 0$.
\end{proof}

\subsection{The Goldman form in de Rham cohomology}
\label{subsec wdR}

Note that the Killing form $\kil$ on $\isom(\H^2)$ provides a non-degenerate bilinear form on each fibre of $\bundle$, that we shall abusively still denote by $\kil$. Indeed, two elements $X$ and $Y$ of the fibre of $\bundle$ at a point $q \in \S$ are classes $X=[p,x]$ and $Y =[p,y]$, where $p\in \pi^{-1}(q) \subset \tilde{\S}$ and $x,y \in \isom(\H^2)$. Then, because of $\Ad$-invariance property of $\kil$, see~\eqref{eq Ad-inv of Kil}, the quantity
\begin{equation*}
\kil(X,Y) = \kil([p,x],[p,y]) \coloneqq \kil(x,y)
\end{equation*}
is well-defined. Hence, there is a natural skew-symmetric bilinear map on the vector space $\HdR$ defined by 
\begin{equation*}
\wdR(\dR{\alpha},\dR{\alpha'}) = \int_\S \kil(\alpha \wedge \alpha') \, ,
\end{equation*}
where $\kil(\alpha \wedge \alpha')$ is the wedge product of $\alpha$ and $\alpha'$ paired with the bilinear form $\kil$ on $\bundle$ introduced in subsection~\ref{subsec A flat bundle}, i.e. for every smooth vector fields $X,Y$ on $\S$,
\begin{equation*}
\kil(\alpha \wedge \alpha')(X,Y) = \kil\bp{\alpha(X), \alpha'(Y)}-\kil\bp{\alpha(Y), \alpha'(X)} \, .
\end{equation*}

\begin{remark}
The bilinear map $\wdR$ is well-defined on the cohomology group $\HdR$ because $\S$ is closed. Indeed, as $\S$ is compact the integral above is finite. Moreover, it does not depend on the choice of representatives of the classes $\dR{\alpha}$ and $\dR{\alpha'}$. Indeed, let $\alpha, \alpha' \in \ZdR$ and let $f$ be a smooth section of $\bundle$. As $\alpha$ is closed (i.e. $\dif \alpha=0$), one easily checks that 
\begin{equation*}
\kil(\alpha \wedge \dif f) = \dif \bp{\kil(\alpha,f) } \, . 
\end{equation*}
Hence, using Stokes's theorem and that $\partial\S = \emptyset$, we get
\begin{equation*}
\int_\S \kil\bp{\alpha \wedge (\alpha' + \dif f)} = \int_\S \kil(\alpha \wedge \alpha') + \int_{\partial\S}\kil(\alpha,f) = \int_\S \kil(\alpha \wedge \alpha') \,.
\end{equation*}
\end{remark}

\begin{proposition}
\label{prop wg=wdR}
The isomorphism $\Psi_\rho: \HdR,\to \Hisom$ is such that $\Psi_\rho^*\wdR = \wg$.
\end{proposition}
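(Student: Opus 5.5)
We compute $\wdR$ on a fundamental domain, using Stokes's Theorem. Choose the standard presentation $\pi_1\S=\langle a_1,b_1,\dots,a_g,b_g \mid \prod_i [a_i,b_i]\rangle$. Let $D\subset\tilde\S$ be a fundamental polygon with $4g$ sides, with a vertex at the base point $p_0$. Its edges are paired by the generators: each side $s$ is glued to a side $\gamma_s\cdot s$ with the opposite orientation. The fundamental class $R\in H_2(\pi_1\S,\Z)$ is then represented by the standard chain $\sum_i \bigl(\partial_{a_i}\text{ and }\partial_{b_i}\text{ terms}\bigr)$, coming from the Fox calculus of the relator. Concretely, for a skew pairing of cocycles one obtains the classical formula
\begin{equation*}
\kil([\tau]\smile[\tau'])(R)=\sum_{i=1}^g \Bigl(\kil\bigl(\tau(a_i),\tau'(b_i)\bigr)-\kil\bigl(\tau(b_i),\tau'(a_i)\bigr)\Bigr)+(\text{correction terms}),
\end{equation*}
which we can make exact by the choice of representatives below.

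\textbf{Step 1.} Fix $\alpha,\alpha'\in\ZdR$ and set $F(p)=\int_{p_0}^p\alphat$ on $\tilde\S$, so that $\dif F=\alphat$. Since $\alphat'$ is closed, $\kil(\alphat\wedge\alphat')=\dif\bigl(\kil(F,\alphat')\bigr)$. Hence
\begin{equation*}
\wdR(\dR{\alpha},\dR{\alpha'})=\int_D\kil(\alphat\wedge\alphat')=\int_{\partial D}\kil(F,\alphat').
\end{equation*}

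\textbf{Step 2.} Pair the sides. For a side $s$ and its partner $\gamma\cdot s$, equivariance gives $F(\gamma p)=\Ad\rho(\gamma)F(p)+\tau_\alpha(\gamma)$ and $\alphat'\circ\dif\gamma=\Ad\rho(\gamma)\alphat'$. By $\Ad$-invariance of $\kil$, the two contributions combine to
\begin{equation*}
-\kil\Bigl(\tau_\alpha(\gamma),\int_s\alphat'\Bigr),
\end{equation*}
with a sign fixed by orientation. Each $\int_s\alphat'$ is an integral between vertices of $D$, and vertices are of the form $w\cdot p_0$ for prefixes $w$ of the relator. Therefore $\int_s\alphat'=\tau_{\alpha'}(w')-\tau_{\alpha'}(w)$, which expands via the cocycle relation \eqref{Zeq}.

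\textbf{Step 3.} Compare the resulting finite sum with the group-cohomological cup product evaluated on the explicit cycle representing $R$. This cycle is obtained from the relator as $\sum_{k}(\text{prefix}_k\otimes\text{letter}_k)$ with suitable signs, and the bar-resolution cup product gives exactly terms of the form $\kil\bigl(\tau(w),\Ad\rho(w)\tau'(x)\bigr)$. We use Proposition~\ref{prop cocy conj} and \eqref{Zeq} to rewrite $\tau(\text{prefix})$ in terms of generators. Since both sides are independent of representatives (by the Remark for $\wdR$, and by Lemma~\ref{lem change of base point} together with Lemma~\ref{lem good value dR form} for $\wg$), and $\Psi_\rho\dR{\alpha}=[\tau_\alpha]$ by definition, the two expressions agree term by term.

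The main obstacle is Step 3. It requires fixing a precise representative of $R$ in $\Z[\pi_1\S]\otimes\Z[\pi_1\S]$ compatible with the paper's normalisation of the cup product, and tracking the signs and orientations of the $4g$ sides so that the boundary sum matches. I would first carry out the check for $g=1$-style bookkeeping on a single commutator $[a,b]$, identifying its four sides with the four terms of the cocycle expansion, and then sum over $i$, using additivity along the relator.
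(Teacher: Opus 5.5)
Your route is genuinely different from the paper's. The paper proves the proposition in two lines by appeal to general theory. It uses that the de Rham isomorphism with local coefficients carries wedge products to cup products (Deligne, Theorem~2.23), and that the fundamental class corresponds to integration over $\S$. The compatibility of Eilenberg's identification with products and fundamental classes is left implicit. You instead compute directly on a fundamental polygon with Stokes's Theorem, in the spirit of the Riemann bilinear relations. Once completed, this is self-contained, which fits the paper's aim of relying only on Stokes's Theorem. It also replaces the multiplicativity of the comparison isomorphisms by one explicit identification of a cycle representing $R$, with its sign, which the paper never specifies. The price is exactly the bookkeeping you postpone.

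As written, however, the proposal stops before the actual content, and several announced details are off.

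\textbf{Side pairings.} With vertices $w\cdot p_0$ for prefixes $w$ of the relator, the sides are not paired by the generators. The side from $p_0$ to $a_1\cdot p_0$ is paired with the side from $a_1b_1\cdot p_0$ to $a_1b_1a_1^{-1}\cdot p_0$ by $a_1b_1a_1^{-1}$, which is a conjugate of $b_1$.

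\textbf{Step~2.} Your computation actually gives $\int_{\gamma\cdot s}\kil(F,\alphat')=\int_s\kil(F,\alphat')+\kil\bigl(\tau_\alpha(\gamma),\Ad\rho(\gamma)\cdot\int_s\alphat'\bigr)$. So a pair contributes $\mp\kil\bigl(\tau_\alpha(\gamma),\int_{\gamma\cdot s}\alphat'\bigr)$, not a term involving $\int_s$.

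\textbf{Correction terms.} These are not an artefact of the choice of representatives. They are the Fox-derivative terms, and the naive sum over $i$ is not even invariant under adding a coboundary. The normalisation $F(p_0)=0$ does not remove them.

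\textbf{How to finish Step~3.} The cleanest way is not to pair sides at all. Let $x_k$ be the $k$-th letter of the relator and $w_k$ its prefix of length $k$. For $x\in\{a_i,b_i\}$ let $\sigma_x$ be a path from $p_0$ to $x\cdot p_0$, and set $\sigma_{x^{-1}}=x^{-1}\cdot\overline{\sigma_x}$, so that the $k$-th side is $w_{k-1}\cdot\sigma_{x_k}$. Apply the transformation rule above once to each side and once to each $\sigma_{x^{-1}}$; each letter and its inverse occur exactly once. The terms $\int_{\sigma_x}\kil(F,\alphat')$ then cancel in pairs, leaving
\begin{equation*}
\wdR(\dR{\alpha},\dR{\alpha'})=\pm\Bigl(\sum_{k=1}^{4g}\kil\bigl(\tau_\alpha(w_{k-1}),\Ad\rho(w_{k-1})\cdot\tau_{\alpha'}(x_k)\bigr)-\sum_{x}\kil\bigl(\tau_\alpha(x^{-1}),\Ad\rho(x^{-1})\cdot\tau_{\alpha'}(x)\bigr)\Bigr),
\end{equation*}
with $x$ ranging over $a_1,b_1,\dots,a_g,b_g$. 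This is the twisted cup product $(\tau\smile\tau')[g|h]=\kil\bigl(\tau(g),\Ad\rho(g)\cdot\tau'(h)\bigr)$ evaluated on $c=\sum_k[w_{k-1}|x_k]-\sum_x[x^{-1}|x]$, whose boundary $2g[1]$ is degenerate.

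\textbf{What remains.} You still must cite or prove that $c$ represents $\pm R$. It is the image of the $2$-cell of the one-vertex CW structure under the comparison map from the Fox-calculus resolution to the bar resolution, with the sign fixed by the orientation of $\partial D$. Match against this standard twisted product, as you indicate in Step~3, rather than against the paper's displayed formula for $\kil([\tau]\smile[\tau'])(\gamma\otimes\eta)$, which omits the $\Ad\rho$ twist.
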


\begin{proof}
In de Rham cohomology, the fundamental class of the closed oriented manifold $\S$ represents integration over $\S$. Moreover, the de Rham isomorphism sends wedge products onto cup products \cite[Theorem 2.23]{Deligne_70}. Hence, we have
\begin{equation*}
\wg \bp{\Psi_\rho\dR{\alpha} , \Psi_\rho\dR{\alpha'}} = \kil\bp{\Psi_\rho\dR{\alpha} \smile \Psi_\rho\dR{\alpha'}}(R)  = \int_\S \kil(\alpha \wedge \alpha')  = \wdR(\dR{\alpha}, \dR{\alpha'}) \, . \qedhere
\end{equation*}
\end{proof}

\section{Twists on Teichm\"uller space}

\label{sec twists}

In this section, we shall describe some particular tangent vectors of $\chifd(\pi_1\S)$: \emph{infinitesimal twists along closed geodesic curves}. 

\begin{definition}[Infinitesimal twist]
\label{def infinitesimal twist}
Let $\rho$ be a Fuchsian representation of $\pi_1\S$ and consider the hyperbolic surface $(\S,h)= \H^2/\rho(\pi_1\S)$. Consider a simple element $\gamma \in \pi_1\S$ (Definition~\ref{def simple}) and the associated oriented simple closed geodesic curve $c^h_{\gamma}$ on $(\S,h)$ given by Proposition~\ref{prop geod is simple}.

The \emph{infinitesimal twist along $\gamma$ at $[\rho]$} is the element $\twinf{\gamma} \in \T_{[\rho]}\chifd(\pi_1\S)$ representing the infinitesimal variation at $t=0$ of the holonomy maps $[\rho_t]_{t\in \R}$ of the family $(\S,h_t)_{t\in\R}$ of hyperbolic surfaces obtained by cutting $(\S,h)$ along the simple closed oriented geodesic curve $c^h_{\gamma}$ and gluing it back with a twist $t\in \R$ on the right side of the oriented geodesic curve $c^h_{\gamma}$.
\end{definition}

Let us now describe infinitesimal twists in both the group and de Rham cohomology models of the tangent space of $\chifd(\pi_1\S)$.

\subsection{Infinitesimal twists in group cohomology}

\label{subsec inf translations and twists}

First, let us introduce the following.

\begin{definition}[Infinitesimal translation along a geodesic line]
\label{def infinitesimal translation}
Let $C$ be an oriented geodesic line of $\H^2$. We shall denote by $T_t(C) \in \Isom^+(\H^2)$ the hyperbolic translation by $t\in \R$ along $C$. The \emph{infinitesimal translation along $C$} is the generator $\tH(C) \in \isom(\H^2)$ of the $1$-parameter subgroup $\{ T_t(C) \ \vert \ t \in \R \} < \Isom^+(\H^2)$. It can be expressed as 
\begin{equation*}
\tH(C) \coloneqq \ddto T_t(C) \, .
\end{equation*}
\end{definition}

Let $\rho$ be a Fuchsian representation of $\pi_1\S$, consider the hyperbolic surface $(\S,h) =\H^2/\rho(\pi_1\S)$ and a simple element $\gamma \in \pi_1\S$. The transformation of the holonomy group $\rho(\pi_1\S)$ by a twist along the oriented simple closed geodesic curve $c^h_\gamma$ and its associated cocycle can be algebraically described in the following way \cite{Johnson_Millson_87}. Let $C_\gamma$ be the oriented geodesic line of $\H^2$ such that $\rho(\gamma)$ is a positive hyperbolic translation $T_{l}(C_\gamma) \in \Isom^+(\H^2)$ with $l>0$. 

\vspace{\baselineskip}

\underline{If $c^h_{\gamma}$ separates $(\S,h) = \H^2/\rho(\pi_1\S)$ into two connected component $\S_0$ and $\S_1$}, respectively on the left and right side of the oriented geodesic curve $c^h_{\gamma}$, then $\pi_1\S$ is the amalgamated product 
 \begin{equation*}
 \pi_1\S = \pi_1 \S_0 *_{\langle\gamma\rangle} \pi_1 \S_1 \, .
 \end{equation*}
A holonomy map $\rho_t$ of the hyperbolic surface obtained from twisting $(\S,h)$ by $t\in\R$ along $c^h_\gamma$ can be expressed as 
\begin{equation*}
 \begin{cases}
 \rho_t \vert_{\pi_1 \S_0 } = \rho \\
 \rho_t \vert_{\pi_1 \S_1 } = T_t(C_\gamma) \cdot \rho \cdot T_{-t}(C_\gamma) \, . 
 \end{cases}
\end{equation*}
\begin{remark}
That is indeed well-defined, as for all $t\in\R$,
\begin{equation*}
 T_t(C) \cdot \rho(\gamma) \cdot T_{-t}(C_\gamma) = T_t(C_\gamma) \cdot T_{l}(C_\gamma) \cdot T_{-t}(C_\gamma) = T_{l}(C_\gamma) = \rho(\gamma)
\end{equation*}
so that those two representations agree on $\langle \gamma\rangle$, the group on which the product defining $\pi_1\S$ is amalgamated. 
\end{remark} 

A representative of 
\begin{equation*}
 \twinf{\gamma}=[\tau] = \ddto [\rho_t] \in \T_{[\rho]}\chifd(\S) = \Hisom
\end{equation*}
is given by the cocycle
\begin{equation*}
 \tau = \ddto \rho_t \rho^{-1} \in \Zisomeq \, ,
\end{equation*}
which is defined by
\begin{equation} 
\label{eq inf twist separating}
 \begin{cases}
\tau(\eta)=0 & \text{if } \eta \in \pi_1 \S_0 \, , \\
\tau(\eta)=\tH(C_\gamma) - \Ad \rho(\eta) \cdot \tH(C_\gamma) & \text{if } \eta \in \pi_1 \S_1 \, .
 \end{cases}
\end{equation}

\vspace{\baselineskip}

\underline{If $c^h_{\gamma}$ does not separate $(\S,h) = \H^2/\rho(\pi_1\S)$}, setting $\S' = \S \setminus c^h_{\gamma}$ and letting $ \langle G \, \vert \, R \rangle$ be a finite presentation of $\pi_1 \S'$, the group $\pi_1\S$ is the HNN extension
 \begin{equation*}
 \pi_1\S = \pi_1 \S' *_{j} = \left\langle G, \zeta \st R, \zeta^{-1}j_0(\gamma)\zeta = j_1(\gamma)\right\rangle \, ,
 \end{equation*}
where the inclusions $j_0(\gamma),j_1(\gamma) \in \pi_1 \S'$ come from the two different inclusions of the closed curve $c^h_\gamma$ as a boundary of surface $\S'$. We may assume that moreover, $\rho(j_0(\gamma))= \rho(\gamma)$ and that these inclusions fit with the orientation of $\gamma$ in the following sense: 
the surface $\S'$ is on the left of $j_{0}(\gamma)$ and on the right of $j_1(\gamma)$.

A holonomy map $\rho$ of the hyperbolic surface obtained from twisting $(\S,h)$ by $t\in\R$ along $c^h_\gamma$ can be expressed as 
\begin{equation*}
 \begin{cases}
 {\rho_t}\vert_{\pi_1 \S'} = \rho \\
 \rho_t(\zeta) = T_t(C_\gamma) \cdot \rho(\zeta) \, .
 \end{cases}
 \end{equation*}

\begin{remark}
That is indeed well-defined, as for all $t\in \R$, 
\begin{equation*}
 \rho_t(\zeta)^{-1} \rho\bp{j_0(\gamma)} \rho_t(\zeta) = \rho(\zeta)^{-1} T_{-t}(C_\gamma) T_{l} (C_\gamma)T_{t}(C_\gamma) \rho(\zeta) =\rho(\zeta)^{-1} T_{l}(C_\gamma) \rho(\zeta) =\rho\bp{j_1(\gamma)} \, .
\end{equation*}
\end{remark}

A representative of 
\begin{equation*}
 \twinf{\gamma}=[\tau] = \ddto [\rho_t] \in \T_{[\rho]}\chifd(\S) = \Hisom
\end{equation*}
is thus given by the cocycle
\begin{equation*}
 \tau = \ddto \rho_t \rho^{-1} \in \Zisomeq \, ,
\end{equation*}
which is defined by
\begin{equation}
\label{eq inf twist non-separating}
 \begin{cases}
 \tau(\eta) = 0 & \text{if} \ \eta \in \pi_1 \S' \, , \\
 \tau(\zeta) = \tH(C_\gamma) \, . 
 \end{cases}
\end{equation}

Let us notice that the group cocycle representing infinitesimal twist variation along $\gamma$ satisfies the following.

\begin{proposition}
\label{prop restriction of inf twist is coboundary}
Let the setting be as above, let $\tau\in \Zisom$ be a group cocycle such that $\twinf{\gamma} =[\tau]$ and consider a connected subsurface $\Sigma$ of the hyperbolic surface $(\S,h)$ such that the  the subgroup inclusion $\pi_1\Sigma \to \pi_1\S$ is injective. If $c^h_\gamma \cap \Sigma = \emptyset$, then the restriction of $\tau$ to $\pi_1\Sigma$ is the restriction of a coboundary, i.e. there exists $x \in \isom(\H^2)$ such that 
\begin{equation*}
    \forall \eta \in \pi_1\Sigma \, , \qquad \tau(\eta) = \Ad \rho(\eta) \cdot x-x   \, .
\end{equation*}
\end{proposition}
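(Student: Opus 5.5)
Since $\Sigma$ is connected and misses $c^h_\gamma$, it lies in a single connected component of $\S \setminus c^h_\gamma$. The plan is to reduce to the explicit cocycles \eqref{eq inf twist separating} and \eqref{eq inf twist non-separating}, using that restricting a coboundary to a subgroup gives the restriction of a coboundary. Indeed, if $\tau = \tau_0 + \beta$ with $\beta(\eta)=\Ad\rho(\eta)\cdot y - y$, it suffices to find $x_0$ working for $\tau_0$ on $\pi_1\Sigma$ and then take $x=x_0+y$. So we may assume $\tau$ is the explicit cocycle given above.

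\emph{Choice of base point and conjugation.} Realising $\pi_1\Sigma$ as a subgroup of $\pi_1\S$ requires a base point and a path to the base point used for $\pi_1\S$. With a different choice, $\pi_1\Sigma$ is replaced by a conjugate $\zeta \pi_1\Sigma \zeta^{-1}$. Proposition~\ref{prop cocy conj} lets us pass between the two. For $\eta\in\pi_1\Sigma$, if $\tau(\eta)=\Ad\rho(\eta)\cdot x - x$, then
\[
\tau(\zeta\eta\zeta^{-1}) = \Ad\rho(\zeta\eta\zeta^{-1})\cdot x' - x',
\]
where $x' = \Ad\rho(\zeta)\cdot x - \tau(\zeta)$. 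This is a direct check: the two terms $\tau(\zeta)-\Ad\rho(\zeta\eta\zeta^{-1})\cdot\tau(\zeta)$ are exactly the coboundary of $-\tau(\zeta)$. Hence the property is invariant under conjugation, and we may choose a base point in the complementary component containing $\Sigma$, joined by a path avoiding $c^h_\gamma$.

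\emph{Case analysis.} If $c^h_\gamma$ separates, then $\pi_1\Sigma$ is contained, up to conjugation, in $\pi_1\S_0$ or $\pi_1\S_1$. In the first case $\tau$ vanishes on it and we take $x=0$. In the second case
\[
\tau(\eta)=\tH(C_\gamma)-\Ad\rho(\eta)\cdot\tH(C_\gamma),
\]
so we take $x=-\tH(C_\gamma)$. If $c^h_\gamma$ does not separate, then $\pi_1\Sigma$ is contained, up to conjugation, in $\pi_1\S'$, where $\tau$ vanishes, so $x=0$ works.

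\emph{Main obstacle.} The delicate step is justifying that $\pi_1\Sigma$ sits, up to conjugation, inside the vertex group $\pi_1\S_0$, $\pi_1\S_1$ or $\pi_1\S'$ of the amalgam or HNN decomposition. This follows from the van Kampen description: a loop in $\Sigma$ never crosses $c^h_\gamma$, so it is homotopic within $\S\setminus c^h_\gamma$, after conjugating by the connecting path, to a loop in the corresponding complementary component. That conjugating path is exactly what Proposition~\ref{prop cocy conj} handles.
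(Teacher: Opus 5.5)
Your proposal is correct and follows the paper's proof. You reduce, up to a coboundary, to the explicit cocycles \eqref{eq inf twist separating} and \eqref{eq inf twist non-separating}, then split into the separating and non-separating cases according to which component of $\S \setminus c^h_\gamma$ contains $\Sigma$. Your extra check via Proposition~\ref{prop cocy conj} adds something the paper leaves implicit: it shows the conclusion survives conjugating $\pi_1\Sigma$, with $x' = \Ad\rho(\zeta)\cdot x - \tau(\zeta)$, which settles the base-point issue.
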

\begin{proof}
If $c^h_{\gamma}$ separates $(\S,h)$ into two connected component $\S_0$ and $\S_1$, then either $\Sigma\subseteq\S_0$ or $\Sigma\subseteq\S_1$. Then, it is direct consequence of the fact that, up to a coboundary, $\tau$ is expressed by \eqref{eq inf twist separating}.

If $c^h_{\gamma}$ does not separates $(\S,h)$, then $\Sigma\subseteq\S' = \S\setminus c^h_{\gamma}$. Then, it is direct consequence of the fact that, up to a coboundary, $\tau$ is expressed by \eqref{eq inf twist non-separating}.
\end{proof}

\subsection{Infinitesimal twists in de Rham cohomology}
\label{subsec Infinitesimal twists in de Rham cohomology}

Let $\rho$ be a Fuchsian representation of $\pi_1\S$ and consider the hyperbolic surface $(\S,h)= \H^2/\rho(\pi_1\S)$ with universal covering $\pi:\tilde{\S} \to \S$. Given a simple element $\gamma \in \pi_1\S$, let us construct an explicit $1$-form $\alpha_\gamma \in \ZdR$ such that $\Psi_{\rho}(\dR{\alpha_\gamma})$ is $\twinf{\gamma}$, the infinitesimal twist along $\gamma$ at $[\rho]$.

Let us define an $\isom(\H^2)$-valued $1$-form $\alphat_\gamma$ on $\tilde \S  $ by, for all $(p,v) \in \T\tilde{S}$
\begin{equation*}
\alphat_\gamma (p,v) =
\begin{cases}
 \tH(C_{\gamma,p})\, \dif(f\circ \tilde{\delta}) & \text{if } p\in \pi^{-1}(N^\gamma_\varepsilon) \, ,  \\
0 & \text{otherwise,}
\end{cases}
\end{equation*}
where:
\begin{itemize}
\item $\varepsilon>0$ is any small enough real number so that $N^\gamma_\varepsilon\subseteq \S$, the $\varepsilon$-neighbourhood of $c^h_{\gamma}$ in $(\S, h )$, is topologically an annulus,
\item $\delta: N^\gamma_\varepsilon \to (-\varepsilon,\varepsilon)$ is the signed $ h $-distance to the oriented closed geodesic curve $c^h_{\gamma}$ function (negative on its left and positive on its right), lifting to $\tilde{\delta}: \pi^{-1}(N^\gamma_\varepsilon) \to (-\varepsilon,\varepsilon)$,
\item $f:(-\varepsilon,\varepsilon) \to \R$ is a smooth function such that $f(\pm\varepsilon) = \pm1/2$ and its derivative $f'$ is compactly supported in $(-\varepsilon,\varepsilon)$ (i.e. $f'$ is a bump function with $\int f'=1$),
\item for all $p\in \pi^{-1}(N^\gamma_\varepsilon)\subset \tilde{\S}$, $C_{\gamma,p}$ is the image by $\dev$ of the oriented lift of $c_\gamma$ at distance less than $\varepsilon$ to $p$ (it is unique by definition of $\varepsilon)$.
\end{itemize}

Let $\eta \in \pi_1\S$ and $C$ be an oriented geodesic line of $\H^2$. Using the identification $\tilde{S} \simeq \H^2$ (see Remark~\ref{remark identification uS and H2}) the action of $\dif \eta $ on $\T\tilde{\S}$ is given by the one of $\rho(\eta) \in \Isom^+(\H^2)$ on $\T\H^2$. Moreover, one has $\tH(\rho(\eta) \cdot C) = \Ad \rho(\eta)\cdot \tH(C)$. Hence, the form $\alphat_\gamma$ satisfies that for all $\eta \in \pi_1\S$,
\begin{equation*}
\alphat_\gamma \circ \dif \eta \coloneqq \Ad \rho(\eta)\circ\alphat_\gamma \, ,
\end{equation*}
and thus induces an element $\alpha_\gamma \in \ZdR$, defined by: for all $(p,v) \in \T\tilde{\S}$ 
\begin{equation}
\label{eq twist in dR}
\alpha_\gamma (\pi(p),v) =
\begin{cases}
[p, \tH (C_{\gamma,p})]\, \dif(f\circ {\tilde \delta})(v) & \text{if } p\in \pi^{-1}(N^\gamma_\varepsilon) \, ,  \\
0 & \text{otherwise.}
\end{cases}
\end{equation}
The form $\alpha_\gamma$ is thus supported on the hyperbolic annulus $N^\gamma_\varepsilon$ (see Figure~\ref{fig alphagamma}). On that annulus it satisfies 
\begin{equation}
\label{eq twist in dR ver2}
\alpha_\gamma = \nu_\gamma \, \dif (f \circ \delta)  = \nu_\gamma (f' \circ \dif \delta) \, , 
\end{equation}
where $\nu_{\gamma}$ is the constant $\bundle$-valued function on $N^\gamma_\varepsilon$ defined by: for all $p\in \pi^{-1}( N^\gamma_\varepsilon)$,
\begin{equation}
\label{eq def nu}
 \nu_\gamma\bp{\pi(p)} \coloneqq \big[p,\tH(C_{\gamma,p})\big] \in \bundle \, .
\end{equation}

\begin{figure}[htbp]
\centering
\includegraphics{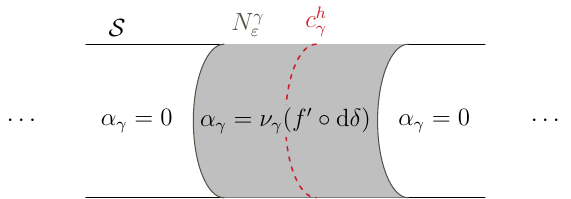} 
\caption{The form $\alpha_\gamma \in \ZdR$ representing the infinitesimal twist along $\gamma \in \pi_1\S$.}
\label{fig alphagamma}
\end{figure}

The following proposition states that the form $\alpha_\gamma$ defined in that way does indeed represent the infinitesimal twist $\twinf{\gamma}$ described in subsection~\ref{subsec inf translations and twists}.

\begin{proposition}
Let $\gamma \in \pi_1 \S$ be a simple element. Consider $\alpha_\gamma \in \ZdR$ an associated de Rham form  given by \eqref{eq twist in dR}, using a real number $\varepsilon>0$ and a smooth function $f:(-\varepsilon,\varepsilon) \to \R$. Then, 
\begin{equation*}
    \Psi_{\rho}(\dR{\alpha_\gamma}) = \twinf{\gamma} \, .
\end{equation*}
\end{proposition}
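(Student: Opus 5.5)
The plan is to compute the group cocycle $\tau_{\alpha_\gamma}(\eta)=\int_{p_0}^{\eta\cdot p_0}\alphat_\gamma$ directly and compare it, up to a coboundary, with the cocycles \eqref{eq inf twist separating} and \eqref{eq inf twist non-separating}. We choose the base point $p_0\in\tilde\S$ with $\pi(p_0)\notin N^\gamma_\varepsilon$; by Lemma~\ref{lem change of base point} this is harmless. In the separating case, take $\pi(p_0)\in\S_0$ (left side). In the non-separating case, take $\pi(p_0)\in\S'$.

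The key local computation is the integral of $\alphat_\gamma$ along a path in $\tilde\S$ crossing a single lift $\tilde N$ of the annulus $N^\gamma_\varepsilon$, which covers a strip around a lift $C$ of $c^h_\gamma$. On $\tilde N$ the $\isom(\H^2)$-valued coefficient $\tH(C_{\gamma,p})=\tH(C)$ is constant. Hence the integral equals $\tH(C)\bigl(f(\tilde\delta(\text{end}))-f(\tilde\delta(\text{start}))\bigr)$, which is $+\tH(C)$ when crossing from left to right of the oriented line $C$, $-\tH(C)$ when crossing from right to left, and $0$ when entering and exiting on the same side. Outside the lifts of $N^\gamma_\varepsilon$ the form vanishes. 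So for any $\eta$, $\tau_{\alpha_\gamma}(\eta)$ is the signed sum of $\tH(C)$ over the lifts $C$ of $c^h_\gamma$ separating $p_0$ from $\eta\cdot p_0$. The lifts are disjoint and embedded, since $c^h_\gamma$ is simple and $\varepsilon$ is small, so this sum is well defined: it is the same for every path.

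Next we evaluate on generators.

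In the separating case, let $\eta\in\pi_1\S_0$. A loop representing $\eta$ stays in $\S_0$ away from $N^\gamma_\varepsilon$, so its lift avoids the strips and $\tau_{\alpha_\gamma}(\eta)=0$.

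For $\eta\in\pi_1\S_1$, based via a path crossing $c^h_\gamma$ once, the loop lifts to a path going from $p_0$ across $C_\gamma$ into the $\S_1$ region, then back across $\rho(\eta)C_\gamma$. This gives $\tau_{\alpha_\gamma}(\eta)=\tH(C_\gamma)-\Ad\rho(\eta)\cdot\tH(C_\gamma)$, using $\tH(\rho(\eta)C)=\Ad\rho(\eta)\cdot\tH(C)$. This is exactly \eqref{eq inf twist separating}.

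In the non-separating case, for $\eta\in\pi_1\S'$ we get $0$ as before. For $\zeta$, the lift crosses exactly one lift of $c^h_\gamma$, namely $C_\gamma$, from left to right, which yields $\tH(C_\gamma)$ and matches \eqref{eq inf twist non-separating}.

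Since both cocycles agree on generating sets (amalgam, resp. HNN generators) and satisfy \eqref{Zeq}, they coincide. Therefore $\Psi_\rho\dR{\alpha_\gamma}=\twinf{\gamma}$. Independence of $\varepsilon$ and $f$ follows, although it is not even needed.

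The main obstacle is the bookkeeping of orientations and of which lift is crossed. One must check that the sign conventions — $\delta$ negative on the left, twist on the right side, $\S_0$ on the left, and $j_0,j_1$ with $\rho(j_0(\gamma))=\rho(\gamma)$ — produce $+\tH(C_\gamma)$ rather than its negative. One must also check that the chosen based loops for $\pi_1\S_1$ and for $\zeta$ cross the lift $C_\gamma$ itself, not a translate of it. To handle this, we fix the identifications so that $C_\gamma$ is the lift adjacent to the component of $\pi^{-1}(\S_0)$ (resp. $\pi^{-1}(\S')$) containing $p_0$, and read off the sign from $f(\pm\varepsilon)=\pm1/2$.
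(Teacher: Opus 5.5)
Your proposal is correct and follows essentially the same route as the paper: you put the base point off $N^\gamma_\varepsilon$, express the integral of $\alphat_\gamma$ as a signed sum of $\tH$ over the crossed lifts of $c^h_\gamma$, and evaluate on $\pi_1\S_0$ and $\pi_1\S_1$ (resp.\ $\pi_1\S'$ and $\zeta$) to match \eqref{eq inf twist separating} and \eqref{eq inf twist non-separating}. The differences are cosmetic: you concatenate paths across $C_\gamma$ and $\rho(\eta)C_\gamma$ where the paper uses the change-of-base-point computation from $p_1$, and you spell out the left-to-right crossing of $C_\gamma$ by $\zeta$, which the paper simply calls clear.
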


\begin{proof}
Let $p_0 \in \tilde{\S}$ be an arbitrary base point such that $\pi(p_0) \notin N_\varepsilon^\gamma$. Given $\eta \in \pi_1\S$, let $\sigma$ be any smooth path from $p_0$ to $\eta \cdot p_0$ transverse to every lift of $c_\gamma^h$ (for instance the geodesic arc between those two points). Then, using the expression of $\alphat_\gamma$, we have 
\begin{equation}
\label{eq inf twist in dR}
\int_{p_0}^{\eta \cdot p_0} \alphat_\gamma = \int_\sigma \alphat_\gamma =\sum_{p \in \Ima \sigma \cap \pi^{-1}(c_\gamma^h)}\epsilon_p(\sigma)\, \tH(C_{\gamma,p}) \, ,
\end{equation}
where the coefficients are given by 
\begin{equation*}
\epsilon_p(\sigma) = \begin{cases}
1 & \text{if $\sigma$ positively crosses } C_{\gamma,p} \text{ at } p \, , \\
-1 & \text{if $\sigma$ negatively crosses } C_{\gamma,p} \text{ at } p \, ,
\end{cases}
\end{equation*}
(positively crossing an oriented geodesic line being crossing it from left to right).

\vspace{\baselineskip}

\underline{If $c^h_{\gamma}$ separates $\S$ into two connected component $\S_0$ and $\S_1$}, then we can assume that $p_0 \in \pi^{-1}(\S_0)$ and set $p_1 \in \pi^{-1}(\S_1)$, such that the only lift of $c_\gamma^h$ separating $p_0$ and $p_1$ is $C_\gamma$ the oriented geodesic line along which $\rho(\gamma)$ is a hyperbolic translation. Then, using the expression of $\alphat_\gamma$, we have 
\begin{equation}
\label{eq p0 to p1}
\int_{p_0}^{p_1} \alphat_\gamma = \tH(C_\gamma) \, .
\end{equation}
If $\eta \in \pi_1\S_0$, then, by lifting a path in $\S_0$ we get a path from $p_0$ to $\eta \cdot p_0$ which does not intersect any lift of $c_\gamma^h$. Thus, using \eqref{eq inf twist in dR}, we have that for all $\eta \in \pi_1\S_0$
\begin{equation}
\label{eq inf twist dR0}
\int_{p_0}^{\eta \cdot p_0} \alphat_\gamma = 0 \, .
\end{equation}
Similarly, for all $\eta \in \pi_1\S_1$, we have
\begin{equation*}
\int_{p_1}^{\eta \cdot p_1} \alphat_\gamma = 0 \, ,
\end{equation*}
so that, using \eqref{eq p0 to p1} and remembering what was done in subsection~\ref{subsec de Rham map} concerning the change of base point for the de Rham map, 
\begin{equation}
\label{eq inf twist dR1}
\int_{p_0}^{\eta \cdot p_0} \alphat_\gamma = \tH(C_\gamma)- \Ad \rho(\eta) \cdot \tH(C_\gamma) \, ,
\end{equation}
Hence, it is clear from equations~\eqref{eq inf twist dR0} and \eqref{eq inf twist dR1} that $\Psi_{\rho}(\dR{\alpha_\gamma}) = \twinf{\gamma}$ where $\twinf{\gamma}$ is described in subsection~\ref{subsec inf translations and twists}.

\vspace{\baselineskip}

\underline{If $c_\gamma^h$ does not separate $\S$}, it is clear from expression~\eqref{eq inf twist in dR} that $\Psi_{\rho}(\dR{\alpha_\gamma}) = \twinf{\gamma}$ where $\twinf{\gamma}$ is described in subsection~\ref{subsec inf translations and twists}.
\end{proof}

\begin{remark}
\label{rem advantage dR}
Here, one clearly sees the advantage of the de Rham cohomology model for the tangent space of Teichm\"uller space evoked at the beginning of Section~\ref{Sec De Rham model}. The deformation of a hyperbolic surface by a twist along a simple closed geodesic curve $c$ is ‘‘localised'' around that curve. As a consequence a de Rham form associated with the infinitesimal twist $\twinf c$ along $c$ can be taken with support in any neighbourhood of the simple closed geodesic curve $c$.
\end{remark}

\section{A first formula of Wolpert}
\label{Sec Wolpert part 1}

In \cite{Wolpert:1981,Wolpert:1983}, Wolpert has found the explicit formula linking the Weil--Petersson product of infinitesimal twists along two closed geodesic curves and their intersection angles. Here is the equivalent statement for the Goldman form, which coincides with the Weil--Petersson form by Goldman's Theorem \cite{Goldman:1984}.

\begin{theorem}[Theorem~\ref{theo intro First Wolpert formula}]
\label{First Wolpert formula}
Let $\rho$ be a Fuchsian representation of $\pi_1\S$ and consider the associated hyperbolic surface $(\S,h)= \H^2/\rho(\pi_1\S)$. Let $\gamma,\eta \in \pi_1\S$ be two simple elements not conjugated with each other. Then, the infinitesimal twists $\twinf{\gamma},\twinf{\eta} \in \T_{[\rho]}\chifd(\pi_1\S)$ satisfy 
\begin{equation*}
\wg(\twinf{\gamma},\twinf{\eta})= 2 \sum_{q\in c^h_{\gamma} \cap c^h_{\eta}}\cos\theta_q \, ,
\end{equation*}
where $\theta_q \in \R/2\pi\Z$ is the oriented angle between the simple closed geodesic curves $c^h_{\gamma}$ and $c^h_{\eta}$ (given by Proposition~\ref{prop Unique geodesic loop}) at $q \in (\S, h )$.
\end{theorem}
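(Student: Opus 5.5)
By Proposition~\ref{prop wg=wdR} we may compute in the de Rham model:
\begin{equation*}
\wg(\twinf{\gamma},\twinf{\eta})=\int_\S \kil(\alpha_\gamma\wedge\alpha_\eta),
\end{equation*}
where $\alpha_\gamma$ and $\alpha_\eta$ are the forms given by \eqref{eq twist in dR}. We build them from a common small $\varepsilon>0$ and bump profiles $f,g$. Since $\gamma$ and $\eta$ are simple and not conjugate, $c^h_\gamma$ and $c^h_\eta$ are distinct simple closed geodesics. They therefore meet transversally in finitely many points. For $\varepsilon$ small enough, $N^\gamma_\varepsilon\cap N^\eta_\varepsilon$ is a disjoint union of small geodesic ``parallelograms'' $Q_q$, one around each $q\in c^h_\gamma\cap c^h_\eta$, and each $Q_q$ lifts isometrically to a neighbourhood of the intersection point $\tilde q$ of two lifts $C=C_{\gamma,\tilde q}$ and $C'=C_{\eta,\tilde q}$. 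The integrand is supported in $N^\gamma_\varepsilon\cap N^\eta_\varepsilon$, so the integral splits as a sum over $q$ of integrals over $Q_q$. If the two geodesics are disjoint, the sum is empty and the formula gives $0$, as required.

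On a given $Q_q$, \eqref{eq twist in dR ver2} gives
\begin{equation*}
\alpha_\gamma=\nu_\gamma\,\dif(f\circ\delta_\gamma),\qquad \alpha_\eta=\nu_\eta\,\dif(g\circ\delta_\eta),
\end{equation*}
and on the lift the sections $\nu_\gamma,\nu_\eta$ are the \emph{constants} $\tH(C)$ and $\tH(C')$. Hence
\begin{equation*}
\kil(\alpha_\gamma\wedge\alpha_\eta)=\kil\bp{\tH(C),\tH(C')}\,\dif(f\circ\delta_\gamma)\wedge\dif(g\circ\delta_\eta),
\end{equation*}
and the contribution of $q$ is $\kil(\tH(C),\tH(C'))\int_{Q_q}\dif F\wedge\dif G$ with $F=f\circ\delta_\gamma$ and $G=g\circ\delta_\eta$. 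This is a purely topological integral. One way to evaluate it is to use $(u,v)=(\delta_\gamma,\delta_\eta)$ as coordinates on $Q_q$, a diffeomorphism onto $(-\varepsilon,\varepsilon)^2$ for small $\varepsilon$ by transversality. Then
\begin{equation*}
\int_{Q_q}\dif F\wedge\dif G=\pm\int f'(u)\,\dif u\int g'(v)\,\dif v=\pm1,
\end{equation*}
the sign being whether $(\dif\delta_\gamma,\dif\delta_\eta)$ is positively oriented. Equivalently, one can apply Stokes's Theorem to $F\,\dif G$ on the parallelogram. Either way the value is the local intersection sign $\epsilon_q$ of the oriented curves $c^h_\gamma$ and $c^h_\eta$ at $q$, up to a global convention, since $\nabla\delta_\gamma$ is the normal pointing to the right of $c^h_\gamma$.

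The remaining step, which is where care is needed, is the Lie-algebra computation: for two oriented geodesics of $\H^2$ crossing at angle $\theta$,
\begin{equation*}
\kil\bp{\tH(C),\tH(C')}=2\cos\theta.
\end{equation*}
Using the $\Ad$-invariance \eqref{eq Ad-inv of Kil}, we can move $\tilde q$ to $(0,0,1)$ in the hyperboloid model. In $\so(2,1)$ the infinitesimal boost along the geodesic through $(0,0,1)$ in direction $(\cos\phi,\sin\phi,0)$ is $B_\phi=\cos\phi\,B_1+\sin\phi\,B_2$, where $B_1,B_2$ are the standard boost generators. With $\kil(A,B)=\tr(AB)$ as in \eqref{eq kil in SO}, one has $\tr(B_iB_j)=2\delta_{ij}$, so $\tr(B_\phi B_{\phi'})=2\cos(\phi-\phi')$.

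The main obstacle is reconciling the sign $\epsilon_q$ with the oriented angle $\theta_q\in\R/2\pi\Z$. The natural fix is to take $\theta_q$ measured from $c^h_\gamma$ to $c^h_\eta$ and to check that $\epsilon_q\cos(\text{unoriented angle})$ equals $\cos\theta_q$ under the paper's conventions. These are: $\delta$ is negative on the left, and positive crossing is from left to right. The check is a short case analysis of the two crossing directions. Summing the contributions $2\cos\theta_q$ over $q\in c^h_\gamma\cap c^h_\eta$ then yields the formula.
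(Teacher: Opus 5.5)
Your route is the paper's: de Rham representatives, localisation of $\kil(\alpha_\gamma\wedge\alpha_\eta)$ to the regions $Q_q$, the identity $\kil\bp{\tH(C),\tH(C')}=2\cos\theta$, and Stokes (or your change of variables) on $Q_q$. You are also right that $\int_{Q_q}\dif F\wedge\dif G$ is a sign $\epsilon_q$ and not always $1$.

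The gap is the step you defer to a ``short case analysis'': it fails. With the paper's conventions ($\dif\delta$ is dual to the right-pointing unit normal), at $q$ one has $\dif\delta_\gamma\wedge\dif\delta_\eta=\sin\theta_q\cdot(\text{area form})$. Here $\theta_q\in\R/2\pi\Z$ is the angle measured counterclockwise, for the orientation of $\S$, from the tangent of $c^h_\gamma$ to that of $c^h_\eta$. Hence $\epsilon_q=\mathrm{sign}(\sin\theta_q)$, and the contribution of $q$ is $2\,\mathrm{sign}(\sin\theta_q)\cos\theta_q$. This equals $-2\cos\theta_q$ at every crossing with $\sin\theta_q<0$. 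The sign is not a global convention: it varies from point to point whenever the curves cross in both directions, for instance when they meet but have algebraic intersection number $0$. The paper's own Stokes computation hides the same issue. It gets $1$ by tacitly assuming that the boundary orientation of $\mathcal{E}_+$ runs from the side $\delta_\eta=-\varepsilon$ to the side $\delta_\eta=\varepsilon$, which holds only when $\sin\theta_q>0$.

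No case analysis can repair this while keeping $\theta_q\in\R/2\pi\Z$, because the two sides behave differently under orientation reversal. The left-hand side is orientation-independent. Replacing $\gamma$ by $\gamma^{-1}$ changes $\nu_\gamma$ to $-\nu_\gamma$ and $\delta$ to $-\delta$, so $\alpha_{\gamma^{-1}}$ is $\alpha_\gamma$ built with the profile $x\mapsto -f(-x)$, hence cohomologous to it. The right-hand side changes sign when $c^h_\eta$ is reversed, since every $\theta_q$ shifts by $\pi$. What your computation actually proves is
\begin{equation*}
\wg(\twinf{\gamma},\twinf{\eta})=2\sum_{q\in c^h_\gamma\cap c^h_\eta}\cos\bar\theta_q \, ,
\end{equation*}
where $\bar\theta_q\in(0,\pi)$ is the angle measured counterclockwise from $c^h_\gamma$ to $c^h_\eta$ viewed as unoriented geodesics. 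This holds because $\mathrm{sign}(\sin\theta_q)\cos\theta_q=\cos\bar\theta_q$. It is Wolpert's original normalisation, and it is the statement you should conclude with.

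Relatedly, ``not conjugate'' should be strengthened to ``not conjugate to $\gamma^{\pm1}$''. If $\eta$ is conjugate to $\gamma^{-1}$, the two geodesics coincide as sets and your transversality step breaks down.
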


\begin{corollary}
\label{w(dt,dt)=0}
If $\gamma,\eta \in \pi_1\S$ are two distinct elements of a pair of pants decomposition, in the associated Fenchel--Nielsen coordinates on $\chifd(\pi_1\S)$, one has
\begin{equation*}
\wg(\partial_{t_{\gamma}}, \partial_{t_{\eta}})= 0\, .
\end{equation*}
\end{corollary}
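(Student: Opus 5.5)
The plan is to identify the coordinate vector field $\partial_{t_\gamma}$ with the infinitesimal twist $\twinf{\gamma}$ and then to apply Theorem~\ref{First Wolpert formula}.

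First, fix $[\rho] \in \chifd(\pi_1\S)$ and the pair of pants decomposition $([\gamma_i])_{1\leq i\leq 3g-3}$ containing $[\gamma]$ and $[\eta]$. By construction of the Fenchel--Nielsen coordinates, the path obtained by cutting $(\S,h)$ along $c^h_\gamma$ and regluing with twist $t$ changes none of the hyperbolic pairs of pants. Hence all length parameters $l_{\gamma_i}$ are constant along it, as are the twist parameters $t_{\gamma_i}$ for $[\gamma_i]\neq[\gamma]$. Only $t_\gamma$ varies, and it varies as $t_\gamma + t$ up to the normalisation convention, that is, up to the orientation of $\gamma$, which only affects a sign. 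Differentiating at $t=0$ in the sense of Definition~\ref{def infinitesimal twist} then gives $\twinf{\gamma} = \pm\partial_{t_\gamma}$, and similarly $\twinf{\eta}=\pm\partial_{t_\eta}$. By Remark~\ref{remark change of coordinates multicurve}, the vector fields $\partial_{t_{\gamma_i}}$ do not depend on the auxiliary multicurve $\Gamma'$, so this identification is unambiguous.

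Second, since $[\gamma]\neq[\eta]$ are two distinct classes of the decomposition, they are represented by disjoint simple closed curves. By Proposition~\ref{prop geod are disjoint}, the geodesics $c^h_\gamma$ and $c^h_\eta$ are therefore disjoint, and in particular $\gamma$ and $\eta$ are not conjugate. Theorem~\ref{First Wolpert formula} then applies, and the sum over $c^h_\gamma\cap c^h_\eta=\emptyset$ is empty. This gives $\wg(\twinf{\gamma},\twinf{\eta})=0$, hence $\wg(\partial_{t_\gamma},\partial_{t_\eta})=0$ by bilinearity, the signs being irrelevant.

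The only delicate step is the first one, namely the identification $\partial_{t_\gamma}=\pm\twinf{\gamma}$. It requires checking that twisting along $c^h_\gamma$ leaves the other Fenchel--Nielsen coordinates unchanged, which holds because twisting does not change the isometry type of any pair of pants and does not alter gluings along the other curves. It also requires checking that the twist parameter $t_\gamma$ increases at unit speed, which follows from the definition of $t_\gamma$ as the displacement along $c^h_\gamma$ of the orthogonal arcs. If one prefers to avoid this, a direct alternative is available. By Remark~\ref{rem advantage dR}, choose $\varepsilon$ small enough that the annuli $N^\gamma_\varepsilon$ and $N^\eta_\varepsilon$ are disjoint. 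The forms $\alpha_\gamma$ and $\alpha_\eta$ then have disjoint supports, so $\kil(\alpha_\gamma\wedge\alpha_\eta)=0$ pointwise, and Proposition~\ref{prop wg=wdR} gives the vanishing immediately.
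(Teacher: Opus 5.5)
Your proof is correct and is essentially the paper's: the paper applies Theorem~\ref{First Wolpert formula} after noting that $c^h_\gamma$ and $c^h_\eta$ are disjoint, and leaves the identification of $\partial_{t_\gamma}$ with the infinitesimal twist $\twinf{\gamma}$ implicit, whereas you make it explicit. Only proportionality $\partial_{t_\gamma}\in\mathrm{span}(\twinf{\gamma})$ is actually needed, so the unit-speed and sign discussion is more than required; also, your disjoint-support alternative is just the proof of Theorem~\ref{First Wolpert formula} in the empty-intersection case, so it relies on that same identification rather than avoiding it.
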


\begin{proof}
Apply Theorem~\ref{First Wolpert formula}, while noticing that the curves $c^h_{\gamma}$ and $c^h_{\eta}$ do not intersect. 
\end{proof}

In this section, we present a proof of Theorem~\ref{First Wolpert formula} due to Fillastre and Seppi \cite{Fillastre-Seppi:2020}. The idea is to prove a simple fact concerning the Killing product of infinitesimal hyperbolic translations, and then use the de Rham representatives of infinitesimal twists given in subsection~\ref{subsec Infinitesimal twists in de Rham cohomology} and Stokes's Theorem.

\subsection{Killing product of infinitesimal translations}

Here is the formula for the Killing product of infinitesimal hyperbolic translations (Definition~\ref{def infinitesimal translation}).

\begin{lemma}
\label{lem cos and killing}
Let $C$ and $C'$ be two oriented geodesic lines of $\H^2$ intersecting each other with oriented angle $\theta \in \R/2\pi\Z$. Then, their infinitesimal translations satisfy
\begin{equation*}
\kil\bp{\tH(C),\tH(C')}= 2\cos \theta\, .
\end{equation*}
\end{lemma}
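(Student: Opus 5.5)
We reduce the computation to a single explicit configuration in the hyperboloid model, using the invariance properties of both sides.

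\textbf{Reduction.} The Killing form is $\Ad$-invariant by \eqref{eq Ad-inv of Kil}. For $\phi\in\Isom^+(\H^2)$ we have $\tH(\phi\cdot C)=\Ad\phi\cdot\tH(C)$, since $T_t(\phi\cdot C)=\phi\, T_t(C)\,\phi^{-1}$. An orientation-preserving isometry also preserves oriented angles. Since $\Isom^+(\H^2)$ acts transitively on pairs (point, unit tangent vector), we may assume that $C$ and $C'$ meet at $e_3=(0,0,1)\in\mathcal H^2$. We may further assume that $C$ has unit tangent vector $e_1$ there, and then $C'$ has unit tangent vector $\cos\theta\, e_1+\sin\theta\, e_2$.

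\textbf{Explicit generators.} The oriented geodesic through $e_3$ with unit tangent $u\in\mathrm{span}(e_1,e_2)$ is $t\mapsto \cosh t\, e_3+\sinh t\, u$. The hyperbolic translation along it is the Lorentz boost in the plane $\mathrm{span}(u,e_3)$, fixing $u^\perp$. Differentiating at $t=0$ gives the generator
\begin{equation*}
\tH(C_u)= u\, e_3^{T}+e_3\, u^{T}\in\so(2,1),
\end{equation*}
that is, the matrix sending $e_3\mapsto u$, $u\mapsto e_3$, and $u^\perp\cap e_3^\perp\mapsto 0$. One checks directly that this matrix lies in $\so(2,1)$: its $(1,3)$ and $(3,1)$ entries are equal, as required for the Lorentzian signature.

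\textbf{Trace computation.} With $A=u e_3^T+e_3u^T$ and $B=u'e_3^T+e_3u'^T$, where $u,u'$ are Euclidean unit vectors in $e_3^\perp$, we expand
\begin{equation*}
AB=u(e_3^Tu')e_3^T+u(e_3^Te_3)u'^T+e_3(u^Tu')e_3^T+e_3(u^Te_3)u'^T=u\,u'^T+(u\cdot u')\,e_3e_3^T,
\end{equation*}
using $e_3^Tu'=u^Te_3=0$. Taking the trace gives
\begin{equation*}
\tr(AB)=u\cdot u'+u\cdot u'=2\cos\theta.
\end{equation*}
By \eqref{eq kil in SO}, $\kil(\tH(C),\tH(C'))=\tr(AB)$, which yields $2\cos\theta$.

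\textbf{Main point of care.} The only delicate step is the sign and normalization conventions. The orientation of $C$ must match positive translation, so that $\tH$ is $\tfrac{\dif}{\dif t}T_t$ with unit speed. The oriented angle $\theta$ enters only through $u\cdot u'=\cos\theta$, which is insensitive to the orientation of the angle. This is consistent with the formula, since cosine is even.
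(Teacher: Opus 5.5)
Your proof is correct and follows essentially the same route as the paper: use $\Ad$-invariance of $\kil$ and the transitivity of $\Isom^+(\H^2)$ to place both geodesics through $e_3$ in the hyperboloid model, write the infinitesimal boosts explicitly, and take $\tr$ of their product. The only difference is cosmetic. You write the generators coordinate-free as $u e_3^T + e_3 u^T$ and expand the product into rank-one terms, whereas the paper obtains $\tH(C')$ as $\Ad R_\theta\cdot\tH(C)$ and multiplies the explicit $3\times 3$ matrices.
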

\begin{proof}
Let us work in the hyperboloid model $\H^2 \subset \R^{2,1}$ described in subsection~\ref{subsec hyperboloid model}. By transitivity of the action of $\Isom^+(\H^2)$ on $2$-frames of $\H^2 \subset \R^{2,1}$ with oriented angle $\theta$, we can assume that $C = (\H^2 \cap \{ x_1=0\})$ and is oriented so that $x_2$ increases along $C$, and that $C' = R_\theta \cdot C$, where $R_\theta$ is the hyperbolic rotation
\begin{equation*}
R_\theta \coloneqq \left( \begin{matrix}
\cos \theta & \sin \theta & 0\\
-\sin \theta & \cos \theta & 0 \\
0 & 0 & 1
\end{matrix} \right) \in \SO_0(2,1)\, .
\end{equation*}
Then, we have 
\begin{equation*}
\tH(C) =\ddto T_t(C) = \ddto \left( \begin{matrix}
1 & 0 & 0\\
0 & \cosh t & \sinh t \\
0 & \sinh t & \cosh t
\end{matrix} \right)
= \left( \begin{matrix}
0 & 0 & 0\\
0 & 0 & 1 \\
0 & 1 & 0
\end{matrix} \right)  \in \so(2,1)\, ,
\end{equation*}
and 
\begin{equation*}
\tH(C') =\ddto T_t(C') = \ddto R_\theta \cdot T_t(C) \cdot R_{-\theta}= \Ad R_\theta  \cdot \tH(C) = \left( \begin{matrix}
0 & 0 & \sin \theta\\
0 & 0 & \cos \theta \\
\sin \theta & \cos \theta & 0
\end{matrix} \right)  \in \so(2,1) \, .
\end{equation*}
Hence, using the expression~\eqref{eq kil in SO} of the Killing form on $\so(2,1)$, we get
\begin{equation*}
\kil\bp{\tH(C),\tH(C')}= \tr\bp{\tH(C)\,\tH(C')} = 2\cos \theta\, . \qedhere
\end{equation*}
\end{proof}

\subsection{Proof of the formula} We can now prove Theorem~\ref{First Wolpert formula}.

\begin{proof}[Proof of Theorem~\ref{First Wolpert formula}]
Let $\rho$ be Fuchsian representation of $\pi_1\S$ and consider the hyperbolic surface $(\S,h) = \H^2/\rho(\pi_1\S)$ with universal covering $\pi:\tilde{\S} = \H^2 \to \S$. Let $\gamma,\eta \in \pi_1\S$ be two simple elements not conjugated with each other (then the simple closed geodesic curves $c^h_\gamma$ and $c^h_\eta$ are distinct). Let $\alpha_\gamma,\alpha_\eta \in \ZdR$ be de Rham representatives of respectively $\Psi_\rho^{-1}(\twinf{\gamma})$ and $\Psi_\rho^{-1}(\twinf{\eta})$ given by \eqref{eq twist in dR}, using a real number $\varepsilon>0$ and a smooth function $f:(-\varepsilon,\varepsilon) \to \R$. We recall that: 
\begin{itemize}
 \item $\varepsilon$ is such that the respective $\varepsilon$-neighbourhoods $N^\gamma_\varepsilon$ and $N^\eta_\varepsilon$ of $c_\gamma^h$ and $c_\eta^h$ (in which $\alpha_\gamma$ and $\alpha_\eta$ are respectively supported) are topologically annuli,
 \item $\varepsilon$ can be taken as small as wanted, 
 \item $f(\pm\varepsilon) = \pm1/2$.
\end{itemize}

For all $q \in c^h_{\gamma} \cap c^h_{\eta}$, let $Q_q$ be the connected component of $N^\gamma_\varepsilon \cap N^\eta_\varepsilon$ containing $q$. Let us assume that we have chosen $\varepsilon$ in the expression of $\alpha_\gamma$ and $\alpha_\eta$ small enough so that points in the finite set $c^h_{\gamma} \cap c^h_{\eta}$ are all separated by a distance at least $4\varepsilon$. Then all the $Q_q$'s are disjoint and, by Proposition~\ref{prop wg=wdR}, we have
\begin{equation*}
\wg(\twinf{\gamma},\twinf{\eta}) = \wdR(\dR{\alpha_\gamma},\dR{\alpha_\eta}) = \int_\S \kil(\alpha_\gamma \wedge \alpha_\eta) = \int_{N^\gamma_\varepsilon \cap N^\eta_\varepsilon} \kil(\alpha_\gamma \wedge \alpha_\eta)= \sum_{q \in c^h_{\gamma} \cap c^h_{\eta}}\int_{Q_q} \kil(\alpha_\gamma \wedge \alpha_\eta)\, .
\end{equation*}

Let $q \in c^h_{\gamma} \cap c^h_{\eta}$ and $p$ be any point in $\pi^{-1}(q) \subset \tilde{\S}=\H^2$. Consider $C_{\gamma,p}$ and $C_{\eta,p}$ the lifts in $\H^2$ of respectively $c^h_{\gamma}$ and $c^h_{\eta}$ going through $p$. Using the description of $\alpha_\gamma$ and $\alpha_\eta$ given by expression \eqref{eq twist in dR ver2}, and Lemma~\ref{lem cos and killing}, we have 
\begin{equation*}
\int_{Q_q} \kil(\alpha_\gamma \wedge \alpha_\eta) = \int_{Q_q} \kil \bp{\tH(C_{\gamma,p}),\tH(C_{\eta,p})} \, \dif(f\circ \delta) \wedge \dif(f\circ \delta') = 2 \cos (\theta_{p}) \int_{Q_q} \dif(f\circ \delta) \wedge \dif(f\circ \delta') \, ,
\end{equation*}
where $\theta_{p} \in \R/2\pi\Z$ is the oriented angle between $C_{\gamma,p}$ and $C_{\eta,p}$ at $q \in \H^2$, and $\delta,\delta': N^\gamma_\varepsilon \cap N^\eta_\varepsilon \to \R$ are the respective signed distance function to $c_\gamma^h$ and $c_\eta^h$. 

Note that $\theta_p$ and $\theta_q$, the oriented angle between $c^h_{\gamma}$ and $c^h_{\eta}$ at $q$ in $(\S,h)= \H^2/\rho(\pi_1\S)$, are equal. Hence, we have
\begin{equation*}
\wg(\twinf{\gamma},\twinf{\eta}) = \sum_{q \in c^h_{\gamma} \cap c^h_{\eta} }\int_{Q_q} \kil(\alpha_\gamma \wedge \alpha_\eta) = 2 \sum_{q \in c^h_{\gamma} \cap c^h_{\eta} } \cos \theta_q \int_{Q_{q}} \dif(f\circ \delta) \wedge \dif(f\circ \delta') \, .
\end{equation*}
We claim that for all $q \in c^h_{\gamma} \cap c^h_{\eta}$, 
\begin{equation*}
    \int_{Q_{q}} \dif(f\circ \delta) \wedge \dif(f\circ \delta') =1 \, , 
\end{equation*}
concluding the proof.

Let us now prove our claim. Let $q \in c^h_{\gamma} \cap c^h_{\eta}$. First, notice that Stokes's Theorem implies that
\begin{equation*}
 \int_{Q_q} \dif(f\circ \delta) \wedge \dif(f\circ \delta') = \int_{Q_q} \dif\bp{(f\circ \delta) \, \dif(f\circ \delta')} = \int_{\partial Q_q}(f\circ \delta) \, \dif(f\circ \delta') \, .
\end{equation*}
The region $Q_q$ has four smooth edges as boundary components (see Figure~\ref{fig Qq}):
\begin{equation*}
\mathcal{E}_\pm \coloneqq \overline{Q_q}\cap \left\lbrace f \circ \delta = \pm \frac{1}{2} \right\rbrace
\quad \text{and} \quad
\mathcal{E}'_\pm \coloneqq \overline{Q_q}\cap \left\lbrace f \circ \delta' = \pm \frac{1}{2} \right\rbrace \, \, \, .
\end{equation*}

\begin{figure}[htbp]
\centering
\includegraphics{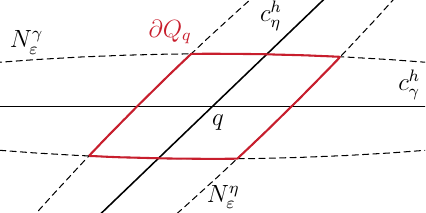} 
\caption{The region $Q_q$ on which we apply Stokes's Theorem.}
\label{fig Qq}
\end{figure}

Since $f\circ \delta'$ is constant on both $\mathcal{E}'_\pm$, we have
\begin{equation*}
\int_{\mathcal{E}'_\pm}(f\circ \delta) \, \dif(f\circ \delta')=0 \, \, .
\end{equation*}
Setting $q_1$ and $q_2$ the two endpoints of $\mathcal{E}_+$, and taking orientation into account, we get
\begin{equation*}
\int_{\mathcal{E}_+}(f\circ \delta) \, \dif(f\circ \delta') =\int_{\mathcal{E}_+} \frac{1}{2} \, \dif(f\circ \delta') = \frac{1}{2}\bp{(f\circ \delta')(q_2)-(f\circ \delta')(q_1)} = \frac{1}{2} \, .
\end{equation*}
As $f\circ\delta= -1/2$ on $\mathcal{E}_-$ which has orientation the opposite of the one of $\mathcal{E}_+$, we similarly get
\begin{equation*}
\int_{\mathcal{E}_-}(f\circ \delta) \, \dif(f\circ \delta') = \frac{1}{2} \, ,
\end{equation*}
and finally 
\begin{equation*}
 \int_{Q_q} \dif(f\circ \delta) \wedge \dif(f\circ \delta') = \int_{\partial Q_q}(f\circ \delta) \, \dif(f\circ \delta')= \int_{\mathcal{E}_+}(f\circ \delta) \, \dif(f\circ \delta')+\int_{\mathcal{E}_-}(f\circ \delta) \, \dif(f\circ \delta') = 1 \, . \qedhere
\end{equation*}
\end{proof}

\section{A second formula of Wolpert}
\label{Sec Wolpert part 2}

Let us begin this section by recalling the definition of the \emph{geodesic length function} on Teichm\"uller space.

\begin{definition}[Geodesic length function]
For all $\gamma \in \pi_1\S$, the \emph{geodesic length function} \cite{Fricke-Klein} 
\begin{equation*}
    l_\gamma: \chifd(\pi_1\S) \longrightarrow \R
\end{equation*}
is defined in the following way. For all $[\rho] \in \chifd(\pi_1\S)$, $l_{\gamma}[\rho]$ is the length of the unique closed geodesic on the hyperbolic surface $(\S, h ) = \H^2/\rho(\pi_1\S)$ with free homotopy class $[\gamma]$ (see Proposition~\ref{prop Unique geodesic loop}). 
\end{definition}

\begin{remark}
When $\gamma \in \pi_1\S$ is part of a pair of pants decomposition, the geodesic length function $l_\gamma$ coincides with the length parameter having same notation, introduced in subsection~\ref{subsec Fenchel_Nielsen coordinates}.
\end{remark}

Note that $l_{\gamma}[\rho]$ is the translation length of $\rho(\gamma)$. Thus, using the $\SO_0(2,1)$ description of $\Isom^+(\H^2)$, the geodesic length function can be expressed as the smooth map
\begin{equation}
\label{eq length in SO(2,1)}
l_{\gamma}[\rho] = \cosh^{-1}\vp{\frac{\tr \rho(\gamma)-1}{2}} \in \R^*_+ \, ,
\end{equation}
which is well-defined, as two representatives of a same $[\rho] \in \chifd(\pi_1\S)$ are conjugated and thus have the same trace.

In \cite{Wolpert:1983}, Wolpert has explicitly described the relation between the infinitesimal twist along a closed geodesic curve $c^h_\gamma$, the geodesic length function $l_{\gamma}$, and the Weil--Petersson form. Here is the equivalent statement for the Goldman form, which coincides with the Weil--Petersson form by Goldman's Theorem \cite{Goldman:1984}.

\begin{theorem}[Theorem~\ref{theo intro Second Wolpert formula}]
\label{Second Wolpert formula}
Let $[\rho] \in \chifd(\pi_1\S)$ and consider a simple element $\gamma \in \pi_1\S$. Then, the infinitesimal twist $\twinf{\gamma} \in \T_{[\rho]}\chifd(\pi_1\S)$ and the geodesic length function $l_{\gamma}:\chifd(\pi_1\S)\to\R^*_+$ satisfy
\begin{equation*}
\wg(\twinf{\gamma}, \cdot)= 2 \, \dif_{[\rho]}l_{\gamma} \, .
\end{equation*}
In particular, if $\gamma \in \pi_1\S$ is part of a pair of pants decomposition of $\S$, in the associated Fenchel--Nielsen coordinates on $\chifd(\pi_1\S)$,
\begin{equation*}
\wg(\partial_{t_{\gamma}}, \cdot)= 2 \, \dif l_{\gamma} \, .
\end{equation*}
\end{theorem}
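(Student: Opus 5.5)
The plan is to compute $\wg(\twinf{\gamma},[\tau'])$ for an arbitrary $[\tau']\in\T_{[\rho]}\chifd(\pi_1\S)$ in the de Rham model, reduce it by Stokes's Theorem to a line integral along $c^h_\gamma$, and identify that line integral with $2\,\dif_{[\rho]}l_\gamma([\tau'])$ through the trace formula~\eqref{eq length in SO(2,1)}.

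\textbf{Reduction to a line integral.} Let $\alpha'\in\ZdR$ be any representative of $\Psi_\rho^{-1}[\tau']$, and let $\alpha_\gamma$ be given by \eqref{eq twist in dR}. By Proposition~\ref{prop wg=wdR} and \eqref{eq twist in dR ver2},
\begin{equation*}
\wg(\twinf{\gamma},[\tau'])=\int_{N^\gamma_\varepsilon}\kil(\alpha_\gamma\wedge\alpha')=\int_{N^\gamma_\varepsilon}\dif(f\circ\delta)\wedge\kil(\nu_\gamma,\alpha') \, .
\end{equation*}
The section $\nu_\gamma$ of \eqref{eq def nu} is flat on the annulus, since its lift is the constant $\tH(C_{\gamma,p})$ on each strip around a lift. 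Hence $\beta:=\kil(\nu_\gamma,\alpha')$ is a closed real $1$-form on $N^\gamma_\varepsilon$, and the integrand equals $\dif\bp{(f\circ\delta)\,\beta}$.

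The form $(f\circ\delta)\beta$ does not extend smoothly, so I will apply Stokes's Theorem on the slightly smaller annulus $\{|\delta|\le\varepsilon'\}$, where $\varepsilon'<\varepsilon$ is chosen so that the support of $f'$ lies inside it. On its two boundary curves $f\circ\delta=\pm\tfrac12$. Both curves are freely homotopic to $c^h_\gamma$, and $\beta$ is closed. Taking boundary orientations into account, the two contributions therefore add up, and we get $\wg(\twinf{\gamma},[\tau'])=\int_{c^h_\gamma}\beta$. The sign of this identity is exactly what I will have to track.

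\textbf{Identification with a cocycle value.} Lift $c^h_\gamma$ to the segment of $C_\gamma$ from a point $p_0\in C_\gamma$ to $\gamma\cdot p_0$. This gives
\begin{equation*}
\int_{c^h_\gamma}\beta=\kil\Bigl(\tH(C_\gamma),\int_{p_0}^{\gamma\cdot p_0}\alphat'\Bigr)=\kil\bp{\tH(C_\gamma),\tau'(\gamma)} \, .
\end{equation*}
By Lemma~\ref{lem change of base point}, changing the base point or the cocycle representative alters $\tau'(\gamma)$ by $\Ad\rho(\gamma)\cdot x-x$. Its Killing pairing with $\tH(C_\gamma)$ vanishes by \eqref{eq Ad-inv of Kil}, because $\Ad\rho(\gamma)$ fixes $\tH(C_\gamma)$. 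So the quantity depends only on $[\tau']$.

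\textbf{Derivative of the length.} Take a path $\rho_s$ with $\tau'=\ddso\rho_s\rho^{-1}$. Differentiating \eqref{eq length in SO(2,1)} gives
\begin{equation*}
\dif l_\gamma([\tau'])=\frac{\tr\bp{\tau'(\gamma)\rho(\gamma)}}{2\sinh l} \, .
\end{equation*}
Conjugate so that $C_\gamma$ is the line used in the proof of Lemma~\ref{lem cos and killing}. Then, with $t=\tH(C_\gamma)$,
\begin{equation*}
\rho(\gamma)=\exp(l\,t)=I+\sinh l\; t+(\cosh l-1)\,t^2 \, .
\end{equation*}
Every element of $\so(2,1)$ is traceless and has zero diagonal, while $t^2=\mathrm{diag}(0,1,1)$, so $\tr(\tau'(\gamma))=0$ and $\tr(\tau'(\gamma)t^2)=0$. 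Hence $\tr(\tau'(\gamma)\rho(\gamma))=\sinh l\,\kil(\tau'(\gamma),t)$. Therefore $\dif l_\gamma([\tau'])=\tfrac12\kil(t,\tau'(\gamma))$, and so $\wg(\twinf{\gamma},[\tau'])=2\,\dif_{[\rho]}l_\gamma([\tau'])$.

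\textbf{The Fenchel--Nielsen statement.} When $\gamma$ is in the pair of pants decomposition, twisting along $c^h_\gamma$ changes only $t_\gamma$, and at unit speed. Hence $\twinf{\gamma}=\partial_{t_\gamma}$, and the second statement follows from the first.

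The main obstacle is the orientation bookkeeping: the orientation of the boundary of the annulus relative to the sign convention for $\delta$ (left negative), and the consistency of "twist to the right" with the direction of $\tH(C_\gamma)$. These must combine to give $+2$ rather than $-2$. I would settle this by checking against the convention already fixed in the proof of Theorem~\ref{First Wolpert formula}.
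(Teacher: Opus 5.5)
Your proposal is correct and follows essentially the paper's proof: you apply Stokes's Theorem on the annulus to $(f\circ\delta)\,\kil(\nu_\gamma,\alpha')$, using that $\kil(\nu_\gamma,\alpha')$ is closed, to reduce $\wg(\twinf{\gamma},[\tau'])$ to $\kil(\tH(C_\gamma),\tau'(\gamma))$, and then match this with the trace formula for $\dif_{[\rho]} l_\gamma$. The differences are cosmetic. You use homotopy invariance of a closed form on a slightly smaller annulus where the paper lets $\varepsilon\to 0$, and you derive the trace--Killing identity from $\rho(\gamma)=I+\sinh l\,t+(\cosh l-1)t^2$ rather than from the explicit matrix computation of Lemma~\ref{lem killing trace and length}.
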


In this section we provide a new proof of Theorem~\ref{Second Wolpert formula}. We shall first compute the differential of the geodesic length function, and then use de Rham representatives of infinitesimal twists given in subsection~\ref{subsec Infinitesimal twists in de Rham cohomology} and Stokes's Theorem to conclude.

\subsection{Differentiation of the length function}

Here is the formula for the differential map 
\begin{equation*}
\dif_{[\rho]}l_\gamma: \T_{[\rho]}\chifd(\pi_1\S) \to \R
\end{equation*}
of the geodesic length function, using the $\SO_0(2,1)$ model of $\Isom^+(\H^2)$.

\begin{proposition}
\label{dl}
Let $\rho$ be a Fuchsian representation of $\pi_1\S$, and $\gamma \in \pi_1\S$. Then, for all $[\tau] \in \T_{[\rho]}\chifd(\pi_1\S) = \Hisom$, in the $\SO_0(2,1)$ model of $\Isom^+(\H^2)$, one has
\begin{equation*}
 \dif_{[\rho]}l_{\gamma}[\tau] = \frac{\tr\bp{\tau(\gamma)\rho(\gamma)}}{ 2\sinh l_{\gamma}[\rho] } \, .
\end{equation*}
\end{proposition}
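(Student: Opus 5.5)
The plan is to differentiate formula~\eqref{eq length in SO(2,1)} along a smooth path of representations. Let $[\tau]\in\Hisom$. Choose a smooth family $(\rho_s)_{s\in I}$ of Fuchsian representations with $\rho_0=\rho$ and $\ddso[\rho_s]=[\tau]$. Its associated cocycle $\ddso\rho_s\rho_0^{-1}$ differs from $\tau$ by a coboundary. We may therefore replace the path by a conjugated one, or simply check directly that the formula is insensitive to coboundaries, so as to assume $\tau=\ddso\rho_s\rho^{-1}$.

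The main computation is as follows. Write $\tr\rho_s(\gamma)=1+2\cosh l_\gamma[\rho_s]$ and differentiate at $s=0$. This gives
\begin{equation*}
2\sinh\bp{l_\gamma[\rho]}\,\dif_{[\rho]}l_\gamma[\tau]=\tr\Bigl(\ddso\rho_s(\gamma)\Bigr)=\tr\bp{\tau(\gamma)\rho(\gamma)} \, .
\end{equation*}
Since $\rho(\gamma)$ is hyperbolic, $l_\gamma[\rho]>0$ and $\sinh l_\gamma[\rho]\neq 0$, so we can divide and obtain the stated formula. Smoothness of $l_\gamma$ near $[\rho]$ follows from smoothness of $\cosh^{-1}$ on $(1,\infty)$, because the trace stays above $3$ on Fuchsian representations.

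Well-definedness on cohomology needs a separate check, and I expect it to be the only (mild) obstacle. If $\tau(\gamma)=\Ad\rho(\gamma)\cdot x-x$, then
\begin{equation*}
\tr\bp{\tau(\gamma)\rho(\gamma)}=\tr\bp{\rho(\gamma)x}-\tr\bp{x\rho(\gamma)}=0
\end{equation*}
by cyclicity of the trace. Hence the right-hand side only depends on $[\tau]$, which justifies the choice of representative made above.

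Finally, we need that every $[\tau]$ is realised as the tangent vector of some smooth path $[\rho_s]$. This holds because $\chifd(\pi_1\S)$ is a smooth manifold whose tangent space is identified with $\Hisom$ as recalled in Section~\ref{Sec Group cohomology model}, and because $\mathrm{Hom}\to\chi$ is locally a submersion near Fuchsian points. Alternatively, the formula can be read as the differential of the smooth function $\rho\mapsto l_\gamma[\rho]$ on the representation variety, evaluated on $\dot\rho=\tau\rho$, which descends to the quotient thanks to the coboundary computation.
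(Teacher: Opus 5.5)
Your proposal is correct and follows essentially the same route as the paper. Both check via cyclicity of the trace that $\tr\bp{\tau(\gamma)\rho(\gamma)}$ is unchanged by coboundaries, then differentiate $\tr\rho_s(\gamma)=1+2\cosh l_\gamma[\rho_s]$ along a path with $\tau=\ddso\rho_s\rho^{-1}$. Your extra justification that every class is realised by a smooth path of representations is a point the paper simply takes for granted.
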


\begin{proof}
Let us first notice that the wanted expression is well-defined: one easily checks that adding a coboundary to a cocycle $\tau \in \Zisom$ does not change the quantity $\tr(\tau(\gamma)\rho(\gamma))$.

Let $\tau \in \Zisom$ and set $(\rho_s)_{s \in \R}$to  be a smooth path of Fuchsian representations of $\pi_1\S$ such that $\rho_0 = \rho$ and 
\begin{equation*}
 \tau= \ddso \rho_s \rho^{-1} \, . 
\end{equation*} 
Using the $\SO_0(2,1)$ model of $\Isom^+(\H^2)$ and differentiating expression~\eqref{eq length in SO(2,1)}, we get 
\begin{equation*}
 \ddso l_{\gamma}[\rho_s] = \frac{1}{2}\tr\vp{\ddso \rho_s(\gamma)}(\cosh^{-1})'\vp{\frac{\tr \rho(\gamma)-1}{2}} = \frac{\tr\bp{\tau(\gamma)\rho(\gamma)}}{2 \sinh l_{\gamma}[\rho]} \, . \qedhere
\end{equation*}
\end{proof}

\subsection{Proof of the formula}

Before proving Theorem~\ref{Second Wolpert formula}, let us notice the following fact concerning infinitesimal hyperbolic translations (Definition~\ref{def infinitesimal translation}).

\begin{lemma}
\label{lem killing trace and length}
Let $C$ be an oriented geodesic line of $\H^2$. Then, using the $\SO_0(2,1)$ description of $\Isom^+(\H^2)$, we have that for all $M \in \so(2,1)$ and $t \in \R^*$,
\begin{equation*}
 \kil \bp{\tH(C), M} = \frac{\tr\bp{T_t(C)M}}{\sinh t } \, .
\end{equation*}
\end{lemma}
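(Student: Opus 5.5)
This is a direct matrix computation in the hyperboloid model, and I would follow the normalisation of Lemma~\ref{lem cos and killing}. Both sides behave the same way under conjugation. Take $\phi \in \SO_0(2,1)$ and replace $C$ by $\phi \cdot C$ and $M$ by $\Ad\phi \cdot M$. Then $\tH(\phi\cdot C)=\Ad\phi\cdot\tH(C)$ and $T_t(\phi\cdot C)=\phi\, T_t(C)\,\phi^{-1}$. The left-hand side is unchanged by the $\Ad$-invariance~\eqref{eq Ad-inv of Kil} of $\kil$, and the right-hand side is unchanged by cyclicity of the trace. Since $\Isom^+(\H^2)$ acts transitively on oriented geodesic lines, we may therefore assume that $C=\H^2\cap\{x_1=0\}$, oriented so that $x_2$ increases along it. With this choice, $T_t(C)$ and $\tH(C)$ are the explicit matrices already written in the proof of Lemma~\ref{lem cos and killing}.

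Next, write a general $M\in\so(2,1)$. The condition $M^TJ+JM=0$ with $J=\mathrm{diag}(1,1,-1)$ gives
\begin{equation*}
M=\left(\begin{matrix} 0 & a & b\\ -a & 0 & c\\ b & c & 0\end{matrix}\right), \qquad a,b,c\in\R .
\end{equation*}
We then compute both sides using~\eqref{eq kil in SO}.

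For the left-hand side, $\tr\bp{\tH(C)M}$ only involves the entries $(2,3)$ and $(3,2)$ of $\tH(C)$, so it equals $M_{32}+M_{23}=2c$.

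For the right-hand side, $\tr\bp{T_t(C)M}=\sum_{i,j}T_{ij}M_{ji}$. The diagonal of $M$ vanishes, and $T_t(C)$ has nonzero off-diagonal entries only at $(2,3)$ and $(3,2)$, both equal to $\sinh t$. Hence $\tr\bp{T_t(C)M}=\sinh t\,(M_{32}+M_{23})=2c\sinh t$. Dividing by $\sinh t\neq0$, which holds since $t\in\R^*$, gives $2c$ again, which proves the identity.

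I do not expect a real obstacle. The only points needing care are the reduction step: checking that the orientation conventions for $T_t(C)$ and $\tH(C)$ are compatible under conjugation, and that the invariance argument genuinely applies to both sides. As a conceptual check, $T_t(C)=\exp\bp{t\,\tH(C)}=\Id+\sinh t\,\tH(C)+(\cosh t-1)\,\tH(C)^2$, because $\tH(C)^3=\tH(C)$. The identity term contributes $\tr M=0$. The last term contributes $\tr\bp{\tH(C)^2M}=0$ in the normalised frame: $\tH(C)^2=\mathrm{diag}(0,1,1)$ only sees the diagonal of $M$, which is zero. Only the $\sinh t\,\tH(C)$ term survives, and it gives exactly $\sinh t\,\kil\bp{\tH(C),M}$.
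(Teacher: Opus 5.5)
Your proposal is correct and is essentially the paper's proof. You normalise $C$ by conjugation in $\SO_0(2,1)$, parametrise $M$ by $a,b,c$, and compare $\tr\bigl(\tH(C)M\bigr)=2c$ with $\tr\bigl(T_t(C)M\bigr)=2c\sinh t$. Your explicit check that both sides are invariant under conjugation, and the consistency check via $T_t(C)=\Id+\sinh t\,\tH(C)+(\cosh t-1)\,\tH(C)^2$, are valid additions that the paper leaves implicit.
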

\begin{proof}
Up to conjugation by an element of $\SO_0(2,1)$, we can assume that
\begin{equation*}
T_t(C) = \left( \begin{matrix}
1 & 0 & 0\\
0 & \cosh t & \sinh t \\
0 & \sinh t & \cosh t
\end{matrix} \right)
\quad \text{and} \quad
\tH(C) = \left( \begin{matrix}
0 & 0 & 0\\
0 & 0 & 1 \\
0 & 1 & 0
\end{matrix} \right) \, .
\end{equation*}
Let $a,b,c\in \R$ such that $M \in \so(2,1)$ is expressed by
\begin{equation*}
M= \left( \begin{matrix}
0 & a & b\\
-a & 0 & c \\
b & c & 0
\end{matrix} \right) \, .
\end{equation*}
Using the expression~\eqref{eq kil in SO} of the Killing form on $\so(2,1)$, we have 
\begin{equation*}
 \tr\bp{T_t(C)M}= 2 c \sinh t = \tr\bp{\tH(C) M} \sinh t = \kil\bp{\tH(C) ,M} \sinh t \, . \qedhere
\end{equation*}
\end{proof}

\begin{proof}[Proof of Theorem~\ref{Second Wolpert formula}]
Let $\rho$ be a Fuchsian representation of $\pi_1\S$ and consider the associated hyperbolic surface $(\S,h) = \H^2/\rho(\pi_1\S)$ with universal covering $\pi:\tilde{\S} = \H^2 \to \S$. Let $\gamma$ be a simple element of $\pi_1\S$ and let $\alpha_\gamma \in \ZdR$ be a de Rham representative of $\Psi_\rho^{-1}(\twinf{\gamma})$ given by \eqref{eq twist in dR}, using a real number $\varepsilon>0$ and a smooth function $f:(-\varepsilon,\varepsilon) \to \R$. We recall that: 
\begin{itemize}
 \item $\varepsilon$ is such that the $\varepsilon$-neighbourhood $N^\gamma_\varepsilon$ of $c_\gamma^h$ (in which $\alpha_\gamma$ is supported) is topologically an annulus,
 \item $\varepsilon$ can be taken as small as wanted, 
 \item $f(\pm\varepsilon) = \pm1/2$.
\end{itemize}

Let $[\tau] \in \Hisom$ and let $\beta \in \ZdR$ be a de Rham representative of $\Psi_\rho^{-1}[\tau]$. By Proposition~\ref{prop wg=wdR}, and because $\alpha_\gamma$ is supported on $N_\varepsilon^\gamma$, we have 
\begin{equation*}
\wg(\twinf{\gamma}, [\tau])= \wdR(\dR{\alpha_\gamma},\dR{\beta}) = \int_{N^\gamma_\varepsilon} \kil(\alpha_\gamma\wedge \beta) \, .
\end{equation*}
Using the expression \eqref{eq twist in dR ver2}, on $N^\gamma_\varepsilon$ we have 
\begin{equation*}
\kil(\alpha_\gamma \wedge \beta) = \dif(f\circ \delta) \wedge \kil(\nu_{\gamma}, \beta) \, ,
\end{equation*}
where $\nu_\gamma$ is defined by~\eqref{eq def nu}. Set $g$ to be the $1$-form on $N^\gamma_\varepsilon$ defined by $g\coloneqq\kil(\nu_{\gamma}, \beta)$.
As $\beta$ is closed and $\nu_{\gamma}$ is constant on $N^\gamma_\varepsilon$, we have that $\dif g = \kil(\nu_\gamma,\dif\beta) =0$ and
\begin{align*}
\dif\bp{(f\circ \delta) g} & = \dif (f\circ \delta) \wedge g + (f\circ \delta) \, \dif g = \dif (f\circ \delta) \wedge g = \dif (f\circ \delta) \wedge  \kil(\nu_{\gamma}, \beta) = \kil(\alpha_\gamma \wedge \beta) \, .
\end{align*}
Then, by Stokes's Theorem,
\begin{equation*}
\int_{ N^\gamma_\varepsilon} \kil(\alpha_\gamma\wedge \beta) = \int_{ N^\gamma_\varepsilon}\dif\bp{(f\circ \delta)g} = \int_{\partial N^\gamma_\varepsilon}(f\circ \delta)g \, .
\end{equation*}

\begin{figure}[htbp]
\centering
\includegraphics{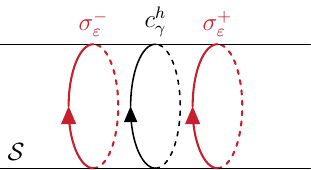} 
\caption{The annulus region $N^\gamma_\varepsilon$ on which we apply Stokes's Theorem.}
\label{Annulus}
\end{figure}

Let $\sigma^{\pm}_{\varepsilon}$ be the two loop boundaries of $N^\gamma_\varepsilon$ such that $f\circ\delta = \pm 1/2$ on $\sigma^{\pm}_{\varepsilon}$ (see Figure~\ref{Annulus}). Taking orientation into account, we get 
\begin{equation*}
\wg(\twinf{\gamma}, [\tau]) = \int_{\partial N^\gamma_\varepsilon}(f\circ \delta)g = \int_{\sigma^{+}_{\varepsilon}}(f\circ \delta)g - \int_{\sigma^{-}_{\varepsilon}}(f\circ \delta)g = \frac{1}{2} \vp{\int_{\sigma^{+}_{\varepsilon}}g + \int_{\sigma^{-}_{\varepsilon}}g} \, .
\end{equation*}
Now, let $\varepsilon$ tend to $0$. Note that, as $\varepsilon$ tends to 0, both $\sigma^{-}_{\varepsilon}$ and $\sigma^{+}_{\varepsilon}$ tend to $c^h_{\gamma}$. Let $p_0$ be a point on $C_\gamma$, the oriented geodesic line of $\H^2$ along which $\rho(\gamma)$ is a positive translation. We thus have 
\begin{equation*}
\wg(\twinf{\gamma}, [\tau]) = \int_{c^h_{\gamma}}g = \int_{p_0}^{\gamma \cdot p_0}\tilde{g} = \int_{p_0}^{\gamma \cdot p_0}\kil\bp{\tH(C_{\gamma}),\tilde{\beta}} \, ,
\end{equation*}
where we integrate along the geodesic path from $p_0$ to $\gamma \cdot p_0$ . Using the bilinearity of $\kil$, that becomes
\begin{equation}
\label{eq proof dl}
\wg(\twinf{\gamma}, [\tau]) =\kil\vp{\tH(C_{\gamma}), \int_{p_0}^{\gamma \cdot p_0} \tilde{\beta} } \, .
\end{equation}
By subsection~\ref{subsec wdR}, we can set $\tau_\beta \in \Hisom$ to be the cocycle defined by: for all $\eta \in \pi_1\S$,
\begin{equation*}
 \tau_\beta(\eta)= \int_{p_0}^{\eta \cdot p_0} \tilde{\beta},
\end{equation*} and we have $[\tau_\beta]=\Psi_\rho\dR{\beta} = [\tau]$. Then equation~\eqref{eq proof dl} becomes
\begin{equation*}
\wg(\twinf{\gamma}, [\tau]) = \kil\vp{\tH(C_{\gamma}), \tau_\beta(\gamma) } \, .
\end{equation*}
As $\rho(\gamma)$ is the hyperbolic translation $T_{l_\gamma[\rho]}(C_\gamma)$, using Proposition~\ref{dl} and Lemma~\ref{lem killing trace and length}, that becomes
\begin{equation*}
\wg(\twinf{\gamma}, [\tau]) = 2 \, \dif_{[\rho]}l_{\gamma}[\tau_\beta] = 2 \, \dif_{[\rho]}l_{\gamma}[\tau] \, . \qedhere
\end{equation*}
\end{proof}

\section{Induced de Rham forms and symmetries of hyperbolic pairs of pants}

\label{sec Induced de Rham forms and symmetries of pairs of pants}

In order to construct the involutions from Theorem~\ref{theo intro existence involution on tangent space}, we need to consider restrictions of $\bundle$-valued de Rham forms to hyperbolic pairs of pants and use the orientation-reversing isometric symmetry on those hyperbolic pairs of pants. In this section, let us properly define and study those.

\subsection{Induced de Rham forms}

\label{subsec Induced de Rham forms}

Let $\rho$ be a Fuchsian representation of $\pi_1\S$ and $U$ be a connected open subset of $\S$ such that the subgroup inclusion $\pi_1U \to \pi_1\S$ is injective. Consider a universal covering ${\pi}: \tilde{\S} \to \S$ and let $\tilde{U}$ be a connected component of $\pi^{-1}(U)\subseteq\tilde{\S}$. Then, ${\pi}\vert_{\tilde{U}}: \tilde{U} \to U$ is a universal covering of $U$. Hence, for all $\alpha \in \ZdR$, the restriction ${\alpha}_{\vert U}$ can be seen as an element, that we shall denote by $\alpha_{U}$, of $\ZdRof{U}$, where
\begin{equation*}
\bundlerest{U} \coloneqq \bp{\tilde{U} \times \isom(\H^2)}/\pi_1U \, .
\end{equation*}
Then, $\alpha_U \in \ZdRof{U}$ is induced by $\alphat_U \coloneqq \alphat_{\vert \tilde{U}}$ which is a $\rho(\pi_1U)$-equivariant closed $\isom(\H^2)$-valued 1-form on $\tilde{U}$.

\subsection{Symmetries on pairs of pants and involutions on induced de Rham forms}
\label{subsec symmetry on pants}

Let $\rho$ be a Fuchsian representation of $\pi_1\S$ and consider the hyperbolic surface $(\S,h)= \H^2/\rho(\pi_1\S)$. Fix a pair of pants decomposition $([\gamma_i])_{1\leq i\leq 3g-3}$ of $\S$ and a universal covering $\pi:\tilde{\S} \to \S$. 

Let $\pant \subset (\S,h)$ be a hyperbolic pair of pants given by that decomposition (seen as an open subset). Note that $\pi_1\pant = \langle \gamma,\eta\rangle$ is the free group generated by two elements $\gamma,\eta \in \pi_1\S$,  such that $[\gamma]$ and $[\eta]$ are free homotopy classes of boundary loops of $(\pant,h) \subset \S$ (see Figure~\ref{fig pi1 pant}). Then, $c^h_\gamma$ and $c^h_\eta$ are two distinct oriented geodesic boundary components of the hyperbolic pair of pants $(\pant,h)$. 

\begin{figure}[htbp]
\centering
\includegraphics{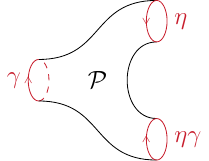} 
\caption{Homotopy classes of boundary loops of a pair of pants generating its fundamental group.}
\label{fig pi1 pant}
\end{figure}

Let $C_\gamma$ and $C_\eta$ be the two respective oriented geodesic lines of $\H^2$ along which $\rho(\gamma)$ and $\rho(\eta)$ are positive hyperbolic translations. Their pull-backs by the developing map $\dev$ associated with $\rho$ are geodesic lines of $(\tilde{\S},\tilde{h)}$. Let $\upant$ be the connected component of $\pi^{-1}(\pant) \subset \tilde{\S}$ having the pull-backs by the developing map of $C_\gamma$ and $C_\eta$ as boundaries. Then, $\pi : \upant \to \pant$ is a universal covering (see Figure~\ref{fig universal pants}).

The hyperbolic pair of pants $\pant$ is naturally endowed with an orientation-reversing isometric symmetry $\sigma$ exchanging the two right-angled hyperbolic hexagons composing it. Let $c_0^h \subset \pant \subset \H^2/\rho(\pi_1\S)$ be the $ h $-geodesic arc orthogonal to $c^h_\gamma$ and $c^h_\eta$. It is one of the three geodesic arcs cutting $\pant \subset (\S,h)$ into two isometric right-angled hyperbolic hexagons.

Let $C$ be the geodesic line of $\H^2$ orthogonal to $C_\gamma$ and $C_\eta$ endowed with an arbitrary orientation (the orientation of $C$ will not matter). The geodesic line $C$ contains a segment which is a lift of $c^h_0$ in $\upant$, the connected component of $\pi^{-1}(\pant)$ having $C_\gamma$ and $C_\eta$ as boundaries. A lift $\tilde{\sigma}$ of the symmetry $\sigma$ to $\upant\subset \H^2$, is the restriction of $S \in \Isom^-(\H^2)$, the hyperbolic reflection along $C$ on $\H^2$ (see Figure~\ref{fig universal pants}).

\begin{remark}
\label{remark identification uS and H2 2}
As explained in Remark~\ref{remark identification uS and H2}, using the universal covering $\pi : \tilde{S} = \H^2 \to \H^2/\rho(\pi_1\S)$ (or equivalently the identification $\tilde{S}\simeq\H^2$ given by a developing map), the action of any element $\xi \in \pi_1\S$ on $\tilde{\S}$ is given by the action of $\rho(\xi)$ on $\H^2$. Similarly, the action of $\tilde{\sigma}$ is given by the restriction of the hyperbolic reflection $S$ to $\upant$, seen as a subset of $\H^2$. Still, for the same reasons as mentioned in Remark~\ref{remark identification uS and H2}, we shall denote distinctively the actions on universal covers and on subsets of $\H^2$.
\end{remark}

\begin{figure}[htbp]
\centering
\includegraphics{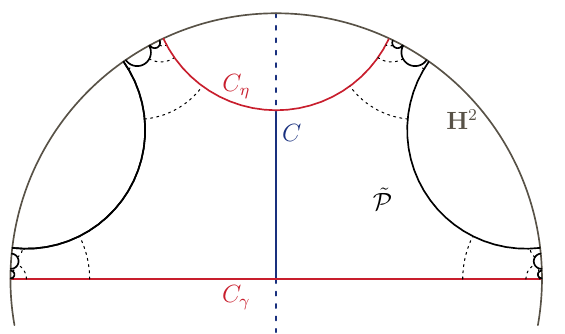}
\caption{The universal cover of the hyperbolic pair of pants $\pant$ in the Poincar\'e disk model of $\H^2$; the right-angled hyperbolic hexagons are indicated by the dashed geodesic segments.}
\label{fig universal pants}
\end{figure}

\begin{proposition}
\label{prop hol(l)S}
Let the setting be as above, let $S \in \Isom^-(\H^2)$ be the hyperbolic reflection along $C$, and let $\xi \in \{\gamma^{\pm1}, \eta^{\pm1}\}$. Then, we have
\begin{equation*}
\rho(\xi)\circ S =S \circ \rho(\xi)^{-1} \, .
\end{equation*}
Using the identification described in Remark~\ref{remark identification uS and H2 2}, that means that on $\tilde \pant$
\begin{equation*}
\xi \circ \tilde{\sigma} =\tilde{\sigma} \circ \xi^{-1} \, .
\end{equation*}
Consequently, we also have 
\begin{equation*}
\Ad\rho(\xi)\circ \Ad S =\Ad S \circ \Ad\rho(\xi)^{-1} \, .
\end{equation*}
\end{proposition}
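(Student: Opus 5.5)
The plan is to prove the identity in $\H^2$, namely $S\rho(\xi)S = \rho(\xi)^{-1}$ (recall $S^{-1}=S$), and then deduce the other two forms. It suffices to treat $\xi=\gamma$ and $\xi=\eta$. Indeed, if $\rho(\xi)S=S\rho(\xi)^{-1}$, then multiplying on both sides by $\rho(\xi)^{-1}$ gives $S\rho(\xi)^{-1} = \rho(\xi)S$. This is exactly the identity for $\xi^{-1}$. By symmetry of the roles of $\gamma$ and $\eta$, we only need the case $\xi=\gamma$.

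For $\xi=\gamma$, write $\rho(\gamma)=T_l(C_\gamma)$ with $l=l_\gamma[\rho]>0$. The line $C$ is orthogonal to $C_\gamma$, so the reflection $S$ along $C$ preserves $C_\gamma$ as a set. It reverses the orientation of $C_\gamma$, since it fixes the intersection point $C\cap C_\gamma$ and acts on the line $C_\gamma$ as the reflection through that point. Hence $S T_l(C_\gamma) S$ is an orientation-preserving isometry preserving $C_\gamma$, and it translates along $C_\gamma$ by $l$ in the reversed direction. Concretely, one can conjugate to the normal form of Lemma~\ref{lem cos and killing}: take $C_\gamma=\H^2\cap\{x_1=0\}$ and $C=\H^2\cap\{x_2=0\}$, so that $S=\mathrm{diag}(1,-1,1)$. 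A direct matrix computation with the matrix of $T_l(C_\gamma)$ written in the proof of Lemma~\ref{lem cos and killing} then gives $S\,T_l(C_\gamma)\,S=T_{-l}(C_\gamma)=\rho(\gamma)^{-1}$. The only thing to check is that the conjugating element can be chosen to send $C_\gamma$ and $C$ simultaneously to these two standard lines. This holds because $\Isom^+(\H^2)$ acts transitively on pairs of oriented orthogonal geodesics, and this is the one step needing a little care. We then get $S\rho(\gamma)S=\rho(\gamma)^{-1}$, that is, $\rho(\gamma)S=S\rho(\gamma)^{-1}$.

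The statement on $\upant$ follows from the identification of Remark~\ref{remark identification uS and H2 2}. The action of $\xi$ on $\tilde\S$ is $\rho(\xi)$, and $\tilde\sigma$ is $S$ restricted to $\upant$. Both $S$ and $\rho(\xi)$, for $\xi\in\{\gamma^{\pm1},\eta^{\pm1}\}$, preserve $\upant$. For $S$ this is because $\tilde\sigma$ is a lift of $\sigma$ to $\upant$; for $\rho(\xi)$ it is because $\xi\in\pi_1\pant$ stabilises the component $\upant$. So the identity restricts to $\upant$.

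Finally, since $\Ad$ is a group homomorphism on $\Isom(\H^2)$, it sends the identity $\rho(\xi)S=S\rho(\xi)^{-1}$ to $\Ad\rho(\xi)\circ\Ad S=\Ad S\circ\Ad\rho(\xi)^{-1}$.
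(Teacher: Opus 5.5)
Your argument is correct and is essentially the paper's proof. The paper simply observes that the identity follows because $\rho(\xi)$ is a hyperbolic translation along a geodesic orthogonal to $C$; you spell this out by noting that $S$ preserves $C_\gamma$ and reverses its orientation, and you confirm it with the normal-form computation $S\,T_l(C_\gamma)\,S=T_{-l}(C_\gamma)$.

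There is one small slip in your reduction to $\xi^{-1}$. The equation you display, $S\rho(\xi)^{-1}=\rho(\xi)S$, is just the original identity with its sides swapped, not the identity for $\xi^{-1}$. To get that, left-multiply by $\rho(\xi)^{-1}$ and right-multiply by $\rho(\xi)$, which gives $\rho(\xi)^{-1}S=S\rho(\xi)$. More simply, $\rho(\xi)^{-1}$ is itself a translation along the same geodesic orthogonal to $C$, so no reduction is needed at all.
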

\begin{proof}
As $S \in \Isom^-(\H^2)$ is the hyperbolic reflection along $C$, that is a direct consequence of the fact that $\rho(\xi)$ is a hyperbolic translation along a geodesic line of $\H^2$ orthogonal to $C$.
\end{proof}

\begin{proposition}
\label{prop alpha* is equiv}
Let $\alpha_{\pant} \in \ZdRof{\pant}$. Then, the $\isom(\H^2)$-valued $1$-form on $\upant$ defined by
\begin{equation*}
\alphat^*_{\pant} \coloneqq \Ad S \cdot (\tilde{\sigma}^*\alphat_\pant) = \Ad S \cdot (\alphat_\pant\circ\dif\tilde{\sigma}) \, .
\end{equation*}
is $\Ad \rho(\pi_1\pant)$-equivariant.
\end{proposition}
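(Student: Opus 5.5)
The plan is to check the equivariance relation \eqref{eq ad equiv 1-form} for $\alphat^*_\pant$ on the generators $\gamma,\eta$ of the free group $\pi_1\pant$. Then we propagate it to all of $\pi_1\pant$. The key input is Proposition~\ref{prop hol(l)S}, which says that $\tilde\sigma$ intertwines the action of each generator with that of its inverse on $\upant$.

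First, observe that the set of $\xi\in\pi_1\pant$ satisfying $\alphat^*_\pant\circ\dif\xi=\Ad\rho(\xi)\cdot\alphat^*_\pant$ is a subgroup. If this holds for $\xi$ and $\xi'$, then
\[
\alphat^*_\pant\circ\dif(\xi\xi') = (\alphat^*_\pant\circ\dif\xi)\circ\dif\xi' = \Ad\rho(\xi)\cdot\alphat^*_\pant\circ\dif\xi' = \Ad\rho(\xi\xi')\cdot\alphat^*_\pant .
\]
Inverses are handled by precomposing with $\dif\xi^{-1}$ and applying $\Ad\rho(\xi)^{-1}$. It therefore suffices to treat $\xi\in\{\gamma,\eta\}$, and Proposition~\ref{prop hol(l)S} in fact covers $\xi^{\pm1}$ directly.

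Fix such a $\xi$. By Proposition~\ref{prop hol(l)S}, $\tilde\sigma\circ\xi=\xi^{-1}\circ\tilde\sigma$ on $\upant$, so $\dif\tilde\sigma\circ\dif\xi=\dif\xi^{-1}\circ\dif\tilde\sigma$. Hence
\[
\alphat^*_\pant\circ\dif\xi = \Ad S\cdot\bp{\alphat_\pant\circ\dif\xi^{-1}\circ\dif\tilde\sigma}.
\]
Since $\xi^{-1}\in\pi_1\pant$, the equivariance of $\alphat_\pant$ gives $\alphat_\pant\circ\dif\xi^{-1}=\Ad\rho(\xi)^{-1}\cdot\alphat_\pant$. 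We thus obtain $\Ad S\,\Ad\rho(\xi)^{-1}\cdot(\alphat_\pant\circ\dif\tilde\sigma)$. The last identity of Proposition~\ref{prop hol(l)S}, read with $\xi^{-1}$ in place of $\xi$, gives $\Ad S\circ\Ad\rho(\xi)^{-1}=\Ad\rho(\xi)\circ\Ad S$. Substituting, we get $\Ad\rho(\xi)\cdot\Ad S\cdot(\alphat_\pant\circ\dif\tilde\sigma)=\Ad\rho(\xi)\cdot\alphat^*_\pant$, which is the required relation.

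The only delicate point is bookkeeping. One must check that $\tilde\sigma$ really preserves $\upant$, so that $\alphat^*_\pant$ is defined on $\upant$ and the composition makes sense. This holds because $S$ is the reflection in $C$, which is orthogonal to both boundary lines $C_\gamma$ and $C_\eta$, so $S$ maps each of them to itself. Consequently $S$ maps the component $\upant$ to a component of $\pi^{-1}(\pant)$ with the same two boundaries, namely $\upant$ itself. One also has to keep the inverses straight when applying Proposition~\ref{prop hol(l)S}. Although not part of the statement, closedness of $\alphat^*_\pant$ is also immediate: pullback commutes with $\dif$, and $\Ad S$ is a constant linear map.
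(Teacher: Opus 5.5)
Your proposal is correct and is essentially the paper's proof: check the relation for $\xi\in\{\gamma^{\pm1},\eta^{\pm1}\}$ by moving $\dif\tilde\sigma$ past $\dif\xi$ with Proposition~\ref{prop hol(l)S}, apply the equivariance of $\alphat_\pant$, use $\Ad S\,\Ad\rho(\xi)^{-1}=\Ad\rho(\xi)\,\Ad S$, and then extend to $\pi_1\pant=\langle\gamma,\eta\rangle$. Your explicit subgroup-closure step and your check that $S$ preserves $\upant$ spell out what the paper leaves implicit, and your chain of equalities also avoids the paper's typo in which the factor $\Ad S$ momentarily disappears.
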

\begin{proof}
By Proposition~\ref{prop hol(l)S}, for all $\xi \in \{\gamma^{\pm1}, \eta^{\pm1}\}$, 
\begin{align*}
\alphat^*_{\pant}\circ \dif \xi &= \Ad S \cdot (\alphat_{\pant}\circ\dif\tilde{\sigma} \circ \dif \xi ) =\Ad S \cdot (\alphat_{\pant}\circ\dif \xi^{-1}\circ \dif\tilde{\sigma} ) \\
& = \Ad S \Ad\rho(\xi)^{-1}\cdot (\alphat_{\pant}\circ \dif\tilde{\sigma}) = \Ad\rho(\xi)\Ad S\cdot (\alphat_{\pant}\circ \dif\tilde{\sigma}) \\
& = \Ad\rho(\xi)\cdot (\alphat_{\pant}\circ \dif\tilde{\sigma})= \Ad\rho(\xi)\cdot \alphat^*_{\pant} \, .
\end{align*}
Hence, as $\pi_1\pant=\left\langle \gamma,\eta\right\rangle$, the identity holds for all $\xi \in \pi_1\pant$.
\end{proof}

The operator ${}^*$ on $\ZdRof{\pant}$ defined by Proposition~\ref{prop alpha* is equiv} is clearly linear. It seems to depend on the choice of the geodesic arc $c^h_0$ among the three $h$-geodesic arcs cutting $\pant \subset (\S,h)$ into two isometric right-angled hyperbolic hexagons, as well as on the choice of its lift $C$ in $\tilde{\pant}$. It actually does not.

\begin{proposition}
The operator ${}^*$ on $\ZdRof{\pant}$ does not depend on the choice of $c^h_0 \subset \pant \subset (\S,h)$ among the three geodesic arcs cutting $\pant \subset (\S,h)$ into two isometric right-angled hyperbolic hexagons, nor of its lift in $\tilde{\pant}$.
\end{proposition}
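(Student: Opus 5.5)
The plan is to reduce everything to one key fact: any two choices of lifted reflection differ by an element of $\rho(\pi_1\pant)$ (or its conjugate on the relevant component), and the equivariance of $\alphat_\pant$ absorbs that difference.

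First, independence of the lift. Fix $c^h_0$ and two lifts $C, C'$ of $c^h_0$ contained in $\upant$, with reflections $S, S'$ and lifted symmetries $\tilde\sigma, \tilde\sigma'$ on $\upant$. Both $\tilde\sigma$ and $\tilde\sigma'$ lift the same map $\sigma$ to the universal cover $\upant \to \pant$. Hence $\tilde\sigma' = \xi\circ\tilde\sigma$ for some deck transformation $\xi \in \pi_1\pant$, and correspondingly $S' = \rho(\xi) S$ as isometries of $\H^2$. Since both are reflections, they agree on $\upant$, which has nonempty interior. Then
\begin{equation*}
\Ad S'\cdot(\alphat_\pant\circ\dif\tilde\sigma') = \Ad\rho(\xi)\Ad S\cdot(\alphat_\pant\circ\dif\xi\circ\dif\tilde\sigma) = \Ad\rho(\xi)\Ad S\Ad\rho(\xi)\cdot(\alphat_\pant\circ\dif\tilde\sigma).
\end{equation*}
Here I used the equivariance \eqref{eq ad equiv 1-form}. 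It remains to check $\Ad\rho(\xi)\Ad S\Ad\rho(\xi) = \Ad S$. This follows from $S'^2=\Id$, that is $\rho(\xi)S\rho(\xi)S = \Id$, which gives $\rho(\xi) S \rho(\xi) = S$. So the two definitions give literally the same form. (If one insists on the convention that $C$ be orthogonal to $C_\gamma$ and $C_\eta$ specifically, the lift is already unique, and this step is only needed for the general statement.)

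Second, independence of the arc. The key observation is that the symmetry $\sigma$ of $\pant$ does not depend on which of the three seams is chosen. The orientation-reversing isometry exchanging the two hexagons fixes all three seams pointwise: it is the unique orientation-reversing isometry of $\pant$ preserving each boundary component. I will justify this from the uniqueness of right-angled hexagons with prescribed alternate side lengths. The two hexagons are isometric by a map that respects the labelling of the seams, and gluing that map with the identity along the seams gives an isometry fixing every seam. An isometry fixing a geodesic arc pointwise and reversing orientation is locally the reflection in that arc, so by analyticity it is the same $\sigma$ for each seam. Consequently, if $c_0'^h$ is another seam with a lift $C'$ bounding $\upant$, the reflection $S'$ along $C'$ restricts on $\upant$ to a lift of the same $\sigma$.

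Applying the first step to the two lifts $\tilde\sigma$ (from $C$) and $\tilde\sigma'$ (from $C'$) of this same $\sigma$ finishes the argument. We get $S'=\rho(\xi)S$ with $\xi\in\pi_1\pant$, and $S'$ is an involution, so the same computation shows the two operators ${}^*$ coincide.

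The main obstacle is this second step: proving rigorously that the three seam-reflections all lift the same global symmetry $\sigma$, rather than three a priori different involutions. I would argue this via the rigidity of isometries: two isometries of the connected surface $\pant$ agreeing on an open set (equivalently, agreeing with their differential at one point) coincide. The hexagon-exchange isometry fixes each seam pointwise, so $\sigma$ and the seam-reflection agree at a point of each seam together with their differentials.
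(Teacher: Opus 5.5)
Your proof is correct, but it is organised differently from the paper's, mainly in the change-of-seam step.

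\textbf{Change of lift.} The paper writes $C'=\rho(\xi)C$, hence $S'=\rho(\xi)S\rho(\xi)^{-1}$ and $\tilde\sigma'=\xi\circ\tilde\sigma\circ\xi^{-1}$. It then concludes using the $\Ad\rho$-equivariance of $\alphat^*_\pant$ (Proposition~\ref{prop alpha* is equiv}). You instead compare the two lifts of $\sigma$ by a left deck transformation, $S'=\rho(\xi)S$. You get the needed relation $\rho(\xi)S\rho(\xi)=S$ purely from $S'^2=\Id$, so you only need equivariance of $\alphat_\pant$.

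\textbf{Change of seam.} The paper never argues that $\sigma$ is seam-independent. It locates the lifts of the other two seams explicitly as $\rho(\gamma)^{1/2}C$ and $\rho(\eta)^{1/2}C$. It then uses that $S$ conjugates translations along $C_\gamma$ to their inverses to get $S'=\rho(\gamma)S$ (similarly $S'=\rho(\eta)S$), after which the same cancellation happens. You replace this computation by the observation that the hexagon-exchanging symmetry fixes all three seams pointwise. Hence every seam reflection restricts to a lift of one and the same $\sigma$, and your first-step argument handles both cases at once. In fact it shows more: any lift of $\sigma$ that is the restriction of a reflection yields the same operator.

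\textbf{What each route buys.} The paper's version is explicit: it identifies the relating deck transformation as $\gamma$ or $\eta$ and stays within the concrete relations of Proposition~\ref{prop hol(l)S}. However, it takes as an unproved geometric input that the seams bisect the boundary geodesics, which is what makes $\rho(\gamma)^{1/2}C$ a seam lift. Your version is more conceptual and avoids locating the other seams. In exchange, it needs the rigidity argument that the reflection along a lift of any seam preserves $\upant$ and lifts $\sigma$, which you correctly sketch via the $1$-jet at a seam point.

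Two small slips. First, $S'=\rho(\xi)S$ holds because both sides are isometries of $\H^2$ agreeing on the open set $\upant$, not ``since both are reflections''; $\rho(\xi)S$ is only a reflection a posteriori. Second, a seam lift $C'$ crosses $\upant$ rather than bounding it.
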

\begin{proof}
\textbullet\ First, let us prove that if $c_0^h$ is chosen and fixed, the operator ${}^*$ does not depend on the choice of its lift in $\tilde{\pant}$. Let $C$ and $C'$ be two lifts of $c_0^h$ in $\upant$, and let $S,S' \in \Isom^-(\H^2)$ be the hyperbolic reflections respectively along $C$ and $C'$, and $\tilde{\sigma},\tilde{\sigma}'$ their restriction to $\upant$. Let us define 
\begin{equation*}
\alphat^*_{\pant} \coloneqq \Ad S \cdot (\alphat_\pant\circ\dif\tilde{\sigma}) \quad \text{and} \quad\alphat^\star_{\pant} \coloneqq \Ad S'  \cdot (\alphat_\pant\circ\dif\tilde{\sigma}') \, .
\end{equation*}
We have to prove that $\alphat^*_{\pant} = \alphat^\star_{\pant}$.

As $C$ and $C'$ are two lifts of $c^h_0$ in $\upant$, there exists an element $\xi \in \pi_1\pant$ such that $C' = \rho(\xi) C$. We then have that $S' = \rho(\xi) S \rho(\xi)^{-1} $ and $\tilde{\sigma}' = \xi \circ \tilde{\sigma} \circ \xi^{-1}$. Using, the $\Ad \rho $-equivariance~\eqref{eq ad equiv 1-form} of $\alphat_\pant$, we have
\begin{align*}
\alphat^\star_{\pant} & = \Ad S' \cdot (\alphat_\pant\circ \dif\tilde{\sigma}') = \Ad\rho(\xi)\Ad S \Ad\rho(\xi)^{-1} \cdot (\alphat_\pant\circ \dif \xi \circ \dif\tilde{\sigma} \circ \dif \xi ^{-1}) \\
& = \Ad\rho(\xi)\Ad S \cdot (\alphat_\pant\circ \dif\tilde{\sigma} \circ \dif \xi ^{-1}) = \Ad \rho(\xi) \cdot (\alphat^*_\pant \circ \dif \xi^{-1}) \, .
\end{align*}
Using, the $\Ad \rho $-equivariance of $\alphat^*_\pant$ (Proposition~\ref{prop alpha* is equiv}), that becomes 
\begin{equation*}
 \alphat^\star_{\pant} = \alphat^*_{\pant}\, .
\end{equation*}

\textbullet\ Now, let us prove that the choice among the three geodesic arcs cutting $\pant \subset (\S,h)$ into two isometric right-angled hyperbolic hexagons does not change the operator ${}^*$. If $C$ is a lift of $c^h_0$, one of those three minmal geodesic arcs, then the geodesics $C_1\coloneqq \rho(\gamma)^{1/2} C$ and $C_2\coloneqq \rho(\eta)^{1/2} C$ are lifts of the two other arcs (see Figure~\ref{fig universal pants s}). 

Let us treat the case where the chosen lift is $C'=C_1$, the other case can be treated identically. In that case, the hyperbolic reflection $S' \in \Isom^-(\H^2)$ along $C'$ is $S' = \rho(\gamma)^{1/2} S \rho(\gamma)^{-1/2} $. Let us define
\begin{equation*}
\alphat^*_{\pant} \coloneqq  \Ad S \cdot (\alphat_\pant\circ\dif\tilde{\sigma}) \quad \text{and} \quad\alphat^\star_{\pant} \coloneqq \Ad S'  \cdot (\alphat_\pant\circ\dif\tilde{\sigma}') \, .
\end{equation*}
Using that $\rho(\gamma)$ is a hyperbolic translation along a geodesic orthogonal to $C$, we have 
\begin{equation*}
\Ad S'  = \Ad\rho(\gamma)^{1/2}\Ad S\Ad\rho(\gamma)^{-1/2} = \Ad S\Ad\rho(\gamma)^{-1} = \Ad\rho(\gamma)\Ad S \, ,
\end{equation*}
Using the identification described in Remark~\ref{remark identification uS and H2 2}, that last equality translates to 
\begin{equation*}
 \tilde{\sigma}' = \tilde{\sigma} \circ \gamma^{-1} = \gamma \circ \tilde{\sigma} \, .
\end{equation*}
Thus, by the $\Ad \rho $-equivariance~\eqref{eq ad equiv 1-form} of $\alphat$, we have 
\begin{equation*}
\alphat^\star_{\pant} =\Ad S'  \cdot (\alphat_\pant\circ\dif\tilde{\sigma}') = \Ad S \Ad \rho(\gamma)^{-1} \cdot (\alphat_\pant\circ \dif \gamma \circ \dif\tilde{\sigma}) = \Ad S \cdot (\alphat_\pant\circ\dif\tilde{\sigma}) =\alphat^*_{\pant} \, ,
\end{equation*}
concluding the proof.
\end{proof}

Note that the operator ${}^*$ can also be defined on the vector space $Z^0_{\mbox{\tiny \textup{dR}}}(\pant,\bundlerest{\pant})$ of smooth sections of $\bundlerest{\pant}$ by: for all $f_\pant \in Z^0_{\mbox{\tiny \textup{dR}}}(\pant,\bundlerest{\pant})$,
\begin{equation*}
\tilde{f}_\pant^* \coloneqq \Ad S \cdot (\tilde \sigma^* \tilde f_\pant)= \Ad S \cdot(\tilde{f}_\pant \circ \tilde{\sigma}).
\end{equation*}

\begin{lemma}
\label{lem * and d commute}
The exterior derivative operator $\dif$ and the operator ${}^*$ commute: for all $f_\pant \in Z^0_{\mbox{\tiny \textup{dR}}}(\pant,\bundlerest{\pant})$,
\begin{equation*}
 \dif (f_\pant^*)= (\dif f_\pant)^* \, .
\end{equation*}
\end{lemma}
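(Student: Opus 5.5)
The plan is to argue directly at the level of lifts to $\upant$. Both sides are $\bundlerest{\pant}$-valued $1$-forms on $\pant$, each induced by an $\Ad\rho(\pi_1\pant)$-equivariant $\isom(\H^2)$-valued $1$-form on $\upant$, so it suffices to show that the two lifted forms agree:
\begin{equation*}
\dif\bp{\tilde f^*_\pant} = \bp{\dif \tilde f_\pant}^* .
\end{equation*}
This is legitimate because the lift of $\dif f_\pant$ is $\dif\tilde f_\pant$. Indeed, the flat connection on $\bundlerest{\pant}$ is induced by the trivial connection on $\upant\times\isom(\H^2)$, so the exterior covariant derivative lifts to the ordinary differential of vector-valued functions.

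The first step is to write out the left-hand side using the definition $\tilde f^*_\pant = \Ad S\cdot(\tilde f_\pant\circ\tilde\sigma)$. The map $\Ad S$ is a fixed (constant) linear endomorphism of $\isom(\H^2)$, so it commutes with $\dif$. The ordinary chain rule for the smooth map $\tilde\sigma:\upant\to\upant$ then gives
\begin{equation*}
\dif\bp{\Ad S\cdot(\tilde f_\pant\circ\tilde\sigma)} = \Ad S\cdot \dif(\tilde f_\pant\circ\tilde\sigma) = \Ad S\cdot\bp{(\dif\tilde f_\pant)\circ\dif\tilde\sigma} = \Ad S\cdot\bp{\tilde\sigma^*\dif\tilde f_\pant}.
\end{equation*}
By the definition in Proposition~\ref{prop alpha* is equiv}, the right-hand side is exactly the lift of $(\dif f_\pant)^*$. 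This also uses that pullback commutes with $\dif$.

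The last step is a remark on well-definedness. The paper asserts that $\tilde f^*_\pant$ is $\Ad\rho(\pi_1\pant)$-equivariant, so that it really defines a section $f^*_\pant$. This follows by the same computation as in Proposition~\ref{prop alpha* is equiv}, using Proposition~\ref{prop hol(l)S} on the generators $\gamma^{\pm1},\eta^{\pm1}$ and replacing $\dif\tilde\sigma$ by $\tilde\sigma$. Once both sides are equivariant forms on $\upant$ that agree there, they descend to the same form on $\pant$.

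I expect no real obstacle. The only point needing care is the bookkeeping that identifies $\dif$ on sections of $\bundlerest{\pant}$ with the ordinary $\dif$ of the equivariant lift. The key fact making everything work is that $\Ad S$ is a constant linear map.
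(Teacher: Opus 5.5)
Your proposal is correct and follows the paper's proof. You lift to $\upant$, pull the constant linear map $\Ad S$ through $\dif$, and apply the chain rule to $\tilde f_\pant\circ\tilde\sigma$, and your extra check of the $\Ad\rho(\pi_1\pant)$-equivariance of $\tilde f^*_\pant$ via Proposition~\ref{prop hol(l)S} fills in a detail the paper leaves implicit.
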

\begin{proof}
Let $f_\pant \in Z^0_{\mbox{\tiny \textup{dR}}}(\pant,\bundlerest{\pant})$. The action of $\Ad S$ on $\isom(\H^2)$ is linear. Hence, 
\begin{equation*}
 \dif (\tilde{f}_\pant^*) = \dif\bp{\Ad S \cdot (\tilde{f}_\pant \circ\sigma)} = \Ad S \cdot (\dif \tilde{f}_\pant \circ \dif \tilde{\sigma}) = (\dif f_\pant)^* \, . \qedhere
\end{equation*}
\end{proof}

By Lemma~\ref{lem * and d commute}, the linear operator ${}^* : \ZdRof{\pant} \to \ZdRof{\pant}$ sends coboundaries onto coboundaries. Hence, we can define the linear endomorphism ${}^*$ on $\HdRof{\pant}$ by: for all $\dR{\alpha_\pant} \in \HdRof{\pant}$, 
\begin{equation*}
 \dR{\alpha_\pant}^* \coloneqq \dR{\alpha_\pant^*} \in \HdRof{\pant} \, .
\end{equation*}

As $\tilde{\sigma}^2=\Id$, for all $\dR{\alpha_\pant} \in \HdRof{\pant}$, we have $(\dR{\alpha_\pant}^*)^*=\dR{\alpha_\pant}$. Geometrically, the ${}^*$ transformation is the linear involution on $\HdRof{\pant}$ induced by the orientation-reversing isometric symmetry $\sigma$ on the hyperbolic pair of pants $\pant$ exchanging the two right-angled hyperbolic hexagons composing it.

\subsection{A technical lemma}
\label{subsec central lemma}

The goal of this subsection is to prove a technical result (Lemma~\ref{lem central result}) which we shall use in the proof of Theorem~\ref{theo intro w(dl,dl)=0}. We work in the same setting as in the previous subsection.

Let $\rho$ be a Fuchsian representation of $\pi_1\S$ and consider the hyperbolic surface $(\S,h)= \H^2/\rho(\pi_1\S)$. Fix a pair of pants decomposition $([\gamma_i])_{1\leq i\leq 3g-3}$ of $\S$. Let $\pant \subset (\S,h)$ be a hyperbolic pair of pants given by that decomposition, with fundamental group $\pi_1\pant = \langle \gamma,\eta\rangle$ generated by two elements $\gamma,\eta \in \pi_1\S$ which are elements of the pair of pants decomposition.

Consider a smooth family of Fuchsian representations $(\rho_s)_{s\in I}$ such that $\rho_0=\rho$ and satisfying:
\begin{enumerate}[label=(\Alph*)]
 \item for all $s \in I$, $\rho_s(\gamma)$ is a positive hyperbolic translation along the same oriented geodesic $C_\gamma \subset \H^2 $ independent of $s$ (see Figure~\ref{fig universal pants s}), \label{cdt A}
 \item for all $s \in I$, $\rho_s(\eta)$ is a positive hyperbolic translation along an oriented geodesic $C_\eta^s \subset \H^2$ orthogonal to a geodesic $C$, which independent of $s$ and orthogonal to $C_\gamma$ (see Figure~\ref{fig universal pants s}). \label{cdt B}
\end{enumerate}

Given such a path of representations, we shall use the following notations. For all $s \in I$, $\pant_s$ is the hyperbolic pair of pants in $(\S,h_s) \coloneqq \H^2/\rho_s(\pi_1\S)$ given by the same boundary loop classes as $\pant$, that is $\gamma$, $\eta$ and $\eta\gamma$ . For all $s\in I$, $\upant_s\subset \H^2$ is the connected component of $\pi^{-1}(\pant_s)$ having $C_\gamma$ and $C^s_\eta$ as boundaries. 

\begin{figure}[htbp]
\centering
\includegraphics{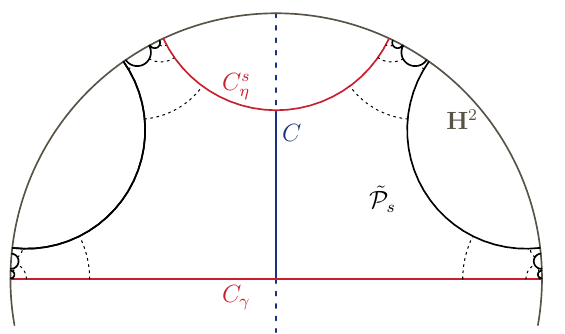}
\caption{The universal cover of the hyperbolic pair of pants $\pant_s$ in the Poincar\'e disk model of $\H^2$.}
\label{fig universal pants s}
\end{figure}

We also denote by $\sigma_s$ the orientation-reversing isometric symmetry on $\pant_s$ exchanging the two right-angled hyperbolic hexagons composing it. A lift $\tilde{\sigma}_s$ of $\sigma_s$ to $\upant_s$ is the restriction of $S \in \Isom^-(\H^2)$ the hyperbolic reflection along $C$ on $\H^2$ (see Figure~\ref{fig universal pants s}).

\begin{lemma}
\label{lem central result}
Let $(\rho_s)_{s\in I}$ be a smooth path of Fuchsian representations of $\pi_1\S$ satisfying conditions \ref{cdt A} and \ref{cdt B} and let the notations be as above. Let $p_0 \in C \cap \tilde{\pant}_0$ and let $\alpha \in Z^1_{\mbox{\tiny \textup{dR}}}(\S,E_{\rho_0})$. Assume that 
\begin{equation*}
 \Psi_{\rho_0}\dR{\alpha} = \ddso[\rho_s], 
\end{equation*}
i.e. that there exist $x \in \isom(\H^2)$ such that for all $\xi \in \pi_1\S$
\begin{equation}
\label{eq alpha ddso rho}
 \int_{p_0}^{\xi \cdot p_0}\alphat = \ddso \rho_s(\xi)\rho_0(\xi)^{-1} + \Ad \rho_0(\xi) \cdot x- x\, .
\end{equation}
Then, for all $\xi \in \{\gamma^{\pm1}, \eta^{\pm1}\}$, 
\begin{equation*}
 \int_{p_0}^{\xi \cdot p_0}(\tilde \alpha_{\pant_0}^* - \alphat_{\pant_0}) = \Ad \rho_0(\xi) \cdot y- y\, ,
\end{equation*}
where : \begin{itemize}
 \item $\alpha_{\pant_0}$ is the restriction of $\alpha$ to $\pant_0$ (see subsection~\ref{subsec Induced de Rham forms}),
 \item $\alphat_{\pant_0}^*$ is defined by Proposition~\ref{prop alpha* is equiv},
 \item $y \in \isom(\H^2)$ is defined by 
 \begin{equation}
 \label{eq y}
 y \coloneqq \Ad S\cdot x -x \, .
 \end{equation}
\end{itemize} 
\end{lemma}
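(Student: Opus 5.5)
The plan is to compute $\int_{p_0}^{\xi\cdot p_0}\alphat^*_{\pant_0}$ directly, by pulling the integral back through $\tilde\sigma_0$, and then to use the symmetry of the path $(\rho_s)$ with respect to $S$ to rewrite the result in terms of the right-hand side of \eqref{eq alpha ddso rho}. Throughout, write $u(\xi)\coloneqq \ddso \rho_s(\xi)\rho_0(\xi)^{-1}$ and $\tau_\alpha(\xi)\coloneqq\int_{p_0}^{\xi\cdot p_0}\alphat$, so that \eqref{eq alpha ddso rho} reads $\tau_\alpha = u + \Ad\rho_0(\cdot)\cdot x - x$. Since $\upant_0$ is simply connected and $\alphat_{\pant_0}$ is the restriction of $\alphat$, integrals of $\alphat_{\pant_0}$ between points of $\upant_0$ coincide with those of $\alphat$. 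I will take the path inside $\upant_0$; this is possible because $p_0$ and $\xi\cdot p_0$ both lie in $\upant_0$, given that $\xi\in\pi_1\pant_0$.

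\emph{Step 1: pulling back through $\tilde\sigma_0$.} Since $p_0\in C$, we have $\tilde\sigma_0(p_0)=p_0$. By Proposition~\ref{prop hol(l)S},
\begin{equation*}
\tilde\sigma_0(\xi\cdot p_0)=\xi^{-1}\cdot\tilde\sigma_0(p_0)=\xi^{-1}\cdot p_0 .
\end{equation*}
Let $c$ be a path in $\upant_0$ from $p_0$ to $\xi\cdot p_0$. Then $\tilde\sigma_0\circ c$ is a path in $\upant_0$ from $p_0$ to $\xi^{-1}\cdot p_0$. The change of variables formula for the pull-back therefore gives
\begin{equation*}
\int_{p_0}^{\xi\cdot p_0}\alphat^*_{\pant_0}=\Ad S\cdot\int_{c}\tilde\sigma_0^*\alphat=\Ad S\cdot\tau_\alpha(\xi^{-1}) .
\end{equation*}
No orientation issue arises here, because we integrate a $1$-form along a path and not over a region.

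\emph{Step 2: symmetry of the deformation.} Conditions \ref{cdt A} and \ref{cdt B} say that, for every $s$, the element $\rho_s(\xi)$ is a translation along a geodesic orthogonal to $C$, for $\xi\in\{\gamma^{\pm1},\eta^{\pm1}\}$. Exactly as in Proposition~\ref{prop hol(l)S}, this gives $S\rho_s(\xi)S^{-1}=\rho_s(\xi)^{-1}=\rho_s(\xi^{-1})$ for all $s$. I will differentiate this identity at $s=0$ and multiply on the right by $\rho_0(\xi^{-1})^{-1}=S\rho_0(\xi)S^{-1}$. This yields
\begin{equation*}
\Ad S\cdot u(\xi)=u(\xi^{-1}) \quad\text{for all } \xi\in\{\gamma^{\pm1},\eta^{\pm1}\}.
\end{equation*}
Since $S^2=\Id$, it follows that $\Ad S\cdot u(\xi^{-1})=u(\xi)$ as well.

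\emph{Step 3: assembling.} Combining Steps 1 and 2 with \eqref{eq alpha ddso rho} and with $\Ad S\,\Ad\rho_0(\xi^{-1})=\Ad\rho_0(\xi)\,\Ad S$ (Proposition~\ref{prop hol(l)S}), we get
\begin{equation*}
\int_{p_0}^{\xi\cdot p_0}\alphat^*_{\pant_0}=u(\xi)+\Ad\rho_0(\xi)\cdot(\Ad S\cdot x)-\Ad S\cdot x .
\end{equation*}
Subtracting $\tau_\alpha(\xi)=u(\xi)+\Ad\rho_0(\xi)\cdot x-x$, the $u(\xi)$ terms cancel. What remains is $\Ad\rho_0(\xi)\cdot y-y$ with $y=\Ad S\cdot x-x$, as claimed.

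The main point requiring care is Step 2. There one must check that conditions \ref{cdt A} and \ref{cdt B} give the conjugation identity for every $s$, and not only at $s=0$; this is where the fixed geodesic $C$ is used. One must also get the inverse and the right multiplication in the derivative exactly right. A secondary check, in Step 1, is that $\tilde\sigma_0$ preserves $\upant_0$, so that the pulled-back path stays where $\alphat_{\pant_0}$ is defined.
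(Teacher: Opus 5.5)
Your proof is correct and follows essentially the same route as the paper's: pull back through $\tilde\sigma_0$ using $\tilde\sigma_0(\xi\cdot p_0)=\xi^{-1}\cdot p_0$, then use the $s$-dependent relation $\rho_s(\xi)S=S\rho_s(\xi)^{-1}$ to get $\Ad S\cdot u(\xi^{-1})=u(\xi)$, together with $\Ad S\,\Ad\rho_0(\xi)^{-1}=\Ad\rho_0(\xi)\,\Ad S$. One small slip in Step 2: the right factor should read $\rho_0(\xi^{-1})^{-1}=S\rho_0(\xi)^{-1}S^{-1}$, whereas you wrote $S\rho_0(\xi)S^{-1}$, which equals $\rho_0(\xi)^{-1}$; the identity you then derive, $\Ad S\cdot u(\xi)=u(\xi^{-1})$, is nonetheless correct.
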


\begin{remark}
Lemma~\ref{lem central result} actually tells us that for every $\alpha \in \ZdR$, $\alpha_\pant$ and $\alpha_\pant^*$ share the same cohomology class in $\HdRof{\pant}$.
Indeed, using the transitivity of the action of $\Isom^+(\H^2)$ on orthogonal $2$-frames of $\H^2$, one can show that every smooth path $[\rho_s]_{s\in I}$ in $\chifd(\pi_1\S)$ such that $[\rho_0]=[\rho]$ can be represented by a smooth family of Fuchsian representations $(\rho_s)_{s\in I}$ (equal to $\rho_0$ at $s=0$) satisfying conditions \ref{cdt A} and \ref{cdt B}. Then, using Lemma~\ref{lem central result} and the same argument as in the proof of the injectivity of $\Psi_\rho$ (Proposition~\ref{prop Psi is injective}), one gets that for every $\alpha \in \ZdR$, there exists a smooth section $f_\pant$ of $\bundlerest{\pant}$ such that 
\begin{equation*}
\alpha^*_{\pant} - \alpha_{ \pant} = \dif f_\pant \, .
\end{equation*} 
\end{remark}

\begin{proof}[Proof of Lemma~\ref{lem central result}]
Let $\xi \in \{\gamma^{\pm 1},{\eta}^{\pm 1}\}$. First note that for all $s\in I$, using Proposition~\ref{prop hol(l)S} with $\rho_s$, we get
\begin{equation}
\label{eq S xi s-dependent}
 \rho_s(\xi) \circ S = S \circ \rho_s (\xi)^{-1} \, .
\end{equation}
As $p_0 \in C \subset \H^2 = \tilde{\S}$, we have 
\begin{equation}
\label{eq sigma(p)=p}
\tilde \sigma_0(p_0)=p_0 \, .
\end{equation}
Using Proposition~\ref{prop hol(l)S} with $\rho = \rho_0$ and \eqref{eq sigma(p)=p}, we have 
\begin{equation*}
\tilde\sigma_0(\xi \cdot p_0) = \xi ^{-1}\cdot \tilde \sigma_0(p_0) = \xi^{-1} \cdot p_0 \, .
\end{equation*}
Hence, we have
\begin{equation*}
 \int_{p_0}^{\xi \cdot p_0} \alphat^* = \int_{p_0}^{\xi \cdot p_0} \Ad S\circ \alphat \circ \dif \tilde\sigma_0 = \Ad S \cdot \int_{\tilde\sigma_0 (p_0)}^{\tilde\sigma_0(\xi \cdot p_0)} \alphat = \Ad S \cdot \int_{p_0}^{\xi^{-1} \cdot p_0} \alphat \, . 
\end{equation*}
Using equation~\eqref{eq alpha ddso rho}, that becomes
\begin{equation}
\label{eq alpha* and AdS}
 \int_{p_0}^{\xi \cdot p_0} \alphat^* = \Ad S \cdot \ddso \rho_s(\xi)^{-1}\rho_0(\xi)+ \Ad S\Ad \rho_0(\xi)^{-1}\cdot x -\Ad S\cdot x \, . 
\end{equation}
The identity~\eqref{eq S xi s-dependent} and the fact that $S^2 = \Id$ imply that
\begin{equation*}
 \Ad S \cdot \ddso \rho_s(\xi)^{-1}\rho_0(\xi)= \ddso S \rho_s(\xi) ^{-1}\rho_0(\xi)S = \ddso \rho_s(\xi) S^2\rho_0(\xi)^{-1} =\ddso \rho_s(\xi)\rho_0(\xi)^{-1} \, ,
\end{equation*}
and that
\begin{equation*}
\Ad S\Ad \rho_0(\xi)^{-1}\cdot x -\Ad S\cdot x =\Ad \rho_0(\xi) \Ad S\cdot x -\Ad S\cdot x \, . 
\end{equation*}
Equation~\eqref{eq alpha* and AdS} thus becomes
\begin{equation*}
 \int_{p_0}^{\xi \cdot p_0} \alphat^* = \ddso \rho_s(\xi)\rho_0(\xi)^{-1}+ \Ad \rho_0(\xi) \Ad S \cdot x  -\Ad S \cdot x \, . 
\end{equation*}
Hence, remembering equation~\eqref{eq alpha ddso rho}, we have just shown that for all $\xi \in \{\gamma^{\pm 1},{\eta}^{\pm 1}\}$,
\begin{equation*}
 \int_{p_0}^{\xi \cdot p_0}\alphat^* - \alphat = \Ad \rho_0(\xi)\cdot y -y \, ,
\end{equation*}
where $y = \Ad S \cdot x -x \in \isom(\H^2)$.
\end{proof}

\section{Involutions on tangent spaces of Teichmüller space}
\label{sec FN involution}

In this section, we shall prove the following.

\begin{theorem}
\label{theo existence involution on tangent space}
Let $([\gamma_i])_{1\leq i\leq 3g-3}$ be a pair of pants decomposition of $\S$. Then, for every $[\rho] \in \chifd(\pi\S)$, there exists a linear involution 
\begin{equation*}
{}^*: \T_{[\rho]}\chifd(\pi\S) \longrightarrow
 \T_{[\rho]}\chifd(\pi\S) \, ,
\end{equation*}
such that, in the de Rham cohomology model for $\T_{[\rho]}\chifd(\pi_1\S)$, for each hyperbolic pair of pants $\pant \subset \H^2/\rho(\pi_1\S)$ given by the decomposition, the restriction of the linear involution ${}^*$ to $\HdRof{\pant}$ is the one induced by the orientation-reversing isometric symmetry on $\pant$ (defined in Section~\ref{sec Induced de Rham forms and symmetries of pairs of pants})

Moreover, for all $[\tau],[\tau'] \in \T_{[\rho]}\chifd(\pi_1\S)$, one has
\begin{equation}
\label{eq wg and *}
 \wg([\tau]^*,[\tau']^*) = - \wg([\tau],[\tau']) \, .
\end{equation}
\end{theorem}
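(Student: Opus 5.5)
The construction glues the pair-of-pants involutions of Section~\ref{sec Induced de Rham forms and symmetries of pairs of pants} into a global one. Given $[\tau]\in \T_{[\rho]}\chifd(\pi_1\S)$, I take a de Rham representative $\alpha\in\ZdR$ of $\Psi_\rho^{-1}[\tau]$. On each hyperbolic pair of pants $\pant$ of the decomposition I form $\alpha_\pant^*$, which by Proposition~\ref{prop alpha* is equiv} is a closed $\bundlerest{\pant}$-valued form. By the Remark following Lemma~\ref{lem central result}, $\alpha^*_\pant-\alpha_\pant=\dif f_\pant$ for some section $f_\pant$ of $\bundlerest{\pant}$. These local forms generally disagree along the curves $c^h_{\gamma_i}$, so they must be corrected near the boundaries. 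To arrange this, I will first modify $\alpha$ by an exact form so that it vanishes, or at least is ${}^*$-invariant, in a small collar $N^{\gamma_i}_\varepsilon$ of each curve. The symmetry $\sigma$ of $\pant$ preserves each boundary geodesic, reversing its orientation, and extends to a neighbourhood.

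I would then define the global form by cut-off. Choose bump functions $\chi_\pant$ equal to $1$ on $\pant$ minus the collars and forming a partition of unity near the curves. Set
\[
\alpha^\sharp \coloneqq \alpha + \sum_{\pant} \dif(\chi_\pant f_\pant),
\]
so that $\alpha^\sharp$ equals $\alpha_\pant^*$ away from the collars, is closed, and is globally defined. The involution is then $[\tau]^*\coloneqq \Psi_\rho\dR{\alpha^\sharp}$. For this to be well defined I must check independence from the choices: of $\alpha$ within its class (using Lemma~\ref{lem * and d commute}), of $f_\pant$, which is unique up to a flat section over $\pant$, and of $\chi_\pant$. The ambiguity from flat sections is the real issue. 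On a pair of pants with non-elementary holonomy $\rho(\pi_1\pant)$ there are no nonzero $\Ad\rho$-invariant vectors, since the holonomy is irreducible. Hence $f_\pant$ is unique, and the construction is canonical. Since the construction restricts on each $\HdRof{\pant}$ to the pair-of-pants involution, and since $\tilde\sigma^2=\Id$ and $(\Ad S)^2=\Id$, applying it twice gives back the class of $\alpha$. Linearity is clear.

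For the identity \eqref{eq wg and *}, I use Proposition~\ref{prop wg=wdR} and compute $\int_\S \kil(\alpha^\sharp\wedge\alpha'^\sharp)$, splitting $\S$ into the pants $\pant$ and the collars. On the interior part of each $\pant$ the integrand is $\kil(\alpha^*_\pant\wedge\alpha'^*_\pant)=\tilde\sigma^*\kil(\alpha_\pant\wedge\alpha'_\pant)$, because $\kil$ is $\Ad S$-invariant by \eqref{eq Ad-inv of Kil}. Since $\sigma$ reverses orientation, the integral over the interior of $\pant$ equals minus the corresponding integral for $\alpha,\alpha'$. The contributions of the collars go to zero as $\varepsilon\to0$, provided the representatives are arranged to be bounded independently of $\varepsilon$ there. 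Alternatively, I would handle them with Stokes on each pants, via $\kil(\alpha\wedge \dif f)=\dif\kil(\alpha,f)$, producing boundary terms on $\partial\pant$ that cancel pairwise between adjacent pants, or that vanish once the forms are normalised to be zero in the collars.

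The main obstacle is the gluing step: ensuring that the corrections $f_\pant$ from the two pants adjacent to a curve $c^h_{\gamma_i}$ are compatible, so that the glued class does not depend on the cut-offs and the boundary terms in the $\wg$ computation cancel. My first attempt is to normalise $\alpha$ so that near each curve it equals a $\sigma$-invariant model, namely a combination of $\nu_{\gamma_i}\,\dif(f\circ\delta)$ (a twist-type form) and a length-type form. For such a model $f_\pant$ can be taken to vanish on the collar, which makes both the gluing and the vanishing of boundary terms automatic.
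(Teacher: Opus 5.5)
There is a genuine gap: the form $\alpha^\sharp$ you build is always cohomologous to $\alpha$, so your map is the identity of $\HdR$. For $\alpha^\sharp$ to be a smooth global form, each $\chi_\pant f_\pant$ must be a smooth global section of $\bundle$ (extended by zero). Then
\[
\alpha^\sharp-\alpha=\dif\Bigl(\sum_{\pant}\chi_\pant f_\pant\Bigr)
\]
is exact. The identity is a linear involution. It is even compatible with the pair-of-pants involutions in cohomology, since by the Remark following Lemma~\ref{lem central result}, $\alpha^*_\pant$ and $\alpha_\pant$ are already cohomologous on each $\pant$. But it violates \eqref{eq wg and *}, because $\wg\neq 0$ (e.g.\ $\wg(\partial_{t_\gamma},\partial_{l_\gamma})=2$). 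The whole content of the theorem lies in how the $\alpha^*_\pant$ are glued, and a partition of unity applied to the primitives $f_\pant$ erases it.

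The error surfaces in your $\wg$ computation. On the collar of a curve $c^h_\gamma$ shared by $\pant$ and $\pant'$, $\alpha^\sharp$ contains the term $(f_\pant-f_{\pant'})\,\dif\chi_\pant$. This term has size $1/\varepsilon$ on a region of area of order $\varepsilon$, so the collar contributions do not tend to $0$. Since the interiors contribute $-\wdR(\dR{\alpha},\dR{\alpha'})$ in the limit, the collars must contribute $2\,\wdR(\dR{\alpha},\dR{\alpha'})$.

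Your fallback does not fix this. Making $\alpha^*_\pant$ agree with $\alpha_\pant$ on the collars is the right idea; it is the paper's property $(\mathrm{P}_{\varepsilon})$. However, the twist-type part of your model must be dropped. The form $\nu_\gamma\,\dif(f\circ\delta)$ is anti-invariant, because $\Ad S\cdot\tH(C_\gamma)=-\tH(C_\gamma)$ while $\dif\delta\circ\dif\sigma=\dif\delta$. Only the length-type form $\mu_\gamma\,\nu_\gamma\,\dif\theta_\gamma$ is invariant, since both of its factors change sign.

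More importantly, $f_\pant$ cannot be taken to vanish on the collars. There it is flat, hence equal to $\lambda\,\nu_\gamma$, and the uniqueness you proved forces the value of $\lambda$. Take the twist form $\alpha_\gamma$ with $f'$ supported in $(-\varepsilon,-\varepsilon/2)$. One gets $f_\pant=-2\nu_\gamma$ next to $c^h_\gamma$ on the side carrying the support, and $0$ on the other side. The jump of $\lambda$ across $c^h_\gamma$ is precisely the $\partial_{t_\gamma}$-component of $[\tau]^*-[\tau]$, which your construction discards.

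What works is to glue the forms themselves, not their primitives.
\begin{enumerate}
\item Choose a representative satisfying $(\mathrm{P}_{\varepsilon})$. This is possible via $\isom(\H^2)=\ker(\Ad\rho(\gamma)-\Id)\oplus\Ima(\Ad\rho(\gamma)-\Id)$.
\item Every $\alpha^*_\pant$ then equals $\alpha$ on the collars, so together they define a closed $\alpha^*\in\ZdR$, in general not cohomologous to $\alpha$.
\item For well-definedness, two such representatives of one class coincide on the collars, because their coefficients $\mu_{\gamma_i}$ are determined by the class. So their difference is $\dif u$ with $u=\lambda\,\nu_\gamma$ on each collar. Then $u^*_\pant=-\lambda\,\nu_\gamma$ from both adjacent sides, so the $u^*_\pant$ glue to a global section.
\end{enumerate}
With this construction, \eqref{eq wg and *} reduces to the orientation-reversing change of variables on each pair of pants, with no collar terms at all.
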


\begin{remark}
Notice that, given a pair of pants decomposition $([\gamma_i])_{1\leq i\leq 3g-3}$, for a generic $[\rho] \in \chifd(\pi_1\S)$, the orientation-reversing isometric symmetry on hyperbolic pants of $\H^2/\rho(\pi_1\S)$ given by decomposition do not glue together into a global isometric symmetry (it only happens if, in associated Fenchel--Nielsen coordinates, one has $t_i[\rho] \in (l_i[\rho]/2)  \Z$ for all $1\leq i\leq 3g-3$). Nevertheless, the involution from Theorem~\ref{theo existence involution on tangent space} is defined on the tangent space at any point of the Teichm\"uller space.
\end{remark}

\begin{remark}
The involution ${}^*$ from Theorem~\ref{theo existence involution on tangent space} depends on the pair of pants decomposition $\Gamma \coloneqq ([\gamma_i])_{1\leq i\leq 3g-3}$ and on the point $[\rho] \in \chifd(\pi_1\S)$. Setting $\mathrm{Inv}^{\Gamma}_{[\rho]} \coloneqq  {}^*$, most of Theorem~\ref{theo intro existence involution on tangent space} is proved with Theorem~\ref{theo existence involution on tangent space}. The remaining part, i.e. the fact that $\mathrm{Inv}^{\Gamma}$ is a smooth tensor on $\chifd(\pi_1\S)$, will be proved in Section~\ref{Sec Formula for infinitesimal lengths} (see Corollary~\ref{coro inv is tensor}).
\end{remark}

We will construct the involution in subsection~\ref{subsec global symmetry}. By construction, it will restrict to the involutions from Section~\ref{sec Induced de Rham forms and symmetries of pairs of pants} on hyperbolic pair of pants. Then, in subsection~\ref{subsec relation with goldman form}, we will show, with Proposition~\ref{prop wg(*,*)}, that it satisfies \eqref{eq wg and *}, concluding the proof of Theorem~\ref{theo existence involution on tangent space}.

For the rest of this section, we fix a pair of pants decomposition $([\gamma_i])_{1\leq i\leq 3g-3}$ of $\S$ and consider the Fenchel--Nielsen coordinates on $\chifd(\pi_1\S)$ given by that pair of pants decomposition and a multicurve $\Gamma'$. We also let $\rho$ be a Fuchsian representation of $\pi_1\S$ and consider the hyperbolic surface $(\S,h) = \H^2/\rho(\pi_1\S)$ with universal cover $\pi: \H^2 \to \H^2/\rho(\pi_1\S)$. 

\subsection{\texorpdfstring{Parametrisation of hyperbolic annuli and property $(\mathrm{P}_{\epsind})$}{Parametrisation of hyperbolic annuli and property (P epsilon)}}
\label{subsec param annulus}

The first problem one faces when trying to define a global involution on $\HdR$ from all the symmetries on hyperbolic pair of pants is the following: given $\alpha \in \ZdR$, all the $\alpha_\pant^*$ on the pair of pants (defined in Section~\ref{sec Induced de Rham forms and symmetries of pairs of pants}) have no reason to glue together into a global $\bundle$-valued $1$-form on $\S$. In this subsection, we shall introduce a property ensuring that this gluing is possible.

\subsubsection{Parametrisation of hyperbolic annuli}

First let us start by describing a nice parametrisation of hyperbolic annuli. Let $\gamma$ be a simple element of $\pi_1\S$ and set $\varepsilon>0$ small enough so that $N^{\gamma}_\varepsilon$, the $\varepsilon$-neighbourhood of the simple closed geodesic curve $c^h_\gamma$ in $(\S,h) =\H^2/\rho(\pi_1\S)$, is an annulus. Then:
\begin{itemize}
 \item the geodesic parametrisation of the oriented simple closed curve $c^h_\gamma$ gives a smooth diffeomorphism $\varphi_\gamma : c^h_\gamma \to \R/l_\gamma[\rho]\Z$ (unique up to translation in $\R/l_\gamma[\rho]\Z$), where $l_\gamma[\rho]$ is the length of $c^h_\gamma$,
 \item the orthogonal projection on $c^h_\gamma$ is a smooth map $\Pi_\gamma : N^{\gamma}_\varepsilon \to c^h_\gamma$,
 \item the signed distance to $c^h_\gamma$ is a smooth map $\delta_\gamma : N^{\gamma}_\varepsilon \to (-\varepsilon,\varepsilon)$.
\end{itemize}
Introducing $\theta_\gamma \coloneqq \varphi_\gamma \circ \Pi_\gamma$, we get the following smooth diffeomorphic parametrisation of the annulus $N^{\gamma}_\varepsilon$:
\begin{equation*}
(\theta_\gamma, \delta_\gamma) : N^{\gamma}_\varepsilon \longrightarrow \R/l_\gamma[\rho]\Z \, \times (-\varepsilon,\varepsilon) \, .
\end{equation*}

\subsubsection{\texorpdfstring{Property $(\mathrm{P}_{\epsind})$}{Property (P epsilon)}}

We recall that we have fixed a pair of pants decomposition $([\gamma_i])_{1\leq i\leq 3g-3}$ and that we denote by $\pi$ the universal covering $\pi: \tilde{\S} = \H^2 \to \H^2/\rho(\pi_1\S)$. Given $\varepsilon>0$ such that the $\varepsilon$-neighbourhoods $(N^{\gamma_i}_\varepsilon)_{1\leq i\leq 3g-3}$ of the geodesic curves $(c^h_{\gamma_i})_{1\leq i\leq 3g-3}$ in $(\S,h) =\H^2/\rho(\pi_1\S)$ are pairwise disjoint annuli, we shall say that a de Rham form $\alpha \in \ZdR$ satisfies property \ref{property P epsilon} if: 
\begin{itemize}
 \myitem{$(\mathrm{P}_{\epsind})$}\label{property P epsilon} For all $1 \leq i \leq 3g-3$, on $N^{\gamma_i}_\varepsilon$, $\alpha$ is of the form: for all $p\in \pi^{-1}(N^{\gamma_i}_\varepsilon)$ and $v \in \T_{\pi(p)}\S$
\begin{equation*}
\alpha (\pi(p),v) = \mu_{\gamma_i} [p, \tH (C_{\gamma_i,p})] \, \dif_{\pi(p)} \theta_{\gamma_i}(v) \, ,
\end{equation*}
where $\mu_{\gamma_i} \in \R$, $C_{\gamma_i,p}$ is the oriented lift of $c^h_{\gamma_i}$ at distance less than $\varepsilon$ to $p \in \tilde{\S} = \H^2$, and $\theta_{\gamma_i}$ is given by the annulus parametrisation of $N^{\gamma_i}_\varepsilon$ described above.
\end{itemize}

\begin{proposition}
\label{prop peps imply gluing}
Let $\alpha \in \ZdR$ satisfying property~\ref{property P epsilon} for some $\varepsilon>0$.
Then, for every hyperbolic pair of pants $\pant \subset (\S,h)$ given by the pair of pants decomposition, the de Rham form $\alpha_\pant^*-\alpha_\pant \in \bundlerest{\pant}$ vanishes on the $\varepsilon$-neighbourhood of every boundary component of $\pant$. 
\end{proposition}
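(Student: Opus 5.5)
The computation is local: on a collar of a boundary geodesic, $\alpha_\pant$ has the explicit form given by \ref{property P epsilon}, so I will compute $\alpha^*_\pant$ there and compare. Fix a boundary component of $\pant$, say $c^h_\gamma$ with $\gamma\in\{\gamma_i\}$, and work on $N^\gamma_\varepsilon\cap\pant$, the half-collar on the side of $c^h_\gamma$ that lies in $\pant$. Choose the lift of the symmetry as in Section~\ref{sec Induced de Rham forms and symmetries of pairs of pants}: $S$ is the reflection along a geodesic $C$ orthogonal to $C_\gamma$, and $\tilde\sigma = S\vert_{\upant}$. The previous proposition shows that ${}^*$ does not depend on the choice of seam or of lift, so this choice is harmless, and it also covers the third boundary component.

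The first step is to describe how $\tilde\sigma$ acts on the collar coordinates. Since $S$ reflects across a geodesic orthogonal to $C_\gamma$, it preserves $C_\gamma$ as a set but reverses its orientation. It therefore preserves the distance to $C_\gamma$ on the side lying in $\upant$, because $S$ maps $\upant$ to itself. It also reverses the arclength coordinate along $C_\gamma$. Passing to the quotient, $\sigma$ preserves the unsigned distance to $c^h_\gamma$ and reverses the orientation of $c^h_\gamma$. Hence $\delta_\gamma\circ\sigma=\delta_\gamma$ and $\theta_\gamma\circ\sigma = -\theta_\gamma + \mathrm{const}$, which gives $\tilde\sigma^*\dif\theta_\gamma = -\dif\theta_\gamma$ on $\pi^{-1}(N^\gamma_\varepsilon)\cap\upant$.

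The second step handles the Lie algebra factor. For $p$ in the lifted collar, the lift $C_{\gamma,\tilde\sigma(p)}$ is $S(C_{\gamma,p})$ with the orientation reversed: $S$ maps the set of lifts of $c^h_\gamma$ bounding $\upant$ to itself, and it reverses the orientation of $c^h_\gamma$. Conjugating a translation by an orientation-reversing isometry preserves the axis as a set but reverses its direction, so $\Ad S\cdot \tH(D) = \tH(S(D))$ when $S(D)$ is given the pushed-forward orientation. Reversing the orientation of the axis changes the sign of $\tH$. Therefore $\Ad S\cdot \tH(C_{\gamma,\tilde\sigma(p)}) = -\tH(C_{\gamma,p})$.

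Combining the two steps with the formula $\alphat = \mu_\gamma\,\tH(C_{\gamma,p})\,\dif\theta_\gamma$ on the collar gives
$$\alphat^*_\pant(p,v)=\Ad S\cdot \alphat(\tilde\sigma(p),\dif\tilde\sigma\, v) = \mu_\gamma\,\bigl(-\tH(C_{\gamma,p})\bigr)\bigl(-\dif\theta_\gamma(v)\bigr) = \alphat_\pant(p,v),$$
so $\alpha^*_\pant-\alpha_\pant$ vanishes on the collar.

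The main obstacle is the sign bookkeeping in the second step. I will verify it concretely in the hyperboloid model, with $C_\gamma=\{x_1=0\}$ and $C=\{x_2=0\}$. In that model $S=\mathrm{diag}(1,-1,1)$, and computing $S\,\tH(C_\gamma)\,S$ against the matrix of $\tH(C_\gamma)$ given in the proof of Lemma~\ref{lem cos and killing} yields $-\tH(C_\gamma)$. This confirms the sign. It also ensures that $\tilde\sigma(p)$ lies near the same lift $C_\gamma$ and not near a different lift of $c^h_\gamma$.
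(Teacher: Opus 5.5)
Your proof is correct and follows essentially the same route as the paper. On the half-collar $N^\gamma_\varepsilon\cap\pant$ you show $\dif\theta_\gamma\circ\dif\sigma=-\dif\theta_\gamma$ and $\Ad S\cdot\tH(C_\gamma)=-\tH(C_\gamma)$, so the two sign changes cancel in $\alphat^*_\pant=\Ad S\cdot(\alphat_\pant\circ\dif\tilde\sigma)$. Your explicit tracking of the oriented lift $C_{\gamma,\tilde\sigma(p)}$ (namely $S(C_{\gamma,p})$ with reversed orientation) is, if anything, slightly more careful than the paper's shorthand of a reflection $S_p$ orthogonal to $C_{\gamma,p}$.
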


Before proving Proposition~\ref{prop peps imply gluing}, let us note its consequences. If a de Rham form $\alpha \in \ZdR$ satisfies property \ref{property P epsilon}, then all the $\alpha_\pant^*$ glue together into a global de Rham form on $\S$, that we shall denote $\alpha^* \in \ZdR$. Moreover, for every $\gamma \in \pi_1\S$ in the fixed pair of pants decomposition, we have 
\begin{equation*}
 \alpha_{\vert N_{\gamma}^\varepsilon} = \alpha^*_{\vert N_{\gamma}^\varepsilon} \, .
\end{equation*}

\begin{proof}[Proof of Proposition~\ref{prop peps imply gluing}]
Let $\pant \subset (\S,h)$ be a hyperbolic pant given by the pair of pants decomposition and $\gamma \in \pi_1\S$ be an element of the decomposition so that $c^h_\gamma \subset \partial \pant$.

As $\alpha \in \ZdR$ satisfies property~\ref{property P epsilon}, on $N_{\gamma}^\varepsilon$, $\alpha$ is of the form
\begin{equation*}
\alpha (\pi(p),v) = \mu_\gamma [p, \tH (C_{\gamma,p})] \, \dif_{\pi(p)} \theta_\gamma(v) \, ,
\end{equation*}
where $\mu_{\gamma} \in \R$, $C_{\gamma,p}$ is the oriented lift of $c^h_\gamma$ at distance less than $\varepsilon$ to $p \in \pi^{-1}(N_{\gamma}^\varepsilon) \subset \H^2$, and $\theta_\gamma$ is given by the annulus parametrisation of $N_{\gamma}^\varepsilon$. Note the two following facts:
\begin{itemize}
 \item As $c^h_\gamma$ bounds the hyperbolic pair of pants $\pant$, in annulus coordinates of $N_\varepsilon^\gamma \cap \pant$, the isometric symmetry $\sigma$ of $\pant$ acts by 
\begin{equation*}
 \sigma(\theta_\gamma, \delta_\gamma) = \sigma(\vartheta-\theta_\gamma, \delta_\gamma) \, 
\end{equation*}
where $\vartheta \in \R/l_\gamma[\rho] \Z$ is a constant. Hence, 
\begin{equation*}
 \dif \theta_\gamma \circ \dif \sigma = - \dif \theta_\gamma \, .
\end{equation*}
\item If $S \in \Isom^-(\H^2)$ is the hyperbolic reflection along the geodesic $C$ orthogonal to $C_\gamma$, then, using the $\O_+(2,1)$ model of $\Isom(\H^2)$, one easily checks that 
\begin{equation}
\label{eq Ad(S) t= -t}
\Ad S \cdot \tH(C_\gamma) = - \tH(C_\gamma)\, .
\end{equation}
\end{itemize}
Thus, by definition of $\alpha_\pant^*$ (Proposition~\ref{prop alpha* is equiv}), on $N_\varepsilon^\gamma \cap \pant$ we have that 
\begin{align*}
\alpha^*_\pant (\pi(p),v) & = \mu_\gamma [p, \Ad S_p \cdot \tH (C_{\gamma,p})] \, \dif_{\sigma \circ \pi(p)} \theta_\gamma \circ \dif_{\pi(p)}\sigma(v) \\
& = \mu_\gamma [p,\tH (C_{\gamma,p})] \, \dif_{\pi(p)}\theta_\gamma (v) \\
& = \alpha_\pant (\pi(p),v) \, ,
\end{align*}
where $S_p \in \Isom(\H^2)$ is a hyperbolic reflection along a geodesic orthogonal to $C_{\gamma,p}$.
\end{proof}

\begin{proposition}
\label{prop explicit value nice rep}
If $\alpha \in \ZdR$ satisfies property~\ref{property P epsilon}, then each coefficient $\mu_{\gamma_i}$ in the definition of property~\ref{property P epsilon} is 
\begin{equation*}
 \mu_{\gamma_i} = \frac{1}{l_{\gamma_i}[\rho]} \dif_{[\rho]} l_{\gamma_i} \bp{\Psi_\rho \dR{\alpha}} \, .
\end{equation*}
\end{proposition}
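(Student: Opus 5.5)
The plan is to compute $\dif_{[\rho]} l_{\gamma_i}\bp{\Psi_\rho\dR{\alpha}}$ directly from Proposition~\ref{dl}. Fix $i$ and write $\gamma=\gamma_i$. Choose the base point $p_0 \in C_\gamma$ with $\pi(p_0)\in c^h_\gamma \subset N^\gamma_\varepsilon$. The de Rham map is independent of the base point (Lemma~\ref{lem change of base point}), so the cocycle
\begin{equation*}
\tau_\alpha(\xi)=\int_{p_0}^{\xi\cdot p_0}\alphat
\end{equation*}
represents $\Psi_\rho\dR{\alpha}$. By Proposition~\ref{dl}, $\dif_{[\rho]}l_\gamma$ evaluated on this class is $\tr\bp{\tau_\alpha(\gamma)\rho(\gamma)}/\bp{2\sinh l_\gamma[\rho]}$. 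Proposition~\ref{dl} notes that this quantity is unchanged when a coboundary is added, so any representative may be used.

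Next I compute $\tau_\alpha(\gamma)$ by integrating along the geodesic segment of $C_\gamma$ from $p_0$ to $\gamma\cdot p_0=\rho(\gamma)p_0$. This segment lies in $\pi^{-1}(N^\gamma_\varepsilon)$ and projects onto one full positive turn of $c^h_\gamma$. Along it, property~\ref{property P epsilon} gives $\alphat(p,v)=\mu_\gamma\,\tH(C_{\gamma,p})\,\dif\theta_\gamma(v)$, and here $C_{\gamma,p}=C_\gamma$ is constant. Since $\theta_\gamma$ restricted to $c^h_\gamma$ is the arclength parametrisation, $\int\dif\theta_\gamma$ over one turn equals $l_\gamma[\rho]$. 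Hence
\begin{equation*}
\tau_\alpha(\gamma)=\mu_\gamma\, l_\gamma[\rho]\,\tH(C_\gamma).
\end{equation*}
The one sign issue is whether the orientation of $\theta_\gamma$ agrees with the direction of translation of $\rho(\gamma)$. It does, because $\varphi_\gamma$ is the parametrisation of the oriented curve $c^h_\gamma$, and $C_\gamma$ is oriented so that $\rho(\gamma)$ translates positively along it.

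Finally I substitute this into the trace. We have $\rho(\gamma)=T_{l}(C_\gamma)$ with $l=l_\gamma[\rho]$. Lemma~\ref{lem killing trace and length}, applied with $M=\tH(C_\gamma)$ and $t=l$, gives
\begin{equation*}
\tr\bp{T_l(C_\gamma)\tH(C_\gamma)}=\sinh l\;\kil\bp{\tH(C_\gamma),\tH(C_\gamma)}.
\end{equation*}
By Lemma~\ref{lem cos and killing} with $\theta=0$, $\kil\bp{\tH(C_\gamma),\tH(C_\gamma)}=2$. Therefore
\begin{equation*}
\dif_{[\rho]}l_\gamma\bp{\Psi_\rho\dR{\alpha}}=\frac{\mu_\gamma\, l\cdot 2\sinh l}{2\sinh l}=\mu_\gamma\, l_\gamma[\rho],
\end{equation*}
which is the stated formula. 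The step needing the most care is the computation of $\tau_\alpha(\gamma)$: one must check that the chosen path stays in the lift of the annulus where the formula from property~\ref{property P epsilon} holds, and that the orientation conventions produce no sign. Both reduce to placing $p_0$ on $C_\gamma$ and integrating along $C_\gamma$ itself.
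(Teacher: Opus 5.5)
Your proof is correct, but it takes a different route from the paper's. The paper pairs $\alpha$ with the twist representative $\alpha_\gamma$ of \eqref{eq twist in dR}. By Proposition~\ref{prop wg=wdR} and the second Wolpert formula (Theorem~\ref{Second Wolpert formula}), $2\,\dif_{[\rho]}l_\gamma\bp{\Psi_\rho\dR{\alpha}}=\wdR(\dR{\alpha_\gamma},\dR{\alpha})$. On $N^\gamma_\varepsilon$ the integrand is $2\mu_\gamma\,\dif(f\circ\delta_\gamma)\wedge\dif\theta_\gamma$, and integrating in the annulus coordinates gives $2\mu_\gamma l_\gamma[\rho]$.

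You instead evaluate the cocycle on the axis, $\tau_\alpha(\gamma)=\mu_\gamma l_\gamma[\rho]\,\tH(C_\gamma)$, and substitute it into Proposition~\ref{dl} and Lemma~\ref{lem killing trace and length}.

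Your argument is more elementary. It needs neither Wolpert's formula nor the identity $\Psi_\rho^*\wdR=\wg$, which the paper imports from the compatibility of the de Rham isomorphism with cup products. Your only sign check is that $\varphi_\gamma$ respects the orientation of $c^h_\gamma$. The paper's computation instead silently uses that $\dif\delta_\gamma\wedge\dif\theta_\gamma$ is positively oriented on $\S$.

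Your computation is in fact exactly the final step of the paper's proof of Theorem~\ref{Second Wolpert formula}, where $\wg(\twinf{\gamma},[\tau])$ is reduced to $\kil\bp{\tH(C_\gamma),\tau_\beta(\gamma)}$. The paper's route is shorter only because that theorem is already in hand. It also phrases the coefficient $\mu_\gamma$ as a pairing with a twist vector, which is the same mechanism used later in Proposition~\ref{prop tau* - tau}.
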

\begin{proof}
Let $\gamma = \gamma_i$ be an element of the pair of pants decomposition. Let $\alpha_\gamma$ be a de Rham representative of $\Psi_\rho^{-1}(\partial_{t_\gamma})$ given by the expression \eqref{eq twist in dR} and supported in $N_\varepsilon^\gamma>0$. 
On one side, by Proposition~\ref{prop wg=wdR} and Theorem~\ref{Second Wolpert formula}, we have
\begin{equation}
\label{eq 1 proof mu}
 2 \, \dif_{[\rho]} l_\gamma \bp{\Psi_\rho \dR{\alpha}} = \wdR(\dR{\alpha_\gamma},\dR{\alpha}) \, . 
\end{equation}
On the other side, using that $\alpha$ satisfies property~\ref{property P epsilon}, and the equation~\eqref{eq twist in dR} defining $\alpha_\gamma$, we have
\begin{equation}
\label{eq 2 proof mu intermediate}
 \wdR(\dR{\alpha_\gamma},\dR{\alpha}) = \mu_\gamma \int_{N_\varepsilon^\gamma} \kil\bp{\tH(C_{\gamma}), \tH(C_{\gamma})} \, \dif (f\circ \delta_\gamma) \wedge \dif \theta_\gamma \, ,
\end{equation} 
where $C_\gamma$ is the oriented geodesics along which $\rho(\gamma)$ is a positive translations, $(\theta_\gamma, \delta_\gamma)$ is the parametrisation of the annulus $N_\varepsilon^\gamma$ described subsection~\ref{subsec param annulus}, and $f:(-\varepsilon,\varepsilon) \to \R$ is a smooth function such that $\int f'=1$. By Lemma~\ref{lem cos and killing}, we also have 
\begin{equation*}
    \kil\bp{\tH(C_{\gamma}), \tH(C_{\gamma})}=2 \, .
\end{equation*}
Thus, equation~\eqref{eq 2 proof mu intermediate} becomes 
\begin{equation}
\label{eq 2 proof mu}
 \wdR(\dR{\alpha_\gamma},\dR{\alpha}) = \mu_\gamma\kil\bp{\tH(C_{\gamma}), \tH(C_{\gamma})}\int_{(-\varepsilon,\varepsilon)}f'\int_{\R/l_\gamma[\rho]\Z}\dif l= 2 \mu_\gamma l_\gamma[\rho ] \, .
\end{equation} 
Identities~\eqref{eq 1 proof mu} and \eqref{eq 2 proof mu} thus conclude the proof.
\end{proof}

\begin{corollary}
\label{cor coincide on neigh}
Let $\alpha_0,\alpha_1 \in \ZdR$ be representatives of the same de Rham class $\dR{\alpha} \in \HdR$. If there exists $\varepsilon>0$ such that both $\alpha_0$ and $\alpha_1$ satisfy property~\ref{property P epsilon}, then for every $\gamma$ in the fixed pair of pants decomposition, $\alpha_0$ and $\alpha_1$ are equal on $N_\varepsilon^\gamma$, the $\varepsilon$-neighbourhood of $c_\gamma^h$.
\end{corollary}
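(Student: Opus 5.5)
The corollary follows almost immediately from Proposition~\ref{prop explicit value nice rep}. Let $\gamma = \gamma_i$ be an element of the fixed pair of pants decomposition. Since both $\alpha_0$ and $\alpha_1$ satisfy property~\ref{property P epsilon}, on $N_\varepsilon^\gamma$ they take the form
\begin{equation*}
\alpha_k(\pi(p),v) = \mu^{(k)}_{\gamma}\, [p,\tH(C_{\gamma,p})]\, \dif_{\pi(p)}\theta_\gamma(v)\, , \qquad k \in \{0,1\}\, ,
\end{equation*}
for some real constants $\mu^{(0)}_\gamma$ and $\mu^{(1)}_\gamma$. The remaining ingredients in this expression are the section $[p,\tH(C_{\gamma,p})]$ and the coordinate $\theta_\gamma$. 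These depend only on $\rho$, on $\varepsilon$ and on the annulus parametrisation, and not on the form itself.

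There is one subtlety: $\theta_\gamma$ is defined only up to a translation in $\R/l_\gamma[\rho]\Z$. However, $\dif\theta_\gamma$ is unaffected by such a translation, so both forms are written using the same $1$-form $\dif\theta_\gamma$. It therefore suffices to show that $\mu^{(0)}_\gamma = \mu^{(1)}_\gamma$.

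Proposition~\ref{prop explicit value nice rep} gives
\begin{equation*}
\mu^{(k)}_\gamma = \frac{1}{l_\gamma[\rho]}\, \dif_{[\rho]} l_\gamma\bp{\Psi_\rho \dR{\alpha_k}}\, .
\end{equation*}
Since $\dR{\alpha_0} = \dR{\alpha_1} = \dR{\alpha}$, the right-hand sides coincide, so the two coefficients are equal. Hence $\alpha_0$ and $\alpha_1$ agree on $N_\varepsilon^\gamma$.

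This holds for every $\gamma$ in the decomposition, which proves the corollary. No step presents a real obstacle. The only point requiring attention is the one already noted: the data other than $\mu_\gamma$ entering property~\ref{property P epsilon} must be intrinsic, and this is ensured by the invariance of $\dif\theta_\gamma$ under translations.
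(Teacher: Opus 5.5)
Your proof is correct and matches the paper's intended argument. The paper states this corollary without proof as an immediate consequence of Proposition~\ref{prop explicit value nice rep}: the coefficients $\mu_{\gamma_i}$ depend only on the class $\Psi_\rho\dR{\alpha}$, while the rest of the expression in property~\ref{property P epsilon} is intrinsic to $(\S,h)$. Your remark that $\dif\theta_\gamma$ does not depend on the choice of translation in the parametrisation $\varphi_\gamma$ makes explicit a point the paper leaves implicit.
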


\subsection{\texorpdfstring{Existence of a representative satisfying property $(\mathrm{P}_{\epsind})$}{Existence of a representative satisfying property (P epsilon}}

The goal of this subsection is to prove the following fact.

\begin{proposition}
\label{prop exist rep satisfying P eps}
Every de Rham class $\dR{\alpha} \in \HdR$ admits a representative $\alpha \in \ZdR$ satisfying property~\ref{property P epsilon} for some $\varepsilon>0$.
\end{proposition}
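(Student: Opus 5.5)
Starting from an arbitrary representative $\alpha$ of the class, I will modify it by exact forms $\dif f$, with $f$ a section of $\bundle$ supported near the curves $c^h_{\gamma_i}$, until it has the form required by property~\ref{property P epsilon} on each annulus. Fix $\varepsilon_0>0$ such that the annuli $N^{\gamma_i}_{\varepsilon_0}$ are pairwise disjoint. The annuli being disjoint, each $i$ can be handled separately. So fix $\gamma=\gamma_i$ and work on $N\coloneqq N^{\gamma}_{\varepsilon_0}$. Lift to the strip $\tilde N\subset\H^2$ around $C_\gamma$. There $\alphat$ is a closed $\isom(\H^2)$-valued form which is $\Ad\rho(\gamma)$-equivariant, and $\rho(\gamma)=T_l(C_\gamma)$ with $l=l_\gamma[\rho]$.

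The first step is to integrate $\alphat$ on the strip. Fix $p_0\in C_\gamma$ and set $F(p)=\int_{p_0}^p\alphat$. Then $F(\gamma\cdot p)=\Ad\rho(\gamma)\cdot F(p)+v$ with $v\coloneqq\int_{p_0}^{\gamma\cdot p_0}\alphat=\tau_\alpha(\gamma)$. Next decompose $\isom(\H^2)=\R\,\tH(C_\gamma)\oplus W$, where $W$ is the sum of the two eigenlines of $\Ad T_l(C_\gamma)$ with eigenvalues $e^{\pm l}$. This splitting is $\Ad\rho(\gamma)$-invariant, and $\Ad\rho(\gamma)-\Id$ is invertible on $W$. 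Write $v=\mu l\,\tH(C_\gamma)+w$ with $w\in W$. Choose $x\in W$ with $\Ad\rho(\gamma)\cdot x-x=-w$, and replace $F$ by $F+\Ad(\cdot)$-transported $x$. More precisely, let $G(p)=F(p)+x_p$, where $x_p$ denotes the parallel (constant) value $x$; this amounts to the coboundary change of Lemma~\ref{lem change of base point}. Then $G(\gamma\cdot p)=\Ad\rho(\gamma)\cdot G(p)+\mu l\,\tH(C_\gamma)$. Since $\Ad\rho(\gamma)$ fixes $\tH(C_\gamma)$, the map
\begin{equation*}
H(p)\coloneqq G(p)-\mu\,\tilde\theta_\gamma(p)\,\tH(C_\gamma)
\end{equation*}
is genuinely $\Ad\rho(\gamma)$-equivariant. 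Here $\tilde\theta_\gamma$ is the lift of $\theta_\gamma$ to $\R$, which increases by $l$ under $\gamma$. Hence $H$ descends to a section $h$ of $\bundle$ over $N$, with $\alpha-\dif h=\mu\,\nu_\gamma\,\dif\theta_\gamma$ on $N$. This is exactly the form required in~\ref{property P epsilon}, because $C_{\gamma,p}=C_\gamma$ on this strip and equivariance handles the other lifts.

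The remaining step is to globalise. Take a bump function $\chi$ on $\S$ equal to $1$ on $N^{\gamma}_{\varepsilon_0/2}$ and supported in $N$, and replace $\alpha$ by $\alpha-\dif(\chi h)$. This is cohomologous to $\alpha$, unchanged outside $N$, and equal to $\mu\,\nu_\gamma\,\dif\theta_\gamma$ on $N^\gamma_{\varepsilon_0/2}$. Doing this for every $i$ gives a representative satisfying property~\ref{property P epsilon} with $\varepsilon=\varepsilon_0/2$. Closedness of $\mu\,\nu_\gamma\,\dif\theta_\gamma$ holds automatically, since $\nu_\gamma$ is flat and $\dif\theta_\gamma$ is closed. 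As a consistency check, Proposition~\ref{prop explicit value nice rep} then identifies $\mu$.

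I expect the main obstacle to be the algebraic normalisation of the period $v$: showing that the $W$-component can be killed by a coboundary while the $\tH(C_\gamma)$-component cannot, and is exactly what the $\dif\theta_\gamma$ term absorbs. This rests on the spectral structure of $\Ad$ of a hyperbolic translation, namely eigenvalues $1,e^{l},e^{-l}$ with the $1$-eigenspace spanned by $\tH(C_\gamma)$. I would verify it in the explicit $\so(2,1)$ matrices of Lemma~\ref{lem killing trace and length}. The rest is bookkeeping of equivariance and cut-offs.
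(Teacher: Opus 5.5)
Your proof is correct and follows the paper's route: you kill the $\Ima(\Ad\rho(\gamma)-\Id)$-part of the period $\tau_\alpha(\gamma)$ by a coboundary, absorb the $\tH(C_\gamma)$-part into $\mu\,\nu_\gamma\,\dif\theta_\gamma$, integrate on the annulus and cut off. Your explicit primitive $H$ replaces the paper's appeal to the injectivity argument, and you correctly account for the factor $l_\gamma[\rho]$ in the period of $\dif\theta_\gamma$, which the paper's proof drops. The only slip is a sign: with $G=F+x$ you need $\Ad\rho(\gamma)\cdot x-x=w$ rather than $-w$, since otherwise $G(\gamma\cdot p)-\Ad\rho(\gamma)\cdot G(p)=\mu l\,\tH(C_\gamma)+2w$.
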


First let us prove some intermediate results.

\begin{lemma}
\label{lem decomp isom via Ad}
Let $M=T_t(C) \in \Isom(\H^2)$ be the hyperbolic translation by $t \in \R$ along an oriented geodesic line $C$. Then, $\Ad M \in \mathrm{GL}(\isom(\H^2))$ is diagonalisable and the $3$-dimensional vector space $\isom(\H^2)$ decomposes as the following direct sum: 
\begin{equation*}
 \isom(\H^2) = \ker\bp{\Ad  M  - \Id} \newoplus \ker\bp{\Ad  M  - e^{t}\, \Id} \newoplus \ker\bp{\Ad  M  - e^{-t}\, \Id} \, ,
\end{equation*}
with $\ker(\Ad M  - \Id) = \mathrm{span}(\tH(C))$.
\end{lemma}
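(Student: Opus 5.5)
We reduce to an explicit normal form by conjugating $M$, exactly as in the proofs of Lemmas~\ref{lem cos and killing} and~\ref{lem killing trace and length}. The statement is invariant under conjugation: if $\phi \in \SO_0(2,1)$, then $\Ad(\phi M \phi^{-1}) = \Ad\phi \,\Ad M\, \Ad\phi^{-1}$. Moreover $\Ad\phi$ maps eigenspaces of $\Ad M$ to those of $\Ad(\phi M\phi^{-1})$, and $\Ad\phi\cdot\tH(C) = \tH(\phi C)$. We may therefore assume that $C = \H^2\cap\{x_1=0\}$, oriented so that $x_2$ increases, and that $M = T_t(C)$ is the matrix with $1$ in the top-left corner and the hyperbolic rotation block $\begin{pmatrix}\cosh t & \sinh t\\ \sinh t & \cosh t\end{pmatrix}$. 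If $t=0$, the statement degenerates to $\Ad M=\Id$. In that case the claimed decomposition should be read with the three summands collapsing, so we treat $t\neq0$ as the substantive case and make the $t=0$ remark separately.

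Next we exhibit an explicit basis of $\so(2,1)$ of eigenvectors. Write a general element as $M'$ with entries $a,b,c$ as in the proof of Lemma~\ref{lem killing trace and length}, that is, with basis vectors $E_a, E_b, E_c$. First, $E_c = \tH(C)$ commutes with $M$ because $M=\exp(tE_c)$, so $\Ad M\cdot E_c = E_c$. Second, we compute $\ad E_c$ on $E_a$ and $E_b$ directly. One expects $[E_c,E_a] = \pm E_b$ and $[E_c,E_b]=\pm E_a$, so that $\ad E_c$ acts on $\mathrm{span}(E_a,E_b)$ by a symmetric matrix with eigenvalues $\pm1$ and eigenvectors $E_a\pm E_b$. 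Then $\Ad M = \exp(t\,\ad E_c)$ acts on $E_a\pm E_b$ (up to sign convention) by $e^{\pm t}$. Alternatively, one can compute $M(E_a\pm E_b)M^{-1}$ directly with the explicit matrices; this is a routine $3\times3$ computation.

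This yields three eigenvectors $E_c$, $E_a+E_b$ and $E_a-E_b$ for the eigenvalues $1$, $e^{t}$ and $e^{-t}$. They form a basis of the $3$-dimensional space $\isom(\H^2)$. Hence $\Ad M$ is diagonalisable and the direct sum decomposition holds. When $t\neq0$ the three eigenvalues are distinct, so each eigenspace is one-dimensional, and in particular $\ker(\Ad M-\Id)=\mathrm{span}(\tH(C))$.

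The only step requiring care is getting the bracket signs right so that the eigenvalues are indeed $e^{\pm t}$ rather than something else. This is a short check and is not a real obstacle. The conjugation-invariance reduction is immediate from the identity $\tH(\phi C)=\Ad\phi\cdot\tH(C)$ already used in Section~\ref{sec twists}.
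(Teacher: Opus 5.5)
Your proposal is correct and is essentially the paper's proof: conjugate to the standard form $M=\exp(t\,\tH(C))$ and exhibit an explicit eigenbasis of $\so(2,1)$. In your notation the paper's eigenvectors are $v=E_b-E_a$, with eigenvalue $e^{t}$, and $w=E_a+E_b$, with eigenvalue $e^{-t}$. Carrying out the bracket check you deferred gives $[E_c,E_a]=-E_b$ and $[E_c,E_b]=-E_a$, so $E_a-E_b$ has eigenvalue $e^{t}$ and $E_a+E_b$ has eigenvalue $e^{-t}$, which is the reverse of your listing; this is harmless for the decomposition. Your remark that the lemma genuinely needs $t\neq 0$ is right and is left implicit in the paper, which only applies it with $t=l_\gamma[\rho]>0$.
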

\begin{proof}
Let us use the $\SO_0(2,1)$ model of $\Isom^+(\H^2)$. Up to conjugation by an element of $\SO_0(2,1)$, we may assume that 
\begin{equation*}
M= T_t(C) = \left( \begin{matrix}
1 & 0 & 0\\
0 & \cosh t & \sinh t \\
0 & \sinh t & \cosh t
\end{matrix} \right)
\quad \text{and} \quad
\tH(C) = \left( \begin{matrix}
0 & 0 & 0\\
0 & 0 & 1 \\
0 & 1 & 0
\end{matrix} \right) \, .
\end{equation*}
Then, setting
\begin{equation*}
v = \left( \begin{matrix}
0 & -1 & 1\\
1 & 0 & 0 \\
1 & 0 & 0
\end{matrix} \right)
\quad \text{and} \quad
w = \left( \begin{matrix}
0 & 1 & 1\\
-1 & 0 & 0 \\
1 & 0 & 0
\end{matrix} \right) \, ,
\end{equation*}
one notices that $(\tH(C),v,w)$ is a basis of $\so(2,1)$ such that
\begin{align*}
& \Ad M  \cdot \tH(C) = \tH(C)\\
& \Ad M  \cdot v = e^t v \\
& \Ad M  \cdot w = e^{-t} w \, . \qedhere
\end{align*}
\end{proof}

As the kernel and image of a diagonalisable linear endomorphism are in direct sum, Lemma~\ref{lem decomp isom via Ad} implies the following.

\begin{corollary}
\label{cor th along gamma}
Let $M=T_t(C) \in \Isom(\H^2)$ be the hyperbolic translation by $t \in \R$ along an oriented geodesic line $C$. Then, we have
\begin{equation*}
\isom(\H^2) = \ker(\Ad M - \Id) \oplus \Ima (\Ad M - \Id) \, ,
\end{equation*}
and thus, for all $x \in \isom(\H^2)$, there exists $\lambda \in \R$ and $y \in \isom(\H^2)$ such that
\begin{equation*}
    x= \lambda \tH(C) + \Ad M \cdot y - y \, .
\end{equation*}
\end{corollary}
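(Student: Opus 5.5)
The argument is essentially linear algebra coming from Lemma~\ref{lem decomp isom via Ad}. I will prove the direct sum decomposition first and then read off the second assertion.

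For the decomposition, set $A \coloneqq \Ad M$. By Lemma~\ref{lem decomp isom via Ad}, $A$ is diagonalisable, so $A-\Id$ is diagonalisable too, with the same eigenbasis $(\tH(C),v,w)$ and eigenvalues $0$, $e^t-1$, $e^{-t}-1$.

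\begin{itemize}
\item \emph{Case $t\neq 0$.} The two numbers $e^{\pm t}-1$ are nonzero. Hence $\ker(A-\Id)=\mathrm{span}(\tH(C))$, and $\Ima(A-\Id)$ contains $(A-\Id)v=(e^t-1)v$ and $(A-\Id)w=(e^{-t}-1)w$, so it equals $\mathrm{span}(v,w)$. Since $(\tH(C),v,w)$ is a basis, $\isom(\H^2)=\ker(A-\Id)\oplus\Ima(A-\Id)$.
\item \emph{Case $t=0$.} Here $A=\Id$, the kernel is everything and the image is $\{0\}$. The decomposition $\isom(\H^2)=\ker(A-\Id)\oplus\Ima(A-\Id)$ holds trivially.
\end{itemize}

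More abstractly, for any diagonalisable endomorphism $B$, the eigenspace for $0$ and the sum of the eigenspaces for the nonzero eigenvalues give $\ker B\oplus\Ima B$. This is exactly the remark preceding the statement, applied to $B=A-\Id$.

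For the second assertion, take $x\in\isom(\H^2)$ and decompose it as $x=k+z$ with $k\in\ker(A-\Id)$ and $z\in\Ima(A-\Id)$. Since $z\in\Ima(A-\Id)$, we can write $z=\Ad M\cdot y-y$ for some $y\in\isom(\H^2)$.

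\begin{itemize}
\item When $t\neq0$, the kernel is $\mathrm{span}(\tH(C))$ by Lemma~\ref{lem decomp isom via Ad}. So $k=\lambda\tH(C)$ for some $\lambda\in\R$, which gives $x=\lambda\tH(C)+\Ad M\cdot y-y$.
\item When $t=0$, the kernel is all of $\isom(\H^2)$, so the conclusion as stated with only $\tH(C)$ can fail. The intended application is $M=\rho(\gamma)$ with $t=l_\gamma[\rho]>0$, so I will assume $t\neq0$, as in every later use.
\end{itemize}

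I do not expect a real obstacle. The only point needing care is the degenerate case $t=0$, which has to be excluded for the final formula.
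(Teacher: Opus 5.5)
Your proof is correct and takes the paper's route: the paper notes that the kernel and image of the diagonalisable endomorphism $\Ad M-\Id$ are in direct sum, and reads off $\ker(\Ad M-\Id)=\mathrm{span}(\tH(C))$ from Lemma~\ref{lem decomp isom via Ad}. You are also right that the second assertion, and the lemma's identification of the kernel, fail at $t=0$. That is a harmless oversight in the statement, since every application has $t=l_\gamma[\rho]>0$.
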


We now have the tools needed to prove Proposition~\ref{prop exist rep satisfying P eps}.

\begin{proof}[Proof of Proposition~\ref{prop exist rep satisfying P eps}]
We recall that we have fixed a pair of pants decomposition $([\gamma_i])_{1\leq i \leq 3g-3}$. Let $\varepsilon>0$ small enough so that the $2\varepsilon$-neighbourhoods $(N^{\gamma_i}_{2\varepsilon} )_{1\leq i \leq 3g-3}$ are pairwise disjoint hyperbolic annuli.

Let $1\leq i \leq 3g-3$ and $p_i$ be an arbitrary base point on $C_{\gamma_i} \subset \H^2$, the oriented geodesic line along which $\rho(\gamma_i)$ is a positive translation. By Corollary~\ref{cor th along gamma}, there exist $\mu_{\gamma_i} \in \R$ and $x_i \in \Isom^+(\H^2)$ such that 
\begin{equation}
\label{eq t(gamma_i)}
 \int_{p_i}^{\gamma_i\cdot p_i} \alphat = \mu_{\gamma_i} \tH(C_{\gamma_i})+ \Ad \rho(\gamma_i)\cdot x_i-x_i \, .
\end{equation}
Using the universal cover of $N^{\gamma_i}_\varepsilon$ given by $\tilde{N}^{\gamma_i}_\varepsilon$, the $\varepsilon$-neighbourhood of $C_{\gamma_i}$ in $\tilde{\S} = \H^2$, the form $\alpha$ induces $\alpha_{i} \in \ZdRof{N^{\gamma_i}_\varepsilon}$ (see subsection~\ref{subsec Induced de Rham forms}). Let $\alpha'_i \in \ZdRof{N^{\gamma_i}_\varepsilon}$ be the form defined by for all $(p,v) \in \T\tilde{N}^{\gamma_i}_\varepsilon$
\begin{equation*}
\alpha_i' (\pi(p),v) = \mu_{\gamma_i} [p, \tH (C_{\gamma_i,p})] \dif_{\pi(p)} \theta_{\gamma_i}(v) \, .
\end{equation*}
We have 
\begin{equation}
\label{eq t'(gamma_i)}
 \int_{p_i}^{\gamma_i\cdot p_i} \alphat'_i = \mu_{\gamma_i}  \tH(C_{\gamma_i}) \, .
\end{equation}
Using that $\pi_1 (N_i^\varepsilon) = \langle \gamma_i \rangle$, the equalities \eqref{eq t(gamma_i)} and \eqref{eq t'(gamma_i)}, and the same argument as in the proof of the injectivity of $\Psi_\rho$ (Proposition~\ref{prop Psi is injective}), we get a section $f_i$ of $\bundlerest{N_{i}^\varepsilon}$ such that
\begin{equation*}
 \alpha_i' = \alpha_i + \dif f_i \, .
\end{equation*}
One can easily extend the section $f_i$ of $\bundlerest{N_{i}^\varepsilon}$ to a smooth section $\bar{f}_i$ of $\bundle$ supported in $N_{i}^{2\varepsilon}$. Then, the $1$-form $\alpha' \in \ZdR$ defined by
\begin{equation*}
 \alpha' = \alpha + \sum_{i=1}^{3g-3} \dif \bar{f}_i 
\end{equation*}
satisfies property~\ref{property P epsilon}. 
\end{proof}

\subsection{A global involution on cocycles}
\label{subsec global symmetry}

We can now define a linear involution on the whole $\HdR$ in the following way. Let $\dR{\alpha} \in \HdR$ with representative $\alpha$ satisfying property~\ref{property P epsilon} for some $\varepsilon>0$. By Proposition~\ref{prop exist rep satisfying P eps}, such a representative exists for every class in $\HdR$. Then, by Proposition~\ref{prop peps imply gluing}, all the $\alpha_\pant^*$ glue together to give an element of $\ZdR$ that we denote by $\alpha^*$, so that we set 
\begin{equation*}
 \dR{\alpha}^* \coloneqq \dR{\alpha^*} \, .
\end{equation*}

\begin{proposition}
The process described above is well-defined and gives a linear involution
\begin{equation*}
{}^*: \HdR \longrightarrow
 \HdR \, .
\end{equation*}
\end{proposition}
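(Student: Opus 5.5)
To show the map is well-defined, linear and an involution, we first check independence of the chosen representative. Let $\alpha_0,\alpha_1\in\ZdR$ be two representatives of $\dR{\alpha}$ satisfying properties $(\mathrm{P}_{\varepsilon_0})$ and $(\mathrm{P}_{\varepsilon_1})$. Property $(\mathrm{P}_{\varepsilon})$ for some $\varepsilon$ implies it for every smaller $\varepsilon$, so both satisfy $(\mathrm{P}_{\varepsilon})$ with $\varepsilon=\min(\varepsilon_0,\varepsilon_1)$. Write $\alpha_1-\alpha_0=\dif f$ with $f$ a global section of $\bundle$. By Corollary~\ref{cor coincide on neigh}, $\alpha_0=\alpha_1$ on each $N^{\gamma_i}_\varepsilon$, so $\dif f=0$ there. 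Hence $f$ is flat on each annulus. Its lift $\tilde f$ is then constant on each component of $\pi^{-1}(N^{\gamma_i}_\varepsilon)$, and by equivariance this constant is fixed by $\Ad\rho(\gamma_i)$. By Lemma~\ref{lem decomp isom via Ad}, it is therefore a multiple of $\tH(C_{\gamma_i,p})$.

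On each pair of pants $\pant$ we have $\alpha_{1,\pant}^*-\alpha_{0,\pant}^*=(\dif f_\pant)^*=\dif(f_\pant^*)$ by Lemma~\ref{lem * and d commute}. It then suffices to show that the sections $f_\pant^*$ glue into a global smooth section $f^*$ of $\bundle$. Then $\alpha_1^*-\alpha_0^*=\dif f^*$ and the classes agree. Gluing reduces to checking $f_\pant^*=f_\pant$ on the $\varepsilon$-collar of each boundary curve, the same mechanism as in Proposition~\ref{prop peps imply gluing}. There $\tilde f=\lambda\,\tH(C_{\gamma,p})$, and since $\tilde\sigma$ maps the collar component to itself, $\tilde f\circ\tilde\sigma=\lambda\,\tH(C_{\gamma,p})$. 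Equation~\eqref{eq Ad(S) t= -t} then gives $\tilde f^*_\pant=-\lambda\,\tH(C_{\gamma,p})$, so we do not get $f^*_\pant=f_\pant$ but a sign flip.

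This sign is the main obstacle: the constant $\lambda$ need not vanish. The way out is to change the cobounding section. Modify $f$ by a section supported near the curves that is equal to $\lambda\,\nu_{\gamma_i}$ on the collars. Such a section is flat there, so it does not change $\dif f$ on the collars. However, it changes $\dif f$ elsewhere, so this is not directly allowed. Instead, I will argue that $\lambda$ must vanish. The two adjacent pants $\pant,\pant'$ share the collar, and the constant is the same from both sides. On a single pants, the constant on one boundary collar can be computed from $\dif f_\pant$ alone only up to the $\Ad\rho(\pi_1\pant)$-invariant part, which is zero because $\rho(\pi_1\pant)$ is non-elementary. If this fails, the fallback is to allow $\lambda$ and define $f^*$ on $\pant$ as $f_\pant^*+2\lambda\,\nu$, corrected by a compactly supported flat-on-collar term. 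Its differential differs from $\dif f^*_\pant$ only by an exact form supported inside the pants, which still yields cohomologous glued forms. Either way, we obtain $\dR{\alpha_1^*}=\dR{\alpha_0^*}$.

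Linearity follows because the sum of two $(\mathrm{P}_\varepsilon)$ representatives (with a common $\varepsilon$), or a scalar multiple of one, is again $(\mathrm{P}_\varepsilon)$, and $\alpha\mapsto\alpha_\pant^*$ is linear on each pants. For the involution property, if $\alpha$ satisfies $(\mathrm{P}_\varepsilon)$, then $\alpha^*$ coincides with $\alpha$ on every collar by Proposition~\ref{prop peps imply gluing}. So $\alpha^*$ satisfies $(\mathrm{P}_\varepsilon)$ and may be used to compute $\dR{\alpha^*}^*$. On each pants, $(\alpha^*_\pant)^*=\alpha_\pant$ since $\tilde\sigma^2=\Id$ and $S^2=\Id$. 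Hence $(\dR{\alpha}^*)^*=\dR{\alpha}$.
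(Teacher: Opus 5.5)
The gap is in the well-definedness step, where you reduce gluing to checking $f^*_\pant=f_\pant$ on the $\varepsilon$-collars. That condition is sufficient but not necessary. It is what Proposition~\ref{prop peps imply gluing} provides for the forms, where the common value on both sides of a curve happens to be the global form $\alpha$. To glue the sections $f^*_\pant$ into a smooth section of $\bundle$, you only need the pieces from the two sides of each $c^h_\gamma$ to agree near the curve and extend smoothly across it.

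Your own computation already gives this. Since $f$ is a single global section, flat on the whole connected annulus $N^\gamma_\varepsilon$, the same constant $\lambda_\gamma$ appears on both half-collars. On each half-collar you get $f^*_\pant=-\lambda_\gamma\nu_\gamma$, whether the two halves lie in two different pants or, in the non-separating case, in the same pants. Hence the $f^*_\pant$ glue to a smooth global section $f^*$, equal to $-\lambda_\gamma\nu_\gamma$ on $N^\gamma_\varepsilon$, with $\alpha_1^*-\alpha_0^*=\dif f^*$. This is exactly how the paper concludes; the sign flip you call the main obstacle is harmless.

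Neither of your proposed ways out works as written.

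First, $\lambda$ need not vanish. Take $\chi$ a cut-off function of $\delta_\gamma$ equal to $1$ on $N^\gamma_\varepsilon$ and supported in $N^\gamma_{2\varepsilon}$. Then $f=\lambda\chi\nu_\gamma$ is a global section with $\dif f=0$ on every collar, so $\alpha_0+\dif f$ still satisfies $(\mathrm{P}_\varepsilon)$ for every $\lambda\in\R$. Your non-elementarity remark only shows that $\lambda$ is determined by $\dif f$ (a pants carries no nonzero flat section), not that it is zero.

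Second, your fallback rests on the claim that a form which is exact on a pants and supported inside it is globally exact. This is false. Take a curve $\gamma$ of the decomposition and the twist form $\alpha_\gamma$ built with $f'$ supported in $(-\varepsilon,-\varepsilon/2)$, as in the proof of Proposition~\ref{prop dt*=-dt}. It is supported inside a single pants and is exact there: its primitive is $(f\circ\delta+\tfrac{1}{2})\nu_\gamma$ on the half-collar, extended by $0$. Yet it represents $\twinf{\gamma}\neq 0$. Making your correction globally exact would require the corrections on the two sides of each curve to match, i.e.\ the same $\lambda_\gamma$ on both sides, at which point no correction is needed.

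Your arguments for linearity and for the involution property, including the check that $\alpha^*$ again satisfies $(\mathrm{P}_\varepsilon)$, are fine.
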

\begin{proof}
First, note that if ${}^*: \HdR \to \HdR$ is well-defined, it is clearly linear and an involution.

In order to prove that it is well-defined, we have to show that $\dR{\alpha}^* = \dR{\alpha^*}$ does not depend on the choice of the representative $\alpha$ satisfying property~\ref{property P epsilon}.

Assume that $\alpha_0,\alpha_1 \in \ZdR$ are such that $\dR{\alpha_0} = \dR{\alpha_1}$ and that they respectively satisfy property~{\renewcommand\epsind{\varepsilon_0}\ref{property P epsilon}} and {\renewcommand\epsind{\varepsilon_1}\ref{property P epsilon}} with $\varepsilon_0,\varepsilon_1>0$. Setting $\varepsilon = \min(\varepsilon_0, \varepsilon_1)$, we have that $\alpha_0$ and $\alpha_1$ both satisfy property~\ref{property P epsilon}.

As $\dR{\alpha_0} = \dR{\alpha_1}$, there exists a smooth section $f$ of $\bundle$ such that $\alpha_0-\alpha_1=\dif f$. By Lemma~\ref{lem * and d commute}, on each hyperbolic pair of pants $\pant \subset (\S,h)$ given by the pair of pants decomposition, we have
\begin{equation*}
 (\alpha_0)_\pant^*-(\alpha_1)_\pant^* = (\dif f_\pant)^* = \dif (f_\pant^*) \, .
\end{equation*}
Thus, if  we show that all the $f_\pant^*$ glue together to a section of $\bundle$, we will get $\dR{\alpha_0^*}=\dR{\alpha_1^*}$, concluding the proof.

Let $\gamma$ in the fixed pair of pants decomposition and $\pant \subset(\S,h)$ be a hyperbolic pair of pants given by the decomposition having $c_\gamma^h$ as one of its boundary components. As $\alpha_0$ and $\alpha_1$ both satisfy property~\ref{property P epsilon} and $\dR{\alpha_0}=\dR{\alpha_1}$, by Corollary~\ref{cor coincide on neigh}, $\dif f = \alpha_0-\alpha_1$ vanishes on $N_\varepsilon^\gamma$, and thus, $f$ is constant on $N_\varepsilon^\gamma$. 

Using the definition of the bundle $\bundle$ (or equivalently the $\Ad \rho $-equivariance~\eqref{eq ad equiv map} of the $\isom(\H^2)$-valued map $\tilde f$ on $\tilde\S = \H^2$ inducing $f$) and Corollary~\ref{cor th along gamma}, one notices that there must exist a constant $\lambda_{\gamma} \in \R$ such that for all $p\in \pi^{-1}(N^\gamma_\varepsilon)$,
\begin{equation*}
f\bp{\pi(p)} = \lambda_\gamma\big[p,\tH(C_{\gamma,p})\big] \in \bundle \, , 
\end{equation*}
where $C_{\gamma,p} \subset \H^2$ is the oriented lift of $c_\gamma$ at distance less than $\varepsilon$ to $p$.

If $S \in \Isom^-(\H^2)$ is the hyperbolic reflection along a geodesic $C$ orthogonal to the oriented geodesic $C_{\gamma,p}$, by \eqref{eq Ad(S) t= -t}, we have
\begin{equation*}
\Ad S \cdot \tH(C_{\gamma,p}) = - \tH(C_{\gamma,p})\, .
\end{equation*}
Thus, by definition of the involution on forms on a hyperbolic pair of pants $\pant$ of the decomposition (Proposition~\ref{prop alpha* is equiv}), all the $(f_\pant)^*$ glue together to give a global section $f^*$ of $\bundle$ such that for all $1\leq i \leq 3g-3$ and $p\in \pi^{-1}( N^{\gamma_i}_\varepsilon)$,
\begin{equation*}
 f^*\bp{\pi(p)} = -\lambda_{\gamma_i}\big[p,\tH(C_{\gamma_i,p})\big] \, .
\end{equation*}
That concludes the proof.
\end{proof}

\begin{remark}
\label{rem * depend on pop dec}
Note that the linear involution ${}^*$ on $\HdR$ depends on the fixed pair of pants decomposition $([\gamma_i])_{1\leq i \leq 3g-3}$, but not on the multicurve $\Gamma'$ used for defining Fenchel--Nielsen coordinates.
\end{remark}

Let us define the linear involution
\begin{equation*}
 {}^*: \Hisomeq \longrightarrow \Hisomeq 
\end{equation*}
by push-forward via the isomorphism $\Psi_\rho$. That is, for all $[\tau] \in \Hisom$,
\begin{equation*}
 [\tau]^*\coloneqq \Psi_\rho(\dR{\alpha}^*) \, 
\end{equation*}
where $\dR{\alpha} = \Psi_\rho^{-1}[\tau] \in \ZdR$.

\subsection{Relation with the Goldman form}
\label{subsec relation with goldman form}

\begin{proposition}
\label{prop wdR(*,*)}
Let $\dR{\alpha},\dR{\alpha'} \in \ZdR$. Then,
\begin{equation*}
 \wdR(\dR{\alpha}^*,\dR{\alpha'}^*) = - \wdR(\dR{\alpha},\dR{\alpha'})
\end{equation*}
\end{proposition}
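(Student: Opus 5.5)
Choose representatives $\alpha,\alpha' \in \ZdR$ of the two classes that satisfy property~\ref{property P epsilon} for a common $\varepsilon>0$. Proposition~\ref{prop exist rep satisfying P eps} gives each one such a representative, and we then shrink to the smaller $\varepsilon$. By the construction of subsection~\ref{subsec global symmetry}, $\dR{\alpha}^*=\dR{\alpha^*}$ and $\dR{\alpha'}^*=\dR{\alpha'^*}$, where $\alpha^*$ and $\alpha'^*$ are obtained by gluing the forms $\alpha_\pant^*$ and $\alpha'^*_\pant$. The pants $\pant$ of the decomposition are open and pairwise disjoint, and they cover $\S$ up to the measure-zero union of the curves $c^h_{\gamma_i}$. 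So we can split
\begin{equation*}
\wdR(\dR{\alpha}^*,\dR{\alpha'}^*)=\int_\S \kil(\alpha^*\wedge\alpha'^*)=\sum_{\pant}\int_\pant \kil(\alpha^*_\pant\wedge\alpha'^*_\pant) \, ,
\end{equation*}
and likewise for $\wdR(\dR{\alpha},\dR{\alpha'})$. It therefore suffices to show, for each pair of pants $\pant$,
\begin{equation*}
\int_\pant \kil(\alpha^*_\pant\wedge\alpha'^*_\pant) = -\int_\pant \kil(\alpha_\pant\wedge\alpha'_\pant) \, .
\end{equation*}

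This is a pointwise change of variables under the symmetry $\sigma$. Work on the universal cover $\upant$ over a fundamental domain, for instance the union of the two right-angled hexagons. There
\begin{equation*}
\kil(\alphat^*_\pant\wedge\alphat'^*_\pant)=\kil\bp{\Ad S\cdot\tilde\sigma^*\alphat_\pant\wedge \Ad S\cdot\tilde\sigma^*\alphat'_\pant} = \tilde\sigma^*\kil(\alphat_\pant\wedge\alphat'_\pant) \, .
\end{equation*}
The first equality is the definition of ${}^*$. The second uses the $\Ad$-invariance \eqref{eq Ad-inv of Kil} of $\kil$, which holds for the whole group $\Isom(\H^2)$ and in particular for $S\in\Isom^-(\H^2)$, together with the naturality of the wedge product under pull-back. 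The $2$-form $\kil(\alpha_\pant\wedge\alpha'_\pant)$ is a genuine real $2$-form on $\pant$, so downstairs this reads $\kil(\alpha^*_\pant\wedge\alpha'^*_\pant)=\sigma^*\kil(\alpha_\pant\wedge\alpha'_\pant)$. Since $\sigma$ is an orientation-reversing diffeomorphism of $\pant$, $\int_\pant\sigma^*\beta=-\int_\pant\beta$ for any integrable $2$-form $\beta$, which gives the per-pants identity. Summing over the pants finishes the proof.

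The main point needing care is the justification of this change of variables. The descent from $\upant$ to $\pant$ of the identity above has to be consistent: the $2$-form $\kil(\alphat_\pant\wedge\alphat'_\pant)$ is $\pi_1\pant$-invariant, and $\tilde\sigma$ normalises the deck action by Proposition~\ref{prop hol(l)S}, so the pulled-back form descends to $\sigma^*$ of the form on $\pant$. The integrals over the open pants are finite, because each $\alpha^*_\pant$ extends smoothly across the boundary. Indeed, by Proposition~\ref{prop peps imply gluing} it agrees with $\alpha$ near every boundary component, and this is also why the glued forms are smooth global forms whose integrals over $\S$ decompose as above. Finally, the result is independent of the chosen representatives, because $\wdR$ is well defined on cohomology and ${}^*$ is well defined by the preceding proposition.
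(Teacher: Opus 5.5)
Your proof is correct and matches the paper's argument. Like the paper, you choose representatives satisfying property $(\mathrm{P}_{\varepsilon})$ and split $\int_\S$ over the pants. You then use the $\Ad$-invariance of $\kil$ under the full group $\Isom(\H^2)$, including the reflection $S$, to get $\kil(\alpha^*_\pant\wedge\alpha'^*_\pant)=\sigma^*\kil(\alpha_\pant\wedge\alpha'_\pant)$, and conclude because $\sigma$ reverses orientation. Your extra remarks on a common $\varepsilon$, the descent of the pulled-back form, and the finiteness of the integrals are correct details that the paper leaves implicit.
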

\begin{proof}
By Proposition~\ref{prop exist rep satisfying P eps}, we can assume that we consider representatives $\alpha$ and $\alpha'$ satisfying property~\ref{property P epsilon} for some $\varepsilon>0$. Then, by definition, $\dR{\alpha}^* = \dR{\alpha^*}$ and $\dR{\alpha'}^* = \dR{\alpha'^*}$.
Let $P$ be the set of hyperbolic pair of pants given by the fixed pair of pants decomposition. We have
\begin{equation*}
 \wdR(\dR{\alpha}^*,\dR{\alpha'}^*) = \wdR(\dR{\alpha^*},\dR{\alpha'^*})= \int_S \kil(\alpha^* \wedge \alpha'^*) = \sum_{\pant \in P} \int_\pant \kil(\alpha^*_\pant \wedge \alpha'^*_\pant) \, .
\end{equation*}
Let $\pant \in P$. The Killing form $\kil$ is invariant under the adjoint action of $\Isom(\H^2)$, so for all $S \in \Isom(\H^2)$ and $x,y \in \isom(\H^2)$,
\begin{equation*}
 \kil(\Ad S \cdot x, \Ad S \cdot y) = \kil(x,y) \, .
\end{equation*}
Hence, by definition of $\alpha_\pant^*$ and $\alpha_{\pant}'^*$ (Proposition~\ref{prop alpha* is equiv}), we have 
\begin{equation*}
 \int_\pant \kil(\alpha^*_\pant \wedge \alpha'^*_\pant) = \int_\pant \kil\bp{(\alpha_\pant \circ \dif\sigma) \wedge (\alpha'_\pant \circ \dif\sigma)}  = \int_\pant \sigma^*\bp{\kil(\alpha_\pant \wedge \alpha'_\pant )}\, ,
\end{equation*}
where $\sigma$ is the orientation-reversing isometric symmetry on $\pant$ exchanging the two right-angled hyperbolic hexagons composing it. As $\sigma$ is orientation-reversing, using the change of variables formula for differential forms that becomes
\begin{equation*}
 \int_\pant \kil(\alpha^*_\pant \wedge \alpha'^*_\pant) = - \int_\pant \kil(\alpha_\pant \wedge \alpha'_\pant ) \, .
\end{equation*}
Thus, 
\begin{equation*}
 \wdR(\dR{\alpha}^*,\dR{\alpha'}^*) = \sum_{\pant \in P} \int_\pant \kil(\alpha^*_\pant \wedge \alpha'^*_\pant) = -\sum_{\pant \in P} \int_\pant \kil(\alpha_\pant \wedge \alpha'_\pant ) = - \wdR(\dR{\alpha},\dR{\alpha'}) \, . \qedhere
\end{equation*}
\end{proof}

As the involution ${}^*$ on $\Hisom$ is defined as the push-forward by $\Psi_\rho$ of the one on $\HdR$, Propositions~\ref{prop wg=wdR} and \ref{prop wdR(*,*)} imply the following.

\begin{proposition}
\label{prop wg(*,*)}
Let $[\tau],[\tau'] \in \T_{[\rho]}\chifd(\pi_1\S) = \Hisom$. Then,
\begin{equation*}
 \wg([\tau]^*,[\tau']^*) = - \wg([\tau],[\tau']) \, .
\end{equation*}
\end{proposition}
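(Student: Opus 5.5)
This follows by transporting Proposition~\ref{prop wdR(*,*)} through the de Rham isomorphism $\Psi_\rho$. Let $[\tau],[\tau'] \in \Hisom$, and set $\dR{\alpha} \coloneqq \Psi_\rho^{-1}[\tau]$ and $\dR{\alpha'} \coloneqq \Psi_\rho^{-1}[\tau']$; this is possible since $\Psi_\rho$ is a vector space isomorphism.

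By the definition of the involution on $\Hisom$ as a push-forward, we have $[\tau]^* = \Psi_\rho(\dR{\alpha}^*)$ and $[\tau']^* = \Psi_\rho(\dR{\alpha'}^*)$. This involution on $\HdR$ is well defined and independent of the chosen representatives satisfying property~\ref{property P epsilon}, by the proposition of subsection~\ref{subsec global symmetry}.

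Next, Proposition~\ref{prop wg=wdR} says $\Psi_\rho^*\wdR = \wg$, that is, $\wg(\Psi_\rho a, \Psi_\rho b) = \wdR(a,b)$ for all $a,b \in \HdR$. Applying this with $a = \dR{\alpha}^*$ and $b = \dR{\alpha'}^*$ gives
\begin{equation*}
\wg([\tau]^*,[\tau']^*) = \wdR(\dR{\alpha}^*,\dR{\alpha'}^*) \, .
\end{equation*}

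Proposition~\ref{prop wdR(*,*)} turns the right-hand side into $-\wdR(\dR{\alpha},\dR{\alpha'})$. Applying Proposition~\ref{prop wg=wdR} once more, this equals $-\wg([\tau],[\tau'])$, which is the claim.

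There is no real obstacle in this step. All the substantive work is in Proposition~\ref{prop wdR(*,*)}: choosing $\varepsilon$-adapted representatives so that the forms $\alpha^*_\pant$ glue into a global form, and then applying the change of variables under the orientation-reversing symmetry $\sigma$ on each pair of pants $\pant$.

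The only point to check is that both the involution on $\Hisom$ and the form $\wg$ are compared through the same map $\Psi_\rho$. This holds because $\Psi_\rho$ does not depend on the base point $p_0$, by Lemma~\ref{lem change of base point}.
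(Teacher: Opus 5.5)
Your proof is correct and matches the paper's: the involution on $\Hisom$ is by definition the push-forward under $\Psi_\rho$ of the one on $\HdR$. The claim therefore follows directly from Proposition~\ref{prop wg=wdR} ($\Psi_\rho^*\wdR=\wg$) together with Proposition~\ref{prop wdR(*,*)}.
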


\begin{corollary}
\label{cor wg(*,*)=0}
Let $[\tau],[\tau'] \in \T_{[\rho]}\chifd(\pi_1\S) = \Hisom$. If $[\tau]=[\tau]^*$ and $[\tau']=[\tau']^*$, then
\begin{equation*}
 \wg([\tau],[\tau']) = 0 \, .
\end{equation*}
\end{corollary}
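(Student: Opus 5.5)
This is a direct consequence of Proposition~\ref{prop wg(*,*)} combined with the skew-symmetry-free, purely linear nature of the identity; no geometry beyond what is already established is needed. I would start from the hypothesis $[\tau]=[\tau]^*$ and $[\tau']=[\tau']^*$, and substitute these fixed-point relations into the left-hand side of the identity in Proposition~\ref{prop wg(*,*)}.

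Concretely, Proposition~\ref{prop wg(*,*)} gives
\begin{equation*}
\wg([\tau]^*,[\tau']^*) = - \wg([\tau],[\tau']) \, .
\end{equation*}
Replacing $[\tau]^*$ by $[\tau]$ and $[\tau']^*$ by $[\tau']$ on the left-hand side yields
\begin{equation*}
\wg([\tau],[\tau']) = - \wg([\tau],[\tau']) \, ,
\end{equation*}
so $2\,\wg([\tau],[\tau'])=0$. Since $\wg$ takes real values, it follows that $\wg([\tau],[\tau'])=0$.

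There is no real obstacle here. The only point worth checking is that the involution ${}^*$ on $\Hisom$ is well-defined, so that the hypotheses make sense. This was ensured by defining it as the push-forward via $\Psi_\rho$ of the well-defined involution on $\HdR$.
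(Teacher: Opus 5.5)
Your proof is correct and matches the paper, which states this corollary as an immediate consequence of Proposition~\ref{prop wg(*,*)} without further argument. Substituting the fixed-point relations gives $\wg([\tau],[\tau'])=-\wg([\tau],[\tau'])$, and hence $\wg([\tau],[\tau'])=0$.
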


Let us end this section by noticing the two following facts.

\begin{proposition}
\label{prop dt*=-dt}
Let $\gamma \in \pi_1\S$ be an element of the fixed pair of pants decomposition. Then, the infinitesimal twist $\partial_{t_\gamma} \in \T_{[\rho]}\chifd(\pi_1\S) = \Hisom$ satisfies 
\begin{equation*}
 \partial_{t_\gamma}^* = - \partial_{t_\gamma} \, .
\end{equation*}
\end{proposition}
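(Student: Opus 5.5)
We will work in the de Rham model and use the explicit representative $\alpha_\gamma$ of $\twinf{\gamma}=\partial_{t_\gamma}$ given by \eqref{eq twist in dR}. It is supported in the annulus $N^\gamma_\varepsilon$, and there it equals $\nu_\gamma\,\dif(f\circ\delta_\gamma)$. The difficulty is that $\alpha_\gamma$ does \emph{not} satisfy property \ref{property P epsilon}: on $N^\gamma_\varepsilon$ it is proportional to $\dif\delta_\gamma$ rather than to $\dif\theta_\gamma$. So the definition of ${}^*$ cannot be applied to it directly. We can, however, use it to build a representative that does satisfy the property.

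\textbf{Step 1: choose a good representative.} Take $\varepsilon'$ with $0<\varepsilon'<\varepsilon$ and choose $f$ so that $f'$ is supported in $(-\varepsilon',\varepsilon')$. On $N^\gamma_\varepsilon$ we have $\alpha_\gamma=\dif(\nu_\gamma\, f\circ\delta_\gamma)$, because $\nu_\gamma$ is a flat section there. Let $\chi$ be a cutoff depending only on $\delta_\gamma$, equal to $1$ on $|\delta_\gamma|<\varepsilon'$ and supported in $|\delta_\gamma|<\varepsilon$. Set
\begin{equation*}
\beta\coloneqq\alpha_\gamma-\dif\bigl(\chi\,\nu_\gamma\,(f\circ\delta_\gamma)\bigr).
\end{equation*}
This form is cohomologous to $\alpha_\gamma$ and vanishes on $N^\gamma_{\varepsilon'}$ and on all the other collars $N^{\gamma_j}_{\varepsilon'}$. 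It is supported in two thin bands $\{\varepsilon'\le|\delta_\gamma|\le\varepsilon\}$, one inside each of the two pants $\pant,\pant'$ adjacent to $c^h_\gamma$ (possibly $\pant=\pant'$). On the band inside $\pant$ it equals $-\tfrac12\,\nu_\gamma\,\dif\chi$ on the right side and $+\tfrac12\,\nu_\gamma\,\dif\chi$ on the left side. Thus $\beta$ satisfies property $(\mathrm{P}_{\varepsilon'})$ with all $\mu_{\gamma_j}=0$. This is consistent with Proposition~\ref{prop explicit value nice rep} and Theorem~\ref{First Wolpert formula}, since $\dif l_{\gamma_j}(\partial_{t_\gamma})=\tfrac12\wg(\twinf{\gamma_j},\twinf{\gamma})=0$ for disjoint curves.

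\textbf{Step 2: compute $\beta^*$ on each band.} The symmetry $\sigma$ of $\pant$ acts in collar coordinates by $(\theta_\gamma,\delta_\gamma)\mapsto(\vartheta-\theta_\gamma,\delta_\gamma)$, so it preserves $\chi$ and $\dif\chi$. By \eqref{eq Ad(S) t= -t} we have $\Ad S\cdot\tH(C_{\gamma,p})=-\tH(C_{\gamma,p})$. Hence on each band
\begin{equation*}
\beta^*_\pant=\Ad S\cdot(\beta_\pant\circ\dif\tilde\sigma)=-\beta_\pant,
\end{equation*}
and $\beta^*_\pant=0=-\beta_\pant$ elsewhere in $\pant$. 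The same holds in $\pant'$. Gluing gives $\beta^*=-\beta$, and therefore $\dR{\alpha_\gamma}^*=-\dR{\alpha_\gamma}$. Pushing forward by $\Psi_\rho$ yields $\partial_{t_\gamma}^*=-\partial_{t_\gamma}$.

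The main obstacle is the bookkeeping in Step 1. One must check that the correction $\chi\nu_\gamma(f\circ\delta_\gamma)$ is a well-defined global section: it is, since it is supported in the collar, where $\nu_\gamma$ is defined. One must also handle carefully the lift $S_p$ used near each lift $C_{\gamma,p}$, which is independent of the choices by the earlier proposition on lifts. Finally, when $\pant=\pant'$, the two bands are two distinct boundary collars of the same pants, and the argument applies to each band separately.
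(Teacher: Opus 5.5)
Your proof is correct and follows essentially the paper's argument. Both take a representative of $\partial_{t_\gamma}$ that vanishes near $c^h_\gamma$, so that property $(\mathrm{P}_{\varepsilon'})$ holds with all $\mu_{\gamma_i}=0$, and then use $\Ad S\cdot\tH(C_{\gamma,p})=-\tH(C_{\gamma,p})$ together with $\dif\delta_\gamma\circ\dif\sigma=\dif\delta_\gamma$ to get $\alpha^*=-\alpha$. The only difference is cosmetic: the paper obtains the good representative directly by choosing $f'$ supported in $(-\varepsilon,-\varepsilon/2)$, whereas your $\beta=\nu_\gamma\,\dif\bigl((1-\chi)\,(f\circ\delta_\gamma)\bigr)$ is itself a twist form of type \eqref{eq twist in dR}, with bump function $(1-\chi)f$ supported in two bands, one on each side of the curve.
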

\begin{proof}
Let $\varepsilon>0$ be small enough so that $N^\gamma_\varepsilon\subseteq \S$, the $\varepsilon$-neighbourhood of $c^h_{\gamma}$ in $(\S, h )= \H^2/\rho(\pi_1\S)$, is an annulus. Let $\alpha_\gamma$ be a de Rham representative of $\Psi_\rho^{-1}(\partial_{t_\gamma})$ given by the expression \eqref{eq twist in dR} where $f:(-\varepsilon,\varepsilon) \to \R$ is a smooth function such that $f(\pm\varepsilon) = \pm1/2$, and $f'$ is compactly supported in $(-\varepsilon,-\varepsilon/2)$. As $f'$ vanishes on $(-\varepsilon/2,\varepsilon/2)$, $\alpha_\gamma$ vanishes on $N_{\varepsilon/2}^\gamma$. Thus, $\alpha_\gamma$ satisfies property~{\renewcommand\epsind{\varepsilon/2}\ref{property P epsilon}} so that $\dR{\alpha_\gamma}^*= \dR{\alpha_\gamma^*}$. 

Let $\pant$ be the hyperbolic pair of pants on which $\alpha_\gamma$ is supported. In annulus coordinates of $N_\varepsilon^\gamma \cap \pant$, the isometric symmetry $\sigma$ of $\pant$ and the distance $\delta$ to $c_\gamma^h$ satisfy
\begin{equation*}
 \delta \circ \sigma = \delta \quad \text{and} \quad  \dif \delta \circ \dif \sigma = \dif \delta \, .
\end{equation*}
Then, using \eqref{eq Ad(S) t= -t}, the expression \eqref{eq twist in dR} of $\alpha_\gamma$, and the definition of $\alpha_\gamma^*$, on $N_\varepsilon^\gamma \cap \pant$ we have
\begin{align*}
\alpha^*_\pant (\pi(p),v) & = \mu_\gamma [p, \Ad S_p \cdot \tH (C_{\gamma,p})] \, \dif_{f \circ \sigma \circ \pi(p)} f \circ \dif_{\sigma \circ \pi(p)} \delta \circ \dif_{\pi(p)}\sigma(v) \\
& = -\mu_\gamma [p,\tH (C_{\gamma,p})] \, \dif_{f \circ \pi(p)} f \circ\dif_{\pi(p)}\delta (v) \\
& = -\alpha_\pant (\pi(p),v) \, ,
\end{align*}
where $S_p \in \Isom(\H^2)$ is a hyperbolic reflection along a geodesic orthogonal to $C_{\gamma,p}$.
\end{proof}

\begin{proposition}
\label{prop tau* - tau}
Let $[\tau] \in \T_{[\rho]}\chifd(\pi_1\S) = \Hisom$. Then,
\begin{equation*}
[\tau]^* - [\tau] \in \mathrm{span}(\partial_{t_{\gamma_i}})_{1\leq i \leq 3g-3} \, .
\end{equation*}
\end{proposition}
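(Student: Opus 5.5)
The plan is to work in the de Rham model and show that $\dR{\alpha}^*-\dR{\alpha}$ is represented by a linear combination of the twist forms $\alpha_{\gamma_i}$ from \eqref{eq twist in dR}. By Proposition~\ref{prop exist rep satisfying P eps}, I would pick a representative $\alpha$ of $\Psi_\rho^{-1}[\tau]$ satisfying property~\ref{property P epsilon} for some $\varepsilon>0$, so that $\dR{\alpha}^*=\dR{\alpha^*}$. Set $\beta\coloneqq\alpha^*-\alpha\in\ZdR$. By Proposition~\ref{prop peps imply gluing}, $\beta$ vanishes on every annulus $N^{\gamma_i}_\varepsilon$.

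\textbf{Step 1: local exactness on each pair of pants.} On each hyperbolic pair of pants $\pant$ of the decomposition, I would use the Remark following Lemma~\ref{lem central result}: the path $[\rho_s]$ can be represented by a family satisfying \ref{cdt A} and \ref{cdt B}, and then Lemma~\ref{lem central result} together with the argument of Proposition~\ref{prop Psi is injective} gives a smooth section $f_\pant$ of $\bundlerest{\pant}$ with $\beta_\pant=\alpha^*_\pant-\alpha_\pant=\dif f_\pant$. This is the step I regard as the real input, and it is already available from the preceding section.

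\textbf{Step 2: behaviour of $f_\pant$ near the boundary.} Since $\beta$ vanishes on $N^{\gamma}_\varepsilon\cap\pant$, the section $f_\pant$ is flat there. Its lift is therefore a constant vector fixed by $\Ad\rho(\gamma)$. By Lemma~\ref{lem decomp isom via Ad}, this vector lies in $\mathrm{span}(\tH(C_\gamma))$, exactly as in the proof that ${}^*$ is well defined. Hence, on the half-annulus of $N^{\gamma_i}_\varepsilon$ lying to the left of $c^h_{\gamma_i}$ we get $f=\lambda_i^-\nu_{\gamma_i}$, and on the right half $f=\lambda_i^+\nu_{\gamma_i}$, with $\nu_{\gamma_i}$ as in \eqref{eq def nu}. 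The two halves may belong to the same pair of pants or to different ones; this does not matter.

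\textbf{Step 3: gluing.} The sections $f_\pant$ then define a section $f$ of $\bundle$ on $\S\setminus\bigcup_i c^h_{\gamma_i}$, which jumps by $(\lambda_i^+-\lambda_i^-)\nu_{\gamma_i}$ across each curve. I would smooth this jump. With $f_i:(-\varepsilon,\varepsilon)\to\R$ as in \eqref{eq twist in dR}, define $g$ to equal $f$ outside the annuli and, on $N^{\gamma_i}_\varepsilon$,
\begin{equation*}
g=\Bigl(\tfrac{\lambda_i^++\lambda_i^-}{2}+(\lambda_i^+-\lambda_i^-)\,f_i\circ\delta_{\gamma_i}\Bigr)\nu_{\gamma_i}\, .
\end{equation*}
Because $f_i'$ has compact support, this $g$ agrees with $f$ near $\partial N^{\gamma_i}_\varepsilon$, and $g$ is a global smooth section of $\bundle$. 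On each annulus $\nu_{\gamma_i}$ is parallel and $\beta=0$, so \eqref{eq twist in dR ver2} gives
\begin{equation*}
\dif g=\beta+\sum_{i}(\lambda_i^+-\lambda_i^-)\,\alpha_{\gamma_i}\, .
\end{equation*}
Consequently $\dR{\beta}=-\sum_i(\lambda_i^+-\lambda_i^-)\dR{\alpha_{\gamma_i}}$. Applying $\Psi_\rho$ and using that $\Psi_\rho(\dR{\alpha_{\gamma_i}})=\twinf{\gamma_i}=\partial_{t_{\gamma_i}}$ gives the claim.

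The main obstacle is Step 1, namely the global exactness of $\alpha^*_\pant-\alpha_\pant$ on $\pant$. Lemma~\ref{lem central result} only controls the periods along the generators $\gamma^{\pm1},\eta^{\pm1}$, and one must check that the base point $p_0\in C\cap\upant_0$ can be used for the restricted form. Since $\pi_1\pant=\langle\gamma,\eta\rangle$ is free, agreement of the periods on the generators with a coboundary suffices, so the argument of Proposition~\ref{prop Psi is injective} applies verbatim on $\upant$. The remaining care in Step 3 is bookkeeping of orientation signs, which does not affect membership in the span.
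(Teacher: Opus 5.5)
Your argument is correct, but it takes a different route from the paper.

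\textbf{The paper's route.} The paper uses a short duality argument. It takes a twist representative $\alpha_\gamma$ supported in $N^\gamma_\varepsilon$, where $\alpha^*-\alpha$ vanishes, which gives $\wg([\tau]^*-[\tau],\partial_{t_\gamma})=0$. Theorem~\ref{Second Wolpert formula} turns this into $\dif_{[\rho]}l_{\gamma_i}([\tau]^*-[\tau])=0$ for every $i$. Since the Fenchel--Nielsen functions are coordinates, the common kernel of the $\dif l_{\gamma_i}$ is exactly $\mathrm{span}(\partial_{t_{\gamma_i}})$.

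\textbf{Your route.} You build an explicit cohomology instead. First, $\alpha^*_\pant-\alpha_\pant$ is exact on each pair of pants, by Lemma~\ref{lem central result} and the Remark after it. Second, flatness of the primitive near the boundary forces it to equal $\lambda_i^\pm\nu_{\gamma_i}$ on each half-annulus. Third, smoothing the jump produces the twist forms.

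\textbf{Comparison.} The paper's route is much shorter. It relies only on the already established second Wolpert formula and on the coordinate property of $(l_{\gamma_i},t_{\gamma_i})$. It does not need Lemma~\ref{lem central result}, nor the fact, only sketched in the paper, that every tangent vector is realised by a path satisfying \ref{cdt A}--\ref{cdt B}. Your route avoids Wolpert's second formula and the Fenchel--Nielsen chart; you only use $\twinf{\gamma_i}=\partial_{t_{\gamma_i}}$. It also yields more: the coefficient of $\partial_{t_{\gamma_i}}$ is $\lambda_i^--\lambda_i^+$, minus the jump of the local primitives across $c^h_{\gamma_i}$. Lemma~\ref{lem central result} gives those primitives explicitly through the constant $y=\Ad S\cdot x-x$. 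Your framework therefore essentially merges this proposition with the coefficient computation the paper carries out separately in the proof of Theorem~\ref{theo ccl dl*}.

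A small correction to your last paragraph: agreement of the periods with a coboundary on generators suffices for any group, by the cocycle identity. Freeness of $\pi_1\pant$ is not what is being used.
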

\begin{proof}
By Proposition~\ref{prop exist rep satisfying P eps}, we can set $\alpha \in \ZdR$ to be a representative of $\Psi_\rho^{-1}[\tau] \in \HdR$ satisfying property~\ref{property P epsilon} for some $\varepsilon>0$. Let $\gamma \in \pi_1\pant$ be any element of the fixed pair of pants decomposition, and let $\alpha_\gamma \in \ZdR$ be a representative of $\Psi_\rho^{-1}(\partial_{t_\gamma})$ given by the expression \eqref{eq twist in dR} and with support in $N^\gamma_\varepsilon$, the $\varepsilon$-neighbourhood of $c_\gamma^h$. As $\alpha - \alpha^*$ vanishes on $N^\gamma_\varepsilon$, by Proposition~\ref{prop wg=wdR}, we have
\begin{equation*}
 \wg([\tau]^*-[\tau],\partial_{t_\gamma}) = \wdR(\dR{\alpha^*}-\dR{\alpha}, \dR{\alpha_\gamma}) = \int_\S \kil\bp{(\alpha^* - \alpha)\wedge\alpha_\gamma}= 0 \, .
\end{equation*}
Hence, by Theorem~\ref{Second Wolpert formula}, we have just proved that for all $\gamma \in (\gamma_1, \dots, \gamma_{3g-3})$, 
\begin{equation*}
 \dif_{[\rho]}l_\gamma ([\tau]^*-[\tau]) =0 \, .
\end{equation*}
That is $[\tau]^* - [\tau] \in \mathrm{span}(\partial_{t_{\gamma_i}})_{1\leq i \leq 3g-3}$.
\end{proof}

\section{A formula for infinitesimal lengths variations}
\label{Sec Formula for infinitesimal lengths}

The purpose of this section is to prove the following fact.

\begin{theorem}[Theorem~\ref{theo intro w(dl,dl)=0}]
\label{theo w(dl,dl)=0}
If $\gamma,\eta \in \pi_1\S$ are two elements of a pair of pants decomposition of $\S$, in associated Fenchel--Nielsen coordinates on $\chifd(\pi_1\S)$, one has
\begin{equation}
\label{eq w(dl,dl)=0}
\wg(\partial_{l_{\gamma}}, \partial_{l_{\eta}})= 0 \, .
\end{equation}
\end{theorem}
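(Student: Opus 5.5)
The plan is to use the involution ${}^*$ of Theorem~\ref{theo existence involution on tangent space} together with Corollary~\ref{cor wg(*,*)=0}. The goal is to show that, for a suitable choice of multicurve $\Gamma'$ (equivalently, a suitable normalisation of the twist coordinates), each $\partial_{l_{\gamma_i}}$ is fixed by ${}^*$. Corollary~\ref{cor wg(*,*)=0} then gives $\wg(\partial_{l_\gamma},\partial_{l_\eta})=0$ in those coordinates. To pass to arbitrary Fenchel--Nielsen coordinates we use Remark~\ref{remark change of coordinates multicurve}. In coordinates defined by another multicurve, \eqref{eq change of FN coordinates} gives $\partial_{\bar l_{\gamma_i}} = \partial_{l_{\gamma_i}} - \frac{k_i}{2}\partial_{t_{\gamma_i}}$. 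Expanding bilinearly, the new cross terms are $\wg(\partial_{t_\gamma},\partial_{t_\eta})$, which vanish by Corollary~\ref{w(dt,dt)=0} when $\gamma\neq\eta$, and $\wg(\partial_{t_\gamma},\partial_{l_\eta})=2\,\dif l_\gamma(\partial_{l_\eta})=2\delta_{\gamma\eta}$ by Theorem~\ref{Second Wolpert formula}. When $\gamma\neq\eta$ all of these vanish. When $\gamma=\eta$ the statement is trivial by skew-symmetry. So it suffices to treat one convenient choice of twist normalisation.

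To show $\partial_{l_\gamma}^*=\partial_{l_\gamma}$, first decompose $[\tau]=\partial_{l_\gamma}$ as follows. By Proposition~\ref{prop tau* - tau}, $[\tau]^*-[\tau]=\sum_i a_i\partial_{t_{\gamma_i}}$. Applying ${}^*$ and using Proposition~\ref{prop dt*=-dt} gives $[\tau]-[\tau]^* = -\sum_i a_i \partial_{t_{\gamma_i}}$, which is consistent and gives no information. So the real content is to show that each $a_i$ vanishes, that is, that $\dif t_{\gamma_i}([\tau]^*-[\tau])=0$. For this I would use Lemma~\ref{lem central result}. Represent $\partial_{l_\gamma}$ by a smooth path $(\rho_s)$ along which only $l_\gamma$ changes. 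Normalise the path so that on each pair of pants conditions \ref{cdt A} and \ref{cdt B} hold with respect to the common perpendicular $C$ used to define the zero twist. This should be possible pant by pant, since varying only lengths while keeping twists fixed amounts to deforming right-angled hexagons and regluing along the orthogonal arcs, which stay aligned when the twist normalisation is given by those arcs. Lemma~\ref{lem central result} then says that on each pant $\pant$ the class $\alpha_\pant^*-\alpha_\pant$ is given by the coboundary associated with $y_\pant=\Ad S_\pant\cdot x-x$.

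The main obstacle is the global gluing. The local coboundaries $y_\pant$ on adjacent pants must be compared across each curve $c^h_{\gamma_i}$. The mismatch across $c^h_{\gamma_i}$ lies in $\ker(\Ad\rho(\gamma_i)-\Id)=\mathrm{span}\,\tH(C_{\gamma_i})$, by Corollary~\ref{cor th along gamma}. Its $\tH$-component is exactly the coefficient $a_i$ of $\partial_{t_{\gamma_i}}$, as in the computation for the non-separating and separating twist cocycles \eqref{eq inf twist separating}, \eqref{eq inf twist non-separating}. I would compute this coefficient explicitly, using the fact that the path $\rho_s$ keeps the perpendiculars of the two adjacent pants aligned along $C_{\gamma_i}$. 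This alignment means the two reflections $S_\pant$ and $S_{\pant'}$ are reflections in the same line, or in lines differing by a half-translation along $C_{\gamma_i}$. From this the relevant $\tH$-components of $y_\pant$ and $y_{\pant'}$ should agree, giving $a_i=0$. If this direct computation becomes unwieldy, the fallback is to choose the multicurve $\Gamma'$ adapted to the point $[\rho]$ itself. One would argue that the path $\rho_s$ can be chosen to preserve a global reflection structure infinitesimally, so that the mismatch terms vanish by symmetry.
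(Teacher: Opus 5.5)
Your preliminary reduction is correct: for $\gamma\neq\eta$ the quantity $\wg(\partial_{l_\gamma},\partial_{l_\eta})$ does not depend on the multicurve. The gap is in the central claim that, after a suitable choice of $\Gamma'$, one has $\partial_{l_\gamma}^*=\partial_{l_\gamma}$. This is false at a generic point. The paper shows (Theorem~\ref{theo ccl dl*}) that $\partial_{l_\gamma}^*=\partial_{l_\gamma}+2\frac{t_\gamma[\rho]}{l_\gamma[\rho]}\,\partial_{t_\gamma}$. A change of multicurve only replaces $t_\gamma$ by $t_\gamma+\frac{k}{2}l_\gamma$ with $k\in\Z$. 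Hence some $\partial_{\bar l_\gamma}$ is ${}^*$-fixed only when $t_\gamma[\rho]\in\frac{l_\gamma[\rho]}{2}\Z$, which are exactly the points where the pant symmetries glue into a global reflection.

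Your gluing computation breaks at the same place. You assume the perpendiculars of the two pants adjacent along $c^h_\gamma$ are aligned, or differ by a half-translation. In general, however, the perpendicular $C$ for $\pant$ and the perpendicular $C'$ for $\pant'$ meet $C_\gamma$ at points $p,p'$ at signed distance $t_\gamma[\rho]$. So Lemma~\ref{lem central result} must be applied with base point $p$ on $\pant$ and base point $p'$ on $\pant'$, and the two constants differ. By property $(\mathrm{P}_{\varepsilon})$ and Proposition~\ref{prop explicit value nice rep}, $x'=x+\int_p^{p'}\alphat=x+\lambda_\gamma\frac{t_\gamma[\rho]}{l_\gamma[\rho]}\tH(C_\gamma)$. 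With $y=-2x$ and $y'=-2x'$, the mismatch gives the coefficient $a_\gamma=2\lambda_\gamma t_\gamma[\rho]/l_\gamma[\rho]$, not $0$.

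Your fallback has the same defect. A global reflection structure exists only at those special points. Transporting the vanishing from there to an arbitrary $[\rho]$ is the classical argument, which needs the closedness of $\wg$ plus a twist-flow (Lie derivative) argument. You supply neither, and this paper deliberately avoids closedness because it derives it afterwards.

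The repair is close to what you already have, since your bilinear expansion works for arbitrary real coefficients, not only for $k_i/2\in\frac12\Z$. So do not try to make $\partial_{l_\gamma}$ itself fixed. Instead:
\begin{enumerate}
\item Carry out the mismatch computation with the offset $t_\gamma[\rho]$. When $c^h_\gamma$ bounds a single pant, the offset enters through the value $\tau'(\zeta)$ of a representative $\tau'$ of $[\tau]^*-[\tau]$ on the HNN generator $\zeta$.
\item Combine the resulting formula with $\partial_{t_\gamma}^*=-\partial_{t_\gamma}$ (Proposition~\ref{prop dt*=-dt}). This shows that $\partial_{l_\gamma}+\frac{t_\gamma[\rho]}{l_\gamma[\rho]}\partial_{t_\gamma}$ is ${}^*$-fixed.
\item Apply Corollary~\ref{cor wg(*,*)=0} to two such vectors.
\item Expand bilinearly using Corollary~\ref{w(dt,dt)=0} and Theorem~\ref{Second Wolpert formula}.
\end{enumerate}
This is exactly the paper's proof.
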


The classical proof of Theorem~\ref{theo w(dl,dl)=0} \cite{Wolpert:1985,Hubbard:2006}, reinterpreted in the language of this article, works as follows. Using the closedness of the Goldman form (or of the Weil--Petersson form, depending on the model of the Teichm\"uller space one is considering) and the Cartan Formula, one shows that one just has to prove the formula at any points $[\rho_0] \in \chifd(\pi_1\S)$ having every twist coordinate $t_{\gamma_i}$ equal to $0$. In that case, the whole hyperbolic surface $(\S,h_0)=\H^2/\rho_0(\pi_1\S)$ is endowed with an orientation-reversing isometric symmetry $\sigma$, obtained by gluing all the symmetries on each hyperbolic pair of pants of the decomposition. Then, using that global isometric symmetry $\sigma$ on $(\S,h_0)$, one can define an involution ${}^*$ on $H^1_{\mbox{\tiny $\Ad \rho_0$}}(\pi_1\S,\isom(\H^2))$ (similarly to what we have done in Section~\ref{sec Induced de Rham forms and symmetries of pairs of pants} with the symmetry on a hyperbolic pair of pants). Then, one shows that for all $[\tau], [\tau'] \in \T_{[\rho_0]}\chifd(\pi_1\S)$,
\begin{equation*}
 \wg([\tau]^*,[\tau']^*) = - \wg([\tau],[\tau']) \, .
\end{equation*}
and that for every $\gamma \in \pi_1\S$ in the fixed pair of pants decomposition, at $[\rho_0]$ one has
\begin{equation}
\label{eq dl*=dl when twist 0}
 \partial_{l_\gamma}^*=\partial_{l_\gamma} \, .
\end{equation}
Then, \eqref{eq w(dl,dl)=0} is satisfied.

\vspace{\baselineskip}

Our proof of Theorem~\ref{theo w(dl,dl)=0} will not rely on the closedness of the Goldman form. With Theorem~\ref{theo existence involution on tangent space}, we have produced, at every point $[\rho] \in \chifd(\pi_1\S)$, an involution ${}^*$ on $\T_{[\rho]}\chifd(\pi_1\S)$ depending on the pair of pants decomposition $\Gamma$, such that for all $[\tau],[\tau'] \in \T_{[\rho]}\chifd(\pi_1\S)$
\begin{equation}
\label{eq wg*=-wg}
 \wg([\tau]^*,[\tau']^*) = - \wg([\tau],[\tau']) \, .
\end{equation}
Then, we will prove (see Theorem~\ref{theo ccl dl*}) that for every element $\gamma \in \pi_1\S$ in the fixed pair of pants decomposition, we have
\begin{equation}
\label{eq sym cocycle}
 \partial_{l_\gamma}+\frac{t_\gamma}{l_\gamma} \partial_{t_\gamma} =\vp{\partial_{l_\gamma}+\frac{t_\gamma}{l_\gamma} \partial_{t_\gamma} }^* \, . 
\end{equation}
Hence, if $\gamma, \eta \in \pi_1 \S$ are two distinct elements of the fixed pair of pants decomposition, equations~\eqref{eq wg*=-wg} and \eqref{eq sym cocycle} imply
\begin{equation*}
 \wg\vp{\partial_{l_\gamma}+\frac{t_\gamma}{l_\gamma} \partial_{t_\gamma} , \partial_{l_\eta}+\frac{t_\eta}{l_\eta} \partial_{t_\eta} } = 0 \, , 
\end{equation*}
so that, using Corollary~\ref{w(dt,dt)=0} and Theorem~\ref{Second Wolpert formula}, we get
\begin{equation*}
 \wg(\partial_{l_\gamma}, \partial_{l_\eta})=\wg\vp{\partial_{l_\gamma}+\frac{t_\gamma}{l_\gamma} \partial_{t_\gamma} , \partial_{l_\eta}+\frac{t_\eta}{l_\eta} \partial_{t_\eta} } = 0 \, . 
\end{equation*}

\begin{remark}
Note that the identity~\eqref{eq sym cocycle} is a quite intuitive fact. Indeed, one expects the ‘‘symmetric way'' to modify the length of a hyperbolic surface along the curve $c^h_\gamma$ to be the one that also proportionally changes the twist along that curve. Moreover, note that if $[\rho]$ has all its twist parameters equal to $0$, we recover the identity \eqref{eq dl*=dl when twist 0} from the classic proof.
\end{remark}

For the rest of this section, we fix a pair of pants decomposition $([\gamma_i])_{1\leq i\leq 3g-3}$ of $\S$ and consider the Fenchel--Nielsen coordinates on $\chifd(\pi_1\S)$ given by that pair of pants decomposition and a multicurve $\Gamma'$. We also let $\rho$ be a Fuchsian representation of $\pi_1\S$ and consider the hyperbolic surface $(\S,h) = \H^2/\rho(\pi_1\S)$ with universal cover $\pi: \H^2 \to \H^2/\rho(\pi_1\S)$.

\subsection{Involutions and infinitesimal lengths}

In this subsection we shall prove the following fact. 

\begin{theorem}
\label{theo ccl dl*}
Let $[\tau] \in \mathrm{span}(\partial_{l_{\gamma_i}})_{1\leq i \leq 3g-3} \subset \T_{[\rho]}\chifd(\pi_1\S)$ and set $(\lambda_i)_{1\leq i \leq 3g-3} \in \R^{3g-3}$, such that 
\begin{equation*}
 [\tau] = \sum_{i=1}^{3g-3} \lambda_i \, \partial_{l_{\gamma_i}} \, .
\end{equation*} 
Then, 
\begin{equation*}
 [\tau]^* - [\tau] = 2 \sum_{i=1}^{3g-3} \lambda_i \frac{t_{\gamma_i}[\rho]}{l_{\gamma_i}[\rho]}\, \partial_{t_{\gamma_i}} \, .
\end{equation*} 
\end{theorem}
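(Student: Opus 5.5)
By linearity it suffices to treat $[\tau]=\partial_{l_\gamma}$ for a single $\gamma=\gamma_i$ of the decomposition. By Proposition~\ref{prop tau* - tau}, $[\tau]^*-[\tau]=\sum_j c_j\,\partial_{t_{\gamma_j}}$ for some real numbers $c_j$. To identify the $c_j$ we pair against the twists. Theorem~\ref{Second Wolpert formula} and Corollary~\ref{w(dt,dt)=0} give $\wg(\partial_{t_{\gamma_k}},\partial_{t_{\gamma_j}})=0$ and $\wg(\partial_{t_{\gamma_k}},\partial_{l_{\gamma_j}})=2\delta_{kj}$. These alone do not detect the $c_j$, so we must compute $[\tau]^*$ more directly.

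The plan is to realise $\partial_{l_\gamma}$ by a concrete path $(\rho_s)$ satisfying conditions \ref{cdt A} and \ref{cdt B} on every pair of pants, and then use Lemma~\ref{lem central result}.

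\textbf{Step 1: describe the path geometrically.} Increasing $l_\gamma$ with all other Fenchel--Nielsen coordinates fixed changes the two pants $\pant,\pant'$ adjacent to $c^h_\gamma$ (their hexagons deform). Keeping $t_\gamma$ fixed while $l_\gamma$ varies means the offset between the orthogeodesic feet on the two sides stays $t_\gamma$. The \emph{symmetric} deformation would keep the ratio $t_\gamma/l_\gamma$ fixed instead. Hence $\partial_{l_\gamma}$ is the symmetric deformation $V:=\partial_{l_\gamma}+\frac{t_\gamma}{l_\gamma}\partial_{t_\gamma}$ corrected by $-\frac{t_\gamma}{l_\gamma}\partial_{t_\gamma}$.

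\textbf{Step 2: show $V^*=V$.} Represent $V$ by a path built pant by pant. On each pant, take $\rho_s$ restricted to $\pi_1\pant$ normalised so that \ref{cdt A} and \ref{cdt B} hold, with the reflection $S$ fixed. Lemma~\ref{lem central result} together with the Remark following it shows that on each pant, $\alpha_\pant^*-\alpha_\pant$ is the coboundary $\dif f_\pant$, with $\tilde f_\pant(p_0)=y_\pant=\Ad S\cdot x_\pant-x_\pant$.

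\textbf{Step 3: compute the gluing defect along $c^h_{\gamma_j}$ (the main obstacle).} Choose a representative $\alpha$ satisfying \ref{property P epsilon}. Near $c^h_{\gamma_j}$ the section $f_\pant$ is constant and equal to $\lambda_\pant[p,\tH(C_{\gamma_j,p})]$, as in the well-definedness proof. The mismatch $\lambda_\pant-\lambda_{\pant'}$ between the two sides measures the twist component: a section jumping by $c\,\tH$ across $c_{\gamma_j}$ corresponds to $\dif$ of a step function, i.e. to $c\,\alpha_{\gamma_j}$, which is $c\,\partial_{t_{\gamma_j}}$.

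The two pants' normalisations differ by a translation along $C_{\gamma_j}$ whose length is the offset between the two orthogeodesic feet. This offset is $t_{\gamma_j}$ up to half-length conventions. Along the path this offset varies at rate $\frac{\dif}{\dif s}t_{\gamma_j}(\rho_s)$ relative to the symmetric position. I expect this to give $x_\pant-x_{\pant'}\equiv \dot t\,\tH(C_{\gamma_j})$ modulo $\mathrm{Im}(\Ad\rho(\gamma_j)-\Id)$. Since $\Ad S\cdot\tH=-\tH$, this produces $y_\pant-y_{\pant'}\equiv-2\dot t\,\tH$. The factor $2$ in the statement comes from exactly this, namely from $\Ad S\cdot\tH(C_\gamma)=-\tH(C_\gamma)$ in \eqref{eq Ad(S) t= -t}.

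For $V$ the relative offset is constant in the symmetric normalisation, so $V^*=V$. For $\partial_{l_\gamma}$ the offset derivative is $-\frac{t_\gamma}{l_\gamma}$. This yields $[\tau]^*-[\tau]=2\frac{t_\gamma}{l_\gamma}\partial_{t_\gamma}$. Equivalently, $\partial_{l_\gamma}=V-\frac{t_\gamma}{l_\gamma}\partial_{t_\gamma}$ with $V^*=V$, and Proposition~\ref{prop dt*=-dt} gives $(\partial_{t_\gamma})^*=-\partial_{t_\gamma}$, which gives the result directly. The hardest part is rigorously establishing $V^*=V$, which requires carefully tracking the normalisations across each gluing curve. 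The sign bookkeeping is the delicate point.
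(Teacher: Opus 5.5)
Your reduction is correct. By linearity and Proposition~\ref{prop dt*=-dt}, the statement is equivalent to $V^*=V$ for $V=\partial_{l_\gamma}+\frac{t_\gamma}{l_\gamma}\partial_{t_\gamma}$, which is Corollary~\ref{cor (dl + blabla)*}. Reading the coefficient of $\partial_{t_{\gamma_j}}$ in $[\tau]^*-[\tau]$ as the jump of the flat sections $f_\pant$ across $c^h_{\gamma_j}$ is also a legitimate substitute for the paper's restriction to $\pi_1\Sigma$. But $V^*=V$ is the entire content of the theorem, and Step~3 does not prove it.

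The jump across $c^h_\gamma$ has two sources:
\begin{itemize}
\item[(i)] the derivative of the relative conjugation between the two pants' normalisations, which is your $\dot t\,\tH(C_\gamma)$;
\item[(ii)] the transport of the base point along $C_\gamma$ from $p=C\cap C_\gamma$ to $p'=C'\cap C_\gamma$, at signed distance $t_\gamma[\rho]$. For a representative satisfying $(\mathrm{P}_{\varepsilon})$, this shifts $x$ by
\begin{equation*}
\int_p^{p'}\alphat=\mu_\gamma\, t_\gamma[\rho]\,\tH(C_\gamma)\,, \qquad \mu_\gamma=\frac{\dif_{[\rho]}l_\gamma([\tau])}{l_\gamma[\rho]}
\end{equation*}
by Proposition~\ref{prop explicit value nice rep}.
\end{itemize}
Term (ii) is where $t_\gamma/l_\gamma$ actually enters, and it is absent from your formula $x_\pant-x_{\pant'}\equiv\dot t\,\tH$.

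Read literally, that formula gives zero defect for $\partial_{l_\gamma}$, since $\dif t_\gamma(\partial_{l_\gamma})=0$. It gives a non-zero defect for $V$. This is the opposite of what you conclude. Read instead as an offset relative to a ``symmetric position'' proportional to $l_\gamma$, it gives the right answer. However, that reference position is exactly term (ii), and you never derive it.

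To close the gap, drop $V$ and work one curve $\gamma$ at a time with $[\tau]=\sum_i\lambda_i\partial_{l_{\gamma_i}}$ directly, as the paper does. One curve at a time is needed because a normalisation satisfying (A)--(B) fixes only one boundary axis of a pant.
\begin{itemize}
\item Along a path realising $[\tau]$ the twist $t_\gamma$ is constant. So a single global family $\mu_s$ can be normalised with $C_\gamma$, $C$ and $C'$ all fixed, and term (i) disappears.
\item Lemma~\ref{lem central result} then applies on $\pant$ with base point $p$ and vector $x$. It also applies on $\pant'$ with base point $p'$ and vector $x'=x+\lambda_\gamma\frac{t_\gamma}{l_\gamma}\tH(C_\gamma)$.
\item Corollary~\ref{cor th along gamma} gives $x\in\mathrm{span}\,\tH(C_\gamma)$, because both $\int_p^{\gamma\cdot p}\alphat$ and $\ddso\mu_s(\gamma)\mu_0(\gamma)^{-1}$ lie in that span. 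Hence $y=-2x$ and $y'=-2x'$ by \eqref{eq Ad(S) t= -t}, for both reflections.
\item Since $\alphat^*-\alphat$ vanishes near $C_\gamma$, the base points $p$ and $p'$ give the same cocycle for $\alpha^*-\alpha$.
\end{itemize}
On $\pi_1\pant'$ this cocycle then differs from the coboundary of $y$ by $2\lambda_\gamma\frac{t_\gamma}{l_\gamma}\bigl(\tH(C_\gamma)-\Ad\rho(\xi)\cdot\tH(C_\gamma)\bigr)$. That is $2\lambda_\gamma\frac{t_\gamma}{l_\gamma}$ times the twist cocycle \eqref{eq inf twist separating}, which is the required coefficient.
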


Before proving Theorem~\ref{theo ccl dl*}, let us notice some of its consequences.

\begin{corollary}
\label{cor (dl + blabla)*}
Let $\gamma \in \pi_1\S$ be an element of the pair of pants decomposition $([\gamma_i])_{1\leq i \leq 3g-3}$. Then,
\begin{equation*}
 \partial_{l_\gamma}^* = \partial_{l_\gamma} + 2\, \frac{t_{\gamma}[\rho]}{l_{\gamma}[\rho]}\partial_{t_{\gamma}} \, ,
\end{equation*}
so that, by Proposition~\ref{prop dt*=-dt},
\begin{equation*}
 \partial_{l_\gamma}+\frac{t_\gamma[\rho]}{l_\gamma[\rho]} \partial_{t_\gamma} =\vp{\partial_{l_\gamma}+\frac{t_\gamma[\rho]}{l_\gamma[\rho]} \partial_{t_\gamma} }^* \, . 
\end{equation*}
\end{corollary}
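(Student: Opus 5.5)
This follows directly from Theorem~\ref{theo ccl dl*}, applied to the single vector $[\tau] = \partial_{l_\gamma}$. Write $\gamma = \gamma_j$ for the relevant index $j$, and take the coefficients $\lambda_i = \delta_{ij}$. Since $\partial_{l_\gamma}$ lies in $\mathrm{span}(\partial_{l_{\gamma_i}})_{1\leq i\leq 3g-3}$, Theorem~\ref{theo ccl dl*} gives
\begin{equation*}
\partial_{l_\gamma}^* - \partial_{l_\gamma} = 2\,\frac{t_\gamma[\rho]}{l_\gamma[\rho]}\,\partial_{t_\gamma} \, ,
\end{equation*}
which is the first identity.

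For the second identity, use linearity of the involution ${}^*$ on $\T_{[\rho]}\chifd(\pi_1\S)$. The scalars $t_\gamma[\rho]$ and $l_\gamma[\rho]$ are evaluated at the fixed point $[\rho]$, so ${}^*$ treats them as constants. Proposition~\ref{prop dt*=-dt} gives $\partial_{t_\gamma}^* = -\partial_{t_\gamma}$. Therefore
\begin{equation*}
\vp{\partial_{l_\gamma}+\frac{t_\gamma[\rho]}{l_\gamma[\rho]} \partial_{t_\gamma}}^* = \partial_{l_\gamma} + 2\,\frac{t_\gamma[\rho]}{l_\gamma[\rho]}\partial_{t_\gamma} - \frac{t_\gamma[\rho]}{l_\gamma[\rho]}\partial_{t_\gamma} = \partial_{l_\gamma}+\frac{t_\gamma[\rho]}{l_\gamma[\rho]} \partial_{t_\gamma} \, .
\end{equation*}

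There is no real obstacle, since all the work is contained in Theorem~\ref{theo ccl dl*}. The only point to check is that $\partial_{l_\gamma}$ and $\partial_{t_\gamma}$ are taken in the same Fenchel--Nielsen chart (the same multicurve $\Gamma'$) as in Theorem~\ref{theo ccl dl*} and Proposition~\ref{prop dt*=-dt}. Within that common chart the computation is consistent.
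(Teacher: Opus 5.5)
Your proposal is correct and matches the paper's argument. The first identity is Theorem~\ref{theo ccl dl*} applied with $\lambda_i=\delta_{ij}$. The second follows from linearity of ${}^*$ together with $\partial_{t_\gamma}^*=-\partial_{t_\gamma}$ from Proposition~\ref{prop dt*=-dt}.
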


\begin{remark}
As noticed in Remark~\ref{rem * depend on pop dec}, the involution ${}^*$ on $\T_{[\rho]}\chifd(\pi_1\S)$ depends only on the pair of pants decomposition $([\gamma_i])_{1\leq i \leq 3g-3}$ and not on the multicurve $\Gamma'$ used in order to define the twists coordinates. Thus, it may seem surprising that the tangent vector
\begin{equation*}
    [\tau]= \partial_{l_\gamma}+\frac{t_\gamma[\rho]}{l_\gamma[\rho]} \partial_{t_\gamma}\, 
\end{equation*}
which seems to depend on the choice of the multicurve $\Gamma'$, could satisfy $[\tau]^*=[\tau]$. In fact it is not, because that tangent vector is actually independent of the choice of the multicurve $\Gamma'$. Indeed, if $(\bar l_{\gamma_i},\bar t_{\gamma_i})_{1\leq i \leq 3g-3}$
are Fenchel--Nielsen coordinates given by the pair of pants decomposition $([\gamma_i])_{1\leq i \leq 3g-3}$ and another multicurve $\bar\Gamma'$, then, using Remark~\ref{remark change of coordinates multicurve} and equation~\eqref{eq change of FN coordinates}, one checks that for all $\gamma $ in the pair of pants decomposition,
\begin{equation*}
    \partial_{\bar l_\gamma}+\frac{\bar t_\gamma[\rho]}{\bar l_\gamma[\rho]}\partial_{\bar t_\gamma} = \partial_{l_\gamma}+\frac{t_\gamma[\rho]}{l_\gamma[\rho]}\partial_{t_\gamma} \, .
\end{equation*}
\end{remark}

Proposition~\ref{prop dt*=-dt} and Corollary~\ref{cor (dl + blabla)*} also imply the following corollary, which, paired with Theorem~\ref{theo existence involution on tangent space}, concludes the proof of Theorem~\ref{theo intro existence involution on tangent space}.

\begin{corollary}
\label{coro inv is tensor}
Let $\Gamma=(\gamma_i)_{1\leq i\leq 3g-3}$ be a pair of pants decomposition of $\S$. Then, setting for all $[\rho] \in \chifd(\pi_1\S)$
\begin{equation*}
    \mathrm{Inv}^{\Gamma}_{[\rho]} \coloneqq {}^*: \T_{[\rho]}\chifd(\pi\S) \longrightarrow
 \T_{[\rho]}\chifd(\pi\S) 
\end{equation*}
defines a smooth $(1,1)$-tensor $\mathrm{Inv^{\Gamma}}$ on $\chifd(\pi_1\S)$, which is expressed, in the Fenchel--Nielsen coordinates $(l_{\gamma_1},\dots,l_{\gamma_{3g-3}},t_{\gamma_1},\dots, t_{\gamma_{3g-3}})$ associated with $\Gamma$, by:
\begin{equation*}
    \mathrm{Inv}^{\Gamma}=
    \begin{pNiceArray}{ccc|ccc}
    \Block{3-3}<\Large>{\Id} &   &   & \Block{3-3}<\Large>{0}
            &   &   \\
            &   &   &   &   &   \\
            &   &   &   &   &   \\
        \hline
        2 \, \frac{t_{\gamma_1}}{l_{\gamma_1}} &   & 0 & \Block{3-3}<\Large>{-\Id} &   &   \\ 
           & \Ddots &   &   &   &   \\
        0 &   & 2 \,\frac{t_{\gamma_{3g-3}}}{l_{\gamma_{3g-3}}} &   &   &
    \end{pNiceArray} \, .
\end{equation*}
\end{corollary}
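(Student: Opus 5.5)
The statement is Corollary~\ref{coro inv is tensor}. Since it is placed right after Theorem~\ref{theo ccl dl*}, Corollary~\ref{cor (dl + blabla)*} and Proposition~\ref{prop dt*=-dt}, I would derive it directly from these by computing ${}^*$ on the coordinate frame $(\partial_{l_{\gamma_i}},\partial_{t_{\gamma_i}})_{1\leq i\leq 3g-3}$ at each point. Smoothness will then follow from the explicit form of the coefficients.

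First, fix $[\rho]\in\chifd(\pi_1\S)$. The Fenchel--Nielsen coordinates are a global chart (\cite[Theorem 7.6.3]{Hubbard:2006}), so these $6g-6$ vectors form a basis of $\T_{[\rho]}\chifd(\pi_1\S)$. The map ${}^*$ is linear by Theorem~\ref{theo existence involution on tangent space}. It is therefore determined by its values on this basis.

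For the twist directions, Proposition~\ref{prop dt*=-dt} gives $\partial_{t_{\gamma_i}}^*=-\partial_{t_{\gamma_i}}$. This produces the block $-\Id$ in the lower right and the zero block in the upper right.

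For the length directions, Corollary~\ref{cor (dl + blabla)*}, which is Theorem~\ref{theo ccl dl*} with $\lambda$ a coordinate vector, gives
\begin{equation*}
\partial_{l_{\gamma_i}}^*=\partial_{l_{\gamma_i}}+2\,\frac{t_{\gamma_i}[\rho]}{l_{\gamma_i}[\rho]}\,\partial_{t_{\gamma_i}} .
\end{equation*}
This produces the identity block in the upper left and the diagonal block with entries $2t_{\gamma_i}/l_{\gamma_i}$ in the lower left. Writing these images as columns gives exactly the displayed matrix.

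For smoothness, the entries of $\mathrm{Inv}^\Gamma$ in the coordinate frame are constants or the functions $2t_{\gamma_i}/l_{\gamma_i}$. These are smooth on $\chifd(\pi_1\S)$ because the $t_{\gamma_i}$ are smooth coordinates and the $l_{\gamma_i}$ are smooth and positive. A field of endomorphisms whose coefficients in a global smooth frame are smooth is a smooth $(1,1)$-tensor, so $\mathrm{Inv}^\Gamma$ is smooth.

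As a consistency check, one can verify that this matrix squares to $\Id$. The lower-left block contributes $D-D=0$ to the square, where $D$ is the diagonal block. This agrees with ${}^*$ being an involution.

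The only step requiring care is the ordering convention for rows and columns, so that the lower-left block is really the $\partial_t$-component of $\partial_l^*$. All the substantive content is contained in Theorem~\ref{theo ccl dl*}, so I do not expect a real obstacle here.
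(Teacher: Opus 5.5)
Your proposal is correct and follows the paper's argument: the paper likewise obtains this corollary directly from Proposition~\ref{prop dt*=-dt} and Corollary~\ref{cor (dl + blabla)*}. It evaluates ${}^*$ on the Fenchel--Nielsen coordinate frame, reads off the block matrix column by column, and gets smoothness because the only non-constant entries, $2t_{\gamma_i}/l_{\gamma_i}$, are smooth functions on $\chifd(\pi_1\S)$.
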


\begin{proof}[Proof of Theorem~\ref{theo ccl dl*}]

Let $(\lambda_i)_{1\leq i \leq 3g-3} \in \R^{3g-3}$, and set
\begin{equation*}
 [\tau] = \sum_{i=1}^{3g-3} \lambda_{\gamma_i} \, \partial_{l_{\gamma_i}} \, .
\end{equation*}
By Proposition~\ref{prop tau* - tau}, we already know that 
\begin{equation}
\label{eq linear combination tau*-tau}
 [\tau]^*-[\tau] \in \mathrm{span}(\partial_{t_{\gamma_i}})_{1\leq i \leq 3g-3} \, . 
\end{equation}

Let $\gamma$ be an element of the pair of pants decomposition $([\gamma_i])_{1\leq i \leq 3g-3}$. We want to compute the coefficient in front of $\partial t_\gamma$ in the decomposition of $[\tau]^*-[\tau]$ given by \eqref{eq linear combination tau*-tau}. Let us consider the hyperbolic subsurface composed of the hyperbolic pair(s) of pants bounded by $c_\gamma^h$. There are two distinct cases.

\vspace{\baselineskip}

\underline{If $c^h_\gamma$ bounds two distinct hyperbolic pairs of pants $ \pant \neq \pant' $}, we consider the open set $\Sigma \coloneqq \pant \cup c^h_\gamma\cup \pant'$. 
Let us notice the following. By Proposition~\ref{prop restriction of inf twist is coboundary}, if $\eta \in \pi_1\S$ is in the pair of pants decomposition and $[\eta] \neq [\gamma]$, any representative $\tau_{\eta}\in \Zisom$ of $\partial_{t_{\eta}}$ restrict to a coboundary on $\pi_1\Sigma$, i.e. there exist $z \in \isom(\H^2)$ such that
\begin{equation*}
    \forall \xi \in \pi_1\Sigma \, , \qquad \tau_{\eta}(\xi) = \Ad \rho(\xi) \cdot z-z   \, .
\end{equation*}

Thus, as we have \eqref{eq linear combination tau*-tau}, the restriction to $\pi_1\Sigma$ of  $[\tau]^*-[\tau]$ is given by the sum of some coboundaries and a multiple of the restriction of a group cocycle representing the infinitesimal twist $\partial_{t_{\gamma}}$. Setting $k_g$ to be the coefficient in front of $\partial t_\gamma$ in the decomposition of $[\tau]^*-[\tau]$, and using expression \eqref{eq inf twist separating}, that means that $[\tau]^*-[\tau]$ has a representative $\tau' \in \Zisom$ such that
\begin{equation*} 
 \begin{cases}
\tau'(\xi)=0 & \text{if } \xi \in \pi_1 \pant \, , \\
\tau'(\xi)= k_\gamma \bp{\tH(C_\gamma) - \Ad \rho(\xi)\cdot \tH(C_\gamma)} & \text{if } \xi \in \pi_1 \pant' \, .
 \end{cases}
 \end{equation*}

As our aim is to prove that $k_\gamma=2 \lambda_\gamma t_\gamma[\rho]/l_\gamma[\rho]$, we only have to prove the following fact. The group cohomology class $[\tau]^*-[\tau]$ has a representative $\tau' \in \Zisom$ such that
\begin{equation} 
\label{eq proof case 1}
 \begin{cases}
\tau'(\xi)=0 & \text{if } \xi \in \pi_1 \pant \, , \\
\tau'(\xi)= 2 \lambda_\gamma \frac{t_\gamma[\rho]}{l_\gamma[\rho]} \bp{\tH(C_\gamma) - \Ad \rho(\xi)\cdot \tH(C_\gamma)} & \text{if } \xi \in \pi_1 \pant' \, .
 \end{cases}
\end{equation}
Let us now prove that \eqref{eq proof case 1} is satisfied.

\textbullet\ First, let us describe the fundamental group of $\Sigma$. It can be presented as $\pi_1\Sigma = \langle \gamma,\eta,\eta'\rangle$, the free group generated by $\gamma$ and two classes $\eta,\eta'$ of boundary loops of respectively $\pant$ and $\pant'$ (see Figure~\ref{fig pi1 S'1}) which are part of the pair of pants decomposition $([\gamma_i])_{1\leq i \leq 3g-3}$.

\begin{figure}[htbp]
\centering
\includegraphics{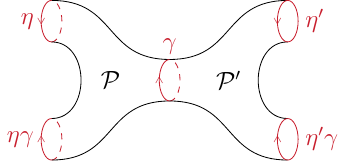} 
\caption{Homotopy classes of loops in $\Sigma = \pant \cup c^h_\gamma\cup \pant'$.}
\label{fig pi1 S'1}
\end{figure}

Let $C_\gamma$, $C_\eta$ and $C_{\eta'}$ be the respective oriented geodesics in $\H^2$ along which $\rho(\gamma)$, $\rho(\eta)$ and $\rho(\eta')$ are positive hyperbolic translations. Then, let $C$ be the geodesic line orthogonal to $C_\gamma$ and $C_\eta$, and let $C'$ be the geodesic line orthogonal to $C_\gamma$ and $C_{\eta'}$ (their orientations do not matter). 

Note that for all $k \in \Z$, we also have $\pi_1\Sigma = \langle \gamma,\eta,\eta'_k\rangle$, where $\eta'_k\coloneqq \gamma^k\eta'\gamma^{-k} \in \pi_1\S$ represents the same boundary loop as $\eta$. By definition of the twist coordinates \cite[Proof of Proposition 7.6.5]{Hubbard:2006}, up to replacing $\eta'$ with some $\eta'_k$ (depending on $\eta$ and the multicurve $\Gamma'$ used in order to define the twist coordinates), we have that the signed distance along $C_\gamma$ between the geodesics $C$ and $C'$ is the twist parameter $t_\gamma[\rho]$ (see Figure~\ref{fig twist universal pants}).

\textbullet\  By assumption, there exists and interval $I$ and a path $[\rho_s]_{s\in I}$ in $\chifd(\pi_1\S)$ with constant twist parameter along $\gamma$, and such that $[\rho_0]=[\rho]$ and
\begin{equation*}
 [\tau]=\ddso[\rho_s] \, .
\end{equation*}
As the twist parameter along $\gamma$ of the path $[\rho_s]_{s\in I}$  is constant, there exists a smooth family of Fuchsian representations $(\mu_s)_{s\in I}$ representing $[\rho_s]_{s\in I}$ such that $\mu_0 = \rho$ and:
\begin{enumerate}[label=(\arabic*)]
 \item for all $s \in I$, $\mu_s(\gamma)$ is a positive hyperbolic translation along a same oriented geodesic $C_\gamma \subset \H^2$ independent of $s$, \label{cdt 1}
 \item for all $s \in I$, $\mu_s(\eta)$ is a positive hyperbolic translation along an oriented geodesic $C_\eta^s \subset \H^2$ orthogonal to $C$ (which is independent of $s$), \label{cdt 2}
 \item for all $s \in I$, $\mu_s(\eta')$ is a positive hyperbolic translation along an oriented geodesic $C_{\eta'}^s \subset \H^2$ orthogonal to $C'$ (which is independent of $s$). \label{cdt 3}
\end{enumerate}
Figure~\ref{fig twist universal pants} depicts the situation.

\begin{figure}[htbp]
\centering
\includegraphics{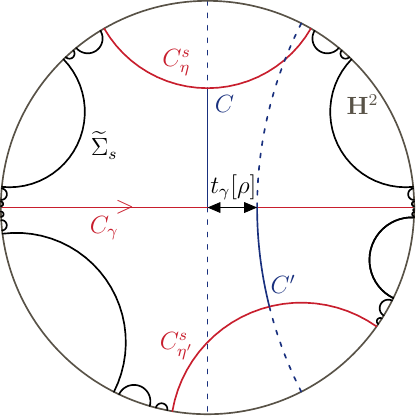}
\caption{The universal cover of the hyperbolic surface $\Sigma_s \subset \H^2/\rho_s(\pi_1\S)$ in the Poincar\'e disk model of $\H^2$.}
\label{fig twist universal pants}
\end{figure}

\textbullet\ From now on, let us use de Rham cohomology in order to prove that equation~\eqref{eq proof case 1} is satisfied. By Proposition~\ref{prop exist rep satisfying P eps}, we can let $\alpha \in \ZdR$ be a representative of $\Psi^{-1}_\rho[\tau] \in \HdR$ satisfying property~\ref{property P epsilon} for some $\varepsilon>0$. Then, by definition, $[\tau]^* = \Psi_\rho \dR{\alpha^*}$. 

Let $\{p\} = C_\gamma \cap C$ and $\{p'\} =C_\gamma \cap C'$. To prove that \eqref{eq proof case 1} is satisfied, we need to compute 
\begin{equation*}
    \int_p^{\xi \cdot p}\alphat^* - \alphat
\end{equation*}
for all $\xi \in \pi_1 \Sigma$.

\textbullet\ First, let us notice that,
\begin{equation}
\label{eq intp = intp'}
\forall \xi \in \pi_1\Sigma \, , \qquad  \int_p^{\xi \cdot p}\alphat^* - \alphat = \int_{p'}^{\xi \cdot p'}\alphat^* - \alphat \, .
\end{equation}
Indeed, as seen in the proof of Lemma~\ref{lem change of base point},
\begin{equation*}
\int_{p'}^{\xi \cdot p'}\alphat^* - \alphat = \int_{p}^{\xi \cdot p}\alphat^* - \alphat +\Ad\rho(\xi)\cdot z -z \, ,
\end{equation*}
where 
\begin{equation*}
z = \int_{p}^{p'} \alphat^* -\alphat \in \isom(\H^2)\, .
\end{equation*}
By Proposition~\ref{prop peps imply gluing}, $\alphat^* - \alphat$ vanishes in a neighbourhood of $C_\gamma$. Thus, using the geodesic path from $p$ to $p'$, which is a geodesic arc included in $C_\gamma$, in the curve integral defining $z$, we get 
$z=0$, implying \eqref{eq intp = intp'}.

\textbullet\ As we have 
\begin{equation*}
 [\tau]=\Psi_\rho\dR{\alpha} = \ddso[\mu_s] \, ,
\end{equation*}
there exists $x \in \isom(\H^2)$ such that
\begin{equation}
\label{eq alpha tau x}
\forall \xi \in \pi_1\S \, , \qquad \int_p^{\xi \cdot p} \alphat = \ddso \mu_s(\xi) \mu(\xi)^{-1}+ \Ad \rho(\xi)\cdot x -x \, .
\end{equation}
Then, as seen in the proof of Lemma~\ref{lem change of base point},
\begin{equation}
\label{eq alpha tau x'}
\forall \xi \in \pi_1\S \, , \qquad \int_{p'}^{\xi \cdot {p'}} \alphat = \ddso \mu_s(\xi) \mu(\xi)^{-1}+ \Ad \rho(\xi)\cdot x' -x' \, , 
\end{equation}
where 
\begin{equation*}
 x' = x + \int_{p}^{p'} \alphat \, .
\end{equation*}

\textbullet\ The geodesic path from $p$ to $p'$ is a geodesic arc of $C_\gamma$ of signed length $t_\gamma [\rho]$ (taking the orientation of $C_\gamma$ into account). Thus, as $\alpha$ satisfies property~\ref{property P epsilon} and, by Proposition~\ref{prop explicit value nice rep}, we have 
\begin{equation}
\label{eq expression x'}
 x' = x + \int_{p}^{p'} \tilde\alpha = x + \lambda_\gamma \frac{t_\gamma[\rho]}{l_\gamma[\rho]} \tH(C_\gamma)\, .
\end{equation}

\textbullet\ We claim that we have
\begin{equation*}
 x,x' \in \mathrm{span}\bp{\tH(C_\gamma)} \, .   
\end{equation*}
Indeed, for $\xi = \gamma$, equation \eqref{eq alpha tau x} becomes 
\begin{equation}
\label{eq alpha for gamma}
\int_p^{\gamma \cdot p} \alphat = \ddso \mu_s(\gamma) \mu(\gamma)^{-1} + \Ad \rho(\gamma)\cdot x -x \, .
\end{equation}
As for all $s \in I$, $\mu_s(\gamma)$ is a positive hyperbolic translation along $C_\gamma$, we have 
\begin{equation*}
 \ddso \mu_s(\gamma) \mu(\gamma)^{-1} \in \mathrm{span}\bp{\tH(C_\gamma)} \, .
\end{equation*}
Moreover, as $\alpha$ is a de Rham form satisfying property~\ref{property P epsilon}, so we also have 
\begin{equation*}
 \int_p^{\gamma \cdot p} \alphat \in \mathrm{span}\bp{\tH(C_\gamma)}\, .
\end{equation*}
Hence, using the direct sum decomposition of $\isom(\H^2)$ given by Corollary~\ref{cor th along gamma}, in equation~\eqref{eq alpha for gamma} we must have 
\begin{equation*}
\label{eq x in span tC}
 x \in \mathrm{span}\bp{\tH(C_\gamma)} \, .
\end{equation*}
Using \eqref{eq expression x'}, we also have 
\begin{equation*}
\label{eq x' in span tC}
 x' \in \mathrm{span}\bp{\tH(C_\gamma)} \, .
\end{equation*}

\textbullet\ The path of representations $(\mu_s)_{s\in I}$ satisfies \ref{cdt 1} and \ref{cdt 2}, thus it satisfies condition~\ref{cdt A} and \ref{cdt B} from subsection~\ref{subsec central lemma}. Let $S\in \isom(H^2)$ be the hyperbolic reflection along $C$. As $p \in C$, we can apply Lemma~\ref{lem central result} and we get, using equation~\eqref{eq alpha tau x}, that for all $\xi \in \{\gamma^{\pm1},\eta^{\pm1}\}$
\begin{equation*}
 \int_p^{\xi \cdot p}\alphat^* - \alphat= \int_p^{\xi \cdot p}\alphat_\pant^* - \alphat_\pant= \Ad \rho(\xi)\cdot y -y \, ,
\end{equation*}
where
\begin{equation*}
y \coloneqq \Ad S \cdot x -x \in \isom(\H^2)\, .
\end{equation*}
Hence, by $\Ad \rho $-equivariance~\eqref{eq ad equiv 1-form} of $\alphat^*$ and $\alphat$,
\begin{equation}
\label{eq p}
\forall \xi \in \pi_1\pant =\langle\gamma,\eta\rangle \, , \qquad \int_p^{\xi \cdot p}\alphat^* - \alphat= \int_p^{\xi \cdot p}\alphat_\pant^* - \alphat_\pant= \Ad \rho(\xi)\cdot y -y \, .
\end{equation}
Let us note that $x \in \mathrm{span}\bp{\tH(C_\gamma)}$ and $C$ is orthogonal to $C_\gamma$, using \eqref{eq Ad(S) t= -t}, we have $\Ad S \cdot x = -x$ so that 
\begin{equation*}
y = \Ad S \cdot x -x = -2x\, .
\end{equation*}

\textbullet\ The path of representations $(\mu_s)_{s\in I}$ satisfies \ref{cdt 1} and \ref{cdt 3}, thus it satisfies condition~\ref{cdt A} and \ref{cdt B} from subsection~\ref{subsec central lemma}, replacing $C$ by $C'$  and $\eta$ by $\eta'$. Using equation~\eqref{eq alpha tau x'} and the exact same arguments as in the previous point, we get that
\begin{equation*}
\forall \xi \in \pi_1\pant' \, , \qquad  \int_{p'}^{\xi \cdot p'}\alphat^* - \alphat = \int_{p'}^{\xi \cdot p'}\alphat^*_\pant - \alphat_\pant= \Ad \rho(\xi)\cdot y' -y' \, ,
\end{equation*}
where
\begin{equation*}
y' \coloneqq -2 x'\in \isom(\H^2)\, .
\end{equation*}
Using \eqref{eq intp = intp'} to change the basepoint from $p$ to $p'$, and the relation \eqref{eq expression x'} between $x$ and $x'$, that becomes
\begin{equation}
\label{eq p'}
\forall \xi \in \pi_1\pant' \, , \qquad \int_{p}^{\xi \cdot p}\alphat^* - \alphat = 2 \lambda_\gamma \frac{t_\gamma[\rho]}{l_\gamma[\rho]} \bp{\tH(C_\gamma) - \Ad \rho(\xi)\cdot \tH(C_\gamma)} + \bp{\Ad \rho(\xi)\cdot y -y}\, ,
\end{equation}

\textbullet\ That concludes the proof in the first case, as then, by equations~\eqref{eq p} and \eqref{eq p'}, $[\tau]^*-[\tau] = \Psi_\rho \dR{\alpha^* - \alpha}$
has a representative $\tau' \in \Zisom$ satisfying~\eqref{eq proof case 1}.

\vspace{\baselineskip}

\underline{If $c^h_\gamma$ bounds only one hyperbolic pair of pants $ \pant $}, we consider $\Sigma \coloneqq \pant \cup c^h_\gamma$. The proof is then similar to the previous case. 

Using, the same argument as in the previous case (based on Proposition~\ref{prop restriction of inf twist is coboundary} and expression~\eqref{eq inf twist non-separating}), we note that we only need to prove that $[\tau]^*-[\tau]$ has a representative $\tau' \in \Zisom$ such that
\begin{equation} 
 \begin{cases}
\label{eq proof case 2}
\tau'(\xi)=0 & \text{if } \xi \in \pi_1 \pant \, , \\
\tau'(\zeta)= 2 \lambda_\gamma\frac{t_\gamma[\rho]}{l_\gamma[\rho]} \tH(C_\gamma) \, .
 \end{cases}
\end{equation}

\textbullet\ The fundamental group of $\Sigma$ is the HNN extension $\pi_1\Sigma = \langle \gamma, \eta, \zeta \ \vert \ \zeta ^{-1} \gamma \zeta= \gamma\eta\rangle$, where $\zeta$ is the class of the loop created by the gluing of the boundaries $\gamma$ and $\gamma' = \gamma\eta$ of $\pant$ (see Figure~\ref{fig pi1 S'2}).

\begin{figure}[htbp]
\centering
\includegraphics{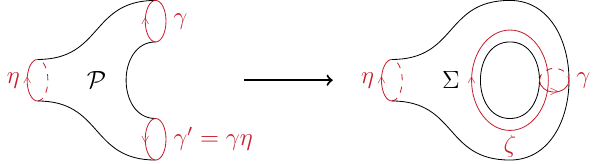} 
\caption{Homotopy classes of loops in $\pant$, and in $\Sigma = \pant \cup c^h_\gamma$ after the gluing.}
\label{fig pi1 S'2}
\end{figure}

Let $C_\gamma$, $C_{\eta}$ and $C_{\eta'}$ be the respective oriented geodesics in $\H^2$ along which $\rho(\gamma)$, $\rho(\eta)$ and $\rho(\eta')$ are positive hyperbolic translations. Then, let $C$ be the geodesic line orthogonal to $C_\gamma$ and $C_{\eta} $. It contains a segment which is a lift of one of the two minimal geodesic arcs from $c^h_\gamma$ to $c^h_\eta$. Let $C'$ be the geodesic line orthogonal to $C_\gamma = C_{\zeta \gamma \zeta^{-1}} = \rho(\zeta^{-1})C_{\gamma}$ and $C_{\eta'} = C_{\zeta \eta \zeta^{-1}} = \rho(\zeta^{-1})C_{\eta}$. It contains a segment which a lift of one of the other minimal geodesic arcs from $c^h_\gamma = c^h_{\gamma\eta}$ to $c^h_\eta = c^h_{\eta'}$. 

By definition of the twist coordinates \cite[Proof of Proposition 7.6.5]{Hubbard:2006}, up to replacing $\zeta$ with some $\gamma^k\zeta\gamma^{-k}$ with $k \in \Z$ (depending on the multicurve $\Gamma'$ used in order to define the twist coordinates), we have that the signed distance along $C_\gamma$ between the geodesics $C$ and $C'$ is $t_\gamma[\rho]$.

\textbullet\ By assumption, there is a path $[\rho_s]_{s\in I}$ in $\chifd(\pi_1\S)$ with constant twist parameter along $\gamma$, and such that $[\rho_0]=[\rho]$ and
\begin{equation*}
 [\tau]=\Psi_\rho\dR{\alpha}=\ddso[\rho_s] \, .
\end{equation*}
As the twist parameter along $\gamma$ of the path $[\rho_s]_{s\in I}$ is constant, there exists a smooth family of Fuchsian representation $(\mu_s)_{s\in I}$ representing $[ \rho_s]_{s\in I}$ such that $\mu_0 = \rho$ and:
\begin{enumerate}[label=(\roman*)]
 \item for all $s \in I$, $\mu_s(\gamma)$ is a positive hyperbolic translation along a same oriented geodesic $C_\gamma \subset \H^2$ independent of $s$, 
 \item for all $s \in I$, $\mu_s(\eta)$ is a positive hyperbolic translation along an oriented geodesic $C_{\eta}^s \subset \H^2$ orthogonal to $C$ (which is independent of $s$),
 \item for all $s \in I$, $\mu_s(\eta')$ is a positive hyperbolic translation along an oriented geodesic $C^s_{\eta'} \subset \H^2$ orthogonal to $C'$ (which is independent of $s$). 
\end{enumerate}

\textbullet\ By Proposition~\ref{prop exist rep satisfying P eps}, we can let $\alpha \in \ZdR$ be a representative of $\Psi^{-1}_\rho[\tau] \in \HdR$ satisfying property~\ref{property P epsilon} for some $\varepsilon>0$. Then, by definition, $[\tau]^* = \Psi_\rho \dR{\alpha^*}$. Let $\{p\} = C_\gamma \cap C$. Using the exact same arguments as in the previous case (relying on Lemma~\ref{lem central result}), we get that there exists $y \in \isom(\H^2)$ such that for all $\xi \in \pi_1\pant = \langle \gamma, \eta \rangle$,
\begin{equation*}
\int_p^{\xi \cdot p}\alphat^* - \alphat = \Ad \rho(\xi)\cdot y -y \, ,
\end{equation*}
and for all $\xi' \in \zeta (\pi_1\pant) \zeta ^{-1} = \langle \zeta\gamma \zeta^{-1} , \zeta \eta \zeta^{-1} \rangle = \langle \gamma \eta , \eta' \rangle$,
\begin{equation*}
 \int_{p}^{\xi' \cdot p}\alphat^* - \alphat = 2 \lambda_\gamma\frac{t_\gamma[\rho]}{l_\gamma[\rho]} \bp{\tH(C_\gamma) - \Ad \rho(\xi')\cdot \tH(C_\gamma)} + \bp{\Ad \rho(\xi')\cdot y -y}\, .
\end{equation*}
Thus, $[\tau]^*-[\tau] = \Psi_\rho\dR{\alpha^*-\alpha}$ has a representative $\tau' \in \Zisom$ such that for all $\xi \in \pi_1 \pant$
\begin{equation} 
 \begin{cases}
 \label{eq tau' second case}
\tau'(\xi)=0 \, , \\
\tau'(\zeta \xi \zeta^{-1} )= 2 \lambda_\gamma\frac{t_\gamma[\rho]}{l_\gamma[\rho]} \bp{\tH(C_\gamma) - \Ad \rho(\zeta \xi\zeta^{-1} )\cdot \tH(C_\gamma)} \, .
 \end{cases}
 \end{equation}

\textbullet\ By Proposition~\ref{prop cocy conj}, we also have that for all $\xi \in \pi_1 \pant$, 
\begin{equation}
\label{eq tau conj by zeta}
\tau'(\zeta \xi\zeta^{-1} ) = \bp{\tau(\zeta) - \Ad \rho(\zeta\xi\zeta^{-1} ) \cdot \tau(\zeta)} + \Ad \rho(\zeta) \cdot \tau (\xi) \, .
\end{equation}
Equations~\eqref{eq tau' second case} and \eqref{eq tau conj by zeta} imply that for every $\xi' \in \zeta (\pi_1\pant) \zeta ^{-1}$, 
\begin{equation*}
\tau(\zeta) - \Ad \rho(\xi') \cdot \tau(\xi) = 2 \lambda_\gamma\frac{t_\gamma[\rho]}{l_\gamma[\rho]} \bp{\tH(C_\gamma) - \Ad \rho(\xi')\cdot \tH(C_\gamma)} .
\end{equation*}
As $\pi_1\pant = \langle \gamma , \eta \rangle$, that implies that 
\begin{equation*}
 \tau(\zeta) - 2 \lambda_\gamma\frac{t_\gamma[\rho]}{l_\gamma[\rho]} \tH(C_\gamma) \in \ker \bp{\Ad \rho(\zeta\gamma \zeta^{-1}) -\Id} \cap \bp{\Ad \rho(\zeta\eta \zeta^{-1}) -\Id} \, .
\end{equation*}
By Lemma~\ref{lem decomp isom via Ad}, that is, 
\begin{equation*}
 \tau(\zeta) - 2 \frac{t_\gamma[\rho]}{l_\gamma[\rho]} \tH(C_\gamma) \in \mathrm{span} \bp{\tH(C_{\zeta\gamma \zeta^{-1}})} \cap \mathrm{span} \bp{\tH(C_{\zeta\eta \zeta^{-1}})} \, .
\end{equation*}
As the geodesics $C_{\zeta\gamma \zeta^{-1}} = \rho(\xi) C_\gamma$ and $C_{\zeta\eta \zeta^{-1}} = \rho(\xi) C_\eta$ are distinct as sets, the intersection above is trivial. Hence, we have 
\begin{equation}
\label{eq tau zeta}
 \tau(\zeta) = 2 \lambda_\gamma\frac{t_\gamma[\rho]}{l_\gamma[\rho]}\tH(C_\gamma) \, .
\end{equation}

\textbullet\ That concludes the proof for the second case, as equations~\eqref{eq tau' second case} and \eqref{eq tau zeta} imply that $[\tau]^*-[\tau]$ has a representative $\tau' \in \Zisom$ satisfying~\eqref{eq proof case 2}.
\end{proof}

\subsection{Proof of the formula}
\label{subsec proof of formula dl}

As explained in the beginning of the section, the proof of Theorem~\ref{theo w(dl,dl)=0} is now straightforward.

\begin{proof}
Let $\gamma, \eta \in \pi_1 \S$ be two distinct elements of the fixed pair of pants decomposition, Corollary~\ref{cor wg(*,*)=0} and Corollary~\ref{cor (dl + blabla)*} imply
\begin{equation*}
 \wg\vp{\partial_{l_\gamma}+\frac{t_\gamma}{l_\gamma} \partial_{t_\gamma} , \partial_{l_\eta}+\frac{t_\eta}{l_\eta} \partial_{t_\eta} } = 0 \, , 
\end{equation*}
Then, using Corollary~\ref{w(dt,dt)=0} and Theorem~\ref{Second Wolpert formula}, we get 
\begin{equation*}
 \wg(\partial_{l_\gamma}, \partial_{l_\eta})=\wg\vp{\partial_{l_\gamma}+\frac{t_\gamma}{l_\gamma} \partial_{t_\gamma} , \partial_{l_\eta}+\frac{t_\eta}{l_\eta} \partial_{t_\eta} } = 0 \, . \qedhere
\end{equation*}
\end{proof}

\section{Wolpert's Magic Formula and the closedness of the Goldman form}
\label{Sec WG closedness}

We now have all the tools in order to conclude and prove Wolpert's Magic Formula \cite{Wolpert:1985}.

\begin{theorem}[Corollary~\ref{theo intro WP in FN}]
\label{WG in FN}
In any Fenchel--Nielsen coordinates $(l_{\gamma_i},t_{\gamma_i})_{1\leq i\leq 3g-3}$ on $\chifd(\pi_1\S)$, the Goldman form is expressed as 
\begin{equation*}
\wg = 2\sum_{i=1}^{3g-3} \dif t_{\gamma_i}\wedge \dif l_{\gamma_i} \, .
\end{equation*}
\end{theorem}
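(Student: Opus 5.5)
Since $\wg$ is bilinear and skew-symmetric on each tangent space, I will determine it by its values on the coordinate basis $(\partial_{l_{\gamma_i}},\partial_{t_{\gamma_i}})_{1\leq i\leq 3g-3}$ at an arbitrary point $[\rho]\in\chifd(\pi_1\S)$. I will then check that these values match those of $2\sum_{i} \dif t_{\gamma_i}\wedge\dif l_{\gamma_i}$, whose values on the basis are computed as follows:
\begin{equation*}
2\sum_{k} \dif t_{\gamma_k}\wedge\dif l_{\gamma_k}\bp{\partial_{t_{\gamma_i}},\partial_{l_{\gamma_j}}} = 2\,\delta_{ij} \, ,
\end{equation*}
and it vanishes on pairs of the form $(\partial_{t},\partial_{t})$ and $(\partial_{l},\partial_{l})$. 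Three families of pairings therefore need to be checked.

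First I treat the twist--twist pairings. For $i\neq j$, Corollary~\ref{w(dt,dt)=0}, which follows from Theorem~\ref{First Wolpert formula} because the curves are disjoint, gives $\wg(\partial_{t_{\gamma_i}},\partial_{t_{\gamma_j}})=0$. For $i=j$ the pairing vanishes by skew-symmetry. This relies on identifying $\partial_{t_{\gamma_i}}$ with the infinitesimal twist $\twinf{\gamma_i}$. That identification is the definition of the twist coordinate, since varying $t_{\gamma_i}$ with the other coordinates fixed is exactly the twist deformation of Definition~\ref{def infinitesimal twist}. The sign and orientation convention should be checked against Theorem~\ref{Second Wolpert formula}, which the paper already states in the form $\wg(\partial_{t_\gamma},\cdot)=2\,\dif l_\gamma$.

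Second I treat the mixed pairings. Theorem~\ref{Second Wolpert formula} gives $\wg(\partial_{t_{\gamma_i}},\cdot)=2\,\dif l_{\gamma_i}$. Evaluating on the coordinate vectors gives
\begin{equation*}
\wg(\partial_{t_{\gamma_i}},\partial_{l_{\gamma_j}})=2\,\dif l_{\gamma_i}(\partial_{l_{\gamma_j}})=2\delta_{ij} \, , \qquad \wg(\partial_{t_{\gamma_i}},\partial_{t_{\gamma_j}})=2\,\dif l_{\gamma_i}(\partial_{t_{\gamma_j}})=0 \, ,
\end{equation*}
which matches the target form and is consistent with the first step.

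Third I treat the length--length pairings. Theorem~\ref{theo w(dl,dl)=0} gives $\wg(\partial_{l_{\gamma_i}},\partial_{l_{\gamma_j}})=0$ for $i\neq j$, and for $i=j$ the pairing vanishes by skew-symmetry. With all three families matching, the two $2$-forms agree at every point.

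I do not expect a real obstacle, because the hard work is contained in Theorems~\ref{First Wolpert formula}, \ref{Second Wolpert formula} and \ref{theo w(dl,dl)=0}. The only point needing care is the compatibility of sign and orientation conventions: the direction of the twist (twisting to the right of $c^h_\gamma$) must agree with the direction of increasing $t_\gamma$, and the ordering of $\dif t\wedge\dif l$ must match. Once Theorem~\ref{Second Wolpert formula} is read with $\partial_{t_\gamma}=\twinf{\gamma}$, the stated form follows. Finally, none of these steps uses closedness of $\wg$, which keeps the argument consistent with the paper's aim.
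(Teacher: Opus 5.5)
Your proof is correct and is the paper's argument: you evaluate $\wg$ on the Fenchel--Nielsen coordinate basis, using Corollary~\ref{w(dt,dt)=0} for the twist--twist pairings, Theorem~\ref{Second Wolpert formula} for the mixed pairings, and Theorem~\ref{theo w(dl,dl)=0} for the length--length pairings. Your side remark that the twist--twist vanishing already follows from Theorem~\ref{Second Wolpert formula}, since $\dif l_{\gamma_i}(\partial_{t_{\gamma_j}})=0$, is also correct, so the appeal to Theorem~\ref{First Wolpert formula} is not needed here.
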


\begin{proof}
By Corollary~\ref{w(dt,dt)=0}, for all $1 \leq i,j \leq 3g-3$, 
\begin{equation*}
\wg(\partial_{ t_{\gamma_i}},\partial_{ t_{\gamma_j}})=0 \, .
\end{equation*}
By Theorem~\ref{Second Wolpert formula}, for all $1 \leq i,j \leq 3g-3$, 
\begin{equation*}
\wg (\partial_{ t_{\gamma_i}},\partial_ {l_{\gamma_j}})=2\, \dif l_{\gamma_i} ( \partial_{l_{\gamma_j}}) = \begin{cases} 2 & \text{if} \ i=j \, , \\ 0 & \text{otherwise.} 
\end{cases}
\end{equation*}
By Theorem~\ref{theo w(dl,dl)=0}, for all $1 \leq i,j \leq 3g-3$, 
\begin{equation*}
\wg(\partial_{ l_{\gamma_i}},\partial_{ l_{\gamma_j}})=0 \, . \qedhere
\end{equation*}
\end{proof}

As previously mentioned, Theorem~\ref{WG in FN} is usually proved using the two formulas of Wolpert (Corollary~\ref{w(dt,dt)=0} and Theorem~\ref{Second Wolpert formula}) and the closedness of the Goldman form. As we have managed to prove Theorem~\ref{theo w(dl,dl)=0} directly, our proof does not rely on the closedness of the form and we can thus deduce closedness from Theorem~\ref{WG in FN}. Note that Theorem~\ref{WG in FN} also clearly implies the non-degenerate nature of the Goldman form. Hence, we have proved the following.

\begin{corollary}[Corollary~\ref{theo intro WP closed}]
\label{cor WG closed}
The Goldman form is a closed non-degenerate skew-symmetric $2$-form on $\chifd(\pi_1\S)$, i.e. it is a symplectic form.
\end{corollary}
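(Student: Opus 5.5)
Closedness comes straight from Theorem~\ref{WG in FN}, which gives an explicit formula for $\wg$ in a global chart. The Fenchel--Nielsen coordinates $(l_{\gamma_i},t_{\gamma_i})_{1\leq i\leq 3g-3}$ of a fixed pair of pants decomposition and multicurve $\Gamma'$ form a global diffeomorphism $\chifd(\pi_1\S)\to(\R_+^*)^{3g-3}\times\R^{3g-3}$ \cite[Theorem 7.6.3]{Hubbard:2006}. In that single chart, Theorem~\ref{WG in FN} reads
\begin{equation*}
\wg = 2\sum_{i=1}^{3g-3} \dif t_{\gamma_i}\wedge \dif l_{\gamma_i} \, .
\end{equation*}
This expression has constant coefficients, so it is smooth and $\dif\wg = 2\sum_i \dif(\dif t_{\gamma_i})\wedge\dif l_{\gamma_i} - 2\sum_i \dif t_{\gamma_i}\wedge \dif(\dif l_{\gamma_i}) = 0$, since $\dif\circ\dif=0$ on functions. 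The point to stress is that Theorem~\ref{WG in FN} rests on Corollary~\ref{w(dt,dt)=0}, Theorem~\ref{Second Wolpert formula} and Theorem~\ref{theo w(dl,dl)=0}, none of which assumed closedness, so the argument is not circular.

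Skew-symmetry holds by definition of the cup-product pairing (equivalently, of $\wdR$ via Proposition~\ref{prop wg=wdR}). Non-degeneracy follows from the same formula: in the basis $(\partial_{l_{\gamma_1}},\dots,\partial_{l_{\gamma_{3g-3}}},\partial_{t_{\gamma_1}},\dots,\partial_{t_{\gamma_{3g-3}}})$ of each tangent space, the matrix of $\wg$ is
\begin{equation*}
\begin{pmatrix} 0 & -2\,\Id \\ 2\,\Id & 0 \end{pmatrix},
\end{equation*}
whose determinant is $2^{6g-6}\neq0$. Equivalently, $\wg^{3g-3}$ is a nonzero multiple of $\dif t_{\gamma_1}\wedge\dif l_{\gamma_1}\wedge\dots\wedge\dif t_{\gamma_{3g-3}}\wedge\dif l_{\gamma_{3g-3}}$, which is a volume form.

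The only step needing care is that the Fenchel--Nielsen coordinates are genuinely a smooth global chart of $\chifd(\pi_1\S)$ compatible with its manifold structure. This is what makes $\dif l_{\gamma_i}$ and $\dif t_{\gamma_i}$ smooth $1$-forms and lets the computation in one chart give $\dif\wg=0$ everywhere. I would cite \cite[Theorem 7.6.3]{Hubbard:2006} together with the holonomy diffeomorphism $\Hol$ for this. Even a local-chart statement would suffice, since closedness and non-degeneracy are both local properties.
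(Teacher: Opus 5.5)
Your proposal is correct and follows the paper's own argument. Closedness and non-degeneracy are read off directly from the constant-coefficient Darboux expression of Theorem~\ref{WG in FN} in the global Fenchel--Nielsen chart, and the key point, as you stress, is that Corollary~\ref{w(dt,dt)=0}, Theorem~\ref{Second Wolpert formula} and Theorem~\ref{theo w(dl,dl)=0} were proved without assuming closedness; your matrix and determinant computation simply spells out the non-degeneracy that the paper calls clear.
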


\end{spacing}

\bibliography{Ablondi_WP}\label{sec:References} 
\bibliographystyle{alpha}

\end{document}